\documentclass[a4paper,11pt]{article}

%\usepackage{verbatim}
%%%%%%%%%%%%%%%%%%%%%%%%%%%%%%%%%%%%%%%%%%%%%%%%%%%%%%%%
% TEXT STYLE
%%%%%%%%%%%%%%%%%%%%%%%%%%%%%%%%%%%%%%%%%%%%%%%%%%%%%%%%

\usepackage[T1]{fontenc}
\setlength\parskip{\medskipamount} %\setlength\parindent{0pt}
\usepackage{pgfplots}
\usepgfplotslibrary{fillbetween}

%%%%%%%%%%%%%%%%%%%%%%%%%%%%%%%%%%%%%%%%%%%%%%%%%%%%%%%%
% GEOMETRY, PAGE STYLE and HYPERLINKS
%%%%%%%%%%%%%%%%%%%%%%%%%%%%%%%%%%%%%%%%%%%%%%%%%%%%%%%%

%%%%margin
\usepackage
[
    a4paper,
    left=2.3cm,
    right=2.3cm,
    top=3cm,
    bottom=3cm,
]
{geometry}
%\usepackage{geometry}
%\geometry{margin=4cm}
%%%%%%%%%Turn back on later!
\usepackage[hidelinks]{hyperref}
%%%%%%%%%%

%%Basic Packages
\usepackage{amsthm}
\usepackage{amsmath}
\usepackage{amsfonts}
\usepackage{amssymb}
\usepackage{mathdots}
\usepackage{stmaryrd} % contains \sslash
\usepackage[shortlabels]{enumitem}
%\usepackage{MnSymbol} % contains \llangle

%% marginal note

\usepackage{marginnote}
\setlength{\marginparwidth}{4cm}
\newcommand\note[1]%
                {$^\dagger$\marginnote{\footnotesize{$^\dagger${#1}}}}

%%%%%%%%%

\makeatletter
\DeclareFontFamily{OMX}{MnSymbolE}{}
\DeclareSymbolFont{MnLargeSymbols}{OMX}{MnSymbolE}{m}{n}
\SetSymbolFont{MnLargeSymbols}{bold}{OMX}{MnSymbolE}{b}{n}
\DeclareFontShape{OMX}{MnSymbolE}{m}{n}{
    <-6>  MnSymbolE5
   <6-7>  MnSymbolE6
   <7-8>  MnSymbolE7
   <8-9>  MnSymbolE8
   <9-10> MnSymbolE9
  <10-12> MnSymbolE10
  <12->   MnSymbolE12
}{}
\DeclareFontShape{OMX}{MnSymbolE}{b}{n}{
    <-6>  MnSymbolE-Bold5
   <6-7>  MnSymbolE-Bold6
   <7-8>  MnSymbolE-Bold7
   <8-9>  MnSymbolE-Bold8
   <9-10> MnSymbolE-Bold9
  <10-12> MnSymbolE-Bold10
  <12->   MnSymbolE-Bold12
}{}

\let\llangle\@undefined
\let\rrangle\@undefined
\DeclareMathDelimiter{\llangle}{\mathopen}%
                     {MnLargeSymbols}{'164}{MnLargeSymbols}{'164}
\DeclareMathDelimiter{\rrangle}{\mathclose}%
                     {MnLargeSymbols}{'171}{MnLargeSymbols}{'171}
\makeatother

%%%%%%%%%%%

%Fonts
\usepackage{times}
\usepackage{bm}
\usepackage{dutchcal}

%%Graphs and Arrows
\usepackage{tikz-cd}
\usepackage{fancybox}
\usepackage{caption}
\usepackage{subcaption}
\usepackage{graphicx}
\pgfplotsset{compat=1.14}

%%Appendix
\usepackage[toc,page]{appendix}
\usepackage{todonotes}

%%New operator
\DeclareMathOperator{\Hom}{Hom}
\DeclareMathOperator{\diag}{diag}
%%rank
\DeclareMathOperator{\im}{Im}%%image
\DeclareMathOperator{\id}{Id}

\DeclareMathOperator{\pr}{pr}
\DeclareMathOperator{\SL}{SL}
\DeclareMathOperator{\SO}{SO}

\DeclareMathOperator{\SU}{SU}

\DeclareMathOperator{\wt}{wt}
\DeclareMathOperator{\hw}{hw}

\DeclareMathOperator{\Lie}{Lie}
\DeclareMathOperator{\Ad}{Ad}

\DeclareMathOperator{\UU}{U}

\DeclareMathOperator{\GWidth}{GWidth}
\DeclareMathOperator{\Vol}{Vol}

\newcommand{\n}{^{-1}}

% Fraktur

\newcommand\lie{\mathfrak}

\renewcommand{\a}{\lie{a}}

\newcommand{\g}{\lie{g}}
\newcommand{\h}{\lie{h}}
\newcommand{\hhh}{\lie{h}}

\newcommand{\nnn}{\lie{n}}

\renewcommand{\t}{\lie{t}}
\newcommand{\ttt}{\lie{t}}
\newcommand{\kk}{\lie{k}}
\newcommand{\z}{\lie{z}}

%%New environments
\usepackage{theoremref}
\newtheorem{theorem}{Theorem}[section]

\newtheorem{proposition}[theorem]{Proposition}
\newtheorem{lemma}[theorem]{Lemma}
\newtheorem{corollary}[theorem]{Corollary}

\newtheorem{notation}[theorem]{Notation}

\theoremstyle{definition}
\newtheorem{definition}[theorem]{Definition}

\newtheorem{example}[theorem]{Example}

\newtheorem{remark}[theorem]{Remark}

% blackboard bold
\newcommand\bb{\mathbb}

\newcommand\N{\bb{N}}
\newcommand\Z{\bb{Z}} 
\newcommand\Q{\bb{Q}}
\newcommand\R{\bb{R}} 
\newcommand\C{\mathbb{C}}

\newcommand\T{\bb{T}}

\newcommand\ii{\mathbf{i}}

% calligraphic

\newcommand\CC{\mathcal{C}}

%renew \eq

\renewcommand{\geq}{\geqslant}
\renewcommand{\le}{\leqslant}
\renewcommand{\leq}{\leqslant}
%footnote
\newcommand\nnfootnote[1]{%
  \begin{NoHyper}
  \renewcommand\thefootnote{}\footnote{#1}%
  \addtocounter{footnote}{-1}%
  \end{NoHyper}
}

%subsubsection
\usepackage{titlesec}
\setcounter{secnumdepth}{4}
\titleformat{\paragraph}
{\normalfont\normalsize\bfseries}{\theparagraph}{1em}{}
\titlespacing*{\paragraph}
{0pt}{3.25ex plus 1ex minus .2ex}{1.5ex plus .2ex}

\begin{document}
%\title{Partial tropicalization, multiplicity-free spaces, and action-angle coordinates}
\title{Action-angle coordinates on coadjoint orbits and multiplicity free spaces 
from partial tropicalization}
\author{Anton Alekseev\and Benjamin Hoffman\and Jeremy Lane \and Yanpeng Li}

\newcommand{\Addresses}{{% additional braces for segregating \footnotesize
  \bigskip
  \footnotesize

  \noindent\textsc{Section of Mathematics, University of Geneva, 2-4 rue du Li\`evre, c.p. 64, 1211 Gen\`eve 4, Switzerland}\par\nopagebreak
  \noindent\textit{E-mail address}: \texttt{Anton.Alekseev@unige.ch}

  \medskip

  \noindent\textsc{Department of Mathematics, Cornell University, 310 Malott Hall, Ithaca, NY 14853, USA}\par\nopagebreak
  \noindent\textit{E-mail address}: \texttt{bsh68@cornell.edu}
  
  \medskip
  
  \noindent\textsc{Department of Mathematics \& Statistics, McMaster University, Hamilton Hall, 1280 Main Street W, Hamilton, ON, L8S 4K1, Canada}\par\nopagebreak
  \noindent\textit{E-mail address}: \texttt{lanej5@math.mcmaster.ca}
  
  \medskip

  \noindent\textsc{Section of Mathematics, University of Geneva, 2-4 rue du Li\`evre, c.p. 64, 1211 Gen\`eve 4, Switzerland}\par\nopagebreak
  \noindent\textit{E-mail address}: \texttt{yanpeng.li@unige.ch}

}}
\date{}
\maketitle

\nnfootnote{\emph{Keywords:} Poisson-Lie groups, coadjoint orbits, symplectic geometry}

\begin{abstract}%\note{J: The title should include 'multiplicity free spaces' and `symplectic embeddings' is too vague. e.g. ``Partial tropicalization and action-angle coordinates on multiplicity free spaces'' or simply ``action-angle coordinates on multiplicity free spaces'' the abstract also buries the lede (mult free spaces). They are very general and legitimately interesting. BH: Changed title. Feel free to change abstract, I just copied from a talk abstract I gave to use as a 0-draft. I do think that we should include "Partial tropicalization" in the title, since it is good marketing.}
%Partial tropicalizations are a kind of Poisson manifold built using techniques of Poisson-Lie theory and the geometric crystals of Berenstein-Kazhdan. They provide a bridge between linear Poisson manifolds and cones which parametrize the canonical bases of irreducible $G$-modules. We generalize the construction of partial tropicalizations to allow for arbitrary cluster charts, and apply the partial tropicalization theory to questions in symplectic geometry. For each regular coadjoint orbit of a compact group, we construct an exhaustion by symplectic embeddings of toric domains. As a by product we arrive at a conjectured formula for Gromov width of coadjoint orbits. We prove similar results for multiplicity free spaces.

Coadjoint orbits and multiplicity free spaces of compact Lie groups are important examples of symplectic manifolds with Hamiltonian groups actions. Constructing action-angle variables on these spaces is a challenging task. A fundamental result in the field is the Guillemin-Sternberg construction of  Gelfand-Zeitlin integrable systems for the groups $K=\UU(n), \SO(n)$. Extending these results to groups of other types is one of the goals of this paper.

Partial tropicalizations are Poisson spaces with constant Poisson bracket built using techniques of Poisson-Lie theory and the geometric crystals of Berenstein-Kazhdan. They provide a bridge between dual spaces of Lie algebras ${\rm Lie}(K)^*$ with linear Poisson brackets and polyhedral cones which parametrize the canonical bases of irreducible modules of $G=K^\mathbb{C}$. 

We generalize the construction of partial tropicalizations to allow for arbitrary cluster charts, and apply it to questions in symplectic geometry. For each regular coadjoint orbit of a compact group $K$, we construct an exhaustion by symplectic embeddings of toric domains. As a by product we arrive at a conjectured formula for Gromov width of regular coadjoint orbits. We prove similar results for multiplicity free $K$-spaces.

\end{abstract}

\tableofcontents

\section{Introduction}\label{intro}

There is a dichotomy in symplectic geometry between local and global coordinates. Whereas Darboux's theorem tells us that symplectic manifolds have no local invariants, the problem of finding large coordinate charts often relates to subtle properties of symplectic manifolds. Most famously, Gromov's non-squeezing theorem demonstrates that the volume of certain coordinate charts on a symplectic manifold may have an upper bound strictly less than the total volume of the symplectic manifold \cite{G}.

Action-angle coordinates are a type of coordinate chart on symplectic manifolds that originate from the study of commutative completely integrable systems in classical mechanics. The domains of action-angle coordinates are products of the form $U \times \T^n$, where $U$ is an open subset of $\R^n$ and $\T^n$ is a direct product of $n$ circles, $S^1 \times \dots \times S^1$. Such domains carry a canonical symplectic form,
\begin{equation}\label{equation; omega std}
	\omega_{\mathrm{std}} = \sum_{i=1}^n d \varphi_i \wedge d\lambda_i,
\end{equation}
where $\lambda_i$ are coordinates on $\mathbb{R}^n$ and $\varphi_i$ are coordinates on $\T^n$. The Liouville-Arnold theorem guarantees existence of local action-angle coordinates in a neighbourhood of  compact regular fibers of  commutative completely integrable systems \cite{arnold}. A compact symplectic toric manifold of dimension $2n$ with Delzant polytope $\triangle$ has a dense subset symplectomorphic to $(\mathring{\triangle} \times \T^n,\omega_{\mathrm{std}})$, where $\mathring{\triangle}$ denotes the interior of $\triangle$. However, there are also many interesting examples of  action-angle coordinates on dense subsets that do not arise from a toric structure, such as Gelfand-Zeitlin systems \cite{GS1}, Goldman systems on moduli spaces of flat 
%$SU(2)$ 
connections \cite{goldman,weitsman}, bending flow systems on moduli spaces of polygons 
%in $\R^3$ 
\cite{kapmill}, and integrable systems constructed by toric degeneration on smooth projective varieties \cite{HK,K1,K2}. 

Multiplicity free spaces are the natural non-abelian generalization of toric manifolds.
%, originating from the study of non-commutative completely integrable systems 
 A \emph{multiplicity free space} $(M,\omega,\mu)$ is a symplectic manifold $(M,\omega)$ equipped with a Hamiltonian action of a compact Lie group $K$ generated by an equivariant moment map $\mu\colon M \to \kk^*=\Lie(K)^*$ with the property that the non-empty symplectic reduced spaces are all zero dimensional \cite{GS4}. Compact multiplicity free spaces are classified by the quotient $M/K$, which is identified with a convex polytope, together with the principal isotropy subgroup $L$, which is a subgroup of the centralizer of a torus subgroup of $K$ \cite{knop2}. In this paper, we consider the case where $L$ is a subgroup of the maximal torus, $T$.  In this case, the range of possible dimensions of a multiplicity free space is
\begin{equation}\label{equation; dims range}
	\dim K - \dim T \leq \dim M \leq \dim K + \dim T.
\end{equation}
Unlike toric manifolds, multiplicity free spaces are not known to have action-angle coordinates on dense subsets, except for several examples. 
The first and oldest example is that of $\mathrm{SO}(3)$ acting by rotation on $S^2$, equipped with a rotation invariant area form.  The action of a maximal torus $T\leq SO(3)$ gives $S^2$ the structure of a symplectic toric manifold and the action-angle coordinates  are simply cylindrical coordinates.  
The second major example is Gelfand-Zeitlin systems, which define action-angle coordinates on dense subsets of coadjoint orbits and multiplicity free spaces of compact Lie groups of type A, B, and D \cite{GS1}.  
The final example is that of spherical varieties. If a multiplicity free space embeds equivariantly  as a projective variety and  the symplectic structure is induced by the embedding, then it is a spherical variety and dense action-angle coordinates can be constructed by toric degeneration \cite{HK}. However, most multiplicity free spaces are not spherical varieties (in fact, a compact multiplicity free space need not admit an invariant compatible complex structure \cite{W2}).

The main result of this paper is a construction of action-angle coordinates on large subsets of compact multiplicity free spaces whose principal isotropy group is contained in a maximal torus.  For any subset $C$ of a Euclidean space, let $C(\delta)$ denote the set of points in $C$ that have distance more than $\delta$ from the boundary of $C$.
\begin{theorem} \label{thm:intro_1}
Let $K$ a compact connected Lie group and $(M, \omega, \mu)$ a compact multiplicity free space for $K$ of dimension $2n$ such that the principal isotropy subgroup is contained in the maximal torus of $K$. Then, there is a convex polytope $\triangle_M\subset \R^n$ of dimension $n$ such that for all $\delta >0$, there exists a symplectic embedding 
\begin{equation}\label{equation;thm intro_1 embeddings}
	(\triangle_M(\delta)\times \T^n,\omega_{\mathrm{std}}) \hookrightarrow (M,\omega).
\end{equation}
Moreover, for all $\varepsilon>0$, there exists $\delta>0$ such that the symplectic volumes satisfy
\begin{equation*}
	{\rm Vol}(\triangle_M(\delta) \times \mathbb{T}^n, \omega_{\mathrm{std}}) >
{\rm Vol}(M, \omega) - \varepsilon.
\end{equation*}
\end{theorem}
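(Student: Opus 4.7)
The plan is to reduce Theorem \ref{thm:intro_1} to the Poisson--Lie setting via a Ginzburg--Weinstein--type isomorphism and then use the partial tropicalization machinery (generalized to arbitrary cluster charts, as announced in the abstract) to produce the toric chart. I would proceed in three steps.

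\textbf{Step 1 (Poisson--Lie replacement).} Replace $(M,\omega,\mu)$ by a Poisson--Lie analog $(M^*,\omega^*,\mu^*)$, where $M^*$ is a multiplicity free $K$-space whose moment map $\mu^*$ takes values in the dual Poisson--Lie group $K^*$ (viewed as a Poisson manifold). A Ginzburg--Weinstein isomorphism $\lie{k}^*\to K^*$ conjugates $\mu$ and $\mu^*$, and one would lift this to a $K$-equivariant symplectomorphism $\Phi\colon M\to M^*$ that matches the $L$-equivariant fiber structure of the two moment maps. Because the hypothesis forces the principal isotropy $L$ to lie in $T$, the symplectic cross-section over the moment polytope has a purely toric fiber, so the lift should reduce to a parametrized family of Ginzburg--Weinstein isomorphisms on the cross-section, extended equivariantly.

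\textbf{Step 2 (Partial tropicalization).} Apply the partial tropicalization construction to $(M^*,\omega^*,\mu^*)$ with a cluster chart compatible with the moment polytope of $M$. This yields a Poisson space $PT(M^*)$ with a \emph{constant} Poisson bracket whose symplectic leaves are products $U\times \T^n$ equipped with $\omega_{\mathrm{std}}$. The range of the induced moment map is the convex polytope $\triangle_M\subset\R^n$, obtained by stacking Berenstein--Kazhdan/string polytope data over the moment image of $\mu$; the total symplectic volume equals that of $M$ by the Duistermaat--Heckman theorem applied to the toric part.

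\textbf{Step 3 (Scaling and density).} The partial tropicalization comes with a canonical scaling family of Poisson diffeomorphisms $\Psi_s\colon(M^*, s\omega^*)\to PT(M^*)_s$, where $PT(M^*)_s$ converges to the constant-Poisson model as $s\to 0$. For any prescribed $\delta>0$, choosing $s$ sufficiently small makes $\Psi_s^{-1}$ restricted to $\triangle_M(\delta)\times\T^n$ a genuine symplectic embedding into $M^*$; composing with $\Phi^{-1}$ yields the embedding \eqref{equation;thm intro_1 embeddings}. Since $\triangle_M\setminus\triangle_M(\delta)$ shrinks to the boundary as $\delta\to 0$, the total symplectic volume of $\triangle_M(\delta)\times\T^n$ approaches $\mathrm{Vol}(\triangle_M\times\T^n)=\mathrm{Vol}(M,\omega)$, yielding the quantitative conclusion.

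\textbf{Main obstacle.} The hardest step is Step 1: the Ginzburg--Weinstein isomorphism must be promoted from the statement on $\lie{k}^*$ versus $K^*$ to a $K$-equivariant symplectomorphism at the level of $M$ that intertwines the full multiplicity-free structure, including compatibility with the $L$-action on fibers of the moment map. A secondary subtlety is ensuring that the cluster chart chosen in Step 2 gives a partial tropicalization polytope whose integral affine structure matches the one prescribed by the moment polytope of $M$ together with the string polytopes of its coadjoint fibers; this is where the generalization of partial tropicalization to arbitrary cluster charts (rather than a fixed Berenstein--Kazhdan chart) is essential.
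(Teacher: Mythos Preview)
Your three-step outline has the right shape---delinearize via Ginzburg--Weinstein, bring in partial tropicalization, scale---but Step~3 contains a genuine gap, and your assessment of where the difficulty lies is off.

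In Step~3 you posit a family of Poisson diffeomorphisms $\Psi_s$ from $(M^*,s\omega^*)$ to a model $PT(M^*)_s$ that converges to the constant-bracket model. The paper explicitly remarks that for general $K$ (outside the $U(n)$ case) it is not known how to choose the Ginzburg--Weinstein map so that $L_s^{-1}\circ\gamma_s$ has a limit as $s\to-\infty$; no such convergent family of diffeomorphisms is available. What one \emph{does} have is convergence of the pulled-back Poisson bivector $\pi_s^\theta\to\pi_{-\infty}^\theta$ on the interior of the BK cone (with explicit $O(e^{s\delta})$ control coming from domination by the Berenstein--Kazhdan potential). The paper therefore replaces your Step~3 by a Moser argument: one shows $\mathcal{L}_s^*\Omega^s=\omega_{-\infty}+O(e^{s\delta})$ on $\mathcal{C}(\delta)\times\T\times T$, interpolates linearly, finds a primitive of controlled size, and integrates the Moser vector field. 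This is the actual work, and it is where the BK potential and the domination estimates (Corollary~\ref{corollary;domination}) enter.

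Your identification of Step~1 as the main obstacle is also misplaced. Promoting Ginzburg--Weinstein to $M$ is handled cleanly by the delinearization theorem (Theorem~\ref{theorem;delin}): one keeps the same underlying manifold, deforms the symplectic form to $\Omega^s=\omega+\mu^*d\beta^s$, and the Moser flow connecting $\omega$ and $\Omega^s$ is complete because the fibers of $\mu$ are compact. The genuinely hard point---which you do not mention---is that pulling back action coordinates from $\kk^*$ (or $K^*$) via the moment map does not automatically give periodic flows on $M$: these are the ``nutation effects'' of Guillemin--Sternberg. The paper handles this by working first on the universal space $K\times\mathring\ttt_+^*$, carrying an extra $T$ factor with coordinates $\Upsilon_i$, and proving separate bracket estimates (Lemmas~\ref{lemma;ThimmbracketsI}, \ref{lemma;ThimmbracketsII}) showing that the $\Upsilon$-brackets are integers up to $O(e^{s\delta})$. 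Only after the Moser correction do the flows become genuinely periodic. The passage to general multiplicity free $M$ is then a reduction argument (Section~\ref{section; charts on multiplicity free}), not a direct application of partial tropicalization to $M$ itself.

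Two minor points: the limit is $s\to-\infty$, not $s\to 0$; and partial tropicalization is a construction on $K^*$, not on $M^*$---the polytope $\triangle_M$ arises by fibering the string-type polytopes $\triangle_\lambda$ over the Kirwan polytope $\mathring\triangle$ of $M$.
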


Additionally, the embeddings~\eqref{equation;thm intro_1 embeddings} intertwine naturally defined Hamiltonian $T\times T$ actions; see Theorem~\ref{main corollary 1} for a precise statement.

In fact, we construct many polytopes $\triangle_M$ which satisfy the conclusions of Theorem \ref{thm:intro_1}. These polytopes are parameterized by several choices, including a choice of seed for the cluster algebra structure on the big double Bruhat cell in a Borel subgroup of the complexification of $K$.

%there is in fact a family of polytopes $\Delta_M$ satisfying the conclusions of Theorem~\ref{thm:intro_1} that is  parameterized by the cluster algebra structure on the double Bruhat cell $G^{w_0,e}$ of the corresponding complex Lie group $G = K^\C$. Moreover, there is a correspondence between lattice points of  $\Delta_M$ and elements of dual canonical bases. (Not sure what this means here...)\note{J:  saying polytopes are "constructed from cluster structure" conveys very little info. I think it's useful to indicate that the polytopes we obtain are parameterized by seeds of the cluster structure on the double bruhat cell (in particular, this means that for most G we obtain infinitely many such polytopes). Also, I believe we can word this in a way that doesn't require introducing notation for bruhat cells and borel subgroups in the intro. }

One particularly important instance of Theorem~\ref{thm:intro_1} is the extremal case where $\dim M = \dim K - \dim T$. In this case $(M,\omega, \mu)$ is a coadjoint orbit, diffeomorphic to $K/T$, and $\omega$ is the canonical Kostant-Kirillov-Souriau symplectic form \cite{kos,kir,sou}.  Coadjoint orbits diffeomorphic to $K/T$, also known as regular coadjoint orbits, are parameterized by elements $\lambda$ in the interior of the positive Weyl chamber. The coadjoint orbit parameterized by $\lambda$ along with its Kostant-Kirillov-Souriau form is denoted $(\mathcal{O}_\lambda,\omega_\lambda)$.

\begin{theorem} \label{thm:intro_2}
Let $\mathcal{O}_\lambda$ be a regular coadjoint orbit of a compact connected Lie group $K$. Then, there is a convex polytope 
$\triangle_\lambda$ of dimension $n = \frac{1}{2}(\dim K - \dim T)$ such that for all $\delta >0$ there exists a symplectic embedding 
\begin{equation*}
	(\triangle_\lambda(\delta)\times \T^n,\omega_{\mathrm{std}}) \hookrightarrow (\mathcal{O}_\lambda,\omega_\lambda).
\end{equation*}
Moreover, for all $\varepsilon >0$, there exists $\delta >0$ such that 
\begin{equation*}
	{\rm Vol}(\triangle_\lambda(\delta) \times \mathbb{T}^n, \omega_{\mathrm{std}}) >
{\rm Vol}(\mathcal{O}_\lambda, \omega_\lambda) - \varepsilon.
\end{equation*}
\end{theorem}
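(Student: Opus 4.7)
The plan is to deduce Theorem~\ref{thm:intro_2} as the extremal case of Theorem~\ref{thm:intro_1}, corresponding to multiplicity free spaces of the smallest possible dimension $\dim K - \dim T$ appearing in~\eqref{equation; dims range}. All the substantive work lies in verifying that a regular coadjoint orbit is a multiplicity free $K$-space of the appropriate type; the polytope $\triangle_\lambda$ and the symplectic embeddings then come directly from Theorem~\ref{thm:intro_1}.

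Concretely, I would fix $\lambda$ in the interior of the positive Weyl chamber, so that the coadjoint stabilizer $K_\lambda$ equals the maximal torus $T$. The coadjoint orbit $\mathcal{O}_\lambda\cong K/T$ is then a compact homogeneous space of dimension $\dim K - \dim T$, the inclusion $\mu\colon \mathcal{O}_\lambda\hookrightarrow \kk^*$ is an equivariant moment map for the Kostant-Kirillov-Souriau form $\omega_\lambda$, and the principal isotropy subgroup of the $K$-action equals $T$, which is trivially contained in the maximal torus as required by the hypothesis of Theorem~\ref{thm:intro_1}. To check the multiplicity free property, I would observe that for $\xi\in \mu(\mathcal{O}_\lambda)=\mathcal{O}_\lambda$ the preimage $\mu^{-1}(\xi)$ consists of the single point $\xi$, whose stabilizer $K_\xi$ is conjugate to $T$, so the symplectic reduction $\mu^{-1}(\xi)/K_\xi$ is zero-dimensional.

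Setting $n = \tfrac{1}{2}(\dim K - \dim T)$ and applying Theorem~\ref{thm:intro_1} to the triple $(\mathcal{O}_\lambda,\omega_\lambda,\mu)$ then produces a convex polytope $\triangle_\lambda := \triangle_{\mathcal{O}_\lambda}\subset\R^n$ of dimension $n$, a symplectic embedding $(\triangle_\lambda(\delta)\times\T^n,\omega_{\mathrm{std}})\hookrightarrow (\mathcal{O}_\lambda,\omega_\lambda)$ for every $\delta>0$, and the volume-exhaustion estimate, matching the conclusion of Theorem~\ref{thm:intro_2}.

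The hard part, of course, is not Theorem~\ref{thm:intro_2} itself but rather the proof of Theorem~\ref{thm:intro_1}: one must construct the polytope $\triangle_{\mathcal{O}_\lambda}$ and the symplectic embeddings using partial tropicalization, cluster charts on the big double Bruhat cell of a Borel subgroup of $K^{\mathbb{C}}$, and the action-angle coordinates coming from geometric crystals. Given that machinery, the theorem at hand reduces to the routine verification above.
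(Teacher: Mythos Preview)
Your proposal is correct and matches the paper's own approach: the paper likewise derives Theorem~\ref{thm:intro_2} as the special case of the multiplicity-free result (stated precisely as Theorem~\ref{main corollary 1}) applied to $(\mathcal{O}_\lambda,\omega_\lambda,\iota)$, noting that the principal isotropy subgroup is $T$ and the Kirwan polytope is the single point $\{\lambda\}$. The only extra detail the paper records is the explicit identification of the model space $M(\delta,\{\lambda\},T)$ with $(\triangle_\lambda(\delta)\times\T,\omega_{\mathrm{std}})$ via symplectic reduction at $\lambda$, which pins down $\triangle_\lambda$ as the fiber $(\hw^{PT})^{-1}(\lambda)$ inside the cone $\mathcal{C}(\delta)$.
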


Theorems~\ref{thm:intro_1} and~\ref{thm:intro_2}  have two limitations. First, we must assume that the principal isotropy subgroup is contained in a maximal torus.  Second, they do not yield action-angle coordinates on a dense subset. In particular, these action-angle charts do not currently have an interpretation as action-angle coordinates for a globally defined commutative integrable system. On the other hand, these theorems illustrate that there are no non-trivial obstructions to the volume of action-angle coordinates on such multiplicity free spaces. 

Returning to the problem of finding large Darboux charts on symplectic manifolds, let $B^{2n}(r)\subset \R^{2n}$ denote the open ball of radius $r>0$ equipped with the standard symplectic structure on $\R^{2n}$,
\begin{equation}\label{equation; omega std on r2n}
	\omega_{\mathrm{std}} = dx_1\wedge dy_1 + \dots + dx_n\wedge dy_n.
\end{equation}
The \emph{Gromov width} of a connected symplectic manifold $(M,\omega)$ of dimension $2n$, denoted $\GWidth(\mathcal{O}_\lambda, \omega_\lambda)$, is supremum of all cross-sectional areas $\pi r^2$ such that $(B^{2n}(r),\omega_{\rm{std}})$ embeds symplectically into $(M,\omega)$. 
Combining Theorem~\ref{thm:intro_2} with some results from \cite{FLP} regarding the geometry of the polytopes $\triangle_\lambda$ yields the following.

\begin{theorem}\label{gromov width theorem}
    Let $(\mathcal{O}_\lambda, \omega_\lambda)$ be a regular coadjoint orbit of a compact connected simple Lie group $K$. Then,
    \begin{equation}\label{gromov width formula}
        \GWidth(\mathcal{O}_\lambda, \omega_\lambda) \geq \min\{ 2\pi\langle \sqrt{-1} \lambda, \alpha^\vee \rangle \mid \alpha\in R_+ \},
    \end{equation}
    where $R_+$ is the set of positive roots of $K$ and $\alpha^\vee$ is the coroot of $\alpha \in R_+$.
\end{theorem}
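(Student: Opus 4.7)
The plan is to deduce Theorem~\ref{gromov width theorem} from Theorem~\ref{thm:intro_2} by packing a symplectic ball of capacity arbitrarily close to $2\pi a$, where $a := \min_{\alpha\in R_+}\langle\sqrt{-1}\lambda,\alpha^\vee\rangle$, into the toric domain $\triangle_\lambda(\delta)\times\T^n$ and then composing with the embedding into $(\mathcal{O}_\lambda,\omega_\lambda)$ supplied by Theorem~\ref{thm:intro_2}. Thus the two ingredients needed are: (i) a suitable lattice simplex inside $\triangle_\lambda$, and (ii) a classical ball-into-toric-domain embedding.

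For (i), I would extract from the results of [FLP] the existence inside $\triangle_\lambda$ of a closed lattice simplex that is unimodular with respect to the integer affine structure on $\triangle_\lambda$ coming from the cluster chart, and whose edge length is exactly $a$. After a $\GL_n(\Z)$ change of coordinates this identifies inside $\triangle_\lambda$ a copy of the standard simplex
\begin{equation*}
\Sigma_a \;=\; \Bigl\{x\in\R^n_{\geq 0} \;:\; x_1+\dots+x_n \leq a\Bigr\}.
\end{equation*}
Since $\triangle_\lambda(\delta)\nearrow\triangle_\lambda$ as $\delta\to 0$, for every $\eta>0$ I can take $\delta>0$ small enough that the open simplex $\mathring{\Sigma}_{a-\eta}$ sits inside $\mathring{\triangle}_\lambda(\delta)$.

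For (ii), I invoke the standard fact that for $\pi r^2 < 2\pi(a-\eta)$ there is a symplectic embedding $(B^{2n}(r),\omega_{\mathrm{std}}) \hookrightarrow (\mathring{\Sigma}_{a-\eta}\times\T^n,\omega_{\mathrm{std}})$. Away from the coordinate cross $\{x_i=y_i=0\text{ for some }i\}$ this is given concretely by passing to symplectic polar coordinates $(\rho_i,\theta_i) = \bigl(\tfrac{1}{2}(x_i^2+y_i^2),\arg(x_i+\ii y_i)\bigr)$, which embed $B^{2n}(r)$ into $\mathring{\Sigma}_{r^2/2}\times\T^n$; the extension across the cross is a standard Moser-type argument. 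Composing with the inclusion $\mathring{\Sigma}_{a-\eta}\times\T^n \hookrightarrow \mathring{\triangle}_\lambda(\delta)\times\T^n$ and with the embedding from Theorem~\ref{thm:intro_2} yields $B^{2n}(r)\hookrightarrow\mathcal{O}_\lambda$; letting $\eta\to 0$ (and $\delta\to 0$ accordingly) produces \eqref{gromov width formula}.

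The main obstacle is step (i). Theorem~\ref{thm:intro_2} is blind to the precise edge length of any particular simplex lying inside $\triangle_\lambda$; the sharp Gromov width bound depends entirely on identifying a unimodular simplex of edge length equal to the minimum coroot pairing $\min_{\alpha\in R_+}\langle\sqrt{-1}\lambda,\alpha^\vee\rangle$. Extracting this requires matching the integer affine structure on $\triangle_\lambda$, which is determined by the chosen cluster seed for the double Bruhat cell, with the root-theoretic data on the right-hand side of \eqref{gromov width formula}; this is likely to require a judicious choice of seed so that the largest unimodular simplex housed in $\triangle_\lambda$ realizes the minimum. By contrast, the ball-into-toric-domain step and the limit $\delta\to 0$ are soft in nature and should cause no difficulty.
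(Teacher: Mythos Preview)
Your approach is the paper's: both combine Theorem~\ref{thm:intro_2} with the simplex-in-string-polytope result of \cite{FLP} and the standard ball capacity of a simplex-times-torus. The paper packages your steps (i)--(ii) via an ``integral affine width'' invariant $c_\triangle(U)$ (the supremum of $\ell$ such that $\ell\triangle^n$ admits an integral affine embedding into $U$), but the content is identical.

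Two points you should sharpen. First, your identified obstacle has a concrete resolution in the paper: one takes the \emph{twisted} reduced cluster chart $\theta=\zeta\circ\theta_{\overline{\sigma}(\ii)}$ for a reduced word $\ii$, so that (Remark~\ref{stringremark}) the polytope $\triangle_\lambda$ is unimodularly a string polytope and \cite{FLP} applies literally. Second, and this is where your sketch has an actual gap, \cite{FLP} only furnishes the unimodular simplex of size $a$ when $\sqrt{-1}\lambda$ lies in the weight lattice $P$; for arbitrary regular $\lambda$ the statement is not immediate. The paper closes this by observing $\triangle_{\beta\lambda}=\beta\triangle_\lambda$ (covering all positive rational multiples of integral weights by conformality) and then invoking continuity of $c_\triangle$ with respect to the Hausdorff metric on compacta to pass from the dense set of rational rays to all of $\mathring{\ttt}_+^*$. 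Your proposal treats the simplex as given for every $\lambda$, which skips exactly the step where Lemma~\ref{string polytope width} does its real work.
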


It follows from the upper bounds of \cite{CC} that~\eqref{gromov width formula} is an equality. This was already known to be an equality in several cases.
The case where $K$ is type A, B, or D and $\lambda$ is arbitrary was proved using Gelfand-Zeitlin systems in \cite{P}.  The case where $K$ is arbitrary type and $\lambda$ is a positive scalar multiple of a dominant integral weight was proved using toric degenerations by \cite{FLP} (see \cite[Section 2]{FLP} for a detailed survey of earlier results).  In fact, \cite{CC} provides upper bounds for Gromov width of non-regular orbits as well. The methods developed in this paper do not  apply to those orbits. 

\begin{remark} The authors BH and JL will present an alternate approach to the problem of constructing action-angle coordinates on multiplicity free spaces in a forthcoming paper~\cite{HL}. Their approach uses a gradient-Hamiltonian flows inside the total space of toric degenerations of the base affine space $G\sslash N$, $G=K^\C$, which were constructed in \cite{C}. These produce integrable systems with dense action-angle coordinates on the symplectic implosion of the cotangent bundle of $K$ (cf. \cite{GJS}). These integrable systems may then be combined with symplectic contraction (cf. \cite{HMM}) to construct completely integrable systems on arbitrary compact multiplicity Hamiltonian $K$-manifolds that have dense action-angle coordinates (no assumption about the principal isotropy subgroup is needed). The polytopes obtained by this construction are similar to the polytopes obtained by \cite{AB} in the case of spherical varieties. We expect these results will be sufficient to prove tight lower bounds for the Gromov width of the remaining non-regular coadjoint orbits (modulo  combinatorial results about string polytopes). It would be interesting to better understand the relationship between the toric degeneration and Ginzburg-Weinstein approaches to this problem.
\end{remark}

\subsection{Methods}
Our results are obtained by a new method of independent value that combines new findings with previous results from~\cite{ABHL1, ABHL2,AHLL}. The main idea is that to each coadjoint orbit $\mathcal{O}_\lambda$ one can associate a family of symplectic spaces $D_{\exp(s\lambda)}$ called dressing orbits. The dressing orbits are symplectomorphic to $\mathcal{O}_\lambda$ for all values of the parameter $s \in \R^\times$. For $s$ small, $D_{\exp(s\lambda)}$ resembles of $\mathcal{O}_\lambda$, and there is a natural way to include $\mathcal{O}_\lambda$ in the family at $s=0$. For $s\ll 0$ large, there are coordinates on $D_{\exp(s\lambda)}$ coming from cluster algebra theory which make its symplectic structure (exponentially) close to the constant one. Using this, one may construct action-angle coordinates on $D_{\exp(s\lambda)}$, and hence on $\mathcal{O}_\lambda$, which exhaust the symplectic volume as $s\to -\infty$. We call these charts \emph{big}, in the sense that by picking $s\ll 0$, their volume may be made arbitrarily close to the volume of $\mathcal{O}_\lambda$. 

In what follows we briefly outline the main points used in our construction. While we focus here on coadjoint orbits, in the main body we extend this treatment to allow for arbitrary compact regular multiplicity-free spaces. We make use of some standard Lie-theoretic notation, which is introduced in detail in Section~\ref{lie theory section}.

\subsubsection{Dual Poisson-Lie groups}
The space $\mathfrak{k}^*=\Lie(K)^*$ carries a canonical linear Poisson structure $\pi_{\mathfrak{k}^*}$ defined by formula
$$
\{ f_\xi, f_\eta\} = f_{[\xi, \eta]},
$$
where $\xi, \eta \in \mathfrak{k}$ and $f_\xi(x)=\langle x, \xi\rangle$ for $x \in \mathfrak{k}^*$.
Coadjoint orbits are symplectic leaves of $\pi_{\mathfrak{k}^*}$.

Consider the complex Lie group $G=K^{\mathbb{C}}$. It admits the Iwasawa decomposition $G=AN_-K$, where $N_-$ is the maximal nilpotent subgroup and $A=\exp(\sqrt{-1} \mathfrak{t})$. Denote $K^*=AN_-$, and observe that $\Lie(K^*)$ may be identified with $\kk^*=\Lie(K)^*$ under the pairing
$$
\langle x, \xi \rangle := 2 {\rm Im} \, (x, \xi)_\mathfrak{g},\quad x\in \Lie(K^*),~\xi\in \kk.
$$
Here $(\cdot, \cdot)_\mathfrak{g}$ is a fixed nondegenerate, $\Ad$-invariant bilinear form on $\mathfrak{g}=\mathfrak{k}^\mathbb{C}$. 

The group $K^*$ is naturally isomorphic to the symmetric space $G/K$. Hence, it carries a $K$-action known as the dressing action. Furthermore, it has a unique Poisson structure $\pi_{K^*}$ given by the Lu formula
$$
\pi_{K^*}^\sharp(\langle \theta^R, \xi \rangle) = -\underline{\xi},\qquad \forall\xi\in\kk,
$$
where $\theta^R$ is the right-invariant Maurer-Cartan form on $K^*$ and $\underline{\xi}$ is the fundamental vector field of $\xi$. The group $(K^*,\pi_{K^*})$ is the \emph{dual Poisson-Lie group to $K$}. Symplectic leaves of the Poisson structure $\pi_{K^*}$ are the orbits of the $K$-action on $K^*$, which are called \emph{dressing orbits}.

See Section~\ref{PLgroupSection} for further exposition of these notions.

\subsubsection{The Ginzburg-Weinstein isomorphism and scaling}
There is an intimate link between the Poisson spaces $\mathfrak{k}^*$ and $K^*$ given by the Ginzburg-Weinstein Theorem:
\begin{theorem} \cite{GW} \label{thm:intro_gw} 
There is a Poisson isomorphism $\gamma : \mathfrak{k}^* \to K^*$ which restricts to the exponential map on 
$\mathfrak{t}^* \cong \sqrt{-1}\mathfrak{t}$.
\end{theorem}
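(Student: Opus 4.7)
The plan is a Moser-type deformation argument in Poisson geometry. I would first choose a $K$-equivariant diffeomorphism $\phi\colon \kk^* \to K^*$ restricting to the exponential map on $\t^* \cong \sqrt{-1}\t$; such $\phi$ exists because both $\kk^*$ (with the coadjoint action) and $K^*$ (with the dressing action) are foliated by $K$-orbits indexed bijectively by points of the closed positive Weyl chamber in $\t^*$, with matching stabilizers. Then I would pull back and rescale: letting $\sigma_s(x) := sx$ for $s \in (0,1]$, define the family of Poisson structures $\pi_s := \tfrac{1}{s}\sigma_s^*\phi^*\pi_{K^*}$ on $\kk^*$, which extends smoothly to $\pi_0 := \pi_{\kk^*}$ as $s \to 0$. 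This continuity at $s=0$ follows by expanding the Lu formula for $\pi_{K^*}$ in exponential coordinates on $K^*$: the leading-order term is the linear Kirillov-Kostant-Souriau bivector, and higher-order terms are suppressed by the rescaling.

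Next, since each $\pi_s$ is $K$-invariant and has $K$-orbits of $\kk^*$ as its symplectic leaves, the Moser trick reduces to a leafwise problem. On each regular coadjoint orbit $\mathcal{O} \subset \kk^*$ one obtains a smooth family of symplectic forms $\omega_s$, and one seeks a $K$-invariant vector field $X_s$ on $\kk^*$, tangent to leaves, with $\iota_{X_s}\omega_s = -\alpha_s$ where $d\alpha_s = \tfrac{d}{ds}\omega_s$. The main obstacle is cohomological: the class $[\tfrac{d}{ds}\omega_s] \in H^2(\mathcal{O},\R)$ must vanish, and for regular orbits $\mathcal{O} \cong K/T$ this group is highly nontrivial. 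The key input is that by the equivariant construction of $\phi$, the symplectic forms $\omega_s$ all represent the Kirillov-Kostant-Souriau cohomology class associated to the labelling point in $\t^*$, so $[\tfrac{d}{ds}\omega_s] = 0$ on every orbit. Averaging over $K$, together with care at singular orbits near the origin, then produces a globally defined $K$-invariant vector field $X_s$ on $\kk^*$.

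Letting $\Psi_s$ denote the flow of $X_s$, the time-one map $\Psi_1\colon \kk^* \to \kk^*$ is a Poisson isomorphism from $(\kk^*,\pi_0)$ to $(\kk^*,\pi_1)$, and I would set $\gamma := \phi \circ \Psi_1$. To enforce $\gamma|_{\t^*} = \exp$, I would make a gauge choice for the primitives $\alpha_s$ so that the resulting $X_s$ vanishes pointwise along $\t^*$; this is possible because $\t^*$ meets each regular coadjoint orbit transversely in a single Weyl orbit and the maximal torus $T$ acts trivially on $\t^*$, so a further $T$-averaging of $\alpha_s$ kills any nonzero value of $X_s$ along $\t^*$. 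With this choice, $\Psi_1$ fixes $\t^*$ pointwise, and therefore $\gamma|_{\t^*} = \phi|_{\t^*} = \exp$, as required.
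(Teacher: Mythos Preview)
Your approach is essentially the same Moser deformation as the paper's (which follows Alekseev \cite{A97}), and is correct in outline. The paper's map $E_s$ is exactly your $\phi\circ\sigma_s$ (since $E_s(\lambda)=E_1(s\lambda)$ on $\t^*$ and both sides are extended by $K$-equivariance), and the family of Poisson structures $\pi_{\kk^*}^{\sigma^s}=(E_s^{-1})_* s\pi_{K^*}$ plays the role of your $\pi_s$.

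The substantive difference is how the Moser primitive is produced. You argue leafwise: on each regular orbit you assert $[\tfrac{d}{ds}\omega_s]=0$, find a primitive, then average over $K$ and patch across the singular strata. The paper instead writes down a single global primitive: the explicit $1$-form $\beta^s$ of \eqref{delinearization 2-form}, obtained by applying the Poincar\'e homotopy operator on $\kk^*$ to $E_s^*(\theta^L\wedge(\theta^L)^\dagger)_\g$. This $\beta^s$ is smooth on all of $\kk^*$, is $T$-invariant by a direct check (Lemma~\ref{T-invariance of form}), and the Moser field $Y^s=(\pi_{\kk^*}^{\sigma^s})^\sharp(-\partial_s\beta^s)$ is then automatically globally defined, tangent to coadjoint orbits, $T$-equivariant, and vanishes along $\t^*$ (see the proof of Proposition~\ref{moser flow equivariance}(vi)). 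What this buys is exactly the two steps you left as exercises: there is no separate cohomological verification that $[\omega_s]$ is constant, and no delicate smoothness argument at singular orbits---both are absorbed into the explicit formula for $\beta^s$.

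Your sketch is therefore correct but would benefit from either supplying the cohomology computation (e.g.\ via the $T$-moment map on dressing orbits) and the smoothness across strata, or simply adopting the explicit primitive.
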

The choice of the Ginzburg-Weinstein isomorphism $\gamma$ is non unique. Indeed, it can be pre-composed with any Poisson isomorphism of $\mathfrak{k}^*$ preserving $\mathfrak{t}^*$ or composed with any Poisson isomorphism of $K^*$ preserving $A$.

The Poisson structure $\pi_{\mathfrak{k}^*}$ on $\mathfrak{k}^*$ is linear, and so under scaling transformations $A_s: x \to sx$ it also scales linearly: $(A_s)_* \pi_{\mathfrak{k}^*} = s \pi_{\mathfrak{k}^*}$. Therefore, the map
$$
\gamma_s= \gamma \circ A_s: x \mapsto \gamma(sx)
$$
is a Poisson isomorphism between $(\mathfrak{k}^*, \pi_{\mathfrak{k}^*})$ and $(K^*, s\pi_{K^*})$. In particular, for each $s\ne 0$ the map $\gamma_s$ restricts to a symplectomorphism between the coadjoint orbit $\mathcal{O}_\lambda\subset \kk^*$ through an element $\lambda$ of the positive Weyl chamber $\ttt^*_+$, and the dressing orbit $D_{\exp(s\lambda)}\subset K^*$ through $\gamma_s(\lambda)\in \gamma_s(\ttt^*_+)$. Therefore, we may study $D_{\exp(s\lambda)}$ for arbitrary $s\in \mathbb{R}^\times$ instead of $\mathcal{O}_\lambda$.

A construction of $\gamma_s$ is described in Section~\ref{GWsection} and Remark~\ref{GWremark}. 

\subsubsection{Coordinates on \texorpdfstring{$K^*$}{K*} and cluster algebras}
The dual Poisson-Lie group $K^*$ is a real subgroup of the Borel subgroup of $G$: 
$$
K^* = AN_- \subset B_-.
$$
Recall that the double Bruhat cell 
$$
G^{w_0,e}=Bw_0B \cap B_- \subset B_-
$$ 
is dense in $B_-$. Double Bruhat cells carry cluster structures which provide an infinite set of distinguished coordinate systems called \emph{toric charts}, which are parametrized by cluster seeds. These coordinate systems restrict to $K^* \cap G^{w_0, e}$ and define dense charts on $K^*$. When restricted to $K^*$, these coordinate systems combine complex and real coordinates. Toric charts on $G^{w_0,e}$ and $K^*$ are described in Sections~\ref{section;chartsDBC} and~\ref{ChartsonKstarSection}, respectively. 
% The class of toric charts we are interested in are the so-called \emph{(twisted) cluster charts}; see Definition~\ref{definition;unreduced cluster chart}. 

For the sake of exposition, we focus here on a finite number of toric charts known as factorization coordinates. They are parametrized by reduced expressions in the Weyl group of $G$ representing the longest element $w_0$. 
The results below apply to all (twisted) cluster charts, see Sections~\ref{PositivitySection} and~\ref{PTSection} for a more detailed discussion.\footnote{Strictly speaking, the following example is related to a certain twisted cluster chart on $G^{w_0,e}$ by a monomial change of coordinates given by~\cite[Theorem~1.9]{FZ}.}

Assume $K$ is semisimple of rank $r$. For a simple root $\alpha_i$ of $\mathfrak{g}$, denote by $\alpha^\vee_i\in \mathfrak{h}$ the corresponding coroot and by $e_i\in \mathfrak{n}$ and $f_i\in \mathfrak{n}_-$ the corresponding root vectors. Let
$$
x_i(t)=\exp(t f_i)
$$
be a 1-parameter subgroup of $N_-$. For a fixed reduced word 
$w_0=s_{i_1} \cdots s_{i_m}$, introduce the following map
\begin{align*}
&\qquad\qquad\qquad\qquad\qquad L_s \colon \R^{m+r} \times (S^1)^m  \to K^* \\
&(\lambda_{-r},\dots,\lambda_{-1},\lambda_1,\dots,\lambda_m,e^{\sqrt{-1}\varphi_1},\dots,e^{\sqrt{-1}\varphi_m})  \\ &\qquad\qquad\qquad \qquad \mapsto \exp\left(\frac{s}{2}\sum_{i=1}^r \lambda_{-i} \alpha^\vee_i\right)x_{i_1}(e^{\frac{s\lambda_1}{2}+\sqrt{-1}\varphi_1})\cdots x_{i_m}(e^{\frac{s\lambda_m}{2}+\sqrt{-1}\varphi_m}).
\end{align*}
%% Not good to make up new notation here...
%$$
%g^* = \exp(\sqrt{-1} t \nu) \, x_{i_1}(\exp(s l_1 + i \theta_1)) \cdots x_{i_m}(\exp(s l_m + i \theta_m)).
%$$
%Here $\nu \in \mathfrak{t}$.
% This is a densely defined coordinate system on $K^*$. Furthermore, introduce a change of variables $\nu=\sum_i \nu_i h_i$, where $h_i$ are coroots corresponding to $\alpha_i$, and $p_a=\exp(t m_a + i \theta_a)$. 
The map $L_s$ is injective for $s\in \R^\times$, and its image is open and dense in $K^*$. We regard the $\lambda_i$, $\varphi_j$ as coordinates on $K^*$ which depend on the parameter $s$. The map $L_s$ is given in general in Definition~\ref{definition;detropicalization}.

In the main body of the paper, we consider a wider class of toric charts, so-called \emph{cluster charts} and \emph{twisted cluster charts}; see Definition~\ref{definition;unreduced cluster chart}.
Coordinates in twisted cluster charts are similar to factorization parameters described above while coordinates in cluster charts are generalizations of minors of a matrix. Furthermore, along with the double Bruhat cell $G^{w_0, e}$ we need to consider its Langlands dual $(G^\vee)^{w_0, e}$ and its (twisted) cluster charts.

%We denote by the $F_s$ the resulting map
%%
%$$
%F_s: K^* \to \mathbb{R}^N \times \mathbb{T}^m, \hskip 0.3cm
%g^* \mapsto (\nu, l , \theta).
%$$
%Note that this map is defined on the dense coordinate chart corresponding to the factorisation parameters.

We denote 
\[
\pi_s =  (L_s)\n_*(s \pi_{K^*}) = (L_s\n \circ \gamma_s)_* \pi_{\mathfrak{k}^*}
\]
wherever $L_s\n$ is defined.
While $L_s\n$ is exponentially contracting, the map $\gamma_s$ is exponentially expanding. 
For $s\ll 0$, their composition possesses extraordinary properties.

\subsubsection{Partial tropicalization}
Our strategy is to consider the limit $s\to -\infty$. A priori, such a limit does not make sense since the Poisson structure $s\pi_{K^*}$ linearly diverges. However, after the coordinate change $L_s$, the Poisson structure $\pi_s$ does admit a limit which is described by the following theorem. It makes use of a map
\[
\hw^{PT}\colon \R^{m+r}\times (S^1)^m \to \ttt^*_+
\]
to the positive Weyl chamber called the \emph{highest weight map}. This map, and its origin, are described in Sections~\ref{PTstructureSection} and~\ref{section;geometriccrystals}, respectively. The following summarizes the main results of Section~\ref{PTSection}.
\begin{theorem}  \label{thm:intro_cone}
There is a unique open polyhedral cone $\mathcal{C} \subset \mathbb{R}^N$ such that:
\begin{enumerate}
\item The limit
\[
\lim_{s\to -\infty}\left( \pi_s |_{\mathcal{C} \times (S^1)^m}\right)
\]
exists and is a constant Poisson structure $\pi_{-\infty}$ on $\mathcal{C} \times (S^1)^m$.
\item The symplectic leaves of $(\mathcal{C}\times (S^1)^m,\pi_{-\infty})$ are the fibers of $\hw^{PT} \colon \mathcal{C}\times (S^1)^m \to \ttt^*_+$.
\item For each regular weight $\lambda\in \mathring{\ttt}^*_+$, the symplectic volume of $(\hw^{PT})\n(\lambda)$ is equal to the symplectic volume of $\mathcal{O}_\lambda$.
\item For a given $\lambda\in \ttt^*_+$, the fiber $(\hw^{PT})\n(\lambda)\cap \mathcal{C}\times(S^1)^m$ is of the form 
\[
\mathring{\Delta}_\lambda \times (S^1)^m,
\]
 where $\Delta_\lambda$ is a convex polytope.
\item After a linear change of variables, $\pi_{-\infty}$ acquires the form
\[
\pi_{-\infty}= \sum_{i=1}^m \frac{\partial}{\partial \lambda_i} \wedge \frac{\partial}{\partial \varphi_i}.
\]
\end{enumerate}
\end{theorem}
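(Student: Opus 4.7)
My plan is to derive the theorem as a direct consequence of the log-canonical form of the Poisson bracket on the double Bruhat cell $G^{w_0,e}$, transported to $K^*$ through a (twisted) cluster chart. In such a chart $(z_1,\dots,z_N)$, the holomorphic Poisson bracket on $G^{w_0,e}$ is log-canonical, $\{z_i,z_j\}=\epsilon_{ij} z_i z_j$ for a skew-symmetric constant matrix $\epsilon$ read off from the exchange matrix of the seed, and the Poisson structure $\pi_{K^*}$ on the real form $K^*=AN_-$ can be rewritten in the coordinates $(\log|z_i|,\arg z_i)$ via the formalism of Section~\ref{ChartsonKstarSection}. The parametrization $L_s$ amounts, up to the monomial change relating factorization coordinates to cluster coordinates, to the substitution $z_i=\exp\bigl(\tfrac{1}{2}(s\lambda_i+\sqrt{-1}\,\varphi_i)\bigr)$.

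The core of the argument is to pull $s\pi_{K^*}$ back through $L_s$ and extract the $s\to-\infty$ limit. Each bracket $\{\lambda_i,\lambda_j\}$, $\{\lambda_i,\varphi_j\}$, and $\{\varphi_i,\varphi_j\}$ admits an expansion consisting of a \emph{leading} piece with constant coefficient (determined by $\epsilon$ and the Cartan pairing) together with a finite collection of \emph{correction} terms of the form $c\,s^{\alpha}\exp(s\,\ell(\lambda))$, where each $\ell$ is a linear functional arising from a monomial in the expansion of $\pi_{K^*}$. I would then define $\mathcal{C}$ as the open polyhedral cone cut out by the inequalities $\ell(\lambda)>0$ for every such $\ell$: on $\mathcal{C}$ each correction decays exponentially as $s\to-\infty$, whereas outside $\mathcal{C}$ at least one correction diverges, which both proves that the limit exists and forces the uniqueness of $\mathcal{C}$. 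This establishes (1), with $\pi_{-\infty}$ equal to the constant leading bivector.

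For parts (2)--(5), the tropical highest weight map $\hw^{PT}$ is the tropicalization of the geometric highest weight map on the Berenstein-Kazhdan crystal reviewed in Section~\ref{section;geometriccrystals}. In the limit, its components Poisson-commute with all of $(\lambda,\varphi)$ and are therefore Casimirs of $\pi_{-\infty}$; since their number equals the corank of $\pi_{-\infty}$, the fibers coincide with the symplectic leaves, proving (2). The intersection $\mathcal{C}\cap(\hw^{PT})^{-1}(\lambda)$ is a slice of a polyhedral cone by an affine subspace, hence a convex polytope $\mathring{\Delta}_\lambda$, and combined with the angle factor this yields (4). A linear block-reduction of the constant bivector, using the nondegeneracy of the $(\lambda,\varphi)$-block on a single leaf, brings $\pi_{-\infty}$ to the Darboux form in (5). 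Finally, (3) follows because the symplectic volume of a leaf equals $(2\pi)^m\vol(\Delta_\lambda)$, which matches $\vol(\mathcal{O}_\lambda,\omega_\lambda)$ via the identification of $\Delta_\lambda$ with a Berenstein-Kazhdan-type polytope whose integer points count the dimension of the irreducible $G$-module of highest weight $\lambda$, as in \cite{ABHL2,AHLL}.

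The principal obstacle is carrying out this analysis for an \emph{arbitrary} (twisted) cluster chart rather than only the factorization chart displayed above. The log-canonical expansion, the control of exponential corrections, the identification of $\mathcal{C}$ with (the image of) the Berenstein-Kazhdan cone, and the compatibility of $\hw^{PT}$ with cluster mutations must all be verified uniformly across seeds; this is the technical heart of Sections~\ref{PositivitySection} and~\ref{PTSection}, and is where I expect the bulk of the work to lie.
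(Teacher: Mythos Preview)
Your proposal is broadly on the right track and mirrors the paper's overall strategy, but there are two places where the paper's route differs in a way that matters.

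First, the definition of $\mathcal{C}$. You propose to read off the linear functionals $\ell$ from the exponential corrections in the bracket expansion and let $\mathcal{C}=\{\ell>0\ \text{for all such }\ell\}$. The paper goes in the opposite direction: it \emph{defines} $\mathcal{C}$ as the interior of the Berenstein--Kazhdan cone $\{\Phi_{BK}^t>0\}$ and then proves, via the domination theory of Section~\ref{section; domination} (Corollary~\ref{corollary;domination} together with Lemma~\ref{lemma;scaling domination}), that every correction $f_{z_i,\overline{z}_j}$ in Proposition~\ref{prop; coefficients} is dominated by $\Phi_{BK}$ and hence decays on $\mathcal{C}$. The non-trivial content is precisely that a \emph{single} potential controls all corrections; your approach would still need this identification to connect $\mathcal{C}$ with the representation-theoretic polytopes needed for (3) and (4). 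Also, your uniqueness argument (``outside $\mathcal{C}$ at least one correction diverges'') is not established in the paper and is not obviously correct: individual correction terms could in principle cancel. The paper treats the uniqueness claim in the introduction informally; in Section~\ref{PTSection} the cone is simply taken to be the BK cone.

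Second, part (5). A generic ``linear block-reduction'' over $\mathbb{R}$ does not suffice, because the angle coordinates live on a torus and any change of $\varphi$-variables must come from a \emph{unimodular integer} matrix. The paper obtains this integrality by an explicit computation (Theorem~\ref{theorem;brackettheorem} and Corollary~\ref{corollary;triangularbracket}) showing the $(\lambda_{j^-},\varphi_k)$-bracket matrix factors as $X^{-1}Y$ with $X$ the diagonal symmetrizer and $Y$ unimodular upper-triangular, and then passes through the Langlands-dual comparison map $\varPsi_\theta^t$ (Theorem~\ref{good coordinates}) to absorb the symmetrizer. This same Langlands-duality ingredient (Theorem~\ref{theorem;ABHL2maintheorem}) is also what makes (3) work: the lattice points of the BK cone for $G$ parametrize canonical bases for $G^\vee$, not $G$, so matching $\mathrm{Vol}(\Delta_\lambda)$ with $\mathrm{Vol}(\mathcal{O}_\lambda)$ requires the real isomorphism of BK cones between $G$ and $G^\vee$. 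Your sketch gestures at this via the citations, but you should be aware that it is not a formality.
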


The Poisson manifold $(\mathcal{C} \times (S^1)^m, \pi_{-\infty})$ is called the \emph{partial tropicalization of the dual Poisson-Lie group $K^*$}. A priori, its definition depends on the coordinate map $L_s$ corresponding to a fixed toric chart. However, one can show that, for all (twisted) cluster charts, the construction gives rise to isomorphic Poisson spaces\footnote{Strictly speaking, this isomorphism is defined only off a union of hyperplanes in $\R^m$; see Definition~\ref{definition;PTcoordtrans} and Theorem~\ref{theorem;PTpoissonisomorphism}.}.

Cluster coordinates on $K^*$ give a good control of the Poisson bracket $\pi_s$, and they allow to establish parts 1, 2, 4 and 5 of Theorem~\ref{thm:intro_cone}. Twisted cluster coordinates on $G^{w_0,e}$ and on its Langlands dual $(G^\vee)^{w_0,e}$ give a tool for volume estimates and are needed in proving part 3 of Theorem~\ref{thm:intro_cone}.

%The Casimir functions of $\pi^*_\infty$ can be arranged into a  map $C \times \mathbb{T} \to C \to \mathring{\mathfrak{t}}^*_+$, where the first map is the projection to $C$ and the second map is a linear map 
%${\rm hw}: C \to \mathring{\mathfrak{t}}^*_+$ (see the next Section for the explanation of notation).
%Symplectic leaves of $\pi^*_\infty$ are labeled by elements $\lambda \in \mathring{\mathfrak{t}}^*_+$ and they are of the form $\Delta_\lambda \times \mathbb{T}$, where $\Delta_\lambda= {\rm hw}^{-1}(\lambda)$. We denote the corresponding constant symplectic forms by $\omega_\lambda^\infty$.
%
%The following result is of key importance in our construction:
%%
%\begin{theorem} \label{thm:intro_volumes}
%For all $\lambda \in \mathring{\mathfrak{t}}^*_+$, symplectic volumes of the leaves of $\pi^*_\infty$ coincide with volumes of coadjoint orbits:
%%
%$$
%{\rm Vol}(\Delta_\lambda \times \mathbb{T}^m, \omega^\infty_\lambda) = {\rm Vol}(\mathcal{O}_\lambda, \omega_\lambda).
%$$
%\end{theorem}

\begin{remark}
Theorem \ref{thm:intro_cone} hints that if the map $L_s\n \circ \gamma_s$ had a limit for $s\to -\infty$, the limit map $\gamma_\infty\colon \mathcal{O}_\lambda \to (\hw^{PT})\n(\lambda)$ would give rise to densely defined action-angle coordinates on the coadjoint orbit. This scenario is realized in the case of $K=\UU(n)$ \cite{ALL}. However, for arbitrary compact $K$ it is not known how to choose the Ginzburg-Weinstein isomorphism $\gamma$ giving rise to a convergent map $\lim_{s\to-\infty} (L_s\n \circ \gamma_s)$.
\end{remark}

\begin{remark}
The real Poisson-Lie group $K^*$ has a complex form, denoted $G^*$. The group $K^*$ is a connected component of the fixed point set of an anti-holomorphic involution of $G^*$. There have been several recent approaches to constructing systems of coordinates $z_1,\dots,z_n$ on $G^*$ whose Poisson brackets are log canonical~\cite{GSV19, SS, Shen}, meaning that $\{z_i,z_j\}_{G^*} = c_{i,j} z_iz_j$ for some $c_{i,j}\in \C$. 

One might expect that, by changing to polar coordinates $z_i=e^{\lambda_i+\sqrt{-1}\varphi_i}$, the restriction of the functions $\lambda_i,\varphi_i$ to $K^*$ would give a Darboux chart on $K^*$. However, this is not the case. 
Assume we are given a $\C$-valued log canonical coordinate system on a real manifold. This doesn't necessarily give rise to an $\R$-valued log canonical coordinate system or to a Darboux chart. Indeed, let $z, z': M \to \C$ such that $\{z, z'\} = c zz'.$ If $z=e^{\lambda + i \varphi}, w=e^{\lambda' + i \varphi'}$, then
\begin{align*}
\{\lambda,\lambda'\} - \{ \varphi, \varphi'\} & = \Re(c) \\
\{\lambda, \varphi'\} + \{\varphi, \lambda'\} & = \Im(c)
\end{align*}
but we cannot conclude whether or not the Poisson bracket is constant in coordinates $\lambda,\lambda',\varphi,\varphi'$.

In our particular example, if $K=\SU(2)$, then 
\[
G^* = \left\{\left(\begin{pmatrix} a & b \\ 0 & a\n \end{pmatrix},\begin{pmatrix} a\n & 0 \\ c & a \end{pmatrix}\right) \colon a\in \C^\times, b,c\in \C\right\} \subset \SL(2)\times \SL(2).
\]
Then $K^*$ is the set of points where $a\in \R_{>0}$ and $b=-\overline{c}$. A system of log canonical coordinates constructed in \cite{GSV19} is $z_1=a, z_2=ab, z_3 =-bc+a^2+a^{-2}$. To extract a Darboux coordinate system, following the strategy of~\cite[Section~6]{ABHL1}, for instance, one would need the Poisson brackets $\{z_1,z_2\}_{K^*}$,~$\{z_1,\overline{z}_2\}_{K^*}$, and $\{z_2,\overline{z}_2\}_{K^*}$ to be log canonical on $K^*$. However, 
\[
\overline{z}_2 = \frac{z_1^2 z_3-z_1^4-1}{z_2}
\]
on $K^*$, and the Poisson bracket $\{z_2,\overline{z}_2\}$ is not log canonical. We will explore the connection between the partial tropicalization of $K^*$ and log canonical coordinate systems on $G^*$ in future work.
\end{remark}

\subsubsection{The cone \texorpdfstring{$\mathcal{C}$}{C} and the Berenstein-Kazhdan potential}
In Theorem \ref{thm:intro_cone}, the cone $\mathcal{C}$ appears as an unexpected and mysterious element of the construction. In fact, it is a version of a well known object in the representation theory of reductive complex Lie groups. For a dominant integral weight $\lambda\in P_+$, let $V_\lambda$ be the irreducible $G$-module with high weight $\lambda$. Then, the canonical basis of $\C[G]^N\cong \oplus_{\lambda\in P_+} V_\lambda$ can be parametrized as the integral points of the closure $\overline{\mathcal{C}}$.

%This cone is equipped with a map $\hw: C_\mathbb{Z} \to \Lambda^*_+$, where $\Lambda^*_+ \subset  \mathfrak{t}^*_+$ is the set of dominant weights of $G$. For $\lambda \in \Lambda^*_+$, the preimage
%${\rm hw}^{-1}(\lambda)$ parametrizes a canonical basis in the irreducible representation of $G$ with higherst weight $\lambda$ (hence the notation ${\rm hw}$ which stands for highest weight).
%
%The open cone $C$ is the interior of the corresponding real cone $C_\mathbb{R} = C_\mathbb{Z} \otimes \mathbb{R}$. The map ${\rm hw}: C \to \mathring{\mathfrak{t}}^*_+$ of the previous Section is an extension of the highest weight map on $C_\mathbb{Z}$.

The defining inequalities of $\overline{\mathcal{C}}$ are determined by the tropicalization of a distinguished function 
\[
\Phi_{BK}\colon G^{w_0,e} \to \mathbb{C},
\]
called the \emph{Berenstein-Kazhdan potential}. On $K^*$, the potential $\Phi_{BK}$ can be written as a Laurent polynomial in the factorization coordinates $e^{\frac{s\lambda_i}{2}+\sqrt{-1}\varphi_i}$ introduced in the previous section. The coefficients of this Laurent polynomial are positive integers. The function $\Phi_{BK}$ is a main technical tool in the theory that follows; see for instance Corollary~\ref{corollary;domination}.

It is a surprising fact that the $s\to -\infty$ behavior of all Poisson brackets on $K^*$ is determined (dominated) by the behavior of a single function $\Phi_{BK}$. At the moment, this phenomenon has no conceptual explanation. The function $\Phi_{BK}$ was introduced in the context of geometric crystals, which are briefly described in Section~\ref{section;geometriccrystals}. %In type ${\rm A}$, it is known that the potential $\Phi_{BK}$ coincides with a cluster-theoretic superpotential in the sense of Gross-Hacking-Keel-Kontsevich \cite{...}\note{CITE}.

\subsubsection{Proof sketch for coadjoint orbits}

%
%For the proof of Theorem \ref{thm:intro_2}, we consider the symplectic space $M_0=K \times \mathring{\mathfrak{t}}_+^*$. It tuns out that it allows for a partial tropicalization $C \times \mathbb{T}^N$ which is also a symplectic space with constant symplectic form. The space $M_0$ is multiplicity free, and it plays a role of a unit in the category of multiplicity free Hamiltonian spaces:
%%
%$$
%M \times M_0//K \cong M_{\rm reg},
%$$
%where $M_{\rm reg}$ is the set of points of $M$ whose image under the moment map is regular (if $M_{\rm reg}$ is non empty, it is dense in $M$). The partial tropicalization of $M_0$ induces a partial tropicalization and toric charts for $M$.
 
We sketch a proof of Theorem~\ref{thm:intro_2}.
For $s< 0$, we have a Ginzburg-Weinstein isomorphism 
$$
\gamma_s: \mathcal{O}_\lambda \to D_{\exp(s\lambda)}.
$$
The image $L_s\n(D_{\exp(s\lambda)}) \subset \mathbb{R}^{m+r} \times (S^1)^m$ has the following property: the symplectic volume of
\[
(L_s\n(D_{\exp(s\lambda)}))\cap(\mathcal{C} \times (S^1)^m)
\]
can be made arbitrarily close to the volume of $L_s\n(D_{\exp(s\lambda)})$ by fixing $s\ll 0$.
 This manifold is very near to the symplectic leaf $(\hw^{PT})\n(\lambda)$ of $\pi_{-\infty}$, and their symplectic forms are also very close to each other (in the appropriate sense). This allows one to use the Moser argument and to construct an embedding
$$
(\Delta_\lambda(\delta)  \times (S^1)^m,\pi_{-\infty}) \hookrightarrow (L_s\n(D_{\exp(s\lambda)}),\pi_s) \cong (D_{\exp(s\lambda)},s\pi_{K^*}) \cong (\mathcal{O}_\lambda,\pi_{\mathfrak{k}^*})
$$
which is an action-angle coordinate chart on  $\mathcal{O}_\lambda$. 
% Here we write $\Delta(\delta)$ to mean the set of points of $\Delta$ of distance more than $\delta$ from the boundary. 
Everything in this argument is equivariant with respect to the maximal torus of $K$ and thus the resulting embeddings are also $T$-equivariant.

The constant $\delta>0$ depends on $s$, and $\delta\to 0$ as $s\to-\infty$. And, the difference in symplectic volumes
\[
{\rm Vol}(\mathcal{O}_\lambda) - {\rm Vol}(\Delta_\lambda(\delta)\times(S^1)^{m}) >0
\]
approaches $0$ as $\delta\to 0$. So, by picking $s\ll 0$, one arrives at the desired embedding.

 The proof of Theorem \ref{thm:intro_1} requires some additional work. Na\"ively, one would like to  take action coordinates constructed as above, on subsets of $\kk^*$, and pull them back via the moment map to a multiplicity free space. However, pullbacks of functions generating periodic flows on $\kk^*$ do not necessarily generate periodic flows: there may be so-called  ``nutation effects'' that cause the resulting  flow  to be aperiodic \cite{GS5}. Thus, one of the main steps in the proof of Theorem \ref{thm:intro_1} is to show that  our coordinates generate periodic flows on multiplicity free spaces. This is achieved in Section~\ref{mainresultssection} by first constructing coordinates that generate approximately periodic flows for large $s$, then deforming (via a Moser trick) to coordinates that generate periodic flows.

\subsection{Structure of the paper}

The construction presented in the paper requires background material from different fields. Sections \ref{background section}, \ref{PositivitySection} and \ref{PTSection} contain an extensive review of this material. We also recall our previous results, and in some cases upgrade them to the level needed for the present paper. Sections \ref{mainresultssection} and \ref{gromov width section} contain the main results of the paper and their proofs.

We can imagine several ways to read the present text. One possibility is to first check the introductory sections and then read the main body of the paper which are Sections \ref{mainresultssection} and \ref{gromov width section}. Another interesting option is to start directly with those two sections and then consult the introductory sections for missing details.

In what follows we describe the content of each section with an accent on elements which are either new or require non-standard presentation.

In Section~\ref{background section}, we collect background material on Lie theory, the theory of Hamiltonian and Poisson group actions and on cluster structures. 
Section~\ref{lie theory section} is devoted to Lie theory including the theory of double Bruhat cells and generalized minors. We also touch upon {\em comparison maps} between the group $G$ and its Langlands dual $G^\vee$.
Section~\ref{hamiltonian group actions section} summarizes the theory of  Hamiltonian $K$-manifolds with focus on Thimm torus actions and multiplicity free Hamiltonian spaces. 
Section~\ref{PLgroupSection} is a reminder of the theory of Poisson $K$-actions and of the dual Poisson-Lie group $K^*$. An interesting new element is the notion of {\em Legendre transform} on $K^*$.
Section~\ref{Cluster Varieties} provides background on cluster algebras and on cluster structures on double Bruhat cells. A somewhat non-standard part of the discussion concerns {\em homogeneous cluster algebras}.

Section~\ref{PositivitySection} describes positivity and tropicalization of double Bruhat cells. 
Section~\ref{section; positivity and trop} is devoted to generalities on positive varieties with potential and their tropicalizations. We also discuss comparison maps between a cluster variety and its Langlands dual.
Section~\ref{section;chartsDBC} contains applications of the material of the previous section to double Bruhat cells $G^{w_0, e}$. In particular, we recall the definition of the Berenstein-Kazhdan (BK) potential $\Phi_{BK}$.
In Section~\ref{section; domination}, we discuss functions dominated by potential, and we give criteria for domination by $\Phi_{BK}$.  
Section~\ref{section;geometriccrystals} contains elements of the geometric crystal theory. In fact, in this paper we do not need crystal operations. So, we only review the {\em highest weight} and {\em weight maps} on geometric crystals.
In Section~\ref{section;canonical bases}, we discuss tropicalization of double Bruhat cells and reduced double Bruhat cells. The corresponding polyhedral cones will serve as targets for action variables.
In Section~\ref{section;comparisonDBC}, we recall the comparison map between double Bruhat cells $G^{w_0, e}$ and $G^{\vee; w_0, e}$ in the group $G$ and its Langlands dual $G^\vee$ which is the main tool in volume calculations.

Section~\ref{PTSection} defines partial tropicalizations. 
%This includes the analysis of the Poisson structure $\pi^*_\infty$ and of its symplectic leaves in different cluster charts and a reminder of a version of Theorem \ref{thm:intro_cone}.
In Section~\ref{ChartsonKstarSection} , we introduce coordinate systems on the dual Poisson-Lie group $K^*$ induced by the cluster structure on the double Bruhat cell $G^{w_0, e}$. 
Then, in Section~\ref{section; definition of partialtrop} we define the partial tropicalization $PT(K^*)$ which carries a constant Poisson structure $\pi_{-\infty}$.
In Section~\ref{section; partialtrop is limit}, we explain how to obtain the Poisson structure of the partial tropicalization as an $s \to -\infty$  limit of a family of Poisson structures on $K^*$.
Section~\ref{PTstructureSection} is the description of symplectic leaves and of torus actions on the partial tropicalization.
Section~\ref{section; properties of partialtrop} is devoted to explicit computations of $\pi_{-\infty}$ in cluster charts of Section 4.1. 
We prove a version of Theorem \ref{thm:intro_cone} which establishes a Darboux normal form for 
$\pi_{-\infty}$. 

Section~\ref{mainresultssection} 
is devoted to the proof of Theorem \ref{thm:intro_1} and Theorem \ref{thm:intro_2}.
Section~\ref{section; statement of main theorem} states the main technical result of the paper which is an extension of partial tropicalization to the universal multiplicity free space $K \times \mathring{\ttt}_+^*$.
Section~\ref{section; proof of main theorem} is devoted to the proof of this result. This section is the core of the paper.
In Section~\ref{section; charts on multiplicity free}, we construct big action-angle coordinate charts on multiplicity free spaces which is one of our main results.

Section~\ref{gromov width section} contains the proof of Theorem~\ref{gromov width theorem} on the Gromov width of coadjoint orbits.

\subsection{Acknowledgements}
We are indebted to A. Berenstein for introducing us to the notion of potential and for his invaluable advice. We would like to thank A. Pelayo for his interesting suggestions.  

Research of AA and YL was supported in part by the National Center for Competence in Research (NCCR) SwissMAP and by the grants number 178794 and 178828 of the Swiss National Science Foundation.  JL thanks the
Fields Institute as well as the organizers of the Toric Topology and Polyhedral Products thematic program for
the support of a Fields Postdoctoral Fellowship during the writing of this paper. BH was supported in part by the National Science Graduate Research Fellowship, grant number DGE-1650441.

% \begin{remark} \note{BH: I think this needs some polishing}
%     The construction of partial tropicalization dates back the idea in \cite{AD, KZ}. One of the main results in \cite{KZ} says that the symplectic leaves of standard Poisson-Lie structure on $G=K^{\mathbb{C}}$ has the {\em log canonical form} $\{z_i,z_j\}=\pi_{ij}z_iz_j$ under certain ``good'' coordinates $z_1,\dots, z_n$. The authors in \cite{AD} consider a similar problem for the real form $K^*$ of the dual Poisson-Lie group $G^*$ for $K=U(n)$. In \cite{ABHL1}, the authors complete and generalize this construction for any compact Lie group $K$. This construction involves a choice of ``good'' coordinates on $K^*$ and a limiting procedure of the scaled bracket of $K^*$. A ``good'' coordinates system on $K^*$, or on $G^*$ to be more precise, comes from positivity theory and cluster theory. In particular, these coordinates can be chosen to be factorization variables or cluster variables.  In general, the bracket on $K^*$ does not admit log canonical form under these coordinates. Together with the limiting procedure, they find a convex polyhedral cone $\mathcal{C}$ for each choice of  ``good'' coordinates system and an integrable system on the space $\mathcal{C}\times \mathbb{T}$.
% \end{remark}

\section{Background} \label{background section}

Some of the key statements of this Section are Example~\ref{T^*K cross section example} which introduces the universal multiplicity free space $K \times \mathring{\ttt}_+^*$, the Delinearization Theorem~\ref{theorem;delin} which explains how to replace $\mathfrak{k}^*$ valued moment maps with $K^*$ valued ones, 
Example~\ref{example; delinearization of T^*K cross section} which is an application of this technique to $K \times \mathring{\ttt}_+^*$,
Proposition~\ref{proposition; homogeneous cluster creterion} which gives a criterion for cluster algebras to be homogeneous and Corollary~\ref{corollary;homog} which states that double Bruhat cells are homogeneous cluster varieties.

\subsection{Lie theory}\label{lie theory section}

In this section we fix notation which will be used throughout the article. Much of the material was developed in \cite{BKII, FZ}.

\subsubsection{The groups \texorpdfstring{$K$}{K} and \texorpdfstring{$G$}{G}}

In all that follows, $K$ will be a compact connected Lie group, and $G$ will be its complexification. Then $G$ is a connected reductive complex algebraic group. Pick a pair of opposite Borel subgroups $B, B_- \subset G$, and let $H=B\cap B_-$ be the Cartan subgroup. Let $T=H\cap K$ be the maximal torus of $K$, and let $N\subset B$ and $N_-\subset B_-$ be the unipotent radicals. We denote the Lie algebras of these groups with fraktur letters, so for instance $\operatorname{Lie}(K)=\mathfrak{k}$ and $\operatorname{Lie}(B_-)=\mathfrak{b}_-$. 

Let $\ttt^*$ be the $\R$-linear dual of $\ttt$. The projection $\kk^*\to \ttt^*$ induces a linear isomorphism $(\kk^*)^T \cong \ttt^*$, where $(\kk^*)^T$ is the subspace of $T$-invariant elements of $\kk^*$. We will identify these two spaces so that $\ttt^*\subset\kk^*$. Let $\h^*$ be the $\C$-linear dual of $\h$. Then $\ttt^*\otimes \mathbb{C} \cong \h^*$.

The operation of taking the center of a Lie group (resp. algebra) is denoted by $Z(\cdot)$ (resp. $\mathfrak{z}(\cdot)$). The Lie algebras $\g$ and $\kk$ split as the direct sums 
\[
\mathfrak{g}=\z(\g) \oplus [\g,\g],\qquad \kk = \z(\kk) \oplus [\kk,\kk],
\]
where $[\g,\g]$ and $[\kk,\kk]$ are semisimple. 
Let $r$ denote the rank of $\g$, defined as the rank of the semisimple part of $\g$. Let $\tilde r = \dim_\C (\h)$.

\subsubsection{Lattices and Weyl chambers}
%
%Fix the isomorphism $\R \cong \Lie(S^1)$ such that the exponential map $\exp\colon \R \to S^1$ is given by $\theta \mapsto e^{i\theta}$. Then the character lattice, $\Hom(T,S^1)$, of a compact torus $T$ is identified with the lattice 
%\[
%    \left\{ \xi \in \ttt^* \mid \langle \xi,X \rangle \in 2\pi \Z \text{ for all } X \in \ker(\exp_T\colon \ttt \to T)\right\}.
%\]
%With this convention the character lattice of the torus $T = (S^1)^n$, with the identification $\ttt = \R^n$ as above and the standard identification $\ttt^* = (\R^n)^* = \R^n$, is the standard lattice $\Z^n \subset \R^n$.
%
%Let $H$ be a complex torus and let $X^*(H)$ and $X_*(H)$ denote the character and cocharacter lattices of $H$. Fix the standard identification
%\[
%X^*(H)\otimes_\Z \R =  \sqrt{-1}\mathfrak{t}^*,\qquad X^*(H)\otimes_\Z \C = \h^*,\qquad X_*(H)\otimes_\Z \C= \mathfrak{h}.
%\]
%such that if $H = T^\C$ for a comapct torus $T$, and $\gamma \in X^*(H) \subset \sqrt{-1}\ttt^*$, then $\frac{1}{\sqrt{-1}}\gamma \in \ttt^*$ is a character of $T$ (with respect to the identification fixed above). For a character $\gamma\in X^*(H)$ and $h\in H$, write $h\mapsto h^\gamma$ for the evaluation of $\gamma$ at $h$. 

Recall the functors $X_* = \Hom(\C^\times,-)$ and $X^* = \Hom(-,\C^\times)$. In particular, for a complex torus $S$ of dimension $\tilde{r}$,
 \begin{align*}
 X^*(S)  = \Hom (S, \C^\times)\cong \Z^{\tilde{r}},\qquad 
 X_*(S)  = \Hom(\C^\times, S) \cong \Z^{\tilde{r}}
 \end{align*}
 are the lattices of \emph{characters} and \emph{cocharacters} of $S$, respectively. They come with the natural evaluation pairing $\langle \cdot,\cdot\rangle$. When $H$ is as in the previous section, we make the standard identifications
 \[
 X^*(H)\otimes_\Z \R =  \sqrt{-1}\mathfrak{t}^*,\qquad X^*(H)\otimes_\Z \C = \h^*,\qquad X_*(H)\otimes_\Z \C= \mathfrak{h}.
 \]
 For a character $\gamma\in X^*(H)$ and $h\in H$, write $h\mapsto h^\gamma$ for the evaluation of $\gamma$ at $h$. 

Let $R\subset X^*(H)$ be the set of roots of $G$, and $R^\vee\subset X_*(H)$ the set of coroots. The choice of positive Borel subgroup $B$ determines the positive roots $R_+\subset R$.  Fix an enumeration of the simple roots $\alpha_1,\dots,\alpha_r \in R_+ $ and simple coroots $\alpha^\vee_1,\dots,\alpha^\vee_r\in R^\vee$. We write $[1,r]=\{1,\dots,r\}$ for their index set. The Lie algebra $\g$ can be decomposed into root spaces $\g = \h \oplus \bigoplus_{\alpha \in R} \g_\alpha$, where $\g_\alpha$ is the $\alpha$-weight space for the adjoint representation of $\g$.
Let $\omega_1,\dots,\omega_r$ be the \emph{fundamental weights}. By definition,
\[
\langle \omega_i,\alpha_j^\vee\rangle = \delta_{i,j}, \qquad \langle \omega_i, \mathfrak{z}(\ttt)\rangle = 0,\qquad \forall i,j\in [1,r].
\]

The positive Weyl chamber $\ttt_+^*\subset\kk^*$ is the intersection of half spaces defined by simple coroots  $\alpha_1^\vee, \dots , \alpha_r^\vee$,
\[
    \ttt_+^* = \{ \xi \in \ttt^* \mid \langle \sqrt{-1} \xi,\alpha_i^\vee\rangle \geqslant 0, ~ \forall i = 1,\dots ,r\}.
\]
For each $J \subset [1,r]$ define
\[
    \sigma_J = \{ \xi \in\ttt^*_+ \mid \langle \sqrt{-1} \xi,\alpha_i^\vee\rangle > 0 \text{ if and only if } i \in J\}.
\]
The subsets $\sigma_J$, $J\subset [1,r]$, define a stratification of $\ttt_+^*$. 
 The maximal stratum of $\ttt^*_+$ (with respect to the inclusion partial order) is the relative interior of $\ttt_+^*$,  denoted $\mathring{\ttt}_+^*$.

The fundamental weights and simple roots of $\g$ generate the \emph{weight lattice} $P$ and \emph{root lattice} $Q$, respectively.  Let $P_+=\sqrt{-1}\ttt^*_+\cap P$ and $X^*_+(H)=\sqrt{-1}\ttt^*_+ \cap X^*(H)$ denote the sets of dominant weights and dominant characters, respectively.

\subsubsection{\texorpdfstring{$\SL_2$}{SL2} triples and the Weyl group}

%Then, define for $i\in I$ and $t\in \C^\times$, 
%\[
%x_{-i}(t) := \alpha_i^\vee(t\n)y_i(t\n) = \phi_i \begin{pmatrix} t\n & 0 \\ 1 & t \end{pmatrix}.
%\]
%\note{Not sure if we really need the ``z-coordinates''}

The Weyl group $W= \operatorname{Norm}_G(H)/H$ acts on the character lattice $X^*(H)$ by
\begin{equation}
\label{equation;weylaction}
h^{w\gamma} = (\tilde{w}\n h \tilde{w})^\gamma, 
\end{equation}
where $h\in H,~w\in W,~\gamma\in X^*(H)$, and $\tilde{w}\in \operatorname{Norm}_G(H)$ is any lift of $w$.  The action \eqref{equation;weylaction} does not depend on the choice of lift $\tilde{w}$. The Weyl group is generated by the simple reflections $s_i,~i\in [1,r]$, whose action on $\gamma\in X^*(H)$ is given by
\[
s_i(\gamma) = \gamma-\gamma(\alpha_i^\vee) \alpha_i.
\]
Let $w_0$ be the longest element in $W$, whose length $\ell(w_0)$ in terms of the simple reflections $s_i$ is $m:=\ell(w_0)$.

Consider the Chevalley generators $e_i,~f_i,~\alpha^\vee_i$ of $[\g,\g]$. Define
\[
x_i(a):=\exp(ae_i)
 \in N,\quad y_i(a):= \exp(af_i)
 \in N_- .\]
 We lift the Weyl group to $\operatorname{Norm}_G(H)$ by setting
\[
\overline{s}_i = x_i(-1) y_i(1) x_i(-1).
\]
 If $w\in W$ and $w=s_{i_1}\cdots s_{i_l}$ is any reduced expression for $w$, define $\overline{w} = \overline{s}_{i_1} \cdots \overline{s}_{i_l}\in G$.  The $\overline{s}_i$'s satisfy the Coxeter relations of $W$, so the definition of $\overline{w}$ does not depend on the choice of reduced expression.
 
 \subsubsection{Anti-holomorphic involutions on \texorpdfstring{$G$}{G}}
 
 Consider the $\C$-antilinear involution $(\cdot)^\dag\colon \g \to \g$ which fixes $\mathfrak{a}$ and has $e_i^\dag = f_i$ for all Chevalley generators $e_i$. This lifts to an anti-holomorphic Lie group anti-involution 
\begin{align*}
(\cdot)^\dag\colon &  G\mapsto G \ :\   g\mapsto g^\dag.
\end{align*}
The fixed points of $g\mapsto (g\n)^\dag$ form the compact subgroup $K$ of $G$.
 
 Additionally, consider the $\C$-antilinear involution $\overline{(\cdot)} \colon \g \to \g$ which fixes $\mathfrak{a}$ and has $\overline{e}_i = e_i$ and $\overline{f}_i = f_i$ for all Chevalley generators $e_i, f_i$. This lifts to an anti-holomorphic Lie group involution $\overline{(\cdot)}\colon G\to G$ which restricts to an involution of $K$.

\subsubsection{Generalized minors and double Bruhat cells}

An element $x\in G$ is \emph{Gaussian decomposable} if $x\in G_0 = N_- H N$. The set $G_0$ is an open subvariety of $G$. For $x\in G_0$, write
\[
x= [x]_- [x]_0 [x]_+,\qquad \text{where }[x]_-\in N_-,~[x]_0\in H, ~[x]_+\in N.
\]
Similarly, write $[x]_{\geqslant 0} = [x]_0 [x]_+$ and $[x]_{\leqslant 0} = [x]_- [x]_0$. 

Let $\gamma \in X^*_+(H)$ be a dominant character. The \emph{principal minor} $\Delta_\gamma\in \C[G]$ is the regular function uniquely determined by
\[
\Delta_\gamma(x) = [x]_0^\gamma, \quad \text{ for } ~ \forall x\in G_0.
\]
Now, let $u,v\in W$. The \emph{generalized minor} $\Delta_{u\gamma,v\gamma}\in \C[G]$ is defined to be
\[
\Delta_{u\gamma,v\gamma} (x) := \Delta_\gamma(\overline{u}\n x \overline{v} ).
\]
 For $h,h'\in H$, the generalized minors satisfy
 \begin{equation}
     \label{homogeneitygenminor}
     \Delta_{u\gamma,v\gamma} ( hxh') = h^{u\gamma} \Delta_{u\gamma,v\gamma} ( x) {h'}^{v\gamma}
 \end{equation}
 for all $x\in G$.
 When $G=\SL_n(\C)$, the generalized minors are minors.

For each pair of Weyl group elements $(u,v)$, a \emph{double Bruhat cell} and a \emph{reduced double Bruhat cell} are defined respectively by
\[
  G^{u,v}:=BuB\cap B_-vB_-;\quad L^{u,v}:=N\overline{u}N\cap B_-vB_-.
\]
A point $x\in G^{u,v}$ is contained in $L^{u,v}$ if and only if
\begin{equation} \label{equation;RDBCdefining}
[\overline{u}\n x]_0 = 1.
\end{equation}
Multiplication in $G$ induces a biregular isomorphism $H\times L^{u,v}\cong G^{u,v}$. 

If $p \colon \widehat{G}\to G$ is a covering group of $G$, then \eqref{equation;RDBCdefining} implies that the covering morphism induces biregular isomorphisms
\begin{equation} \label{equation;coverL}
p\colon \widehat{L}^{u,v} \to L^{u,v}
\end{equation}
from the reduced double Bruhat cells of $\widehat{G}$ to those of $G$. For any weight $\gamma \in P$ of $G$, if $\gamma$ is not contained in $X^*(H)$, we can still make sense of the generalized minor
\[
\Delta_{u\gamma,v\gamma}\in \C[ L^{u,v} ]
\]
by identifying $L^{u,v}$ with the reduced double Bruhat cell of any covering group $\widehat{G}$ for which $\gamma$ is a character.

In what follows we will focus on the double Bruhat cell $G^{w_0,e}$, which is an open subvariety of $B_-$. We recall a special case of the \emph{twist map} of \cite{FZ}. Our twist map is the biregular involution
\begin{equation}
\label{twist map equation}
\zeta\colon G^{w_0,e} \to G^{w_0,e}, \quad x \mapsto ([\overline{w_0}\n x]_{\geqslant 0 })^\theta,
\end{equation}
where $g\mapsto g^\theta$ is a Lie group involution of $G$ determined by
\[
x_i(t)^\theta = y_i(t),  \text{~for~} i\in [1,r]; \quad  h^\theta = h\n, \text{~for~}  h\in H.
\] 
For a generalized minor $\Delta_{\gamma,\delta}$, let $\Delta_{\gamma,\delta}^\zeta := \Delta_{\gamma,\delta}\circ \zeta$ be the \emph{twisted minor}. For $\gamma\in X^*_+(H)$, 
\begin{equation}\label{equation;twistfrozen}
\Delta^\zeta_{w_0\gamma,\gamma} = \Delta_{\gamma,\gamma}\n,\qquad \Delta^\zeta_{\gamma,\gamma} = \Delta_{w_0\gamma,\gamma}\n.
\end{equation}

\subsubsection{Langlands dual groups}

Let $G^\vee$ be the \emph{Langlands dual group} of $G$.
For a fixed Cartan subgroup $H^\vee\subset G^\vee$, one has
\[
X^*(H) = X_*(H^\vee);\qquad X^*(H^\vee) = X_*(H).
\]
The group $G^\vee$ is important in describing the representation theory of $G$; see Section~\ref{section;geometriccrystals} below. 
As with $H^\vee$, we superscript with $\vee$ those varieties and maps associated with $G^\vee$ (as opposed to $G$). So for instance the reduced double Bruhat cells of $G^\vee$ are $L^{\vee; u,v}$ rather than $L^{u,v}$.

\subsubsection{Comparison map and bilinear form on \texorpdfstring{$\g$}{g}}\label{section;comparison}

The Cartan matrix $A$ of $G$ is the $r\times r$ matrix with entries $A_{i,j} = \langle \alpha_j,\alpha_i^\vee \rangle$. Let $D=\diag(d_1,\dots,d_r)$ be a $r\times r$ diagonal matrix, with entries $d_i\in \Z$, so that $AD=(AD)^T$. The matrix $D$ is then a \emph{symmetrizer} of $A$.

Fix a non-degenerate, $\g$-invariant bilinear form $(\cdot,\cdot)_\g\colon \g \otimes_\C \g \to \C$, as follows. Pick $\gamma^\vee_{1}, \dots \gamma^\vee_{\tilde{r}-r} \in X_*(H)\cap \z(\g)$ so that $\alpha^\vee_1,\dots,\alpha^\vee_r,\gamma^\vee_{1},\dots, \gamma^\vee_{\tilde{r}-r}$ forms a $\C$-basis of $\h$. Then $(\cdot,\cdot)_\g$ is uniquely determined by:
\[
(\alpha^\vee_i,\alpha_j^\vee)_\g = A_{i,j} d_j, \quad (\gamma^\vee_i,\gamma^\vee_j)_\g=\delta_{i,j},\quad \z(\g)=[\g,\g]^\perp.
\]
Denote the induced bilinear form on $\g^*$ by $(\cdot,\cdot)_{\g^*}$. If $V$ is a (real or complex) linear subspace of $\g$ or $\g^*$, denote the restriction of the bilinear form to $V$ by $(\cdot,\cdot)_V$. When it is clear from context we omit the subscripts.

The \emph{comparison map} 
\begin{equation}
\psi_\g\colon \g \to \g^*
\end{equation} is the $\C$-linear map determined by $(\cdot,\cdot)_\g$:
\[
\langle \psi_\g(X), Y \rangle = (X,Y)_\g, \qquad \forall X,Y \in \g.
\]
For $\kk\subset \g$ and $\h\subset \g$, we have similarly defined maps
\begin{align}
\psi_\kk \colon \kk \to \kk^*,\qquad \psi_\h \colon \h \to \h^* \label{equation;comparisonmap}
\end{align}
Then $\psi_\h(\alpha^\vee_i)= d_i \alpha_i$. We will assume we have chosen $D$ so that $\psi_\h$ restricts to a $\Z$-module homomorphism $X_*(H)\to X^*(H)=X_*(H^\vee)$, which is possible by \cite[Proposition~2.2]{ABHL2}. This induces a group homomorphism
\[
\varPsi_H\colon H \to H^\vee.
\]
\subsection{Hamiltonian \texorpdfstring{$K$}{K}-manifolds}\label{hamiltonian group actions section}

This section recalls basic facts and conventions regarding Hamiltonian group actions. More details may be found in \cite{symplectic-techniques}. 

The Hamiltonian vector field of a smooth function $f\in C^{\infty}(M)$ on a symplectic manifold $(M,\omega)$ is the vector field $X_f$ defined by the equation $\iota_{X_f} \omega = -df$.
%A \emph{symplectic manifold} $(M,\omega)$ consists of a smooth manifold $M$ equipped with closed, non-degenerate 2-form $\omega\in \Omega^2(M)$.  
Given a Lie group $K$ with Lie algebra $\kk$ and a smooth left action of $K$ on a manifold $M$, the fundamental vector field of $X\in\kk$ is the vector field $\underline{X}\in \mathfrak{X}(M)$ defined point-wise as
\[
\underline{X}_m = \frac{d}{dt}\bigg\vert_{t=0}\exp(-tX)\cdot m,\qquad m\in M.
\]
% The assignment $\kk \to \mathfrak{X}(M)$, $X \mapsto \underline{X}$ is a Lie algebra homomorphism.   

\begin{definition}[Hamiltonian $K$ action]\label{hamiltonian K-manifold definition}
  Suppose that $(M,\omega)$ is a symplectic manifold and $K$ is a connected Lie group. A smooth left action of $K$ on $M$ is \emph{Hamiltonian} if there is a $K$-equivariant map $\mu\colon M \to \kk^*$ such that $\iota_{\underline{X}}\omega = -d\langle \mu,X\rangle$ for all $X\in \kk$. The map $\mu$ is called a \emph{moment map} for the action of $K$. The data of $(M,\omega)$ together with a Hamiltonian action of $K$ and a moment map $\mu$ is a \emph{Hamiltonian $K$-manifold}, denoted $(M,\omega,K,\mu)$ or, when it is unambiguous, simply $(M,\omega,\mu)$.
\end{definition}

A map of Hamiltonian $K$-manifolds $(M,\omega,\mu)$ and $(M',\omega',\mu')$ is a $K$-equivariant symplectic map $F\colon M\to M'$ such that $\mu'\circ F = \mu$. If such an $F$ is also a symplectomorphism, then we say that $F$ is an \emph{isomorphism} of Hamiltonian $K$-manifolds.

% \subsubsection{The symplectic cross-section theorem}

Let $K$ be a compact connected Lie group with positive Weyl chamber $\ttt_+^*$  as in Section \ref{lie theory section}. For every connected Hamiltonian $K$-manifold $(M,\omega,\mu)$ there is a unique stratum $\sigma \subset \ttt_+^*$, called the \emph{principal stratum of} $(M,\omega,\mu)$, with the property that $\mu(M) \cap \sigma$ is dense in $\mu(M) \cap \ttt_+^*$ \cite{symplectic-techniques}. The principal stratum of $(M,\omega,\mu)$ is the maximal stratum of $\ttt_+^*$ with the property that $\mu(M) \cap \sigma$ is non-empty. 
% Thus, either $\mathring{\ttt}_+^*$ is the principal stratum of $(M,\omega,\mu)$, or  $\mu(M) \cap \mathring{\ttt}_+^* = \emptyset$.
% The symplectic cross-section theorem says that the preimage under $\mu$ of a slice for the coadjoint action is a symplectic submanifold of $M$ equipped with a Hamiltonian action of the stabilizer subgroup for the coadjoint action at points in the slice \cite[Theorem 26.7, Proposition 41.2]{symplectic-techniques}. Such a preimage is a \emph{symplectic cross-section of} $(M,\omega,\mu)$. 
% If the principal stratum of $(M,\omega,\mu)$ is $\mathring{\ttt}_+^*$, then the symplectic cross-section theorem can be stated as follows.
If the principal stratum of $(M,\omega,\mu)$ is $\mathring{\ttt}_+^*$, then the symplectic cross-section theorem can be stated as follows.

\begin{theorem}[Symplectic cross-section theorem]\label{cross section theorem}\cite[Theorem 26.7, Proposition 41.2]{symplectic-techniques}
    Let $K$ be a compact connected Lie group and let $(M,\omega,\mu)$ be a connected Hamiltonian $K$-manifold with principal stratum $\mathring{\ttt}_+^*$. Then,
    \begin{enumerate}[(i)]
        \item The preimage, $\mu\n(\mathring{\ttt}_+^*)$, is a non-empty, symplectic, $T$-invariant submanifold of $M$.
        \item The action of $T$ on $\mu\n(\mathring{\ttt}_+^*)$ is Hamiltonian with moment map $\mu$.
       % (Recall that $(\kk^*)^T$ is the set of $T$ fixed points in $\kk^*$). BH: standard notation, breaks flow
    \end{enumerate}
\end{theorem}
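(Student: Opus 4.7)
The plan is to verify the three parts of (i) — non-emptiness, $T$-invariance, and submanifold structure — and then derive (ii) directly from the moment map identity $\iota_{\underline{X}}\omega = -d\langle \mu, X\rangle$. Non-emptiness is immediate from the definition of the principal stratum, since $\mu(M)\cap \mathring{\ttt}_+^*$ is dense in the non-empty set $\mu(M)\cap \ttt_+^*$. For $T$-invariance, $K$-equivariance gives $\mu(t\cdot x) = \Ad^*(t)\mu(x) = \mu(x)$ for any $t\in T$ and $x\in \mu^{-1}(\mathring{\ttt}_+^*)$, since $\mu(x)\in \ttt^*$ is $T$-fixed.

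I would next show that $\mu$ is transverse to $\ttt^*\subset\kk^*$ at every $x\in \mu^{-1}(\mathring{\ttt}_+^*)$, whence $\mu^{-1}(\mathring{\ttt}_+^*)$ is an open subset of the smooth submanifold $\mu^{-1}(\ttt^*)$. Set $\xi = \mu(x)$. Since $\xi\in \mathring{\ttt}_+^*$ is regular, the coadjoint stabilizer $K_\xi$ equals $T$, and therefore $\kk_x\subseteq \kk_\xi = \ttt$. The image of $d\mu_x$ coincides with the annihilator of $\kk_x$ in $\kk^*$, so it contains the annihilator of $\ttt$, which under a bi-invariant inner product on $\kk$ is identified with an $\Ad(T)$-invariant complement of $\ttt^*$ in $\kk^*$. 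This gives $\operatorname{image}(d\mu_x) + \ttt^* = \kk^*$, the transversality condition.

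The symplectic property at $x$ is the main calculation. Using the moment map identity, $v\in T_x\mu^{-1}(\mathring{\ttt}_+^*)$ iff $\omega_x(\underline{Y}_x, v) = -\langle d\mu_x(v), Y\rangle = 0$ for every $Y$ in the annihilator of $\ttt^*$ in $\kk$, which under a bi-invariant inner product equals $[\ttt,\kk]$. Hence the symplectic orthogonal complement of $T_x\mu^{-1}(\mathring{\ttt}_+^*)$ is precisely $\{\underline{Y}_x : Y\in [\ttt,\kk]\}$. I claim the two subspaces intersect trivially: if $\underline{Y}_x$ lies in $T_x\mu^{-1}(\mathring{\ttt}_+^*)$ with $Y\in [\ttt,\kk]$, then $d\mu_x(\underline{Y}_x) = -\ad^*_Y \xi$ lies in $\ttt^*$; but $\ad^*_Y\xi$ lies in $T_\xi(K\cdot\xi)$, which is complementary to $\ttt^*$ by regularity of $\xi$, so $\ad^*_Y\xi = 0$ and thus $Y\in \kk_\xi = \ttt$. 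Since $\ttt\cap [\ttt,\kk] = 0$ (as $\ttt$ is abelian, so $\langle \ttt, [\ttt,\kk]\rangle = \langle [\ttt,\ttt], \kk\rangle = 0$), this forces $Y = 0$. Non-degeneracy of $\omega$ restricted to $\mu^{-1}(\mathring{\ttt}_+^*)$ follows. Part (ii) is then immediate: restricting $\iota_{\underline{X}}\omega = -d\langle \mu, X\rangle$ to $X\in \ttt$ and to the submanifold gives the Hamiltonian $T$-action with moment map $\mu$.

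The main technical obstacle is the symplectic complement calculation — identifying the orthogonal as $\{\underline{Y}_x : Y\in [\ttt,\kk]\}$ and then verifying triviality of the intersection with $T_x\mu^{-1}(\mathring{\ttt}_+^*)$. Once that is in place, everything else reduces to formal manipulations with $K$-equivariance and the moment map equation.
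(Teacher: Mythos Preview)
The paper does not prove this theorem; it is stated as background and attributed to \cite[Theorem 26.7, Proposition 41.2]{symplectic-techniques}. Your proposal is a correct outline of the standard argument: the transversality step (via $\kk_x\subseteq\kk_\xi=\ttt$ and the fact that the image of $d\mu_x$ is the annihilator of $\kk_x$) and the symplectic step (identifying the $\omega$-orthogonal of $T_x\mu^{-1}(\mathring{\ttt}_+^*)$ with $\{\underline{Y}_x:Y\in\ttt^\perp\}$ and using regularity of $\xi$ to see this meets the tangent space trivially) are both sound. Since there is no proof in the paper to compare against, there is nothing further to say beyond noting that your sketch matches the classical Guillemin--Sternberg argument.
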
 

\begin{example}[Symplectic cross-section of $T^*K$]\label{T^*K cross section example}
Let $T^*K$ denote the cotangent bundle of $K$. Fix the standard trivializations, $T^*K \cong K \times \kk^*$ and $T(T^*K) \cong K \times \kk^* \times \kk\times \kk^*$ by left invariance. The canonical symplectic form on $T^*K$ is $\omega_\mathrm{can} = -d\theta_{\mathrm{taut}}$, where $\theta_{\mathrm{taut}}$ is the tautological 1-form.

There is a (left) action of $K\times K$ on $T^*K$ by the cotangent lift of left and right multiplication. In terms of the trivialization, the action is
$(k_1,k_2) \cdot (k,\xi) = (k_1kk_2\n,\Ad_{k_2}^*\xi)$. 
This action is Hamiltonian with moment map
\begin{equation}\label{equation; cotangent bundle moment map}
    (\mu_L , \mu_R) \colon K\times \kk^* \rightarrow \kk^* \times \kk^*, \quad (k,\xi) \mapsto (\Ad^*_k\xi,-\xi).
\end{equation}
The symplectic cross-section of $T^*K$ with respect to the action of the right copy of $K$ and the opposite Weyl chamber $-\ttt_+^*$ is $\mu_R\n(-\mathring{\ttt}_+^*) = K \times \mathring{\ttt}_+^*$. 
It is a Hamiltonian $K\times T$ manifold. 
\end{example} 

\subsubsection{The Thimm torus action}\label{thimm torus background section}

This section recalls the definition of an additional Hamiltonian $T$-action, called the \emph{Thimm torus action}, that is defined on certain dense subsets of Hamiltonian $K$-manifolds and commutes with the action of $K$. See \cite{GS1}.

Let $(M,\omega,\mu)$ be a connected Hamiltonian $K$-manifold  with principal stratum\footnote{We include this assumption only for the sake of simplified exposition; it is not necessary in order to define a Thimm torus action on a dense subset of $M$.}  $\mathring{\ttt}_+^*$. Consider the symplectic cross-section of $(M,\omega,\mu)$ with respect to $\ttt_+^*$ and the symplectic cross-section of $(T^*K,\omega_{\mathrm{can}},\mu_R)$ with respect to the opposite Weyl chamber $-\ttt_+^*$ (see Example \ref{T^*K cross section example}). Their symplectic product,
\[
    (K \times \mathring{\ttt}_+^*\times \mu\n(\mathring{\ttt}_+^*),\omega_{\mathrm{can}} \oplus \omega),
\]
is a Hamiltonian $K\times T \times T$-manifold, 
with action $(k',t_1,t_2) \cdot (k,\xi,x) = (k' k t_1\n,\xi,t_2\cdot x)$ and moment map 
\[
    (\mu_L,\mu_R,\mu)\colon K \times \mathring{\ttt}_+^*\times \mu\n(\mathring{\ttt}_+^*) \to \kk^* \times \ttt^* \times \ttt^*, \quad (k,\xi,x) \mapsto (\Ad_k^*\xi,-\xi,\mu(x)).
\]
The diagonal action of $T$ by $t\cdot (k,\xi,x) = (kt\n,\xi,t\cdot x)$ is Hamiltonian with moment map $(k,\xi,x) \mapsto \mu(x)-\xi$. The symplectic reduction by the diagonal $T$-action is a smooth symplectic manifold, 
\begin{equation} \label{equation;reducedproduct}
    K\times_T \mu\n(\mathring{\ttt}_+^*) := (K\times \mu\n(\mathring{\ttt}_+^*))/T  \cong (K \times \mathring{\ttt}_+^*\times \mu\n(\mathring{\ttt}_+^*),\omega_{\mathrm{can}} \oplus \omega)\sslash_0 T.
\end{equation}
The isomorphism arises from the $T$-equivariant map $(k,x) \mapsto (k,\mu(x),x)$.

Let $[k,x]\in K\times_T \mu\n(\mathring{\ttt}_+^*)$ denote the $T$-equivalence class of $(k,x)$. The reduced space \eqref{equation;reducedproduct} is a Hamiltonian $K\times T$-manifold, with action $(k',t)\cdot [k,x] = [k'kt,x] = [k'k,t\cdot x]$ and moment map
\[
    (\overline{\mu}_L,\overline{\mu}_R) \colon K \times_T \mu\n(\mathring{\ttt}_+^*) \to \kk^*\times \ttt^*, \quad \overline{\mu}_L([k,x])=\Ad_k^*\mu(x),\quad \overline{\mu}_R([k,x]) = -\mu_R(x) = \mu(x).
\]
The map
\begin{equation}\label{dense embedding}
    \varphi\colon K\times_T \mu\n(\mathring{\ttt}_+^*) \to M,\quad [k,x] \mapsto k\cdot x
\end{equation}
is a $K$-equivariant symplectomorphism onto a dense, $K$-invariant subset of $M$ and satisfies $\mu \circ \varphi = \overline{\mu}_L$. Denote the image of $\varphi$ by $U$. As a consequence of this construction, one has a Hamiltonian action of $K\times T$ on $U$.

The construction of an extra Hamiltonian $T$ action on $U$ can be rephrased in the following way. Define a map $\mathcal{S}\colon \kk^* \to \ttt_+^*$ by letting $\mathcal{S}(\xi)$ be the unique element of the intersection $\mathcal{O}_\xi \cap \ttt_+^*$. By $K$-equivariance, $U = (\mathcal{S}\circ \mu)\n(\mathring{\ttt}_+^*)$. Define an action of $T$ on $U$ by
\begin{equation}\label{new action}
    t\ast x = (k\n t k)\cdot x,
\end{equation}
where $k\in K$ such that $\Ad_k^* \mu(x)\in \mathring{\ttt}_+^*$. This action is well-defined since the stabilizer subgroup of elements in $\mu\n(\mathring{\ttt}_+^*)$ is contained in $T$. The restriction of $\mathcal{S}\circ \mu$ to $U$ is a moment map for this action (in particular, its restriction to $U$ is smooth) \cite{GS1}. The preceding discussion can be summarized as follows.

\begin{proposition}\label{proposition about dense model space}
    Let $K$ be a compact connected Lie group, let $(M,\omega,\mu)$ be a connected Hamiltonian $K$-manifold with principal stratum $\mathring{\ttt}_+^*$, and let $U=(\mathcal{S}\circ \mu)\n(\mathring{\ttt}_+^*)$. Then, the map
    \[
        K\times_T \mu\n(\mathring{\ttt}_+^*) \to U,\quad [k,x] \mapsto k\cdot x
    \]
    is an isomorphism of Hamiltonian $K\times T$-manifolds, i.e.\ 
    it is a $K\times T$-equivariant symplectomorphism and the following diagrams commute.
    \begin{equation}
    \begin{tikzcd}
        K\times_T \mu\n(\mathring{\ttt}_+^*) \ar[d,"\overline{\mu}_L"] \ar[r] & U \ar[d,"\mu"] \\
        \kk^* \ar[r,"="] & \kk^*
    \end{tikzcd}\quad \quad   
    \begin{tikzcd}
        K\times_T \mu\n(\mathring{\ttt}_+^*) \ar[d,"\overline{\mu}_R"] \ar[r] & U \ar[d,"\mu"] \\
        \ttt^*_+  & \kk^*\ar[l,"\mathcal{S}"]
    \end{tikzcd}
\end{equation}
\end{proposition}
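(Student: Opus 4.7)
The statement packages together several facts that have been essentially set up in the preceding discussion, so the plan is to assemble them and upgrade the already-observed $K$-equivariance of the map $\varphi\colon [k,x]\mapsto k\cdot x$ to full $K\times T$-equivariance with respect to the Thimm action $\ast$, and then read off the commutativity of the two moment map diagrams. Throughout, I would work with the concrete formula for $\varphi$ rather than an abstract characterization.

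First I would pin down the purely geometric content, namely that $\varphi$ is a diffeomorphism onto $U$. Well-definedness is immediate because the diagonal $T$-action $t\cdot(k,x) = (kt\n, t\cdot x)$ preserves the formula $(k,x)\mapsto k\cdot x$. The image lands in $U$, since for $\mu(x)\in\mathring\ttt_+^*$ one has $\mathcal{S}(\mu(k\cdot x)) = \mathcal{S}(\Ad_k^*\mu(x)) = \mu(x)\in\mathring\ttt_+^*$. Conversely, given any $y\in U$ there is a $k\in K$ with $\Ad_{k\n}^*\mu(y)\in \mathring\ttt_+^*$, so $y = \varphi([k, k\n\cdot y])$. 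Injectivity follows from the standard fact that the stabilizer in $K$ of any $x\in\mu\n(\mathring\ttt_+^*)$ is contained in $T$: if $k_1\cdot x_1 = k_2\cdot x_2$, then $k_2\n k_1$ sends $x_1$ to $x_2$ while preserving $\mu(x_1)=\mu(x_2)$, so $k_2\n k_1\in T$ and $(k_1,x_1)$ and $(k_2,x_2)$ lie in the same $T$-orbit.

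To establish the symplectomorphism property I would realize $K\times_T\mu\n(\mathring\ttt_+^*)$ as the Marsden-Weinstein reduction of the symplectic product $(K\times\mathring\ttt_+^*\times\mu\n(\mathring\ttt_+^*),\omega_{\mathrm{can}}\oplus\omega)$ at $0$ for the diagonal $T$-action, as already noted in~\eqref{equation;reducedproduct}. Under the $T$-equivariant identification $(k,x)\mapsto(k,\mu(x),x)$ of $K\times\mu\n(\mathring\ttt_+^*)$ with the zero level set, the map $\varphi$ is the composition with $(k,\xi,x)\mapsto k\cdot x$; its pullback of $\omega$ coincides on the level set with the form that descends to the reduced symplectic form, so $\varphi$ is a symplectomorphism onto $U$.

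Finally, $K$-equivariance is the direct computation $\varphi([k'k,x]) = k'k\cdot x = k'\cdot \varphi([k,x])$, and for the Thimm $T$-action one checks that for $[k,x]$ with $\mu(x)\in\mathring\ttt_+^*$ the element $k\n$ is an admissible choice in the definition of $\ast$, giving $t\ast(k\cdot x) = (ktk\n)\cdot(k\cdot x) = kt\cdot x = \varphi([kt,x])$. The two moment map diagrams then follow from $\mu\circ\varphi([k,x]) = \Ad_k^*\mu(x) = \overline\mu_L([k,x])$ and $\mathcal{S}\circ\mu\circ\varphi([k,x]) = \mathcal{S}(\Ad_k^*\mu(x)) = \mu(x) = \overline\mu_R([k,x])$, the latter using that the coadjoint orbit of $\mu(x)$ meets $\ttt_+^*$ in the single point $\mu(x)$. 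The main point to handle with care -- and the only real obstacle -- is that $\mathcal{S}\circ\mu$ is not a priori smooth on all of $M$ but only on $U$; the cleanest way to see both this smoothness and its moment map property for $\ast$ is precisely to transport the corresponding properties of $\overline\mu_R$ via the isomorphism $\varphi$ itself.
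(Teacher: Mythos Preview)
Your proposal is correct and follows essentially the same route as the paper: the proposition is explicitly presented there as a summary of the preceding discussion, and you unpack exactly that discussion (the reduction description of $K\times_T\mu\n(\mathring\ttt_+^*)$, the identification of $\varphi$ with the dense embedding~\eqref{dense embedding}, and the check of $T$-equivariance for the Thimm action using the formula~\eqref{new action}). If anything, you supply more detail on injectivity and surjectivity of $\varphi$ than the paper does, which simply asserts that $\varphi$ is a $K$-equivariant symplectomorphism onto a dense subset.
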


The extra Hamiltonian $T$ action on $U$ is often referred to as the \emph{Thimm torus action}\footnote{Although we like to call it the Thimm torus action, we note that to our knowledge this torus action originated in the work of Guillemin and Sternberg \cite{GS1}.}. It does not in general extend to a well-defined  action of $T$ on $M$. 

\subsubsection{Toric manifolds and multiplicity free spaces}\label{section;multiplicity free spaces}

A Hamiltonian $T$-manifold $(M,\omega,\mu)$ is \emph{completely integrable} if $T$ acts locally transitively on fibers of $\mu$. If the action of $T$ is also effective, then $(M,\omega,\mu)$ is a \emph{toric manifold} (or \emph{toric $T$-manifold} if we want to emphasize the r\^ole of the torus).

\begin{example}\label{toric domain example} As in Example \ref{T^*K cross section example}, the cotangent bundle $T^*T$ of a compact torus $T$ has a canonical symplectic structure, $\omega_{\mathrm{std}} = -d\theta_{\mathrm{taut}}$. With respect to the trivialization $T^*T = \ttt^* \times T$, the action of $T$ on $T^*T$ by cotangent lift of left multiplication is $t \cdot (\lambda,t') = (\lambda,tt')$ and the moment map is $\pr(\lambda,t) = \lambda$ (cf.~the formula for $\mu_L$ in  Equation \eqref{equation; cotangent bundle moment map}). For any open subset $U\subset \ttt^*$, $(U\times T, \omega_{\mathrm{std}}, \pr)$ is a toric manifold.  For the torus $\T^n = S^1 \times \dots \times S^1$ we fix the identification $\Lie(S^1)^* = \R$ such that $\omega_{\mathrm{std}}$ is given by Equation \eqref{equation; omega std}.
\end{example}

% Let $K$ be an arbitrary compact connected Lie group and let $K_\xi$ denote the stabilizer subgroup of an element $\xi \in \kk^*$ with respect to the coadjoint action of $K$. For any Hamiltonian $K$-manifold $(M,\omega,\mu)$, $K_\xi$ preserves the level set $\mu\n(\xi)$.  

% \begin{definition}
%     Let $K$ be a compact connected Lie group. A Hamiltonian $K$-manifold $(M,\omega,\mu)$ is \emph{multiplicity free} if for all $\xi \in \kk^*$ such that $\mu\n(\xi)$ is non-empty, $K_\xi$ acts locally transitively on  $\mu\n(\xi)$.
% \end{definition}

%If $K$ is a torus, then this definition reduces to the definition of completely integrable Hamiltonian torus actions. multiplicity free space are closely related to  non-commutative integrable systems. They were introduced in a series of papers by Guillemin and Sternberg in the early 80's that focused on dynamical problems and connections to geometric quantization \cite{GS1,GS2,GS3,GS4}. After several partial results, including notable work by Delzant, Iglesias, and Woodward and others, a Delzant-style classification of all compact connected multiplicity free Hamiltonian $K$-manifolds was proved by Knop and Losev \cite{knop2,losev}.\footnote{In fact, the classification holds for the larger family of all \emph{convex} multiplicity free Hamiltonian $K$-manifolds (see \cite[Definition 2.2]{knop2}). Our results extend in a straightforward way to this more general setting.}

By equivariance of the moment map, a Hamiltonian $K$-manifold $(M,\omega,\mu)$ is a multiplicity free space if and only if the action of the maximal torus $T\subset K$ on the symplectic cross-section is  completely integrable.

Let $(M,\omega,\mu)$ be a compact, connected multiplicity free space with principal stratum $\mathring{\ttt}_+^*$, Kirwan polytope $\triangle = \mu(M) \cap \ttt_+^*$, and principal isotropy subgroup $L \leq T$. Let $\mathring{\triangle}$ denote the relative interior of $\triangle$. Then $W = \mu\n(\mathring{\triangle})$ is an open, dense, $T$-invariant subset of the symplectic cross-section $\mu\n(\mathring{\ttt}_+^*)$. The kernel of the action of $T$ on $W$ is the subgroup $L$ and the action of $T/L$ on $W$ is free. The convex set $\mathring{\triangle}$ spans an affine subspace of $\ttt^*$ that is a translation of the subspace of $\ttt^*$ given by the image of the map $\Lie(T/L)^* \to \ttt^*$ dual to the quotient map $\ttt \to \Lie(T/L)$. In particular, the dimension of  $\mathring{\triangle}$ and $T/L$ are equal and $\mathring{\triangle}$ is open as a subset of this affine subspace \cite{knop2}. Fix a linear identification of $\Lie(T/L)^*$ with the affine subspace spanned by $\mathring{\triangle}$. 

With respect to the identification of $\Lie(T/L)^*$ with the affine subspace spanned by $\mathring{\triangle}$, the map  $\mu\colon W \to \mathring{\triangle}$ is a moment map for the free action of $T/L$ on $W$.  The tuple $(W,\omega,\mu)$ is thus a toric $T/L$-manifold.

Define the toric $T/L$-manifold $(\mathring{\triangle}\times (T/L),\omega_{\mathrm{std}},\pr)$ as in Example \ref{toric domain example}, taking care to note that $\Lie(T/L)^*$ is identified with the affine subspace of $\ttt^*$ spanned by $\mathring{\triangle}$.
It follows from compactness of $M$ that $\mu$ is proper as a map to $\mathring{\triangle}$. By the classification of non-compact toric manifolds with proper moment maps,  $(W,\omega,\mu)$ is isomorphic as a toric $T/L$-manifold to  $(\mathring{\triangle}\times (T/L),\omega_{\mathrm{std}},\pr)$  \cite{KL}. The preceding discussion is summarized as follows. Note that the commutative diagram \eqref{diagram: mult free isomorphism} is stated in terms of maps to $\ttt^*$. This is due to our identification of $\Lie(T/L)^*$ with an affine subspace of $\ttt^*$.

\begin{lemma}\label{model for sympelctic slice}
    Let $(M,\omega,\mu)$ be a compact, connected, multiplicity free Hamiltonian $K$-manifold with principal stratum $\mathring{\ttt}_+^*$, Kirwan polytope $\triangle = \mu(M) \cap \ttt_+^*$, and principal isotropy subgroup $L \leq T$. Let $(W,\omega,\mu)$ and $(\mathring{\triangle}\times (T/L),\omega_{\mathrm{std}},\pr)$ be the toric $T/L$-manifolds defined as above. Then $(W,\omega,\mu)$ and $(\mathring{\triangle}\times (T/L),\omega_{\mathrm{std}},\pr)$ are isomorphic as Hamiltonian $T/L$-manifolds, i.e.\ there is a $T$-equivariant (therefore also $T/L$-equivariant) symplectomorphism $(W,\omega) \cong (\mathring{\triangle} \times (T/L),\omega_{\mathrm{std}})$ such that the following diagram commutes.
    \begin{equation}\label{diagram: mult free isomorphism}
    \begin{tikzcd}
        W \ar[d,"\mu"] \ar[r,"\cong"] & \mathring{\triangle} \times (T/L) \ar[d,"\pr"] \\
        \ttt^*  & \ttt^*\ar[l,"="]
    \end{tikzcd}
\end{equation}
\end{lemma}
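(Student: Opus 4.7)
The plan is to realize $(W,\omega,\mu|_W)$ as a non-compact toric $T/L$-manifold with proper moment map, and then invoke the classification of such manifolds by Karshon--Lerman \cite{KL} to obtain the desired isomorphism with the standard model $(\mathring{\triangle}\times(T/L),\omega_{\mathrm{std}},\pr)$.

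First I would use the symplectic cross-section theorem (Theorem \ref{cross section theorem}) to conclude that $\mu^{-1}(\mathring{\ttt}_+^*)$ is a Hamiltonian $T$-manifold with moment map $\mu$, and that the multiplicity-free condition on $(M,\omega,\mu)$ translates into the $T$-action on the cross-section being completely integrable. Restricting to the open, dense, $T$-invariant subset $W=\mu^{-1}(\mathring{\triangle})$, the principal isotropy $L$ acts trivially on $W$ by definition, so the action descends to one of $T/L$ on $W$. By the classification results of Knop \cite{knop2} already quoted in the paragraph above the lemma, $T/L$ acts \emph{freely} on $W$, and $\dim\mathring{\triangle}=\dim(T/L)$, with $\mathring{\triangle}$ spanning a translate of the image of $\Lie(T/L)^*\hookrightarrow\ttt^*$. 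Under the chosen identification of $\Lie(T/L)^*$ with this affine subspace, $\mu|_W\colon W\to\mathring{\triangle}$ becomes a moment map for the free $T/L$-action, so $(W,\omega,\mu|_W)$ is a toric $T/L$-manifold.

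Next I would verify that $\mu|_W$ is proper as a map to $\mathring{\triangle}$. Given a compact $C\subset\mathring{\triangle}$, its preimage in $M$ is closed in the compact space $M$, hence compact; since $C\subset\mathring{\triangle}$ and $\mathring{\triangle}\subset\mathring{\ttt}_+^*$, this preimage is contained in $W$ and coincides with $(\mu|_W)^{-1}(C)$.

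Having checked that $(W,\omega,\mu|_W)$ is a connected toric $T/L$-manifold with proper moment map onto the open convex set $\mathring{\triangle}$, I would apply the Karshon--Lerman classification \cite{KL} to obtain a $T/L$-equivariant symplectomorphism $(W,\omega)\cong(\mathring{\triangle}\times(T/L),\omega_{\mathrm{std}})$ intertwining $\mu|_W$ with $\pr$; the commutativity of \eqref{diagram: mult free isomorphism} is then immediate from the identification of $\Lie(T/L)^*$ with the affine span of $\mathring{\triangle}$ inside $\ttt^*$. The isomorphism is automatically $T$-equivariant because $L$ acts trivially on both sides. The only subtle point is to make sure all data match the hypotheses of the Karshon--Lerman theorem, in particular connectedness of $W$ (which follows from connectedness of $\mu^{-1}(\mathring{\ttt}_+^*)$, a standard consequence of the cross-section theorem) and the identification of the correct effective torus and target polytope; no estimate or extra ingredient beyond what has already been developed in this section is needed.
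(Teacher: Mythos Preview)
Your proposal is correct and follows essentially the same approach as the paper: the paper's argument, given in the paragraph immediately preceding the lemma, consists precisely of observing that $(W,\omega,\mu)$ is a toric $T/L$-manifold (via the cross-section theorem and Knop's results), noting that compactness of $M$ makes $\mu$ proper as a map to $\mathring{\triangle}$, and then invoking the Karshon--Lerman classification \cite{KL}. Your write-up simply unpacks these same steps with a bit more detail.
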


\subsection{Hamiltonian Poisson-Lie group actions} \label{PLgroupSection}

This section recalls the relevant details of compact connected Poisson-Lie groups and their Hamiltonian actions from \cite{A97,AMW,LW}.   For further background on Poisson-Lie groups the reader may consult \cite{Chari-Pressley,Etingof-Schiffmann,Laurent-Gengoux}. 

In general, a \emph{Poisson-Lie group}, $(K,\pi)$, is a Lie group $K$ equipped with a Poisson bivector field $\pi$ such that the group multiplication map $K\times K \to K$ is a Poisson map (with respect to the product Poisson structure on $K\times K$).  The bivector field $\pi$ necessarily vanishes at the group identity and therefore has a linearization $\delta_\pi\colon \kk \to \kk\wedge \kk$. The linearization is a Lie algebra 1-cocycle and the dual map, $\delta_\pi^*\colon \kk^* \wedge \kk^* \to \kk^*$, is a Lie bracket. In other words, the tuple $(\kk,[\cdot,\cdot]_\kk,\delta_\pi)$ is a Lie bialgebra. Two Lie bialgebras,  $(\kk,[\cdot,\cdot]_{\kk},\delta_\kk)$ and $(\mathfrak{l},[\cdot,\cdot]_{\mathfrak{l}},\delta_\mathfrak{l})$, are \emph{dual} to one another if there is a dual pairing, i.e.\ a non-degenerate bilinear map $\langle\cdot,\cdot\rangle\colon \kk \times \mathfrak{l} \to \R$, such that  
\begin{equation}\label{dual lie bialgebras}
    \langle[X,Y]_\kk,\xi\rangle = \langle X\wedge Y,\delta_{\mathfrak{l}}(\xi)\rangle \quad \text{and} \quad \langle X,[\eta,\xi]_{\mathfrak{l}}\rangle = \langle \delta_\kk(X),\eta\wedge \xi\rangle \quad \forall X,Y \in \kk, \, \eta,\xi \in \mathfrak{l}.
\end{equation}
Two Poisson-Lie groups are \emph{dual}\footnote{It is common in the literature to define \emph{the} dual Poisson-Lie group of a Poisson Lie group $(K,\pi)$ to be the unique simply connected dual Poisson-Lie group.} to one another if their linearizations are dual to each other as Lie bialgebras. 

A \emph{Manin triple} is a tuple of Lie algebras  $(\g,\kk,\mathfrak{l})$ along with a non-degenerate ${\rm ad}$-invariant bilinear form on $\g$ such that: i) $\kk$ and $\mathfrak{l}$ are Lie subalgebras of $\g$, ii) $\g = \kk \oplus \mathfrak{l}$ as vector spaces, and iii) $\kk$ and $\mathfrak{l}$ are isotropic with respect to the bilinear form.  Given a Manin triple $(\g,\kk,\mathfrak{l})$ the bilinear form on $\g$ defines a dual pairing between $\kk$ and $\mathfrak{l}$. If $\delta_\kk$ and $\delta_\mathfrak{l}$ denote the maps defined by Equation \eqref{dual lie bialgebras} with respect to this pairing, then  $(\kk,[\cdot,\cdot]_{\kk},\delta_\kk)$ and $(\mathfrak{l},[\cdot,\cdot]_{\mathfrak{l}},\delta_\mathfrak{l})$ are dual Lie bialgebras.

\subsubsection{Compact Poisson-Lie groups}\label{compact poisson lie groups}

Let $K$ be a compact connected Lie group with complexification $G$. %Recall from Section \ref{lie theory section} that we defined an inner product $(\cdot,\cdot)_\g$ on $\g$. 
For all $s\in \R$, $s\neq0$, define the real-valued bilinear form
\[
    \llangle\cdot,\cdot\rrangle_s := \frac{2}{s}\im (\cdot,\cdot)_\g.
\]
Denote the real Lie subalgebra $\a\oplus \nnn_- \subset \g$ by $\a\nnn_-$ and the corresponding connected subgroup of $G$ by $AN_-$.  The tuple $(\g,\kk,\a\nnn_-)$ along with  $\llangle\cdot,\cdot\rrangle_s$ is a Manin triple.\footnote{Most references, such as \cite{LW}, prefer to use $\a\nnn_+$ in their definition of the standard Manin triple on $\g$. Our convention is equivalent: the map $\g \to \g$, $x\mapsto -x^\dagger$, defines an isomorphism of Manin triples $(\g,\kk,\a\nnn_-,\llangle\cdot,\cdot\rrangle_s)\cong (\g,\kk,\a\nnn_+,\llangle\cdot,\cdot\rrangle_{-s})$.} The dual Lie bialgebras defined by this Manin triple are $(\kk,[\cdot,\cdot]_\kk,\delta_{\kk,s})$ and $(\a\nnn_-,[\cdot,\cdot]_{\a\nnn_-},\delta_{\a\nnn_-,s})$, where $\delta_{\kk,s}$ and $\delta_{\a\nnn_-,s}$ are defined by Equation \eqref{dual lie bialgebras}. Denote $\delta_{\kk} := \delta_{\kk,1}$ and $\delta_{\a\nnn_-} := \delta_{\a\nnn_-,1}$, and observe that $\delta_{\kk,s} = s\delta_\kk$ and  $\delta_{\a\nnn_-,s} = s\delta_{\a\nnn_-}$. The Lie bialgebras $(\kk,[\cdot,\cdot]_\kk,s\delta_{\kk})$ and $(\a\nnn_-,[\cdot,\cdot]_{\a\nnn_-},s\delta_{\a\nnn_-})$ integrate to dual Poisson-Lie groups $(K,s\pi_K)$ and $(AN_-,s\pi_{K^*})$.  For this reason, we often denote $K^* := AN_-$.

\begin{remark}
If $\widehat{G}\to G$ is a covering group of $G$, then the covering map restricts to a Lie group isomorphism from $\widehat{AN}_-\subset \widehat{G}$ to $AN_-\subset G$. For this reason, we often assume that $G$ is of the form
\begin{equation}\label{equation;niceformG}
    G \cong Z \times G_{sc}, 
\end{equation}
where $G_{sc}$ is semisimple and simply connected, and $Z$ is an algebraic torus with Lie algebra $\z(\g)$.
\end{remark}

\begin{remark}\label{remark; quasitriangular r matrix} 
For $K$, which is not assumed to be simply connected, the integration of $\delta_{\kk}$ is achieved by defining an anti-symmetric $r$-matrix $\Lambda_0 \in \kk\wedge\kk$ such that the Lie algebra 1-cocycle $\delta_\kk$ is a coboundary, i.e.\ $\partial\Lambda_0 = \delta_\kk$. The formula for this $r$-matrix is the same as in the semisimple or simple case (see e.g.\ \cite{Etingof-Schiffmann}). The Poisson-Lie group structure on $K$ is then defined as $\pi_K := \Lambda_0^L - \Lambda_0^R$, where $\Lambda_0^L$ and $\Lambda_0^R$ denote the right and left invariant bivector fields on $K$ with $\Lambda_0^L(e) = \Lambda_0^R(e) = \Lambda_0$.
\end{remark}

\begin{remark}
Although there is some choice in the definition of the bilinear form $(\cdot,\cdot)_\g$, the Poisson-Lie groups $K$ and $AN_-$ only depend on the restriction of $(\cdot,\cdot)_\g$ to $[\g,\g]$, since
\[
    \delta_\kk\vert_{\z(\kk)} = 0 \text{ and } \delta_{\a\nnn_-}\vert_{i\z(\kk)} = 0.
\]
%Thus, they are determined uniquely, up to a scalar multiple.
\end{remark}

\begin{definition} \label{definition;dressing} For all $k \in K$ and $b\in AN_-$, the equation
\begin{equation}
    kb = b^kk^b
\end{equation}
uniquely determines elements $b^k \in AN_-$ and $k^b\in K$ by the Iwasawa decomposition, $G = AN_- K$. This defines a left action $K \times AN_- \to AN_-$, $(k,b) \mapsto b^k$, and a right action, $AN_- \times K \to K$, $(k,b) \mapsto k^b$. These are the \emph{dressing actions} of $K$ and $AN_-$ on each other. 
\end{definition}

Note that since $T$ normalizes $AN_-$, the dressing action of $T$ on $AN_-$ is simply conjugation.

\subsubsection{Explicit formula for the Poisson bracket on \texorpdfstring{$K^*$}{K*}}
\label{subsubsection;explicitformula}
% \note{BH:  rearranged for clarity}
The real Poisson structure $\pi_{K^*}$ can be described in terms of a coboundary Poisson-Lie group structure on $G$ as follows. 

Fix a $\C$-basis of $\g$, consisting of $E_\alpha \in \g_\alpha$, $\alpha \in R$, and $X_j \in \a$, $j= 1,\ldots ,\tilde r$, subject to 
\begin{equation} \label{equation;basisproperties}
(E_\alpha,E_{-\alpha})_\g = 1, \forall \alpha \in R, \qquad  (X_i,X_j)_\g = \delta_{i,j}, i,j \in [1,\tilde{r}].
\end{equation}
Taking tensors over $\R$, define
\begin{equation}
\label{equation;rmatrix}
    \begin{split}
        \mathrm{r} = &\frac{1}{2}\sum_{j=1}^{\tilde r} (\sqrt{-1}X_j)\otimes X_j \\
        &+ \frac{1}{2}\sum_{\alpha\in R_+}\left(E_\alpha\otimes (\sqrt{-1}E_{-\alpha}) + (\sqrt{-1}E_\alpha)\otimes E_{-\alpha}-E_{-\alpha}\otimes \sqrt{-1}E_{-\alpha}+(\sqrt{-1}E_{-\alpha})\otimes E_{-\alpha}   \right).
    \end{split}
\end{equation}
Let $\pi_G = \mathrm{r}^L-\mathrm{r}^R$, where $\mathrm{r}^L$ and $\mathrm{r}^R$ denote the left and right-invariant tensor fields on $G$ satisfying $\mathrm{r}^L(e) = \mathrm{r}^R(e) = \mathrm{r}$.
%
%The Drinfeld double associated to  $(\g,\kk,\a\nnn_-)$ and $\llangle\cdot,\cdot\rrangle_s$  has a quasi-triangular $r$-matrix that can be expressed as follows. 
%
%In this basis, the standard quasi-triangular $r$-matrix is 
%
%Note that the tensors are taken over $\R$.
If $G$ is viewed as a complex manifold and $\pi_G$ a section of the complexification of $\wedge^2 TG$, then the formula simplifies, taking tensors over $\C$: 
\begin{equation}
\label{equation;rmatrixholomorphic}
        \mathrm{r} = \frac{\sqrt{-1}}{2}\sum_{j=1}^{\tilde r} X_j\otimes X_j 
        + \sqrt{-1}\sum_{\alpha\in R_+}E_\alpha\otimes E_{-\alpha}.
\end{equation}

The dual Poisson Lie group $(AN_-,\pi_{K^*})$ is an anti-Poisson submanifold of $(G, \pi_G)$.
The following is immediate from the expression~\eqref{equation;rmatrix} for $\mathrm{r}$. Here and in what follows, we follow the convention that, for $X\in \mathfrak{b}_-$, and $f\in C^\infty(B_-,\C)$, the action of $X$ is
\[
(X\cdot f)(b) = \left.\frac{d}{dt}\right|_{t=0} f(\exp(-tX) b),\qquad (f\cdot X) (b) =\left. \frac{d}{dt}\right|_{t=0} f(b\exp(tX)).
\]
\begin{lemma}
\label{lemma;formulaformixedbrackets}
  Let $f$ and $g$ be complex valued functions defined on an open subset of $B_-$. 
   If $f$ is holomorphic and $g$ is anti-holomorphic, then
  \begin{equation*}
      \begin{split}
        \{f\vert_{K^*},g\vert_{K^*}\}_{K^*} = & \frac{\sqrt{-1}}{2}\sum_{j=1}^{\tilde r} \left((X_j\cdot f  )(X_j\cdot g)-(f\cdot X_j  )(g\cdot X_j) \right)\vert_{K^*}\\ &+\sqrt{-1}\sum_{\alpha\in R_+}\left((E_{-\alpha}\cdot f  )(E_{-\alpha}\cdot g)-(f\cdot E_{-\alpha}  )(g\cdot E_{-\alpha}) \right)  \vert_{K^*}.
      \end{split}
  \end{equation*}
\end{lemma}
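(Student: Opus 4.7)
The plan is to reduce the computation to one in the ambient group $G$ and then to exploit the holomorphic/anti-holomorphic hypothesis to cancel terms involving root vectors $E_\alpha$, $\alpha\in R_+$, which, crucially, are not tangent to $B_-$ and so should not appear in a formula using only derivations coming from $\mathfrak{b}_-$.

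The first step is to use that $(K^*,\pi_{K^*})$ is an anti-Poisson submanifold of $(G,\pi_G)$: if $F,H$ are any local extensions of $f,g$ to a neighborhood of $K^*$ in $G$, then
\[
\{f|_{K^*},g|_{K^*}\}_{K^*} \;=\; -\{F,H\}_G\big|_{K^*},
\]
and the ambient bracket is computed from the real $r$-matrix expression $\pi_G = \mathrm{r}^L - \mathrm{r}^R$ given in \eqref{equation;rmatrix}.

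The key observation is that, since the complex structure of $\mathfrak{g}$ coincides with the almost complex structure on $T_eG$ coming from the complex manifold structure on $G$, for every $Y\in\mathfrak{g}$ the identities
\[
(\sqrt{-1}Y)\cdot F \;=\; \sqrt{-1}\,(Y\cdot F),\qquad (\sqrt{-1}Y)\cdot H \;=\; -\sqrt{-1}\,(Y\cdot H)
\]
hold for $F$ holomorphic and $H$ anti-holomorphic (together with their analogues for $(\bullet)\cdot Y$), because $J$ acts on $(1,0)$-cotangent vectors by $+\sqrt{-1}$ and on $(0,1)$-cotangent vectors by $-\sqrt{-1}$. Plugging this into each summand of \eqref{equation;rmatrix} paired against $dF\wedge dH$, the two mixed $\alpha$-summands $E_\alpha\otimes(\sqrt{-1}E_{-\alpha})$ and $(\sqrt{-1}E_\alpha)\otimes E_{-\alpha}$ pick up opposite signs and cancel, whereas the two pure $\alpha$-summands $-E_{-\alpha}\otimes(\sqrt{-1}E_{-\alpha})$ and $(\sqrt{-1}E_{-\alpha})\otimes E_{-\alpha}$ reinforce and combine to $2\sqrt{-1}(E_{-\alpha}\cdot F)(E_{-\alpha}\cdot H)$; similarly, the Cartan summand contributes $\sqrt{-1}(X_j\cdot F)(X_j\cdot H)$. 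The disappearance of the $E_\alpha$ terms is precisely what makes the final expression well-defined on $B_-$.

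The remaining step is to perform the same computation for $\mathrm{r}^R$, translate between the paper's $X\cdot f$, $f\cdot X$ and the right/left-invariant vector field actions via the conventions $X\cdot f = -X^R f$ and $f\cdot X = X^L f$, subtract the two contributions, and apply the overall anti-Poisson sign to arrive at the stated formula. The only subtlety, which I view as the principal obstacle, is consistent bookkeeping of three independent signs: the anti-Poisson sign between $\pi_G|_{K^*}$ and $\pi_{K^*}$, the $J$-sign from the holomorphic/anti-holomorphic hypothesis, and the normalization of $X\cdot f$ versus $X^R f$. Beyond this accounting, the argument is a direct calculation requiring no further conceptual input.
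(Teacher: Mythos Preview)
Your proposal is correct and follows essentially the same route as the paper: both use that $K^*$ is anti-Poisson in $(G,\pi_G)$, expand $\pi_G=\mathrm{r}^L-\mathrm{r}^R$ via the real $r$-matrix \eqref{equation;rmatrix}, and then invoke $\C$-linearity of $df$ and $\C$-antilinearity of $dg$ to cancel the $E_\alpha$ terms and combine the rest. The paper's proof is simply a terser version of your computation, compressing your cancellation analysis into the single remark ``we use that $df$ is $\C$-linear and $dg$ is $\C$-antilinear.''
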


\begin{proof}
Computing directly from the definition~\eqref{equation;rmatrix}, one finds for $b\in K^*$,
\begin{align*}
\{f|_{K^*},g|_{K^*}\}_{K^*}(b) & = -\{f,g\}_G(b) = \iota_{dg|_b}\iota_{df|_b}(\mathrm{r}b-b\mathrm{r}) \\
& = \left. \frac{d}{dt}\right|_{t=0} \frac{\sqrt{-1}}{2}\sum_{j=1}^{\tilde r} \left(f(\exp(tX_j) b)g(\exp(tX_j) b)- f(b\exp(tX_j)) g(b\exp(tX_j)) \right)
\\ &\quad +\sqrt{-1}\sum_{\alpha\in R_+} \left(f(\exp(tE_{-\alpha})b) g(\exp(t E_{-\alpha}) b)-f(b\exp(tE_{-\alpha})) g(b\exp(t E_{-\alpha}) ) \right).
\end{align*}
where in the last line we use that $df$ is $\C$-linear and $dg$ is $\C$-antilinear. The result then immediately follows.
\end{proof}
%BH: There was a factor of 1/2 which appeared on the second term, which I believe is an error. I deleted it 11/29

\subsubsection{Hamiltonian \texorpdfstring{$K$}{K}-manifolds with \texorpdfstring{$K^*$}{K*}-valued moment maps}

Let $(K,\pi)$ be a Poisson-Lie group and let $(M,\Pi)$ be a Poisson manifold. A smooth left action of $K$ on $M$ is a  \emph{Poisson action} if the action map $K\times M \to M$ is a Poisson map (with respect to the product Poisson structure on $K\times M$).  Let $\theta^R \in \Omega^1(AN_-,\a\nnn_-)$ denote the right invariant Maurer-Cartan form.

\begin{definition}[Hamiltonian $K$-manifold with $K^*$-valued moment map]\label{hamiltonian action with K* valued moment map definition} Let $s\neq 0$. A Poisson action of $(K,s\pi_K)$ on a symplectic  manifold $(M,\omega)$ is \emph{Hamiltonian with $K^*$-valued moment map} if there is a $K$-equivariant map $\Psi\colon M \to AN_-$ such that
\[
    \iota_{\underline{X}}\omega = -\Psi^*\llangle \theta^R,X\rrangle_s, \quad \forall X \in \kk.
\]
% The convention of \theta^R is consistent with defining X\mapsto \underline{X} so that it is a Lie algebra homomorphism. See AMW.
The map $\Psi$ is a \emph{$K^*$-valued moment map}. The tuple $(M,\Omega,\Psi)$ is a \emph{Hamiltonian $K$-manifold with $K^*$-valued moment map}.
\end{definition}
A $K^*$-valued moment map is a Poisson map to $(AN_-,s\pi_{K^*})$ \cite[Theorem 4.8]{Lu}.
Hamiltonian $K$-manifolds with $K^*$-valued moment maps can be defined more generally for Poisson actions of $K$ under the assumption that dual Poisson-Lie groups $K$ and $K^*$ admit a double \cite{Lu}. 

%This definition is a generalization of ordinary Hamiltonian $K$-manifolds since it reduces to Definition \ref{hamiltonian K-manifold definition} in the case where the Poisson structure on $K$ is trivial. As with ordinary Hamiltonian $K$-manifolds, some authors do not require that $K^*$-valued moment maps are $K$-equivariant. Since we require that $\Psi$ is equivariant as part of the definition, it is Poisson as a map to $(AN_-,s\pi_{K^*})$ \cite[Theorem 4.8]{Lu}.  

\subsubsection{Delinearization of Hamiltonian \texorpdfstring{$K$}{K}-manifolds} \label{GWsection}

% Ordinary Hamiltonian $K$-manifolds and Hamiltonian $K$-manifolds with $K^*$-valued moment maps are related by linearization/delinearization. For more details, see \cite{A97, AMW}.

Let $(K,s\pi_K)$ and $(AN_-,s\pi_{K^*})$ be as in Section \ref{compact poisson lie groups}. For all $s \in \R$, define a map \[
E_s \colon \kk^* \to AN_-\]
 as follows. Recall from~\eqref{equation;comparisonmap} the comparison map $\psi_\kk \colon \kk \to \kk^*$ given by the choice of form $(\cdot,\cdot)_\g$. For all $\lambda \in \ttt^*$, define $E_s(\lambda) := \exp(\frac{s\sqrt{-1}}{2}\psi\n_{\kk}(\lambda))$.  Extend the definition of $E_s$ to all of $\kk^*$ by $K$-equivariance with respect to the coadjoint action and the dressing action, i.e.\ 
\[
    E_s(\Ad_k^*\lambda) := E_s(\lambda)^k \quad \forall k \in K.
\]
For $s\neq 0$, $E_s$ is a $K$-equivariant diffeomorphism.
% but it does not satisfy $(E_s)_*\pi_{\kk^*} = s\pi_{K^*}$.
%with respect to $\pi_{\kk^*}$ and $s\pi_{K^*}$ for any $s$. %Note that $E_s$ may be defined equivalently as a composition of several other maps \cite[Equation (13)]{A97}. BH: too much

Let $\theta^L \in \Omega^1(AN_-,\a\nnn_-)$ denote the left invariant Maurer-Cartan form. Following \cite{A97}, for all $s\in\R$, define 
\begin{equation}\label{delinearization 2-form}
    \beta^s := \frac{1}{2\sqrt{-1}}H(E_s^*(\theta^L\wedge(\theta^L)^\dagger)_\g) \in \Omega^1(\kk^*)
\end{equation}
where $H\colon \Omega^\bullet(\kk^*) \to \Omega^{\bullet-1}(\kk^*)$ is the Poincar\'e homotopy operator defined by the homotopy $[0,1]\times \kk^* \to \kk^*$, $(t,\lambda) \mapsto t\lambda$, and $(\theta^L)^\dagger\in\Omega^1(AN_-,\a\nnn_+)$ denotes the anti-involution  $(\cdot)^\dagger$ applied to the values of $\theta^L$.  

\begin{lemma}\label{T-invariance of form} The 2-form $\beta_s$ is invariant with respect to the coadjoint action of $T$. \end{lemma}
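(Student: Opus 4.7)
The plan is to trace the action of $T$ through each ingredient of the definition
\[
\beta_s = \frac{1}{2\sqrt{-1}} H\bigl(E_s^*(\theta^L\wedge(\theta^L)^\dagger)_\g\bigr)
\]
and verify equivariance at each stage. Three facts need to be checked: (i) $E_s \colon \kk^* \to AN_-$ intertwines the coadjoint action of $T$ on $\kk^*$ with conjugation on $AN_-$; (ii) the complex-valued form $(\theta^L \wedge (\theta^L)^\dagger)_\g$ on $AN_-$ is invariant under conjugation by $T$; (iii) the Poincar\'e homotopy operator $H$ commutes with the coadjoint action of $T$.

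For (i): by definition $E_s$ is $K$-equivariant, with the dressing action on $AN_-$ on the target. Since $T$ normalizes $AN_-$, the dressing action of $T$ restricts to conjugation, so $E_s(\Ad^*_t\lambda) = t E_s(\lambda) t^{-1}$ for all $t \in T$ and $\lambda \in \kk^*$. For (iii): the retraction $(t,\lambda)\mapsto t\lambda$ used to build $H$ is $T$-equivariant because the coadjoint action on $\kk^*$ is linear, so $H$ and $\Ad_t^*$ commute on forms.

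The core of the argument is (ii). Let $C_t$ denote conjugation by $t\in T$. Since $\theta^L$ is left-invariant, a direct computation (splitting $C_t = L_t \circ R_{t^{-1}}$ and recognizing that differentiating $L_t\circ L_{b^{-1}}\circ R_{t^{-1}}$ at $b$ factors as left translation to the identity followed by $\Ad_t$) yields
\[
C_t^* \theta^L = \Ad_t \circ \theta^L.
\]
The same identity holds for $(\theta^L)^\dagger$ provided $\dagger$ commutes with $\Ad_t$ for $t\in T$; this is true because $\ttt \subset \kk$ and the involution $\dagger$ satisfies $X^\dagger = -X$ on $\kk$, so $t^\dagger = t^{-1}$ and $(\Ad_t Y)^\dagger = \Ad_t(Y^\dagger)$. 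Applying the $\Ad$-invariance of $(\cdot,\cdot)_\g$, we conclude
\[
C_t^*(\theta^L\wedge (\theta^L)^\dagger)_\g = (\Ad_t\theta^L \wedge \Ad_t(\theta^L)^\dagger)_\g = (\theta^L\wedge(\theta^L)^\dagger)_\g.
\]

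Combining the three equivariance statements through the definition of $\beta_s$ gives $\Ad_t^*\beta_s = \beta_s$ for every $t\in T$, which is the claim. The only step that is not pure formality is (ii), and the delicate point there is bookkeeping for the antilinear involution $\dagger$; once one notes that $\dagger$ preserves $\ttt$ (up to sign) and that $T$ normalizes $A$ and $N_-$, the computation closes.
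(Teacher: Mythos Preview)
Your proof is correct and follows essentially the same route as the paper's: both reduce to showing $(\theta^L\wedge(\theta^L)^\dagger)_\g$ is invariant under conjugation by $T$ (after noting the equivariance of $E_s$ and of the homotopy defining $H$), compute $C_t^*\theta^L=\Ad_t\theta^L$, and finish with $\Ad$-invariance of the bilinear form. You have spelled out slightly more detail than the paper does, in particular the compatibility of $\dagger$ with $\Ad_t$, but there is no substantive difference in approach.
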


\begin{proof}
    Since the homotopy used to define $H$ is $\Ad^*$-equivariant and the map $E_s$ is equivariant, it suffices to show that $(\theta^L\wedge(\theta^L)^\dagger)_\g$ is invariant with respect to the dressing action of $T$. 
    
The dressing action of $T$ on $AN_-$ coincides with the action by conjugation, i.e.\ $b^{t} = tbt\n$ for all $t\in T$ and $b \in AN_-$ (see Definition \ref{definition;dressing}). Let $C_t$ denote conjugation by $t$, and let $\Ad_t\theta^L$ denote the adjoint action of $t$ on the values of $\theta^L$. Then $C_t^*\theta^L = \Ad_t\theta^L$, and
    \[
        C_t^*(\theta^L \wedge (\theta^L)^\dagger)_\g = ((C_t^*\theta^L) \wedge (C_t^*\theta^L)^\dagger)_\g = ((\Ad_t\theta^L) \wedge (\Ad_t\theta^L)^\dagger)_\g.
    \]
    A short calculation %\note{using the chevalley basis and an argument similar to the end of the proof of lemma 3.13} 
    shows that $((\Ad_t\theta^L) \wedge (\Ad_t\theta^L)^\dagger)_\g = ((\theta^L) \wedge (\theta^L)^\dagger)_\g$. 
    \end{proof}

Let $M$ be a smooth manifold equipped with a smooth left action of $K$. Given a symplectic form $\omega \in \Omega^2(M)$ and a map $\mu\colon M \to \kk^*$, define
\begin{equation}\label{linearization equation}
    \Psi^s := E_s \circ\mu, \quad \Omega^s := \omega +\mu^*d\beta^s.
\end{equation}
For all $s\neq 0$ the forms $\Omega^s$ are symplectic \cite{A97}. 

\begin{theorem}[Linearization/Delinearization Theorem \cite{A97}] \label{theorem;delin}
    Let $M$ be a smooth manifold equipped with a smooth left action of $K$. Let $\omega,\Omega^s\in \Omega^2(M)$ and $\Psi^s,\mu$ be related by \eqref{linearization equation}. Then, for all $s\neq 0$, $(M,\Omega^s,\Psi^s)$ is a Hamiltonian $(K,s\pi_K)$-manifold with $(AN_-,s\pi_{K^*})$-valued moment map if and only if $(M,\omega,\mu)$ is a Hamiltonian $K$-manifold.
\end{theorem}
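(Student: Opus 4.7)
The plan is to establish the biconditional by checking each ingredient of the two Hamiltonian structures in turn. The closedness and equivariance parts are straightforward. Since $d\Omega^s = d\omega + d(\mu^* d\beta^s) = d\omega$, the form $\Omega^s$ is closed iff $\omega$ is closed; non-degeneracy of $\Omega^s$ for $s\neq 0$ was stated immediately before the theorem. Because $E_s$ is a $K$-equivariant diffeomorphism (coadjoint action on $\kk^*$, dressing action on $AN_-$) whenever $s\neq 0$, the map $\Psi^s=E_s\circ\mu$ is $K$-equivariant iff $\mu$ is.

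Next I would reduce the equivalence of the two moment-map conditions to a \emph{universal identity} on $\kk^*$ not involving $M$. Writing $f_X(\xi):=\langle \xi, X\rangle$, and using equivariance to identify $\mu_*\underline{X}=\underline{X}_{\kk^*}$ (coadjoint fundamental vector field) together with $\Psi^s_*\underline{X}=\underline{X}_{AN_-}$ (dressing fundamental vector field), a direct computation gives
\[
\iota_{\underline{X}}\Omega^s + \Psi^{s,*}\llangle \theta^R, X\rrangle_s \;=\; \iota_{\underline{X}}\omega + \mu^*\bigl(\iota_{\underline{X}_{\kk^*}} d\beta^s + E_s^*\llangle \theta^R, X\rrangle_s\bigr).
\]
Consequently, the two moment-map conditions are simultaneously satisfied provided I can establish the universal identity
\[
\iota_{\underline{X}_{\kk^*}} d\beta^s + E_s^*\llangle \theta^R, X\rrangle_s \;=\; df_X \qquad \forall\, X\in\kk
\]
on $\kk^*$. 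Given such an identity, the Poisson-action property of the $K$-action on $(M,\Omega^s)$ then follows automatically, since $\Psi^s$ will be a Poisson map to $(AN_-, s\pi_{K^*})$ by \cite{Lu}.

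The substantive work is therefore to prove the universal identity, which is the content of Alekseev's original calculation in \cite{A97}. To prove it, I would expand $d\beta^s$ using the Poincar\'e homotopy relation $dH+Hd=\mathrm{id}$ on $\Omega^{\geq 1}(\kk^*)$, which applied to the definition~\eqref{delinearization 2-form} yields
\[
d\beta^s \;=\; \tfrac{1}{2\sqrt{-1}}\, E_s^*(\theta^L\wedge(\theta^L)^\dagger)_\g \;-\; \tfrac{1}{2\sqrt{-1}}\, H\, d\bigl(E_s^*(\theta^L\wedge(\theta^L)^\dagger)_\g\bigr).
\]
Contracting with $\underline{X}_{\kk^*}$ and invoking the Maurer-Cartan equation $d\theta^L = -\tfrac12[\theta^L,\theta^L]$, the conjugation relation $\theta^R = \Ad(b)\theta^L$ on $AN_-$, and the $\mathrm{ad}$-invariance of $(\cdot,\cdot)_\g$, I would rewrite each resulting piece as a combination of $E_s^*\llangle\theta^R, X\rrangle_s$ and exact terms. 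The main obstacle is the careful bookkeeping of the $\dagger$-involution and of the pairing $\llangle\cdot,\cdot\rrangle_s = \tfrac{2}{s}\im(\cdot,\cdot)_\g$: the asymmetry between $\theta^L$ and $(\theta^L)^\dagger$ under $\im$ is precisely what produces the linear $1$-form $df_X$, while the $\Ad$-invariance of $(\cdot,\cdot)_\g$ and the scaling factor $s$ built into $E_s$ combine to cancel the non-invariant commutator contributions arising from Maurer-Cartan. Once these cancellations are tracked, the identity follows and the biconditional is proved.
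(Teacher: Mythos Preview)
The paper does not give its own proof of this theorem: it is stated as a citation to \cite{A97} and used as a black box. So there is nothing to compare against in the paper itself.

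That said, your outline is essentially the strategy of \cite{A97}. The reduction to a universal identity on $\kk^*$ is correct and is exactly the point: once you know
\[
\iota_{\underline{X}_{\kk^*}} d\beta^s + E_s^*\llangle \theta^R, X\rrangle_s = df_X
\]
for all $X\in\kk$, pulling back by $\mu$ gives the equivalence of moment-map conditions. One caution: Definition~\ref{hamiltonian action with K* valued moment map definition} requires the $K$-action on $(M,\Omega^s)$ to be a \emph{Poisson} action of $(K,s\pi_K)$, not merely a smooth action, so this must be verified and is not automatic from $\Psi^s$ being Poisson. In \cite{A97} this is handled by showing that the moment-map condition plus equivariance forces the action to be Poisson (equivalently, that the multiplicativity cocycle condition for $s\pi_K$ is encoded in the dressing-equivariance of $E_s$). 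Your last paragraph gestures at the right ingredients for the universal identity, but as written it is a plan rather than a proof; the actual computation in \cite{A97} is short but requires care with the specific normalization of $\beta^s$ via the homotopy operator $H$, and you should expect to use the explicit scaling $E_s(\lambda)=\exp(\tfrac{s\sqrt{-1}}{2}\psi_\kk^{-1}(\lambda))$ to match the $1/s$ in $\llangle\cdot,\cdot\rrangle_s$.
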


Let $\pi_{\kk^*}$ denote the canonical Poisson structure on $\kk^*$ and let $\pi_{\kk^*}^{\sigma^s}$ denote the gauge transformation\footnote{See e.g.~\cite{Bursztyn} for more details on gauge transformations of Poisson structures.}  of  $\pi_{\kk^*}$ by $\sigma^s = d\beta^s$. 
Define a vector field $Y^s$ on $\kk^*$ by the equation
\begin{equation}\label{Poisson delinearization moser vf}
	Y^s = (\pi_{\kk^*}^{\sigma^s})^\sharp\left(-\frac{\partial}{\partial s} \beta^s\right)
\end{equation}
and let $\psi_s$ denote the flow of $Y^s$.  By Moser's trick for Poisson manifolds, $(\psi_s)_*\pi_{\kk^*} = \pi_{\kk^*}^{\sigma^s}$ for all $s$ such that $\psi_s$ is defined. 
%The flow preserves symplectic leaves of $\pi_{\kk^*}$, the coadjoint orbits of $K$ which are compact, so $\psi_s$ is defined for all $s$.
On any Hamiltonian $K$-manifold $(M,\omega,\mu)$, define a Moser vector field $X^s$ by the equation
\begin{equation}\label{delinearization moser vf}
    \iota_{X_s}\Omega^s = -\frac{\partial}{\partial s}\mu^*\beta^s, \quad s \in \R.
\end{equation}
and let $\phi_s$ denote the flow of $X_s$. By Moser's trick, $\phi_s^*\Omega^s = \omega$ for all $s$ such that $\phi_s$ is defined.

Let $\wt\colon AN_- \to A$ be given by $\wt(x) = [x]_0$. If $(M,\Omega,\Psi)$ is a Hamiltonian $(K,s\pi_K)$-manifold with $K^*=AN_-$-valued moment map, then $(M,\Omega,E_s\n\circ \wt\circ \Psi)$ is a Hamiltonian $T$-manifold, where $T$ acts as the maximal torus of $K$. 

\begin{proposition}\label{moser flow equivariance}
    The flows $\psi_s$ and $\phi_s$ have the following properties (valid for all $s$ such that  $\psi_s$ and $\phi_s$ are defined).
    \begin{enumerate}[(i)]
    	\item $\pi_{\kk^*}^{\sigma^s} = (E_s\n)_*s\pi_{K^*}$.
    	\item The flow $\psi_s$ preserves coadjoint orbits. In particular, $\psi_s$ is defined for all $s$. 
    	\item $\psi_s\circ \mu = \mu \circ \phi_s$.
        \item Both $\psi_s$ and $\phi_s$ are equivariant with respect to the action of $T$ as the maximal torus of $K$.
        \item The flow $\phi_s$ is equivariant with respect to the Thimm torus action defined on the dense subset $U\subset M$ (see Section \ref{thimm torus background section}).
        
        \item The following diagrams commute.
        \begin{equation}
    \begin{tikzcd}
        M \ar[d,"\mathcal{S}\circ \mu"] \ar[r,"\phi_s"] &M \ar[d,"\mathcal{S}\circ\mu"] \\
         \ttt^*_+   & \ttt^*_+ \ar[l,"= "]\\
    \end{tikzcd}\quad \quad 
    \begin{tikzcd}
        M \ar[d,"\mu"] \ar[r,"\phi_s"] & M \ar[d,"\Psi^s"] \\
        \kk^* \ar[d,"\pr_{\ttt^*}"] & K^*\ar[d,"\wt"]  \\
        \ttt^* \ar[r,"E_s"] & A  \\
    \end{tikzcd} 
\end{equation}
    \end{enumerate}
\end{proposition}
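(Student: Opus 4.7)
The plan is to run Moser arguments in tandem on $M$ and $\kk^*$, using the delinearization theorem (Theorem~\ref{theorem;delin}) as a bridge between the linear and $K^*$-valued pictures, and exploiting the $T$-invariance of $\beta^s$ established in Lemma~\ref{T-invariance of form}. Part~(i) is the infinitesimal content of Ginzburg-Weinstein with scaling parameter $s$: the cleanest route is to apply the delinearization theorem leafwise to each coadjoint orbit $\mathcal{O}$ viewed as a Hamiltonian $K$-manifold with $\mu=\id_\mathcal{O}$, so that $E_s|_\mathcal{O}$ becomes a symplectomorphism onto the dressing orbit $E_s(\mathcal{O})$ whose symplectic form is the leafwise inverse of $s\pi_{K^*}$; assembling across orbits gives~(i). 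Part~(ii) follows from two general facts about gauge transformations: $\pi^{\sigma^s}$ has the same symplectic foliation as $\pi_{\kk^*}$, and $(\pi^{\sigma^s})^\sharp\alpha$ is tangent to the leaves for any $1$-form $\alpha$; since coadjoint orbits of $K$ are compact, $\psi_s$ is complete. Part~(iii) is a routine Moser compatibility: $\mu$ is a Poisson map $(M,\omega)\to(\kk^*,\pi_{\kk^*})$ and remains so after simultaneous gauging by $\mu^*\sigma^s$ and $\sigma^s$, which forces $\mu_*X^s=Y^s\circ\mu$. Part~(iv) combines $T$-invariance of $\beta^s$ with $T$-equivariance of $\mu$ and $E_s$.

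The first diagram in~(vi) follows from~(ii) and~(iii): $\psi_s$ preserves coadjoint orbits so $\mathcal{S}\circ\psi_s=\mathcal{S}$, whence $\mathcal{S}\circ\mu\circ\phi_s=\mathcal{S}\circ\psi_s\circ\mu=\mathcal{S}\circ\mu$. For the second diagram, the delinearization theorem applied to the $T\subset K$ subaction (which is Poisson with trivial bracket, corresponding on the dual side to the Poisson quotient $\wt\colon K^*\to A$) identifies $E_s\n\circ\wt\circ\Psi^s$ as a $T$-moment map for $(M,\Omega^s)$, while $\pr_{\ttt^*}\circ\mu$ is the $T$-moment map for $(M,\omega)$; since $\phi_s\colon(M,\omega)\to(M,\Omega^s)$ is a $T$-equivariant symplectomorphism it intertwines these moment maps, and the residual additive constant is pinned down by the observation that $\psi_s$ fixes $\ttt^*$ pointwise (deducible from~(ii) and~(iv) together with the finiteness of Weyl orbits in $\ttt^*$ and connectedness in $s$, starting from $\psi_0=\id$).

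Part~(v) is the main obstacle. Using the model $U\cong K\times_T\mu\n(\mathring{\ttt}_+^*)$ of Proposition~\ref{proposition about dense model space}, in which the Thimm action reads $t\ast[k,y]=[k,t\cdot y]$, it suffices to show that $\phi_s$ descends to a flow of the form $[k,y]\mapsto[k,\phi_s^{cs}(y)]$ for some $T$-equivariant flow $\phi_s^{cs}$ on the cross-section. The difficulty is that $\phi_s$ is not $K$-equivariant on $M$. The resolution is to realize the Moser construction on $U$ as the symplectic reduction of a Moser construction on the product $(T^*K\times M,\omega_\mathrm{can}\oplus\omega)$ by the diagonal $T$-action, in the same reduction that underlies Proposition~\ref{proposition about dense model space}; on the product, the Moser data are built by pulling $\beta^s$ back through $\mu_R$ in the first factor and through $\mu$ in the second, so the resulting flow is manifestly $K$-equivariant for the left $K$-action on $T^*K$, and its descent to $U$ coincides with $\phi_s$. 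A convenient computational input is that $\beta^s$ restricts to zero on $\ttt^*\subset\kk^*$, since $(\theta^L\wedge(\theta^L)^\dagger)_\g$ vanishes on $A$ by symmetry of $(\cdot,\cdot)_\g|_\a$; this ensures that the $T^*K$-component of the product Moser flow acts trivially on the $\mathring{\ttt}_+^*$-factor and the descent has the required form, yielding Thimm-equivariance.
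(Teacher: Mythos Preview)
Your treatment of parts (i)--(iv) and (vi) aligns closely with the paper's proof, and in places (e.g.\ spelling out why $\psi_s$ fixes $\ttt^*$ pointwise via the finite Weyl orbit and continuity in $s$) you are more explicit than the paper.

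The significant divergence is in part~(v), which you flag as ``the main obstacle'' and attack via an elaborate product/reduction Moser construction on $T^*K\times M$. The paper's argument is a one-liner: the moment map $\mu\colon U\to\kk^*$ is \emph{invariant} under the Thimm torus action (i.e.\ equivariant for the trivial $T$-action on $\kk^*$), since $\mu(t\ast x)=\Ad^*_{k^{-1}tk}\mu(x)=\mu(x)$ whenever $\Ad_k^*\mu(x)\in\ttt^*$. Consequently $\mu^*\beta^s$ is Thimm-invariant for free, $\Omega^s=\omega+\mu^*d\beta^s$ is Thimm-invariant (the Thimm action being Hamiltonian for $\omega$), and hence $X^s$ and its flow $\phi_s$ are Thimm-equivariant by exactly the same mechanism as~(iv). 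You have overlooked this Thimm-invariance of $\mu$, which dissolves the ``difficulty that $\phi_s$ is not $K$-equivariant.''

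Your proposed workaround is also not clearly correct as sketched. You want to pull back $\beta^s$ via \emph{both} $\mu_R$ and $\mu$ on the product, but the Moser flow on $M$ is generated by $\mu^*\beta^s$ alone; adding a $\mu_R^*\beta^s$ term changes the flow, so the descent to $U$ would not a priori coincide with $\phi_s$. You would need to argue separately that the extra piece contributes nothing after reduction (your remark that $\beta^s|_{\ttt^*}=0$ points in this direction but does not by itself control the full $1$-form on the product), and the bookkeeping of which actions are being delinearized where is delicate. Even if this can be made to work, it is far heavier machinery than the situation requires.
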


\begin{proof} 
    \noindent (i) This is \cite[Theorem 1]{A97} (see also the remark at the end of \cite{A97}).
    
    \noindent (ii) The first claim follows from (i) and the definition of $Y^s$. The second claim follows by the Escape Lemma since coadjoint orbits are compact.
    
    \noindent (iii) It suffices to prove that $\mu_*X^s = Y^s$. Letting $(\Omega^s)\n$ denote the Poisson structure on $M$ defined by the symplectic structure $\Omega^s$, observe that \eqref{delinearization moser vf} can be re-written equivalently as 
    \[
    	X^s = ((\Omega^s)\n)^\sharp\left(-\frac{\partial}{\partial s}\mu^*\beta^s\right).
    \]
    Since $\Psi^s = E_s \circ \mu$ is a Poisson map with respect to $s\pi_{K^*}$, it follows by (i) that $\mu = E_s\n \circ \Psi^s$ is a Poisson map with respect to $\pi_{\kk^*}^{\sigma^s}$. Thus
    \[
    	\mu_*X^s = (\pi_{\kk^*}^{\sigma^s})^\sharp\left(-\frac{\partial}{\partial s} \beta^s\right) = Y^s.
    \]
    
    \noindent (iv) Follows by Lemma~\ref{T-invariance of form} along with Equations \eqref{linearization equation}, \eqref{Poisson delinearization moser vf}, \eqref{delinearization moser vf}, and equivariance of $\mu$.
    
    \noindent (v) The map $\mu$ is equivariant as a map from $U$ to $\kk^*$ with respect to the Thimm torus action of $T$ on $U$ and the trivial action of $T$ on $\kk^*$. Since $\beta^s$ is invariant with respect to the trivial action of $T$ on $\kk^*$, it follows by the same argument as (iv) that $\phi_s$ is equivariant with respect to the Thimm torus action of $T$ on $U$.
    
    \noindent (vi) The first diagram is a direct consequence of (iii) since $\mathcal{S}$ is constant on coadjoint orbits. 
    
    The action of $T$ on $(M,\omega)$ as the maximal torus of $K$ is Hamiltonian with moment map $\pr_{\ttt^*}\circ \mu$. The action of $T$ on $(M,\Omega^s)$ as the maximal torus of $K$ is Hamiltonian with moment map $E_s\n\circ \wt\circ \Psi^s$. Since $M$ is connected and  $\phi_s\colon (M,\omega) \to (M,\Omega^s)$ is a $T$-equivariant symplectomorphism, it follows by uniqueness of moment maps that there is an element $\xi \in \ttt^*$ such that 
    \[
        E_s\n \circ \wt \circ \Psi^s \circ \phi_s = \pr_{\ttt^*} \circ \mu + \xi.
    \]
    
    The set $\mu\n(\ttt^*)$ is non-empty by $K$-equivariance of $\mu$. Let $m\in \mu\n(\ttt^*)$. Then it follows by \eqref{linearization equation} and (iii) that
    \begin{equation*}
        \begin{split}
            E_s\n \circ \wt \circ \Psi^s \circ \phi_s(m) & = E_s\n \circ \wt \circ E_s\circ \mu(\phi_s(m))\\
            &=E_s\n \circ \wt \circ E_s\circ \psi_s(\mu(m)).
            \end{split}
            \end{equation*}
             By (ii) and (iv), $\psi_s$ fixes elements of $\ttt^*$. Thus
             \begin{equation*}
             \begin{split}
             E_s\n \circ \wt \circ \Psi^s \circ \phi_s(m) & = E_s\n \circ \wt \circ E_s\circ \mu(m) \\
             & =\mu(m) = \pr_{\ttt^*}\circ \mu(m),
             \end{split}
             \end{equation*}
    and so $\xi =0$. Thus the second diagram commutes.
\end{proof}

The flow $\phi_s$ is defined for all $s$ if $M$ is compact. However, one does not need such a strong assumption.

\begin{corollary}\label{completeness of moser flow}
    Let $(M,\omega,\mu)$ be a Hamiltonian $K$-manifold such that the connected components of the fibers of $\mu$ are compact. Then, the symplectic forms $\omega$ and $\Omega^s$ are isotopic for all $s\neq 0$. In particular, if $(M,\omega,\mu)$ is multiplicity free, then  $\omega$ and $\Omega^s$ are isotopic for all $s$.
\end{corollary}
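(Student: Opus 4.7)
The plan is to apply the Moser trick. The Moser vector field $X^s$ defined by~\eqref{delinearization moser vf} generates a time-dependent flow $\phi_s$ satisfying $\phi_s^{*}\Omega^s = \omega$, so $\omega$ and $\Omega^{s_0}$ are isotopic (through the path of symplectic forms $\{\phi_s^{*}\Omega^s\}_{s\in[0,s_0]}$, each cohomologous to $\omega$ since $\Omega^s - \omega = \mu^* d\beta^s$ is exact) provided $\phi_s$ is defined on all of $[0,s_0]$ for each initial point $m\in M$. Thus the whole argument reduces to completeness of $\phi_s$ on every bounded interval.

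To establish completeness, fix $m\in M$ and suppose for contradiction that $\phi_s(m)$ is defined only on a maximal interval $[0,T)$ with $T<\infty$. By Proposition~\ref{moser flow equivariance}(ii)--(iii), $\mu\circ \phi_s = \psi_s\circ \mu$ where $\psi_s$ is complete and preserves coadjoint orbits. Consequently the projection $s\mapsto \mu(\phi_s(m))$ extends continuously to $s=T$ and traces a compact path $C = \{\psi_s(\mu(m)) : s\in[0,T]\} \subset \mathcal{O}_{\mu(m)}$. To invoke the Escape Lemma I would show that the lift of this path stays in a compact subset of $M$. Since the trajectory is continuous and connected, it lies in a single connected component $N$ of $\mu\n(C)$ containing $m$. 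I would prove $N$ is compact via a local-to-global argument: at each $s\in[0,T]$ the connected component of $\mu\n(\psi_s(\mu(m)))$ containing $\phi_s(m)$ is compact by hypothesis, and by the equivariant normal form of the moment map (or an equivariant tubular neighborhood theorem applied to each compact fiber component) it extends as a locally trivial compact family over a neighborhood of $\psi_s(\mu(m))$ in $C$. Compactness of $C$ yields a finite cover by such neighborhoods; the union of the corresponding compact families is a compact subset of $M$ containing $\phi_{[0,T)}(m)$. The Escape Lemma then forces $\phi_s(m)$ to extend past $T$, contradicting maximality. Hence $\phi_s$ is complete and the first statement follows.

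For the second statement, I would first verify that a multiplicity free Hamiltonian $K$-manifold satisfies the hypothesis. By the symplectic cross-section theorem (Theorem~\ref{cross section theorem}) and the fact that the $T$-action on the cross-section is completely integrable, the fibers of $\mu$ restricted to $\mu\n(\mathring{\ttt}_+^*)$ are $T$-orbits; by $K$-equivariance and the fact that $T$ is compact, all fibers of $\mu$ are compact, so the first part of the corollary applies for all $s\ne 0$. The endpoint $s=0$ is handled by noting that $\beta^0=0$ by the definition~\eqref{delinearization 2-form} (the integrand vanishes at $s=0$), hence $\Omega^0 = \omega$ by~\eqref{linearization equation} and the isotopy extends trivially to include $s=0$.

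The main obstacle is the compactness of the connected component $N$ in the second paragraph. The hypothesis controls only connected components of individual fibers, not preimages of arbitrary compact sets, so the passage from pointwise compactness to a compact neighborhood of the trajectory is nontrivial and requires the patching argument above. In the multiplicity free case this obstacle disappears because the fibers of $\mu$ are themselves compact, so $\mu\n(C)$ is immediately compact and the Escape Lemma applies at once.
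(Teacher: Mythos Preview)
Your overall strategy (Moser trick plus Escape Lemma, using Proposition~\ref{moser flow equivariance}(ii)--(iii) to control the image under $\mu$) matches the paper, but you are working harder than necessary at the compactness step, and the patching argument you propose is genuinely delicate.

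The paper avoids your path $C$ entirely. Since $\psi_s$ preserves coadjoint orbits, the trajectory $\mu(\phi_s(m))=\psi_s(\mu(m))$ stays in the \emph{fixed} set $\mathcal{O}_{\mu(m)}$ for all $s$, so $\phi_s(m)$ lies in a single connected component of $\mu^{-1}(\mathcal{O}_{\mu(m)})$, independent of $s$. The paper then observes that because $\mu$ is $K$-equivariant and $K$ is compact, connected components of fibers of $\mu$ are compact if and only if connected components of $\mu^{-1}(\mathcal{O})$ are compact for every coadjoint orbit $\mathcal{O}$. This one-line equivariance fact replaces your entire local-triviality-and-finite-cover argument, which as you note is the main obstacle; in fact your version would require control on how fiber components vary along $C$, and the normal-form/tubular-neighborhood step you invoke is not obviously available without further hypotheses (e.g.\ $\mu$ need not be a submersion).

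For the multiplicity free case, the paper again argues more directly: when $(M,\omega,\mu)$ is multiplicity free, $K$ acts transitively on connected components of $\mu^{-1}(\mathcal{O})$, so these are automatically compact (as $K$ is). This bypasses your cross-section/$T$-orbit argument, which only treats fibers over $\mathring{\ttt}_+^*$ cleanly.
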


\begin{proof}
    Since $\mu$ is $K$-equivariant and $K$ is compact,  connected components of fibers of $\mu$ are compact if and only if the connected components of $\mu\n(\mathcal{O})$ are compact for every coadjoint orbit $\mathcal{O}$. %(provided they are non-empty). 
    Let $m\in M$. By Proposition \ref{moser flow equivariance} (ii) and (iii), the flow $\phi_s(m)$ is contained in a connected component of  $\mu\n(K\cdot \mu(m))$. The result then follows by the Escape Lemma for flows of smooth vector fields. The final claim follows since $K$ acts transitively on connected components of $\mu\n(\mathcal{O})$ when $(M,\omega,\mu)$ is multiplicity free.
    % (Lee, version 3.0, Lemma 12.11).
\end{proof}

\begin{remark} \label{GWremark}
Using a similar argument, one can show that $E_1\circ \psi_1\colon(\mathfrak{k}^*,\pi_{\kk^*}) \to (K^*,\pi_{K^*})$ is well defined. It is therefore a Ginzburg-Weinstein isomorphism in the sense of Theorem~\ref{thm:intro_gw}.
\end{remark}

\begin{example}\label{example; delinearization of T^*K cross section}
Recall the Hamiltonian $K\times T$-manifold $(K \times \mathring{\ttt}_+^*,\omega_{\rm{can}},(\mu_L,\mu_R) )$ from Example~\ref{T^*K cross section example}. Since $T$ is a torus, the Poisson-Lie group structure $s\pi_{K\times T}$ defined in Section~\ref{compact poisson lie groups} equals $s\pi_K$  and the dual Poisson-Lie group is $(AN_- \times A, s\pi_{K^*})$. The map $E_s$ for $K\times T$ is 
\begin{equation}
    \left(E_s,\exp\left(\frac{s\sqrt{-1}}{2}\psi_\kk\n\right)\right)\colon \kk^* \times \ttt^* \to AN_-\times A
\end{equation}
Since $T$ is a torus, the 2-form $\beta^s \in \Omega^2(\kk^* \times \ttt^*)$ coincides with the pullback of the 2-form $\beta^s \in \Omega^2(\kk^*)$ under projection $\kk^* \times \ttt^* \to \kk^*$.  Thus, 
\begin{equation}
    \Psi^s = \left(E_s\circ \mu_L,\exp\left(\frac{s\sqrt{-1}}{2}\psi_\kk\n\circ \mu_R\right)\right), \quad \Omega^s = \omega_{\rm{can}}+ \mu_L^*d\beta^s.
\end{equation}
The action of $T$ on $K \times \mathring{\ttt}_+^*$ coincides with the action of the Thimm torus of $K$. It follows by Proposition~\ref{moser flow equivariance} (ii), (iii), and (v) that 
 the action of $T$ on $(K \times \mathring{\ttt}_+^*,\Omega^s)$ is also Hamiltonian with moment map $\mu_R$.

An explicit formula for the Poisson structure defined by $\Omega^s$ can be given as follows. Fix the same basis of $\g$ consisting of elements $X_j \in \h$ and root vectors $E_{\alpha} \in \g_{\alpha}$ as in Section~\ref{subsubsection;explicitformula}. Let $P_j \in\h^*$ denote the basis dual to $X_j$. Then, as a section of $\wedge^2 T^\C(K\times \mathring{\ttt}_+^*)$,
\begin{equation}\label{equation; delinearization of T^*K cross section}
	    (\Omega^s)\n = \sum_{\alpha\in R_+}\frac{2s\sqrt{-1}(E_\alpha\wedge E_{-\alpha})^L}{\exp(2s\sqrt{-1}\langle\xi,[E_\alpha, E_{-\alpha}]\rangle)-1} + \sum_j X_j^L\wedge P_j + s\pi_K  
\end{equation}
where for any $X \in \wedge^k\g$, $X^L$ denotes the left-invariant multivector field on $K$ whose value at $e$ equals $X$. This formula follows by combining the explicit formula for $s\pi_{K^*}$ given in \cite[Proposition 5.12]{Lu00}, the fact that $(\Psi^s)_*(\Omega^s)\n = s\pi_{K^*}$, and the definition of $\Omega^s$.
\end{example}

\subsubsection{Legendre transforms on \texorpdfstring{$K^*$}{K*}}\label{section;legendre transform}

The \emph{Legendre transform} of $f \in C^\infty(K^*)$ is the map  $\mathfrak{L}_f \colon K^* \to \kk$ uniquely defined by the equation 
\begin{equation}\label{definition;legendre transform}
    \llangle \theta^R ,\mathfrak{L}_f\rrangle_s = df.
\end{equation}
Note that if $\beta \in \Omega^1(K^*)$, then $\mathfrak{L}_\beta$ is defined as in \eqref{definition;legendre transform} by replacing $\mathfrak{L}_f$ with $\mathfrak{L}_\beta$ and $df$ with $\beta$. This is the analogue for $K^*$-valued moment maps of the Legendre transform on $\kk^*$ used in \cite{GS5,GS1} to study Hamiltonian flows of collective  functions  (i.e.~functions of the form $f\circ \mu$ where $\mu$ is a moment map). The main property of Legendre transforms on $\kk^*$ generalizes directly to the setting of Hamiltonian spaces with $K^*$-valued moment maps as follows.
\begin{proposition}\label{proposition;legendre transform}
If $(M,\Omega,\Psi)$ is Hamiltonian $K$-manifold with $K^*$-valued moment map, then
\begin{equation}
    (X_{f\circ \Psi})_m = \underline{\mathfrak{L}_f(\Psi(m))}_m, \quad \forall m \in M.
\end{equation}
\end{proposition}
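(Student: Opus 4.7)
The plan is to reduce the identity to a pointwise comparison of 1-forms via the nondegeneracy of $\Omega$. Fix an arbitrary $m\in M$ and set $X := \mathfrak{L}_f(\Psi(m))\in \kk$. Although $X$ depends on $m$, it is a constant element of $\kk$ once $m$ is fixed, so we may apply the moment map axiom of Definition~\ref{hamiltonian action with K* valued moment map definition} to this $X$. The goal is then to show that $(X_{f\circ\Psi})_m$ and $\underline{X}_m$ have the same contraction with $\Omega_m$.

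First I would evaluate the moment map equation $\iota_{\underline{X}}\Omega = -\Psi^*\llangle \theta^R, X\rrangle_s$ at the point $m$, obtaining
\[
\iota_{\underline{X}_m}\Omega_m \;=\; -\llangle \theta^R_{\Psi(m)},X\rrangle_s\circ d\Psi_m.
\]
Next I would invoke the defining equation of the Legendre transform~\eqref{definition;legendre transform} at the single point $\Psi(m)\in K^*$. This gives the equality of covectors
\[
\llangle \theta^R_{\Psi(m)},\mathfrak{L}_f(\Psi(m))\rrangle_s \;=\; df_{\Psi(m)}
\]
on $T_{\Psi(m)}K^*$. Substituting $X=\mathfrak{L}_f(\Psi(m))$ and pulling back along $d\Psi_m$ yields
\[
\iota_{\underline{X}_m}\Omega_m \;=\; -df_{\Psi(m)}\circ d\Psi_m \;=\; -d(f\circ\Psi)_m \;=\; \iota_{(X_{f\circ\Psi})_m}\Omega_m,
\]
where the last equality is the definition of the Hamiltonian vector field of $f\circ\Psi$. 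Finally, nondegeneracy of $\Omega_m$ forces $(X_{f\circ\Psi})_m = \underline{X}_m = \underline{\mathfrak{L}_f(\Psi(m))}_m$, and since $m$ was arbitrary the proposition follows.

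There is no genuine obstacle here; the only subtlety worth flagging is that one must fix $m$ before invoking the moment map axiom, since $\mathfrak{L}_f(\Psi(m))$ varies with $m$ and the vector field $\underline{\mathfrak{L}_f\circ\Psi}$ on the right-hand side is not literally a fundamental vector field of a single Lie algebra element. Once that is made precise, the argument is a direct two-line unpacking of Definitions~\ref{hamiltonian action with K* valued moment map definition} and~\eqref{definition;legendre transform}, exactly parallel to the well-known $\kk^*$-valued analogue in~\cite{GS5,GS1}.
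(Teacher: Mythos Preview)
Your proof is correct and follows essentially the same approach as the paper: apply the moment map axiom and the defining equation of the Legendre transform at a fixed point $m$, then invoke nondegeneracy of $\Omega$. Your version is slightly more explicit about the pointwise nature of the argument (fixing $m$ before treating $\mathfrak{L}_f(\Psi(m))$ as a constant element of $\kk$), but the logical content is identical to the paper's two-line proof.
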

\begin{proof}
By \eqref{definition;legendre transform}, the moment map equation, and the definition of $X_{f\circ \Psi}$: for all $m \in M$,
\begin{equation*}
    -\iota_{\underline{\mathfrak{L}_f(\Psi(m))}_m}\Omega = (\Psi^*\llangle \theta^R ,\mathfrak{L}_f\rrangle_s)_m = (\Psi^*df)_m = -\iota_{(X_{f\circ \Psi})_m}\Omega. 
\end{equation*}
The proposition then follows by non-degeneracy of $\Omega$.
\end{proof}

\subsection{Cluster algebras}\label{Cluster Varieties}

In this section, we first recall some basic definitions from cluster theory. Secondly, we recall the construction of the cluster algebra structure on the double Bruhat cell $G^{u,v}$. At the end, we recall the homogeneity of the cluster algebra structure on $G^{u,v}$. In the rest of the paper, we mainly focus on the cell $G^{w_0,e }$. More detail can be found in \cite{BFZ,GSV}.

\subsubsection{Seeds and cluster mutations} 

\begin{definition}\label{seed}
 	A \emph{seed} $\sigma=(I,J,M)$ consists of a finite set $I$, a subset $J\subset I$ and an integer matrix $M=\left[ M_{ij} \right]_{i,j\in I}$ which is skew-symmetrizable, i.e.~there exists an $I\times I$ diagonal matrix $\bm{D}$ with positive integer entries, called a skew-symmetrizer, such that $M\bm{D}=-(M\bm{D})^T$. The \emph{principal part} of $M$ is given by $M_0=\left[ M_{ij} \right]_{i,j\in J}$.

Consider the ambient field $\mathcal{F}$ of rational functions over $\mathbb{C}$ in $|I|$ independent variables. A \emph{labeled seed} in $\mathcal{F}$ is a pair $({\bm z}_\sigma, \sigma)$ where $\sigma$ is a seed and ${\bm z}_\sigma:=\{z_i \in \mathcal{F} \mid i\in I\}$ is a set of $|I|$ elements forming a free generating set of $\mathcal{F}$. We refer to ${\bm z}_\sigma$ as a \emph{cluster} and the $z_i$ as \emph{cluster variables}. Denote ${\bm z}_\sigma^0:=\{z_i\mid i\in J\}$, which is set of \emph{unfrozen variables}. 
\end{definition} 

If $(I,J,M)$ is a seed, the mutation $\mu_k(M)$ of $M$ in direction $k\in J$  is the $I\times I$ matrix with entries:
\[
  \mu_k(M)_{ij}=\left\{
  \begin{aligned}
    \ &-M_{ij}, & &\text{~if~}\ k\in\{i,j\};\\
    \ &M_{ij}+\frac{1}{2}\Big(|M_{ik}|M_{kj}+M_{ik}|M_{kj}|\Big), & &\text{~otherwise~}.
  \end{aligned}\right.
\]
If $\bm{D}$ is a skew-symmetrizer of $M$, then $\bm{D}$ is a skew-symmetrizer of $\mu_k(M)$.  A \emph{seed mutation} in \emph{direction} $k\in J$ transforms a labeled seed $({\bm z}_\sigma, \sigma)$ into a new labeled seed $\mu_k({\bm z}_\sigma, \sigma)=({\bm z}_{\sigma'}, \sigma')$, where $\sigma'=(I,J,\mu_k(M))$ and the new cluster ${\bm z}_{\sigma'}\subset \mathcal{F}$ contains the cluster variables:
\begin{equation} \label{equation;mutationvariable}
  \mu_k(z_i)=\left\{
  \begin{aligned}
    \ & z_i, & &\text{~if~}\ i\neq k;\\
    \ & z_k^{-1}\left(\prod_{M_{jk}>0}z_j^{M_{jk}}+\prod_{M_{jk}<0}z_j^{-M_{jk}}\right), & &\text{~if~}\ i=k.
  \end{aligned}\right.
\end{equation}
Two seeds will be called \emph{mutation equivalent} if they are related by a sequence of mutations. The equivalence class of a labeled seed $({\bm z}_\sigma,\sigma)$ is denoted by $|\sigma|$. 

\begin{definition}
Let $(\bm{z}_\sigma,\sigma)$ be a labeled seed. Let $\mathcal{P}$ be the $\C$-algebra generated by $\{z_i, z_i^{-1} \mid i\in I\backslash J\}$. The \emph{cluster algebra} $\mathcal{A}_{|\sigma|}$ over $\mathcal{P}$ associated with the labeled seed $({\bm z}_\sigma,\sigma)$ is the $\mathcal{P}$-subalgebra of  $\mathcal{F}$ generated by the set \[
\{z \in \mathcal{F} \mid z\in {\bm z}_{\sigma'} ^0,~\sigma'\in |\sigma|\}\]
 of all unfrozen cluster variables. In this case $({\bm z}_\sigma,\sigma)$ is the \emph{initial seed} of $\mathcal{A}_{|\sigma|}$.
%Denote by ${\bm Z}:=\bigcup_{\sigma\in |\sigma|} {\bm z}_\sigma^0$ is the set all unfrozen cluster variables in the seeds that are mutation equivalent to $({\bm z}_\sigma, \sigma)$. We give BH: This notation is used once and the formula \sigma \in |\sigma| doesn't make any sense. I rewrote this definition and now it is shorter.
\end{definition}

\begin{definition}[\cite{FG}]
  The {\em (Langlands) dual seed} of a seed $\sigma=(I,J,M)$ is $\sigma^\vee:=(I,J,-M^T)$. If $(\bm{z}_\sigma,\sigma)$ is a labeled seed, let $\mathcal{F}^\vee$ be the ambient field of rational functions over $\C$, in the variables $z_i^\vee$, $i\in I$. The \emph{dual labeled seed} is $(\bm{z}^\vee_\sigma,\sigma^\vee)$, where $\bm{z}^\vee_\sigma = \{z_i^\vee \in \mathcal{F}^\vee \mid i \in I\}$. For a labeled seed $(\bm{z}_\sigma,\sigma)$, denote by $\mathcal{A}^\vee_{|\sigma|}:=\mathcal{A}_{|\sigma^\vee|}$ the cluster algebra associated with $(\bm{z}^\vee_{\sigma},\sigma^\vee)$, which we call the {\em (Langlands) dual cluster algebra} of $\mathcal{A}_{\sigma}$. 
\end{definition}

If $\mu$ is a sequence of mutations at indices $j_1,\dots,j_n\in J$ which takes a labeled seed $(\bm{z},\sigma)$ to $(\bm{z}',\sigma')$, let $\mu^\vee$ denote the sequence of mutations at indices $j_1,\dots,j_n$ applied to the dual seed $(\bm{z}^\vee,\sigma^\vee)$.
It is straightforward to verify that $\mu^\vee$ takes the dual seed $(\bm{z}^\vee,\sigma^\vee)$ to $({{\bm z}'}^\vee,{\sigma'}^\vee)$.

Let $(\bm{z}_\sigma,\sigma=(I,J,M))$ be a labeled seed, and assume a skew-symmetrizer $\bm{D}$ of $M$ has been fixed, with diagonal entries $\bm{D}_{ii} = d_i$. %Fix an integer $d$ such that each $d_i$ divides $d$ for all $i\in I$. Then $D^\vee:=dD^{-1}$ is a skew-symmetrizer of $-M^T$.  BH: This sentence is never used, as far as I can tell.
For each labeled seed $(\bm{z}_{\sigma'},\sigma') \in |\sigma|$, define a $\C$-algebra homomorphism 
\begin{equation}\label{comparisongeneral}
\varPsi^*_{\sigma'}\colon \mathcal{A}^\vee_{|\sigma|} \to \mathcal{A}_{|\sigma|}, \quad z_i^\vee  \mapsto z_i^{d_{i}},\quad z_i^\vee \in \bm{z}_{\sigma'}.
\end{equation}
%\varPsi_{\sigma'}^*\colon {\bm z}_{\sigma^\vee}  \to {\bm z}_\sigma \ :\ 
Note that $\varPsi_{\sigma'}\ne \varPsi_{\sigma}$ in general. 

%
%Denote by $\overline{\mathcal{A}}_{|\sigma|}$ the \emph{upper cluster algebra} of $({\bm z}_\sigma,\sigma)$, which is a subalgebra of $\mathcal{F}$ consisting of elements of $\mathcal{F}$ which are Laurent polynomials over $\mathcal{P}$ in unfrozen cluster variables ${\bm z}_{\sigma'}^0$ of any given $ \sigma'\in |\sigma|$, in other words: 
%\[
%	\overline{\mathcal{A}}_{|\sigma|}:=\bigcap_{\sigma\in |\sigma|} \mathcal{P}[{\bm z}_\sigma^{\pm}], \quad \text{where~} {\bm z}_\sigma^{\pm}:=\{z_i,z_i^{-1}\mid z_i\in {\bm z}_\sigma^0 \}.
%\]  	
%By the Laurent phenomenon, we have $\mathcal{A}_{|\sigma|}\subset \overline{\mathcal{A}}_{|\sigma|}$, see \cite[Theorem 3.14]{GSV}.

\subsubsection{Cluster algebra structures on double Bruhat cells}\label{doublecellcluster}

In this section, we assume that $G$ is a connected reductive algebraic group of rank $r$ as before. We will recall how to construct a %(upper)
 cluster algebra structure on the coordinate algebra of a reduced double Bruhat cell $L^{u,v}$ and double Bruhat cell $G^{u,v}$, for any pair $(u,v) \in W\times W$.

A \emph{double reduced word} $\mathbf{i}=(i_1,\dots,i_n)$ for $(u,v)$ is a shuffle  of a reduced word for $u$, written in the alphabet $\{-r,\dots,-1\}$, and a reduced word for $v$, written in the alphabet $\{1,\dots,r\}$, where $n=\ell(u)+\ell(v)$. For $k\in -[1,r]\cup[1,n]$, we denote by
\begin{equation}\label{k+}
    k^+=\min\{j\mid j>k,|i_j|=|i_k|\}.
\end{equation}
If $|i_j|\neq |i_k|$ for all $j>k$, we set $k^+=n+1$. An index $k$ is \emph{$\mathbf{i}$-exchangeable} if both $k,k^+\in [1,n]$. Let $\bm{e}(\mathbf{i})$ denote the set of all $\mathbf{i}$-exchangeable indices.

Fix a double reduced word $\mathbf{i}$ of $(u,v)$. Let $I=[-r,-1]\cup [1,n]$, $J=\bm{e}(\mathbf{i})$, and $L:=[-r,-1]\cup\bm{e}(\mathbf{i})\subset I$. Construct a $I \times I$ matrix $M(\mathbf{i})$ as in \cite[Remark 2.4]{BFZ}: For $k,l\in I$, set $p=\max\{k,l\}$ and $q=\min\{k^+,l^+\}$, and let $\epsilon(k)$ be the sign of $k$. Then following \cite[Remark 2.4]{BFZ}, let
\begin{equation}\label{matixinseed}
  \begin{split}
    M(\mathbf{i})_{kl}=\left\{
    \begin{aligned}
      &-\epsilon(k-l)\cdot\epsilon(i_p),& &\text{~if~} p=q;\\
      &-\epsilon(k-l)\cdot\epsilon(i_p)\cdot A_{|i_k|,|i_l|},& &\text{~if~} p<q \text{~and~} \epsilon(i_p)\epsilon(i_q)(k-l)(k^+-l^+)>0;\\
      &0, & &\text{~otherwise~}. 
    \end{aligned}\right.
  \end{split}
\end{equation}
Here and throughout we use the convention that $i_{-k} = k$, for $k\in [1,r]$. Recall that $A$ is the Cartan matrix of $\mathfrak{g}$. Denote by $\overline{M}(\mathbf{i}):=[M(\mathbf{i})_{kl}]_{k,l\in L}$ the $L\times L$ submatrix of $M(\mathbf{i})$. 

Recall the matrix $D$, which is the symmetrizer of the Cartan matrix $A$ fixed in Section~\ref{section;comparison}. Consider the diagonal $I \times I$ matrix ${\bm D}$ with entries
\begin{equation} \label{equation;skewsymmetrizer}
{\bm D}_{j,j}= D_{|i_j|,|i_j|} = d_{|i_j|}\in \Z.
\end{equation}
Let $\overline{\bm D}$ be the submatrix of ${\bm D}$ with rows and columns indexed by $L$. Then $M(\mathbf{i})$ is skew-symmetrizable, with skew-symmetrizer ${\bm D}$, and $\overline{M}(\mathbf{i})$ is skew-symmetrizable, with skew-symmetrizer $\overline{\bm D}$.

Define the following seeds:
\[
  \sigma(\mathbf{i}):=(I,J,M(\mathbf{i})),\quad \overline{\sigma}(\mathbf{i}):=(L,J,\overline{M}(\mathbf{i})).
\]
If $\sigma$ is obtained by mutating $\sigma(\mathbf{i})$ at indices $j_1,\dots,j_n$, let $\overline{\sigma}$ denote the seed obtained by mutating $\overline{\sigma}(\mathbf{i})$ at the same indices.
For a double reduced word $\mathbf{i}$ of $(u,v)$ and $k\in [1,n]$, denote
\[
  u_k:=\prod_{\substack{l=1,\ldots,k\\ i_l<0}} s_{|i_l|}, \quad v_k:=\prod_{\substack{l=n,\ldots,k+1\\ i_l>0}} s_{i_l},
\]
where the index is increasing (resp. decreasing) in the product on the left (resp. right). Denote
\begin{equation}\label{fullcluster}
  \Delta_k:=\Delta_{u_k\omega_{|i_k|},v_k\omega_{|i_k|}}\text{~for~}k\in [1,n]; \quad \Delta_k:=\Delta_{\omega_{|k|},v^{-1}\omega_{|k|}}\text{~for~}k\in[-r,-1].  
\end{equation}

\begin{theorem}\cite[Theorem 2.10]{BFZ}\label{Thm2.10} 
  For every double reduced word $\mathbf{i}$ for $(u,v)$, the map
  \[
    \varphi^{u,v}\colon \mathcal{A}_{|\overline{\sigma}(\mathbf{i})|}\to \mathbb{C}[L^{u,v}], \quad z_k\mapsto \Delta_k\big|_{L^{u,v}}, \ \text{~for~}\   k \in L
  \]
  is an isomorphism of algebras. If $G$ is simply connected, the map
  \[
    \phi^{u,v}\colon \mathcal{A}_{|\sigma(\mathbf{i})|}\to \mathbb{C}[G^{u,v}], \quad z_k\mapsto \Delta_k, \ \text{~for~}\   k \in I
  \]
  is an isomorphism of algebras. Here $z_k\in \bm{z}_{\overline{\sigma}(\mathbf{i})}$ (resp. $z_k\in\bm{z}_{\sigma(\mathbf{i})}$) is in the initial seed of $ \mathcal{A}_{|\overline{\sigma}(\mathbf{i})|}$ (resp. $ \mathcal{A}_{|\overline{\sigma}(\mathbf{i})|}$).
\end{theorem}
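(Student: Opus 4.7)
The strategy is to show that $\varphi^{u,v}$ and $\phi^{u,v}$ are well-defined algebra homomorphisms, establish that they are injective via algebraic independence of the initial cluster, and surjective via generation by factorization coordinates together with the Laurent phenomenon. The core content is an exchange relation identifying cluster mutation with a classical identity among generalized minors.

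For well-definedness, I would first check that each $\Delta_k$ for $k\in L$ restricts to a regular function on $L^{u,v}$ and that the frozen minors are nowhere vanishing on $L^{u,v}$, using~\eqref{equation;RDBCdefining} and Bruhat-cell considerations. To establish algebraic independence of $\{\Delta_k : k \in L\}$, I would parameterize a generic element of $L^{u,v}$ by factorization coordinates, writing it as a product of simple-root subgroup elements $y_{i_j}(t_j)$ and $x_{i_j}(t_j)$ (according to the sign of $i_j$) together with a Cartan part, and show that the $(\Delta_k)_{k \in L}$ form a unitriangular Laurent monomial change of variables from these coordinates; the triangularity is proved via the Bruhat order on $W$ by extracting the highest-weight vector contribution to each minor. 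This simultaneously gives an equality of rational function fields $\mathbb{C}(\Delta_k : k \in L) = \mathbb{C}(L^{u,v})$.

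The heart of the proof is the exchange relation. For each $\mathbf{i}$-exchangeable $k$, I would verify that the mutation formula~\eqref{equation;mutationvariable} applied to $\Delta_k$ produces the generalized minor naturally associated to advancing $k$ to $k^+$ along the double word. This reduces to the generalized determinantal identity
\begin{equation*}
\Delta_{u\omega_i,v\omega_i}\,\Delta_{us_i\omega_i,vs_i\omega_i} = \Delta_{us_i\omega_i,v\omega_i}\,\Delta_{u\omega_i,vs_i\omega_i} + \prod_{j\neq i}\Delta_{u\omega_j,v\omega_j}^{-A_{j,i}},
\end{equation*}
which I would prove by reducing to a rank-two calculation inside the reductive subgroup containing the relevant pair of simple roots, handled case-by-case in types $A_1\times A_1$, $A_2$, $B_2$, $G_2$. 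The homogeneity property~\eqref{homogeneitygenminor} then propagates it to the full group. A comparison with the entries of $M(\mathbf{i})$ given in~\eqref{matixinseed} matches the signs and exponents of the mutation formula, and an induction on mutation distance transports the identification to every cluster in $|\overline{\sigma}(\mathbf{i})|$.

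Surjectivity follows because the factorization coordinates $t_j$ are expressible as Laurent monomials in the generalized minors, and together with the frozen minors they generate $\mathbb{C}[L^{u,v}]$; combined with the algebraic independence from the first step and an equality of transcendence degrees, this gives that $\varphi^{u,v}$ is an isomorphism. For $\phi^{u,v}$ one uses the biregular splitting $H\times L^{u,v} \cong G^{u,v}$, where the frozen minors indexed by $[-r,-1]$ recover coordinates on $H$ while the remaining minors reproduce $L^{u,v}$. The main obstacle I anticipate is the verification of the generalized determinantal identity together with the combinatorial matching to~\eqref{matixinseed}: the rank-two case analysis is lengthy, and ensuring that the signs $\epsilon(k-l)\epsilon(i_p)$ and the Cartan-matrix exponents in~\eqref{matixinseed} correctly encode the exponents on the right-hand side of the determinantal identity for every possible configuration of $k, k^+, l, l^+$ is the most delicate bookkeeping in the proof.
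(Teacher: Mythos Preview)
The paper does not prove this statement; it is quoted as \cite[Theorem~2.10]{BFZ} and used as a black box. There is therefore nothing in the paper's own argument to compare your proposal against. Your outline is in fact a reasonable sketch of the strategy used in \cite{BFZ}: algebraic independence of the initial minors via factorization coordinates, the generalized determinantal identity as the exchange relation, and the Laurent phenomenon for the converse inclusion.

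One point worth flagging: strictly speaking \cite{BFZ} identifies $\mathbb{C}[G^{u,v}]$ with the \emph{upper} cluster algebra of $\sigma(\mathbf{i})$, not the cluster algebra itself. The paper addresses this in the remark immediately following the theorem, noting that the two coincide for double Bruhat cells by \cite{GY}, and that in any case only the Laurent phenomenon is used downstream. Your surjectivity argument (factorization parameters are Laurent monomials in the minors) gives only the upper cluster algebra inclusion; closing the gap to the cluster algebra requires the additional input from \cite{GY} or an equivalent argument, which your proposal does not mention. The reduced-cell version is also not in \cite{BFZ} directly but is obtained, as the paper notes, by passing to a cover and specializing frozen variables.
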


\begin{remark}
Theorem 2.10 in~\cite{BFZ} is stated for non-reduced double Bruhat cells; the statement for reduced double Bruhat cells follows by passing to a cover as in~\eqref{equation;coverL} and specializing the frozen variables $\Delta_{w_0\omega_i,\omega_i}=1$. Strictly speaking, in~\cite{BFZ} they show there is an isomorphism from $\C[G^{w_0,e}]$ to the \emph{upper} cluster algebra of $\sigma(\mathbf{i})$. We ignore this detail here for two reasons: First, it is known that in the case of double Bruhat cells, the cluster algebra and upper cluster algebra coincide (see for instance the introduction of~\cite{GY}). Second, we only make use of the Laurent phenomenon of  $\mathcal{A}_{|\sigma(\mathbf{i})|}$ in what follows, and so all our results hold if ``cluster'' is replaced by ``upper cluster''.
\end{remark}

Let us apply Theorem~\ref{Thm2.10} to the Langlands dual group $G^\vee$ of $G$. Let $\mathbf{i}$ be a double reduced word for $(u,v)$ as before. Let $M^\vee(\mathbf{i})$ be the matrix defined in \eqref{matixinseed}, with respect to the Cartan matrix $A^\vee=A^T$ of $G^\vee$. Denote by $\overline{M}^\vee(\mathbf{i})$ the  submatrix of $M^\vee(\mathbf{i})$ formed by taking the $[-r,-1]\cup {\bm e}(\mathbf{i})$ rows and columns. Direct computation shows that 
\[
\overline{M}^\vee(\mathbf{i})=-\overline{M}^T(\mathbf{i}).
\] Thus $\C[L^{\vee;u,v}]$ is the Langlands dual cluster algebra to $\C[L^{u,v}]$.

\subsubsection{Homogeneous cluster algebras}  \label{section;homogeneous}

In this section, we recall the notion of homogeneous cluster algebras. %Note that the notion of a graded (homogeneous) cluster algebra has been implicit used in the early works since the origin of the subject, see \cite{BFZ, GSV03}. BH: This resembles Grabowski's writing very strongly...
For a more detailed discussion, see \cite{GSV,Gra}.
\begin{definition}
  A cluster algebra $\mathcal{A}_{|\sigma|}$ with initial seed $({\bm z}_\sigma,\sigma)$ is \emph{graded} by an abelian group $\mathcal{G}$ if the algebra $\mathbb{C}[{\bm z}_\sigma^{\pm}]$ is graded by $\mathcal{G}$ and the initial cluster variables $z_i$ are homogeneous for $i\in I$. We say a graded cluster algebra is \emph{homogeneous} if all cluster variables are homogeneous with respect to the grading. Denote by $|z|$ the degree of a homogeneous element $z\in \mathbb{C}[{\bm z}_\sigma^{\pm}]$. 
\end{definition}

\begin{proposition}  \label{proposition; homogeneous cluster creterion}
  A $\mathcal{G}$-graded cluster algebra $\mathcal{A}_{|\sigma|}$ with initial seed $({\bm z}_\sigma, \sigma=(I,J,M))$ is homogeneous if and only if
  \begin{equation}\label{degreeaskernel}
    \sum_{i\in I} |z_i|M_{ij}=0, \quad \forall j\in J,
  \end{equation}
  where $z_i\in {\bm z}_\sigma$ and $|z_i|$ is the degree of $z_i$.
\end{proposition}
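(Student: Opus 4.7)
The proof plan is to use the cluster mutation formula~\eqref{equation;mutationvariable} directly, together with an induction on the length of a mutation sequence. The key observation is that the exchange relation
\[
\mu_k(z_k)\, z_k \;=\; \prod_{M_{jk}>0} z_j^{M_{jk}} \;+\; \prod_{M_{jk}<0} z_j^{-M_{jk}}
\]
has a right-hand side which is a sum of two Laurent monomials in the current cluster. For $\mu_k(z_k)$ to be homogeneous, the two monomials must have equal $\mathcal{G}$-degree, and conversely, equal degrees make $\mu_k(z_k)$ homogeneous of degree $\sum_{M_{jk}>0} M_{jk}|z_j| - |z_k|$. Equating the two degrees is precisely the relation $\sum_{i\in I} M_{ik}|z_i| = 0$ (note $M_{kk}=0$). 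This already gives the ``only if'' direction: taking a single mutation at each unfrozen index $j\in J$ from the initial seed, homogeneity of $\mu_j(z_j)$ forces~\eqref{degreeaskernel}.

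For the ``if'' direction I would argue by induction on the length of a mutation sequence. The inductive hypothesis at a seed $(\bm{z}_{\sigma'},\sigma'=(I,J,M'))$ mutation equivalent to $(\bm{z}_\sigma,\sigma)$ is two-fold:
\begin{enumerate}[(a)]
\item every cluster variable $z'_i \in \bm z_{\sigma'}$ is homogeneous;
\item the degree vector $d' := (|z'_i|)_{i\in I}$ satisfies $\sum_{i\in I} d'_i M'_{ij} = 0$ for all $j\in J$.
\end{enumerate}
The base case is the hypothesis~\eqref{degreeaskernel}. The inductive step has two parts. For (a), condition (b) at $\sigma'$ implies (as in the previous paragraph) that the next mutated variable $\mu_k(z'_k)$ is homogeneous. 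For (b), one must check that the condition persists after a mutation $\mu_k$; this is a matrix-level computation using the mutation formula for $M$ together with the exchange-degree identity $|\mu_k(z_k')| + |z'_k| = \sum_{M'_{lk}>0} M'_{lk}|z'_l|$.

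The one step where a genuine computation is needed is verifying (b) persists under mutation. Writing $d''_i = d'_i$ for $i\neq k$ and $d''_k = -|z'_k| + \sum_{M'_{lk}>0} M'_{lk}|z'_l|$, one plugs into
\[
\sum_i d''_i \,\mu_k(M')_{ij} \;=\; \sum_{i\neq k} d'_i\Bigl(M'_{ij} + \tfrac{1}{2}\bigl(|M'_{ik}|M'_{kj} + M'_{ik}|M'_{kj}|\bigr)\Bigr) - d''_k M'_{kj}.
\]
Using (b) at $\sigma'$ twice, once with index $j$ (to rewrite $\sum_{i\neq k} d'_i M'_{ij} = -d'_k M'_{kj}$) and once with index $k$ (to observe $\sum_{M'_{ik}>0} d'_i M'_{ik} = -\sum_{M'_{ik}<0} d'_i M'_{ik}$, hence $\sum_{i\neq k} d'_i |M'_{ik}| = 2\sum_{M'_{ik}>0}d'_iM'_{ik}$), the whole expression collapses to $M'_{kj}\bigl(-|z'_k| + \sum_{M'_{ik}>0}d'_i M'_{ik} - d''_k\bigr) = 0$. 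This closes the induction.

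The only real subtlety is bookkeeping of signs in the mutation formula for $M$, which is why the identity at index $k$ (giving the symmetry between positive and negative parts of column $k$) is what makes the two halves of the increment $\tfrac{1}{2}(|M'_{ik}|M'_{kj} + M'_{ik}|M'_{kj}|)$ collapse after summation against $d'$. No assumption on $\mathcal{G}$ beyond being abelian is needed; the argument works entry-wise in any target abelian group.
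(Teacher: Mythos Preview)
Your proof is correct and follows essentially the same approach as the paper: both argue the ``only if'' direction by a single mutation and the ``if'' direction by induction on mutation length, showing that the degree condition \eqref{degreeaskernel} persists under mutation via the same matrix computation. One small omission: your displayed computation for $\sum_i d''_i\,\mu_k(M')_{ij}$ tacitly assumes $j\neq k$ (since you use the ``otherwise'' branch of the mutation formula for $i\neq k$); the case $j=k$ needs a separate line, but it is immediate from $\mu_k(M')_{ik}=-M'_{ik}$ and condition (b) at index $k$, exactly as the paper records.
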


\begin{proof}
  If $\mathcal{A}_{|\sigma|}$ is homogeneous,  the equation \eqref{degreeaskernel} follows from the fact that the cluster variables $z_k'\in {\bm z}_{\sigma'}$ of the seed $\sigma'=\mu_k(\sigma)$ are homogeneous. To be more precise, the variable $z_k'$ is homogeneous if and only if the monomials in \eqref{equation;mutationvariable} have the same degree. Then we have:
  \[
  \sum_{j: M_{jk}>0}|z_j|M_{jk}=-\sum_{j: M_{jk}<0}|z_j|M_{jk},
  \]
  which is equivalent to \eqref{degreeaskernel}.
  
  For the other direction, by induction, all we need to show is
  \begin{equation}\label{mhc}
    \sum_{i\in I} |\mu_k(z_i)|\mu_k(M)_{ij}=0, \quad \forall k, j\in J.
  \end{equation}
  First all, note that $\mu_k(z_k)$ has degree:
  \[
    |\mu_k(z_k)|=-|z_k|+\frac{1}{2}\sum_{i\in I}|z_i||M_{ik}|.
  \]
  Then for $j\neq k$, noting that $M_{kk}=0$, we have:
  \begin{align*}
    2\sum_{i\in I} |\mu_k(z_i)|\mu_k(M)_{ij}&=2\sum_{i\neq k} |z_i|\mu_k(M)_{ij}+2|\mu_k(z_k)|\mu_k(M)_{kj}\\
    &=\sum_{i\neq k} |z_i|\left(2M_{ij}+|M_{ik}|M_{kj}+M_{ik}|M_{kj}|\right)-\left( \sum_{i\in I}|z_i||M_{ik}|-2|z_k|\right)M_{kj}\\
    &=\sum_{i\neq k} |z_i|M_{ik}|M_{kj}|=|M_{kj}|\sum_{i\in I} |z_i|M_{ik} \\
    & =0.
  \end{align*}
  The relation~\eqref{degreeaskernel} is used when moving from the second line to the third, and again moving from the third line to the fourth. 
  For $j=k$, we have
  \begin{align*}
    \sum_{i\in I} |\mu_k(z_i)|\mu_k(M)_{ik}&=-\sum_{i\neq k} |z_i|M_{ik}=0.
  \end{align*}
  Thus we get \eqref{mhc}.
\end{proof}

\subsubsection{Homogeneity of cluster variables on double Bruhat cells}

Let $G$ be a simply connected semisimple Lie group. Consider the action of $H\times H$ on $\C[G]$, where $(h,h') \cdot f(x) = f(hxh')$ for $f\in \C[G]$ and $(h,h') \in H\times H$. Then $\C[G^{u,v}]$ has a natural $P\times P$-grading, where the $P\times P$-homogenous elements are $H\times H$-eigenvectors in $ \C[G^{u,v}]$. 

Thus for a fixed double reduced word $\mathbf{i}$ of $(u,v)\in W\times W$,  the generalized minor $\Delta_{\gamma,\delta}$ naturally has a degree $(\gamma,\delta)\in P\times P$: 
\[
    |\Delta_k|:=(u_k\omega_{|i_k|},v_k\omega_{|i_k|})\text{~for~}k\in [1,n]; \quad |\Delta_k|:=(\omega_{|k|},v^{-1}\omega_{|k|})\text{~for~}k\in[-r,-1].
\]
\begin{proposition}\cite[Lemma 4.22]{GSV}
  Let $({\bm z}_{\sigma(\mathbf{i})},\sigma(\mathbf{i})=(I,J={\bm e}(\mathbf{i}), M(\mathbf{i})))$ be the initial seed as in Section~\ref{doublecellcluster}. For any $k\in J$, we have:
  \[
    \sum_j u_j\omega_{|i_j|}M(\mathbf{i})_{jk}=0, \quad \sum_j v_j\omega_{|i_j|}M(\mathbf{i})_{jk}=0.
  \]
\end{proposition}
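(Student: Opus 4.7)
My plan is to derive both identities from the fact that generalized minors, and the exchange relations among them on the double Bruhat cell, respect the $H \times H$-bigrading of $\mathbb{C}[G^{u,v}]$.

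First, I would recall that by~\eqref{homogeneitygenminor} each generalized minor $\Delta_{u_j \omega_{|i_j|},v_j \omega_{|i_j|}}$ is an eigenvector of the left-right action of $H\times H$ on $\mathbb{C}[G^{u,v}]$ with weight $(u_j \omega_{|i_j|},\,v_j \omega_{|i_j|}) \in P \times P$. Consequently every cluster monomial $\prod_j \Delta_j^{a_j}$ in the initial cluster is an $H\times H$-eigenvector whose left weight is $\sum_j a_j u_j \omega_{|i_j|}$ and whose right weight is $\sum_j a_j v_j \omega_{|i_j|}$.

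Next, I would invoke the generalized determinantal identity of Fomin-Zelevinsky (\cite[Theorem 1.17]{FZ}; cf.~\cite{BFZ}): for each exchangeable index $k\in J$ there exists an index $k^-$ (the predecessor of $k$ in the $|i_k|$-strand) such that
\[
\mu_k(\Delta_k) \;=\; \Delta_{u_{k^-}\omega_{|i_k|},\, v_{k^-}\omega_{|i_k|}},
\]
which is again a generalized minor, hence also an $H\times H$-weight vector. By the exchange relation~\eqref{equation;mutationvariable} applied inside $\mathbb{C}[G^{u,v}]$ under the isomorphism of Theorem~\ref{Thm2.10}, we have
\[
\Delta_k \cdot \mu_k(\Delta_k) \;=\; \prod_{j:\,M(\mathbf{i})_{jk}>0}\Delta_j^{M(\mathbf{i})_{jk}} \;+\; \prod_{j:\,M(\mathbf{i})_{jk}<0}\Delta_j^{-M(\mathbf{i})_{jk}} .
\]
The left-hand side is an $H \times H$-eigenvector. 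Since each of the two monomials on the right-hand side is also an eigenvector, the two monomials must carry the same $H\times H$-weight as the left-hand side (otherwise their sum would not be an eigenvector). Equating the left weights of the two monomials on the right gives
\[
\sum_{j:\,M(\mathbf{i})_{jk}>0}M(\mathbf{i})_{jk}\,u_j\omega_{|i_j|} \;=\; -\sum_{j:\,M(\mathbf{i})_{jk}<0}M(\mathbf{i})_{jk}\,u_j\omega_{|i_j|},
\]
which is precisely $\sum_j u_j\omega_{|i_j|}\,M(\mathbf{i})_{jk}=0$. Equating right weights yields the second identity.

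The main obstacle is the input from~\cite{FZ,BFZ} that a one-step mutation of $\Delta_k$ coincides with another generalized minor (and hence is automatically $H\times H$-homogeneous). Once that is in hand, the rest is a direct weight computation. If one wishes to avoid this input, one can alternatively verify the identities by a purely combinatorial case analysis of the definition~\eqref{matixinseed}: for fixed $k \in J$ the entries $M(\mathbf{i})_{jk}$ are supported on a very small set of indices around the adjacent occurrences of $|i_k|$ and the positions where a letter $\pm|i_l|$ with $A_{|i_l|,|i_k|}\neq 0$ sits between them, and the identity then reduces to the reflection relation $s_{|i_k|}\omega_{|i_k|}=\omega_{|i_k|}-\alpha_{|i_k|}$ together with $\alpha_{|i_k|} = \sum_m A_{m,|i_k|}\omega_m$. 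This bookkeeping is the approach taken in~\cite[Lemma 4.22]{GSV}, and is conceptually just the mirror image of the exchange-relation argument above.
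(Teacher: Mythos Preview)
The paper does not give its own proof of this proposition; it is stated and cited without proof from \cite[Lemma~4.22]{GSV}. Your argument is correct. The essential input---that a single mutation $\mu_k(\Delta_k)$ of an initial cluster variable on $G^{u,v}$ is again a generalized minor---is precisely what \cite{BFZ} establish by matching the cluster exchange relations with the Fomin--Zelevinsky determinantal identities, so invoking it is legitimate and not circular. From there, comparing $H\times H$-weights of the two monomials on the right of the exchange relation yields the identity immediately (both monomials are nonzero since they are products of initial cluster variables, so the weight comparison is forced). The direct combinatorial verification you sketch at the end, working from the explicit formula \eqref{matixinseed}, is closer in spirit to what \cite{GSV} actually do; your route trades that bookkeeping for a single appeal to homogeneity, at the cost of importing the determinantal identities as a black box.
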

%conventions there differ by a transpose, revised to match our conventions
By Theorem \ref{Thm2.10}, we define a grading $|\cdot|$ for the cluster algebra $\mathcal{A}_{|\sigma(\mathbf{i})|}$ by:
\[
    |z_k|:=|\Delta_k|\text{~for~}k\in [-r,-1]\cup [1,n].
\]
Thus it follows immediately: 
\begin{corollary}\label{corollary;homog}
	Let $\mathbf{i}$ be a double reduced word of $(u,v)$ and $({\bm z}_\sigma, \sigma(\mathbf{i})=(I,J, M(\mathbf{i})))$ be the initial seed as in Section~\ref{doublecellcluster}. The $P\times P$-graded cluster algebra $(\mathcal{A}_{|\sigma(\mathbf{i})|}, |\cdot|)$ is homogeneous.
\end{corollary}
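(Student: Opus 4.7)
The plan is to apply the homogeneity criterion (Proposition~\ref{proposition; homogeneous cluster creterion}) to the $P\times P$-grading on $\mathcal{A}_{|\sigma(\mathbf{i})|}$ defined by $|z_k| := |\Delta_k|$. First I would check that the setup of the criterion is satisfied, namely that $\C[\bm{z}_{\sigma(\mathbf{i})}^\pm]$ is graded by $P\times P$ and that the initial cluster variables are homogeneous. This is immediate: under the identification from Theorem~\ref{Thm2.10} of $z_k$ with the generalized minor $\Delta_k$, the homogeneity property~\eqref{homogeneitygenminor} of generalized minors shows that $\Delta_k$ is an $H\times H$-eigenvector of weight $(u_k\omega_{|i_k|},v_k\omega_{|i_k|})\in P\times P$, and the initial cluster $\{\Delta_k\mid k\in I\}$ is algebraically independent, inducing a well-defined $P\times P$-grading on the Laurent polynomial ring.

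Next I would invoke the preceding proposition (the GSV lemma~\cite[Lemma~4.22]{GSV} cited above), which gives, for every $k\in J$,
\[
\sum_{j\in I} u_j\omega_{|i_j|}\, M(\mathbf{i})_{jk}=0,\qquad \sum_{j\in I} v_j\omega_{|i_j|}\, M(\mathbf{i})_{jk}=0.
\]
Pairing these two identities gives exactly the single $P\times P$-valued identity
\[
\sum_{j\in I}|z_j|\,M(\mathbf{i})_{jk}=\Bigl(\sum_{j\in I} u_j\omega_{|i_j|}\, M(\mathbf{i})_{jk},\ \sum_{j\in I} v_j\omega_{|i_j|}\, M(\mathbf{i})_{jk}\Bigr)=(0,0),
\]
which is condition~\eqref{degreeaskernel} of Proposition~\ref{proposition; homogeneous cluster creterion}. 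Hence the hypothesis of the homogeneity criterion is met, and the criterion delivers the conclusion.

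There is essentially no obstacle here; the corollary is a direct combination of the two preceding results. The one point that warrants a brief mention in the write-up is the bookkeeping for the frozen indices $k\in [-r,-1]$: the degrees $|z_k|=(\omega_{|k|},v^{-1}\omega_{|k|})$ are consistent with the conventions under which the GSV identities are stated (setting $u_{-l}$ and $v_{-l}$ by the natural convention so that $u_{-l}\omega_{l}=\omega_l$ and $v_{-l}\omega_l=v^{-1}\omega_l$), so that the sums above genuinely range over all of $I$. Once this is noted, no further argument is required.
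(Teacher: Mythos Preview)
Your proof is correct and follows exactly the paper's approach: the corollary is stated as an immediate consequence of the GSV identity $\sum_j u_j\omega_{|i_j|}M(\mathbf{i})_{jk}=\sum_j v_j\omega_{|i_j|}M(\mathbf{i})_{jk}=0$ combined with the homogeneity criterion of Proposition~\ref{proposition; homogeneous cluster creterion}. The paper simply writes ``Thus it follows immediately,'' so you have in fact spelled out more detail than the original.
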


Now assume $(u,v) = (w_0,e)$. Recall that following~\eqref{twist map equation} we defined the twisted minors
\[
  \Delta^\zeta_{u\omega_i,\omega}:=\Delta_{u\omega_i,\omega_i}\circ \zeta,
\]
Note that the twisted minors $\Delta^\zeta_{u\omega_i,\omega_i}$'s are homogeneous:
\begin{proposition}
\label{proposition;twisthomog}
Let $f\in \C[G^{w_0,e}]$ be homogenous of degree $(\gamma,\delta)\in P\times P$. Then $f\circ \zeta$ is homogenous of degree $(-w_0 \gamma, -\delta)$. In particular, the function $\Delta^\zeta_{u\omega_i,\omega_i}$ is of degree $(-w_0u\omega_i,-\omega_i)$.
%
%
%  The twist map $\zeta$ is homogeneous, {\em i.e.}, for any homogeneous function rational function $f\in \mathbb{C}[G^{w_0,e}]$, the function $f\circ \zeta$ is homogeneous. Moreover, the function $\Delta^\zeta_{u\omega_i,\omega_i}$ is of degree $(-w_0u\omega_i,-\omega_i)$.
\end{proposition}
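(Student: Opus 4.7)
The plan is to trace through the definition $\zeta(x) = ([\overline{w_0}^{-1} x]_{\geq 0})^\theta$ and determine how $\zeta(h_1 g h_2)$ relates to $\zeta(g)$ for $h_1,h_2\in H$ and $g$ in the open subset of $G^{w_0,e}$ where $\zeta$ is defined. Once we have a formula of the form $\zeta(h_1 g h_2) = h_1' \,\zeta(g)\, h_2'$ with $h_1',h_2'\in H$, the bi-homogeneity of $f$ immediately delivers the degree of $f\circ\zeta$.

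First I would commute $\overline{w_0}^{-1}$ past $h_1$ using the Weyl action on $H$: writing $h_1^{w_0}:=\overline{w_0}^{-1} h_1 \overline{w_0}$, one has by definition $(h_1^{w_0})^\gamma = h_1^{w_0\gamma}$ for every character $\gamma\in X^*(H)$ (cf.~\eqref{equation;weylaction}). Therefore
\[
\overline{w_0}^{-1}(h_1 g h_2) \;=\; h_1^{w_0}\,(\overline{w_0}^{-1} g)\, h_2 .
\]
Next, since $H$ normalizes $N$ (and fixes $H$), it follows directly from the definition of Gaussian decomposition that $[h x h']_{\geq 0} = h\,[x]_{\geq 0}\, h'$ for all $h,h'\in H$ and $x\in G_0$. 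Applied to $x=\overline{w_0}^{-1} g$, this yields
\[
[\overline{w_0}^{-1}(h_1 g h_2)]_{\geq 0} \;=\; h_1^{w_0}\,[\overline{w_0}^{-1} g]_{\geq 0}\, h_2 .
\]
Finally I apply the group involution $\theta$, which satisfies $h^\theta = h^{-1}$ for $h\in H$, to obtain
\[
\zeta(h_1 g h_2) \;=\; (h_1^{w_0})^{-1}\,\zeta(g)\, h_2^{-1}.
\]
Applying $f$ and using its homogeneity of degree $(\gamma,\delta)$:
\[
f(\zeta(h_1 g h_2)) \;=\; \bigl((h_1^{w_0})^{-1}\bigr)^{\gamma}\,(h_2^{-1})^{\delta}\, (f\circ\zeta)(g) \;=\; h_1^{-w_0\gamma}\, h_2^{-\delta}\,(f\circ\zeta)(g),
\]
proving that $f\circ\zeta$ is homogeneous of degree $(-w_0\gamma,-\delta)$.

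For the second assertion, I would observe that by~\eqref{homogeneitygenminor} the generalized minor $\Delta_{u\omega_i,\omega_i}$ is homogeneous of degree $(u\omega_i,\omega_i)$; substituting $(\gamma,\delta)=(u\omega_i,\omega_i)$ into the first statement gives the claimed degree $(-w_0 u\omega_i,-\omega_i)$ of $\Delta^\zeta_{u\omega_i,\omega_i}$. The only subtlety I anticipate is bookkeeping the Weyl convention: one must check that $h_1^{w_0}$ as defined via conjugation yields the character $w_0\gamma$ (rather than $w_0^{-1}\gamma$), and also confirm that $\theta$ is a group automorphism rather than an anti-automorphism, since the latter would reverse the roles of the two $H$-factors and produce the incorrect degree $(-w_0\delta,-\gamma)$; the compatibility check with~\eqref{equation;twistfrozen} (taking $\gamma=\omega_i$ gives degree $(-\omega_i,-\omega_i)$, matching $\Delta_{\omega_i,\omega_i}^{-1}$) confirms the chosen convention.
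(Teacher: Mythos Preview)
Your proof is correct and follows essentially the same route as the paper: commute $\overline{w_0}^{-1}$ past $h_1$ using the Weyl action, use that $H$ normalises $N_\pm$ to pull torus elements through $[\,\cdot\,]_{\geq 0}$, and then apply $h^\theta=h^{-1}$. The paper treats the left and right $H$-translations separately rather than simultaneously, but the computation is otherwise identical; your added consistency check against~\eqref{equation;twistfrozen} to pin down the automorphism-versus-antiautomorphism convention for $\theta$ is a nice extra safeguard.
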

\begin{proof}
%BH: I made the statement slightly stronger and simplified the proof a bit.
%  To show $\zeta$ is homogeneous, we only need to show it for the labeled initial variable $\{\Delta_k\mid k\in [-r,-1]\cup [1,m]\}$. Thus we only need to show $\Delta^\zeta_{u\omega_i,\omega_i}$ is homogeneous.
 By the uniqueness of the Gauss decomposition, we have for all $g\in G_0$ and $h\in H$:
  \begin{align*}
    [hg]_{\geqslant0}&=h[g]_{\geqslant 0}, \text{~since~} hg=[hg]_-[hg]_{\geqslant 0}=h[g]_-h^{-1}\cdot h[g]_{\geqslant 0};\\
    [gh]_{\geqslant0}&=[g]_{\geqslant 0}h, \text{~since~} gh=[gh]_-[gh]_{\geqslant 0}=[g]_-\cdot [g]_{\geqslant 0}h.
  \end{align*}
  Thus one computes for $x\in G^{w_0,e}$ and $h\in H$,
  \begin{align*}
    f\circ\zeta (hx)&=f([\overline{w_0}^{-1}hx]_{\geqslant 0}^\theta)=f((\overline{w_0}^{-1}h\overline{w_0})^{-1}[\overline{w_0}^{-1}x]_{\geqslant 0}^\theta)=h^{-w_0\gamma}(f\circ\zeta)(x);\\
   f\circ \zeta (xh)&=f([\overline{w_0}^{-1}xh]_{\geqslant 0}^\theta)=f([\overline{w_0}^{-1}x]_{\geqslant 0}^\theta\cdot h^{-1})=f\circ \zeta (x)h^{-\delta},
  \end{align*}
  which gives us the degree of $f\circ \zeta$.   
\end{proof}

Thus by Corollary~\ref{corollary;homog} and Proposition~\ref{proposition;twisthomog}, if $f\in \C[G^{w_0,e}]$ is a cluster variable or the twist of a cluster variable, then $f$ is a $H\times H$-eigenvector.

\section{Positivity and polyhedral parameterizations of canonical bases} \label{PositivitySection}
%Broke off as a new section, since it has many new results in it

In this section we briefly recall the notions of a positive structure and the tropicalization functor, and describe a fragment of the theory of geometric crystals. More detailed discussions of these subjects can be found in Sections 3.1, 4.2, and 6.3 of \cite{BKII}. In the next section we will connect these notions to our more analytic perspective, of scaled families of functions on the real manifold $AN_-$.

Highlights of this Section are Definition~\ref{pvwp} of a positive variety with potential, Definition~\ref{definition; tropicalization of positive variety} which introduces the notion of tropicalization, Definition~\ref{definition; BK potential} of the Berenstein-Kazhdan (BK)  potential on a double Bruhat cell $G^{w_0, e}$, Corollary~\ref{corollary;domination} explaining that cluster variables on $G^{w_0, e}$ are dominated by the BK potential, Remark~\ref{stringremark} stating that tropicalization of $G^{w_0, e}$ in twisted cluster variables gives rise to the sting cone of $G^\vee$ and Theorem~\ref{theorem;ABHL2maintheorem} which establishes an isomorphism (over $\mathbb{R}$) of sting cones for $G$ and $G^\vee$.

We will make the abbreviation $\C^{\times n}:= (\C^\times)^n$. We fix a reductive algebraic group $G$ with compact form $K$, as in previous sections.

\subsection{Positivity theory and tropicalization}\label{section; positivity and trop}

Throughout, we frequently view characters $\gamma\in X^*(H)$ of a complex algebraic torus $H$ as regular functions on $H$. In particular, the standard coordinates $z_1,\dots ,z_m$ on  $\C^{\times m}$ are identified with the standard basis of $X^*(\C^{\times m})$.

% \begin{definition}\label{definition; theta coordinate}
%     For an irreducible complex algebraic variety $A$, a \emph{toric chart} on $A$ is an open embedding $\theta\colon \mathbb{C}^{\times n} \to A$. For each standard coordinate $z_i$ on $\C^{\times n}$, the function $z_i\circ \theta\n \in \C(A)$ is a \emph{$\theta$-coordinate on $A$}. We often write $z_i = z_i\circ \theta\n$.
% \end{definition}

\begin{definition}\label{definition; theta coordinate}
    A \emph{toric chart} on irreducible complex algebraic variety $A$ is an open embedding $\theta\colon H \to A$ of a complex algebraic torus $H$. Given a toric chart $\theta\colon \C^{\times m} \to A$, the function $z_i\circ \theta\n \in \C(A)$ is a \emph{$\theta$-coordinate on $A$}. We often write $z_i = z_i\circ \theta\n$.
\end{definition}

% \begin{definition}\note{J: we do consider positive maps to complex tori $H$ that are not given with a particular choice of isomorphism to $\C^{\times n}$. I think this definition is more naturally stated using characters. No reference to ``component functions'' should be necessary.}
%   A rational function $f\in  \C(z_1,\dots, z_m) $ is \emph{positive} if it can be written in a subtraction free form
%   \[
%     f = \frac{\sum_i A_i M_i}{\sum_j B_j N_j}.
%   \]
%   Here $A_i, B_j>0$ are positive real numbers and $M_i,N_j$ are Laurent monomials in $z_1,\dots,z_m$. A rational map $\theta\colon \mathbb{C}^{\times m}\to \mathbb{C}^{\times n}$ is {\em positive} if the component functions of $\theta$ are positive. 
% \end{definition}

\begin{definition}
  Let $H$ and $S$ be complex algebraic tori. A rational function $f\in  \C(H) $ is \emph{positive} if it can be written in a subtraction free form
  \begin{equation}\label{positive rational function}
    f = \frac{f'}{f''}, \quad f' = \sum_{\gamma\in X^*(H)} A_\gamma \gamma, \quad f'' = \sum_{\delta \in X^*(H)} B_\delta \delta
  \end{equation}
  for $A_\gamma, B_\delta\geq 0$ with all but finitely many of the coefficients $A_\gamma, B_\delta$ equal to zero. A rational map $F\colon H\to S$ is {\em positive} if $F^\gamma$ is positive for all $\gamma \in X^*(S)$. 
\end{definition}

Characters are positive functions and homomorphisms are positive maps, but not conversely.

\begin{definition}\label{pvwp}
  A \emph{positive variety with potential} is a triple $(A, \Phi_A, \Theta_A)$, where $A$ is an irreducible complex algebraic variety, $\Phi_A$ is a rational function on $A$ called a \emph{potential}, and $\Theta_A$ is a non-empty set of toric charts on $A$, such that:
  \begin{itemize}
    \item[(1)] there exists a toric chart $\theta\in \Theta_A$, such that $\Phi_A\circ \theta$ is positive; and
    \item[(2)] for any pair $\theta,\theta'\in\Theta_A$, the compositions $\theta^{-1}\circ \theta'$ and $(\theta')^{-1}\circ \theta$ are positive.
  \end{itemize}
  A {\em positive} rational function on a positive variety with potential $(A,\Phi_A,\Theta_A)$ is a rational function $f\in \C(A)$ such that $f\circ \theta$ is positive for some (equivalently, any) $\theta \in \Theta_A$.
\end{definition}

Toric charts $\theta,\theta'$ on $A$ that satisfy (2) are {\em positive equivalent toric charts}. Positive equivalence is an equivalence relation on the set of all toric charts on $A$. Thus, condition (2) requires that $\Theta_A$ is a subset of an equivalence class. 

% \begin{remark}
% Positive equivalence defines an equivalence relation on the set of toric charts of a variety $A$. By Definition \ref{pvwp}, given a positive variety with potential $(A, \Phi_A, \Theta_A)$, it immediately follows that the function $\Phi_A\circ \theta$ is positive for any $\theta\in \Theta_A$.\note{J: with new def/sentence added above, this remark  might be removed}
% \end{remark}
The following special cases deserve their own names. If $\Phi_A=0$, then $(A,\Theta_A) = (A,0,\Theta_A)$ is a {\em positive variety}. If $\Theta_A=\{\theta\}$ is a singleton, then $(A,\Phi_A, \theta) = (A,\Phi_A, \{\theta\})$ is a {\em framed positive variety with potential}. 
If $\Phi_A = 0$ and $\Theta_A=\{\theta\}$, then $(A,\theta) = (A,0,\{\theta\})$ is a \emph{framed positive variety}. Every complex algebraic torus has a natural framed positive variety structure.

\begin{example}\label{posClust}
Let $A$ be an irreducible complex algebraic variety and assume that $\C[A]$ is isomorphic to a cluster algebra $\mathcal{A}_{|\sigma|}$.  Let $({\bm z}_{\sigma'}, \sigma')\in |\sigma|$ be a labeled seed for $\mathcal{A}_{|\sigma|}$. By the Laurent Phenomenon~\cite{BFZ}, there is an inclusion of rings
\begin{align*}
\C[A] \hookrightarrow \C[{\bm z}_\sigma^\pm] = \C[z_1^\pm, \dots, z_n^\pm]
\end{align*}
that induces a toric chart $\theta_{\sigma'} \colon \C^{\times n} \to A$. 
Since the mutation equations~\eqref{equation;mutationvariable} are subtraction free, toric charts $\theta_\sigma$, $\theta_{\sigma'}$ associated to mutation equivalent seeds $({\bm z}_{\sigma}, \sigma)$ and $({\bm z}_{\sigma'}, \sigma')$ are positive equivalent.
\end{example}

\begin{definition}  
  Let $(A,\Phi_A,\Theta_A)$ be a positive variety with potential. A rational function $f\in\C(A)$ is {\em dominated by $\Phi_A$}, denoted $f\prec \Phi_A$, if there exists positive rational functions $f^+,f^-$ on $(A,\Phi_A,\Theta_A)$ and a polynomial $p(\Phi_A)$ in $\Phi_A$ with positive real coefficients, such that $f=f^+-f^-$ and both
    \[
        p(\Phi_A)-f^+,\quad \text{and}\quad p(\Phi_A)-f^-
    \]
are positive with respect to $\Theta_A$.
\end{definition}

\begin{definition}
  A {\em map of positive varieties with potential} $f\colon(A,\Phi_A,\Theta_A)\to (B,\Phi_B,\Theta_B)$ is a rational map $f\colon A\to B$ such that:
  \begin{itemize}
    \item[(1)] for some (equivalently, any) $\theta_A\in \Theta_A$ and $\theta_B\in \Theta_B$,  $\theta_B^{-1}\circ f\circ \theta_A$ is positive, and 
    \item[(2)] $f^*\Phi_B\prec \Phi_A$.
  \end{itemize}
\end{definition}
When $\Phi_A=0$ and $\Phi_B=0$, then the second condition holds automatically.

We now recall the tropicalization construction; for a full discussion see~\cite{BKII}. The {\em tropicalization} of a framed positive variety $(A,\theta_A\colon H\to A )$ is 
\[
    (A,\theta_A )^t = X_*(H).
\] 
The {\em tropicalization} of a positive rational function $f$ on a framed positive variety $(A,\theta_A)$ is 
\[
    f^t \colon (A,\theta_A)^t \to \Z, \quad f^t = \min_{\substack{\gamma\in X^*(H)\\ A_\gamma \neq 0}}\{\gamma\} - \min_{\substack{\delta\in X^*(H)\\ B_\delta \neq 0}}\{\delta\}
\]
where $f\circ \theta_A$ is assumed to have the form~\eqref{positive rational function} and $H$ is the domain of $\theta_A$. The {\em tropicalization} of a map of framed positive varieties $f\colon (A,\theta_A) \to (B,\theta_B)$ is the map $f^t \colon (A,\theta_A)^t \to (B,\theta_B)^t$ uniquely defined by the property that $\langle \gamma, f^t \rangle = (f^\gamma)^t$ for all $\gamma \in X^*(S)$, where $S$ is the domain of $\theta_B$. If $f$ is a homomorphism, then $f^t=X_*(f)$.

If $k$ is a nonnegative integer, then $f^t(kp) = kf^t(p)$ for all $p \in X_*(H)$. Define the real extension of $f^t$,
\begin{align*}
f^t \colon X_*(H)\otimes_{\Z} \R & \to X_*(S)\otimes_{\Z} \R, \quad f^t(p\otimes x) = \left\{\begin{array}{ll}
    f^t(p)\otimes x & \text{ if }x\geq 0,\\
    f^t(-p)\otimes (-x) & \text{else.}
\end{array}\right.
\end{align*}
Here we have overloaded the notation $f^t$, but the meaning will be clear from context.
This map $f^t$ is piecewise $\R$-linear and the linearity chambers are cones.
% To positive map of framed positive varieties\note{J: alternate version of paragraph above.}
% \[
% f\colon (A,\theta_A\colon \C^{\times n}\to A ) \to (B,\theta_B\colon \C^{\times m} \to B),
% \]
% tropicalization associates a piecewise $\Z$-linear map
% \[
% f^t\colon X_*(\C^{\times n}) \to X_*(\C^{\times m}).
% \]
% In coordinates, one arrives at the tropicalization $f^t$ of $f$ by replacing $x+ y $ with $\min\{x,y\}$ and replacing $xy$ with $x+y$. If $k\in \Z_{\geqslant 0}$ is a nonnegative integer, then $f^t$ satisfies $f^t(kp) = kf^t(p)$ for all $p \in X_*(\C^{\times n})$. If $x\ge 0$ is a nonnegative real number, define the real extension of $f^t$ by
% \begin{align*}
% f_\R^t \colon X_*(\C^{\times n})\otimes_{\Z} \R & \to X_*(\C^{\times m})\otimes_{\Z} \R & \\
% p\otimes x& \mapsto f^t(p)\otimes x   & \forall x\geqslant 0.
% \end{align*}
% The map $f^t_\R$ is piecewise $\R$-linear.

% Additionally $\Phi$ is a positive rational function on a marked\note{J: marked?} positive variety $(A,\theta_A\colon \C^{\times n} \to A)$, then we may view $\Phi$ as a map of positive varieties\note{J: I don't see how this remark is useful. Once positive rational functions are defined it's unnecessary}
% \[
% \Phi \colon (A,\theta_A) \to (\C,\C^{\times}\hookrightarrow \C).
% \] 
% Under the above identification the tropicalization $\Phi^t_\R$ may be viewed as a piecewise $\R$-linear function $\Phi^t_\R\colon \R^n \to \R$.\note{J: i believe it is more economical to simply define trop of positive rational fns as i added above. then this paragraph can be excised}

\begin{definition} \label{definition; tropicalization of positive variety}
Let $(A,\Phi, \theta_A\colon H \to A)$ be a framed positive variety with potential. The \emph{tropicalization of $A$} (and its real points) are the cones
\begin{align}
  \label{cone definition}
  (A,\Phi, \theta_A)^t & = \{ x\in X_*(H) \mid \Phi^t(x) \geqslant 0 \}; \\
  \label{real cone definition}
  (A,\Phi, \theta_A)^t_{\mathbb{R}} & = \{ x\in X_*(H)\otimes_{\Z} \mathbb{R} \mid \Phi^t(x)\geqslant 0\}.
\end{align}
For any $\delta\geqslant 0$, the \emph{$\delta$-interior} of the tropicalization is
\begin{equation}
    (A,\Phi, \theta_A)^t_\R(\delta) = \{x\in X_*(H)\otimes_{\Z} \mathbb{R} \mid \Phi^t(x)> \delta\}.
\end{equation}
\end{definition}

The space $(A,\Phi, \theta_A)^t_{\mathbb{R}}$ has a natural topology defined as follows. An isomorphism $H\cong \C^{\times m}$ induces an isomorphism of sets $(A, \theta_A)^t_{\mathbb{R}} \cong  \R^m$. The induced topology on  $(A, \theta_A)^t_{\mathbb{R}}$ is independent of the choice of isomorphism $H\cong \C^{\times m}$.
% There is a natural identification $X_*(\mathbb{C}^{\times m})\otimes \mathbb{R}\cong \R^m$. This endows $X_*(\mathbb{C}^{\times m})\otimes \mathbb{R}$ with the structure of a metric space.  
The cone $(A,\Phi, \theta_A)^t_{\mathbb{R}}\subset (A, \theta_A)^t_{\mathbb{R}}$ has the subspace topology.
With respect to this topology, the set $(A,\Phi, \theta_A)^t_\R(0)$ is the topological interior of $(A,\Phi, \theta_A)^t_\R$. %Since $\Z_+$ acts on $(A,\Phi, \theta_A)^t$ by scalar multiplication, we see that $(A,\Phi, \theta_A)^t_\R= (A,\Phi, \theta_A)^t\otimes_{\Z_+} \R_+$. Not used and also obvious
If $\Phi \circ \theta_A$ is a regular function on $H$, then $(A,\Phi, \theta_A)^t$ is a polyhedral cone. 

If $f\colon (A,\Phi_A, \theta_A) \to (B,\Phi_B, \theta_B)$
is a map of framed positive varieties with potential, then 
the image of $(A,\Phi_A, \theta_A)^t$ under  $f^t$ is contained in $(B,\Phi_B, \theta_B)^t$. The \emph{tropicalization} of $f$ is the resulting piecewise $\Z$-linear map  
\[
  f^t\colon (A,\Phi_A, \theta_A)^t\to (B,\Phi_B, \theta_B)^t.
\]
The real extension of $f^t$ defines a piecewise $\R$-linear map  $f^t \colon (A,\Phi_A, \theta_A)^t_{\mathbb{R}}\to (B,\Phi_B, \theta_B)^t_{\mathbb{R}}$. 

Tropicalization is functorial, in that it respects composition of maps of framed positive varieties with potential \cite[Theorem~4.12]{BKII}.
%For any point $\lambda\in (B,\Phi_B, \theta_B)^t$, denote the fiber of $\lambda$ along $f^t$ by
%\[
%  f^{-t}(\lambda):=\{\xi\in (A,\Phi_A, \theta_A)^t \mid f^t(\xi)=\lambda\}.
%\]  BH: Used only twice, not worth the extra notation.

\begin{example}
Extending Example~\ref{posClust}, let $A^\vee$ be a variety with $\C[A^\vee] \cong \mathcal{A}_{|\sigma|}^\vee$, and assume a skew symmetrizer ${\bm D}$ of the mutation matrix of $\sigma$ has been fixed. Let $\Theta$ and $\Theta^\vee$ denote the collection of toric charts on $A$ and $A^\vee$ arising from cluster seeds of $\mathcal{A}_{|\sigma|}$ and $\mathcal{A}_{|\sigma|}^\vee$. Then, as in~\eqref{comparisongeneral}, for each $({\bm z}_{\sigma'},\sigma')\in |\sigma|$, we have a map of positive varieties
\[
(A,\Theta) \xrightarrow{\varPsi_{\sigma'}} (A^\vee, \Theta^\vee).
\]
Due to~\cite[Proposition~4.7]{ABHL2}, the maps $\varPsi_{\sigma'}$ and $\varPsi_{\sigma}$  coincide, after tropicalization. In other words, for any $\theta\in \Theta$ and $\theta^\vee\in \Theta^\vee$, the diagram commutes:
\begin{equation} \label{comparisonCompat}
\begin{tikzcd}
(A,\theta)^t \ar[r,"\varPsi_\sigma^t"] \ar[d, equal] & (A^\vee,\theta^\vee)^t \ar[d, equal] \\
(A,\theta)^t \ar[r,"\varPsi_{\sigma'}^t"] &(A^\vee,\theta^\vee)^t.
\end{tikzcd}
\end{equation}
\end{example}

\subsection{Positive structures on the double Bruhat cell \texorpdfstring{$G^{w_0,e}$}{Gwe} }\label{section;chartsDBC}

In this section we introduce several important toric charts on $G^{w_0,e}$ and define the Berenstein-Kazhdan (BK) potential. This gives us a positive variety with potential $(G^{w_0,e},\Phi_{BK}, \Theta(G^{w_0,e}))$.

Assume temporarily that $G$ is of the form~\eqref{equation;niceformG}. Let $\mathbf{i}$ be a double reduced word for $(w_0,e)$ (i.e. a reduced word for $w_0$ written in the alphabet $\{-1,\dots,-r\}$). Let $({\bm z}_\sigma,\sigma) \in |\sigma(\ii)|$. Then
\[
    {\bm z}_\sigma = \{z_{-r},\dots,z_{-1},z_1,\dots, z_m\},
\]
where the frozen cluster variables with negative indices are the principal minors $z_{-i} = \Delta_{\omega_i}.$
Extend the cluster variables $z_i\in {\bm z}_\sigma$ to functions on $Z\times G_{sc}$ by setting $z_i(z,g) = z_i(g)$. Choose characters $\gamma_{1},\dots, \gamma_{\tilde{r}-r} \in X^*_+(H)\cap \z(\g)^*$ so that the collection
\begin{equation}\label{equation;collection of characters}
    \omega_1,\dots, \omega_r, \gamma_{1},\dots, \gamma_{\tilde{r}-r}
\end{equation}
forms a $\Z$-basis for $X^*(H)$. We use the notation $z_{-(r+j)} : = \Delta_{\gamma_j} \in \C[G^{w_0,e}]$. By Example~\ref{posClust}, the birational evaluation map
\begin{align*}
G^{w_0,e} \to \C^{\tilde{r}+m} \ :\  g \mapsto (z_{-\tilde{r}}(g),\dots, z_{-r-1}(g),z_{-r}(g),\dots, z_{-1}(g), z_1(g),\dots, z_{m}(g))
\end{align*}
induces a toric chart which we denote
\begin{equation}\label{equation;clusterchart}
\theta_{\sigma} \colon \C^{\times (\tilde{r}+m)} \hookrightarrow G^{w_0,e}.
\end{equation}
Recall the twist map $\zeta\colon G^{w_0,e} \to G^{w_0,e}$ introduced in \eqref{twist map equation}. For any $\theta_\sigma$, we get a toric chart 
\begin{equation}\label{twisted cluster charts}
    \theta_\sigma^\zeta = \zeta\circ \theta_\sigma
\end{equation}
which is positively equivalent to $\theta_\sigma$.

\begin{definition} \label{definition;unreduced cluster chart}
  For $G$ of the form~\eqref{equation;niceformG}, any toric chart $\theta\colon \C^{\times(\tilde{r}+m)}\to G^{w_0,e}$ of the form~\eqref{equation;clusterchart} is a \emph{cluster chart on $G^{w_0,e}$}. Any toric chart $\theta$ of the form~\eqref{twisted cluster charts} is a \emph{twisted cluster chart on $G^{w_0,e}$.} If $\theta$ is either a cluster chart or a twisted cluster chart, we will say that $\theta$ is a (twisted) cluster chart.
\end{definition}

\begin{remark} \label{alwaysappearremark}
Let $\ii$ be any double reduced word for $(w_0,e)$. It follows from the definition of  $\mathcal{A}_{|\sigma(\mathbf{i})|}$ that the functions
    $\Delta_{w_0\omega_i,\omega_i}$ and $\Delta_{\omega_i}$, $i \in [i,\tilde r]$, are $\theta_\sigma$-coordinates for any $\sigma \in |\sigma(\ii)|$. By~\eqref{equation;twistfrozen},
   the functions $\Delta_{w_0\omega_i,\omega_i}\n$ and $\Delta_{\omega_i}\n$, $i \in [i,\tilde r]$, are $\theta_\sigma^\zeta$-coordinates for any $\sigma \in |\sigma(\ii)|$.
\end{remark}

Now, let $G$ be any connected reductive algebraic group. Identify $H\times L^{w_0,e} \cong G^{w_0,e}$ using multiplication in $G$. For a double reduced word $\mathbf{i}$ of $(w_0,e)$, let $({\bm z}_{\overline{\sigma}}, \overline{\sigma})$ be a labeled seed of the cluster algebra $\mathcal{A}_{|\overline{\sigma}(\mathbf{i})|}\subseteq \C[L^{u,v}]$ as in Theorem~\ref{Thm2.10}. Define the toric chart
\begin{equation} \label{equation;toricchartDBC}
\theta_{\overline{\sigma}}\colon \C^{\times\tilde{r}} \times \C^{\times m} \to H \times L^{w_0,e}
\end{equation}
which is the product of an isomorphism 
\begin{equation} \label{equation;splitH}
\C^{\times \tilde{r}}\cong H
\end{equation} and the toric chart $\theta_{\overline{\sigma}}\colon \C^{\times m} \to L^{w_0,e}$. In what follows we will assume a choice of isomorphism~\eqref{equation;splitH} has been fixed.

\begin{definition}\label{definition;reduced cluster chart} Any toric chart $\theta\colon \C^{\times \tilde{r}} \times \C^{\times m}\to G^{w_0,e}$ of the form~\eqref{equation;toricchartDBC} is a \emph{reduced cluster chart}. If $\theta=\zeta\circ \theta_{\overline{\sigma}}$ for some reduced cluster chart $\theta_{\overline{\sigma}}$, then $\theta$ is a \emph{twisted reduced cluster chart}. In this case, the Langlands dual of $\theta$ is defined to be $\zeta^\vee\circ \theta_{\overline{\sigma}^\vee}$, the twist of the Langlands dual of $\theta_{\overline{\sigma}}$.
\end{definition}

\begin{remark} \label{reducedchangeofcoords}
The reduced and unreduced charts in Definitions~\ref{definition;unreduced cluster chart} and~\ref{definition;reduced cluster chart} are used for different purposes. Assume $G=Z\times G_{sc}$ of the form~\eqref{equation;niceformG}, and let $H_{sc}$ be the Cartan subgroup of $G_{sc}$. Then, the unreduced charts are more convenient to describe the Poisson brackets that arise in Theorem~\ref{theorem;brackettheorem}. On the other hand, the reduced charts are defined on $G^{w_0,e}$ for all reductive $G$, and are needed in Section~\ref{section;geometriccrystals}.  

Let $({\bm z}_\sigma,\sigma)$ be a seed for $\C[G_{sc}^{w_0,e}]$, and let $({\bm z}_{\overline{\sigma}},\overline{\sigma})$ be the corresponding reduced seed for $\C[L^{w_0,e}]$.
If $\theta = \theta_\sigma$ and $\theta^{red}=\theta_{\overline{\sigma}}$  is the corresponding reduced cluster chart on $G^{w_0,e}\cong Z\times H_{sc} \times L^{w_0,e}$, then the $\theta$-coordinates are related to the $\theta^{red}$-coordinates by a Laurent monomial change of coordinates as follows.  Let $i^*\colon \C[G_{sc}^{w_0,e}]\to \C[L^{w_0,e}]$ be the projection dual to the inclusion $L^{w_0,e}\hookrightarrow G_{sc}^{w_0,e}$. Then, define 
\begin{equation}
\label{isomorphismredcoords}
\begin{split}
\C[Z]\otimes \C[G^{w_0,e}_{sc}] & \to \C[Z]\otimes \C[H_{sc}] \otimes \C[L^{w_0,e}] \\
f\otimes z_k & \mapsto f \otimes |z_k|_1 \otimes i^*z_k,
\end{split}
\end{equation}
where we write $|z_k| = (|z_k|_1,|z_k|_2) \in P\times P$, using the grading of Corollary~\ref{corollary;homog}. It is straightforward to show that~\eqref{isomorphismredcoords} is an algebra isomorphism, and does not depend on the choice of seed $\sigma$. Similarly, if $\theta=\zeta\circ \theta_\sigma$ is a twisted cluster chart and $\theta^{red}= \zeta\circ \theta_{\overline{\sigma}}$, then~\eqref{isomorphismredcoords} takes $\theta$-coordinates to Laurent monomials in $\theta^{red}$ coordinates. In particular, in either case the $\theta^{red}$-coordinates $z_i$ are $P\times P$-homogenous.
\end{remark}

%\note{Especially for the section on canonical bases, it might be good to develop the relationship between reduced and non-reduced better (ie show how they are related by a linear change of coordinates)}

\begin{definition}[Notation] \label{definition;notation for pos} For a connected reductive algebraic group $G$, denote by $\Theta(G^{w_0,e})$ the set of all  reduced cluster charts arising from all double reduced word $\mathbf{i}$ of $(w_0,e)$. Moreover, if $G$ is of the form~\eqref{equation;niceformG}, we extend $\Theta(G^{w_0,e})$ to include all (twisted) cluster charts and (twisted) reduced cluster charts arising from all double reduced words $\mathbf{i}$ of $(w_0,e)$, all the choices of characters $\gamma_i$, and all isomorphisms $H\cong \C^{\times\tilde{r}}$. In particular, if $({\bm z}_{\sigma},\sigma)$ is mutation equivalent to $({\bm z}_{\sigma(\mathbf{i})},\sigma(\mathbf{i}))$, then $\theta_{\sigma}\in \Theta(G^{w_0,e})$.
\end{definition}

\begin{definition} \label{definition; BK potential}
  Let $G$ be a connected reductive algebraic group, and assume the character lattice $X^*(H)$ contains the fundamental weights $\omega_i$. The function
  \begin{equation}\label{BK potential equation}
    \Phi_{BK} = \sum_{i\in {\bm I}}\frac{\Delta_{w_0\omega_i,s_i\omega_i}+\Delta_{w_0s_i\omega_i,\omega_i}}{\Delta_{w_0\omega_i,\omega_i}}
  \end{equation}
  is a regular function on $G^{w_0,e}$. Following its description in~\cite[Corollary~1.25]{BKII}, $\Phi_{BK}$ is called the \emph{Berenstein-Kazhdan (BK) potential}. 
\end{definition}

More generally, let $G$ be any connected reductive algebraic group, and let $\widehat{G}\to G$ be a covering group on which $\Phi_{BK}$ is defined. Then the function $\Phi_{BK}$ is invariant under translations by the center $Z(\widehat{G})$, and so descends to a well defined function on $G$. This function will also be denoted $\Phi_{BK}$. 

The following is a restatement of \cite[Proposition~4.9]{ABHL1} and \cite[Lemma~3.36]{BKII}.

\begin{proposition}
The triple $(G^{w_0,e},\Phi_{BK},\Theta(G^{w_0,e}))$ is a positive variety with potential.
\end{proposition}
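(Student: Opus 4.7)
The plan is to verify the three ingredients of Definition~\ref{pvwp}: irreducibility of the underlying variety, rationality of $\Phi_{BK}$, and the two positivity conditions on $\Theta(G^{w_0,e})$. Irreducibility of $G^{w_0,e}$ follows from its being a nonempty open subvariety of the irreducible variety $B_-$, and $\Phi_{BK}$ is regular (hence rational) by its defining expression~\eqref{BK potential equation}. The set $\Theta(G^{w_0,e})$ is nonempty since at least one reduced word $\ii$ exists. So it remains to exhibit a chart in which $\Phi_{BK}$ is subtraction-free, and to show that any two charts of $\Theta(G^{w_0,e})$ are positive equivalent.

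For condition (1), I would pick a double reduced word $\ii$ for $(w_0,e)$ and work in the cluster chart $\theta_{\sigma(\ii)}$. By Theorem~\ref{Thm2.10}, the generalized minors $\Delta_{\omega_i}$, $\Delta_{w_0\omega_i,\omega_i}$ all appear among the frozen cluster variables (cf.\ Remark~\ref{alwaysappearremark}), so they are monomials in $\theta_{\sigma(\ii)}$-coordinates. The remaining minors $\Delta_{w_0\omega_i,s_i\omega_i}$ and $\Delta_{w_0 s_i\omega_i,\omega_i}$ appearing in the numerators of~\eqref{BK potential equation} are cluster variables (or products of cluster variables with frozen inverses) of $\mathcal{A}_{|\sigma(\ii)|}$ by the Berenstein--Fomin--Zelevinsky classification, hence by the Laurent phenomenon they expand as subtraction-free Laurent polynomials in $\bm{z}_{\sigma(\ii)}$. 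Summing these Laurent polynomials and dividing by the monomial $\Delta_{w_0\omega_i,\omega_i}$ preserves subtraction-freeness, so $\Phi_{BK}\circ\theta_{\sigma(\ii)}$ is positive.

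For condition (2), I would argue that any two charts in $\Theta(G^{w_0,e})$ are connected by a chain of subtraction-free birational equivalences organized into the following families:
\textbf{(a)} cluster charts $\theta_\sigma,\theta_{\sigma'}$ for seeds $\sigma,\sigma' \in |\sigma(\ii)|$ are positive equivalent because cluster mutation~\eqref{equation;mutationvariable} is manifestly subtraction-free;
\textbf{(b)} cluster structures associated with two different double reduced words $\ii,\ii'$ for $(w_0,e)$ belong to the same mutation class by~\cite{BFZ}, reducing to (a);
\textbf{(c)} different choices of the completing characters $\gamma_j$ extending the fundamental weights to a $\mathbb Z$-basis of $X^*(H)$, and different splittings $H\cong \mathbb C^{\times \tilde r}$, alter coordinates by an invertible monomial (hence positive) transformation;
\textbf{(d)} the relation between an unreduced cluster chart $\theta_\sigma$ and its reduced version $\theta_{\overline{\sigma}}$ is the monomial isomorphism~\eqref{isomorphismredcoords}, which is positive in both directions; and
\textbf{(e)} the twist map $\zeta$ relating $\theta$ and $\zeta\circ\theta$ is a positive birational involution in any cluster chart, this being a standard consequence of the Fomin--Zelevinsky description of twists of generalized minors as subtraction-free expressions in the factorization parameters. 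Concatenating these subtraction-free transition maps shows that all elements of $\Theta(G^{w_0,e})$ lie in a single positive equivalence class.

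The main obstacle is (1): exhibiting a subtraction-free expression for $\Phi_{BK}$ in a cluster chart hinges on positivity of the specific generalized minors $\Delta_{w_0\omega_i,s_i\omega_i}$ and $\Delta_{w_0s_i\omega_i,\omega_i}$ as Laurent polynomials in $\bm{z}_{\sigma(\ii)}$. While for charts built from factorization parameters this can be verified by direct matrix calculation, in general one must appeal to the positivity statements for generalized minors on double Bruhat cells from the cluster theory literature; the cleanest route is to first establish positivity in a single well-chosen chart (e.g.\ a factorization chart after applying the monomial change-of-coordinates of~\cite[Theorem~1.9]{FZ}) and then propagate it via (a)--(e).
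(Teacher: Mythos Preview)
The paper does not give a proof of this proposition; it simply records that it is a restatement of \cite[Proposition~4.9]{ABHL1} and \cite[Lemma~3.36]{BKII}. Your proposal is therefore not a comparison target so much as an attempt to unpack what those references contain, and the outline is broadly correct. In particular, your treatment of condition~(2) via the chain (a)--(e) is sound: mutation, change of reduced word, monomial reparametrizations of the torus factor, the monomial passage~\eqref{isomorphismredcoords} between reduced and unreduced charts, and positivity of the twist are exactly the ingredients one assembles.

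There is, however, a genuine slip in your argument for condition~(1). You write that the numerator minors $\Delta_{w_0\omega_i,s_i\omega_i}$ and $\Delta_{w_0 s_i\omega_i,\omega_i}$ are cluster variables and ``hence by the Laurent phenomenon they expand as subtraction-free Laurent polynomials.'' The Laurent phenomenon only guarantees that cluster variables are Laurent polynomials in any seed; subtraction-freeness is the separate (and much deeper) positivity statement. Moreover, $\Delta_{w_0\omega_i,s_i\omega_i}$ is not obviously a cluster variable for $G^{w_0,e}$ at all: the initial minors~\eqref{fullcluster} for $(w_0,e)$ have second index $\omega_{|i_k|}$, never $s_i\omega_i$. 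You do recognize the difficulty in your final paragraph, and the fix you suggest there is the right one: positivity of all generalized minors in factorization parameters is \cite[Theorem~5.8]{BZ01}, and the monomial relation \cite[Theorem~1.9]{FZ} transports this to a (twisted) cluster chart. That route bypasses both the cluster-variable claim and any appeal to cluster positivity, and is essentially how the cited references proceed.
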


By tropicalization, for any $\theta\in \Theta(G^{w_0,e})$, we get a polyhedral cone $(G^{w_0,e},\Phi_{BK},\theta)^t$ called a \emph{Berenstein-Kazhdan (BK) cone}.

\subsection{Domination of functions on \texorpdfstring{$G^{w_0,e}$}{Gw0,e}}\label{section; domination}

This section describes a family of functions on $G^{w_0,e}$ that are dominated by  $\Phi_{BK}$. This    technical property is exploited to describe the limiting behavior of a family of Poisson brackets in Theorem~\ref{theorem;bracketconvergence}.

Let $G$ be a reductive algebraic group and $\mathfrak{g}$ its Lie  algebra as before. Recall that $N_-$ is the unipotent radical of the Borel $B_-$. A \emph{$N_-\times N_-$-variety} is a variety equipped with a left action of $N_-$ and a right action of $N_-$, such that the two actions commute.

Since $N_-$ is unipotent, the exponential map $\exp\colon \mathfrak{n}_-\to N_-$ is algebraic. Thus for any $(F,F') \in\mathfrak{n}_-\times \mathfrak{n}_-$, the map 
\begin{equation}\label{equation;algebraicaction} 
    \C\times A \to A, \quad  
    (t, a) \mapsto \exp(-tF)\cdot a\cdot \exp(tF') 
\end{equation} 
is algebraic. The action of the fundamental vector field of $(F,F')\in \mathfrak{n}_-\times \mathfrak{n}_-$ on $f\in C^\infty(A,\C)$ is given by
\[
(F\cdot f\cdot F')(a) = \left.\frac{d}{dt}\right|_{t=0} f\left(
 \exp(-tF)\cdot a\cdot \exp(tF') 
\right).
\]
Since the map~\eqref{equation;algebraicaction} is algebraic, the action of $(F,F')$ restricts to a derivation of $\C[A]$.

%%I moved the order of some quantifiers, it's slightly stronger now.
\begin{lemma} \label{lemma;domination}
  Let $A$ be a $N_-\times N_-$-variety and $(A,\Phi,\Theta)$ be a positive variety with potential. Let $\{a_i\}_{i=1}^n$ be a set of positive rational functions on $(A,\Phi,\Theta)$ and let $(F,F')\in \mathfrak{n}_-\times\mathfrak{n}_-$. If 
  \[
    \dfrac{F \cdot a_i\cdot F'}{a_i}\prec \Phi,\quad \forall i\in [1,n],
  \]
  then $(F \cdot f\cdot F')/f\prec \Phi$ for any  subtraction free Laurent polynomial $f:=f(a_1,\ldots,a_n)$ in the functions $a_i$.
\end{lemma}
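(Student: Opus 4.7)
The plan is to exploit the derivation property of $D := F \cdot (-) \cdot F'$. Since the one-parameter family $a \mapsto \exp(-tF)\cdot a \cdot \exp(tF')$ in~\eqref{equation;algebraicaction} is an algebraic action on $A$, differentiating the product rule $(fg)(\exp(-tF)\cdot a \cdot \exp(tF')) = f(\cdots)g(\cdots)$ in $t$ shows that $D$ is a derivation of $\C[A]$. I would extend $D$ to rational functions via the quotient rule, which yields the logarithmic Leibniz identity
\begin{equation*}
\frac{D(a^m)}{a^m} = \sum_{i=1}^n m_i\,\frac{D(a_i)}{a_i}
\end{equation*}
for any Laurent monomial $a^m := a_1^{m_1}\cdots a_n^{m_n}$ with $m_i \in \Z$.

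For a subtraction-free Laurent polynomial $f = \sum_m c_m a^m$ (finitely many $c_m \geq 0$), I would combine Leibniz and the identity above to write
\begin{equation*}
\frac{D(f)}{f} = \frac{\sum_m c_m a^m \sum_i m_i\bigl(D(a_i)/a_i\bigr)}{\sum_m c_m a^m}.
\end{equation*}
The hypothesis $D(a_i)/a_i \prec \Phi$ lets me write $D(a_i)/a_i = g_i^+ - g_i^-$ with $g_i^\pm$ positive rational functions on $(A,\Phi,\Theta)$, and polynomials $p_i$ with non-negative coefficients so that $p_i(\Phi) - g_i^\pm$ is positive. Splitting each $m_i = m_i^+ - m_i^-$ into its non-negative parts and regrouping collapses the inner sum into $P^+(m) - P^-(m)$, where $P^+(m) = \sum_i(m_i^+ g_i^+ + m_i^- g_i^-)$ and $P^-(m) = \sum_i(m_i^+ g_i^- + m_i^- g_i^+)$ are manifestly positive. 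This produces a decomposition $D(f)/f = S^+ - S^-$ with $S^\pm = (\sum_m c_m a^m P^\pm(m))/\sum_m c_m a^m$ both positive rational functions.

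The main obstacle, as I see it, is the final step: exhibiting a single polynomial in $\Phi$ that dominates both $S^+$ and $S^-$ in the subtraction-free sense. Domination is not generally closed under multiplication by a bounded positive factor, so one must use the combinatorial structure. The plan is to set $M := \max_{m,i}|m_i|$ over the finite support of $f$ and take $p(\Phi) := 2M \sum_i p_i(\Phi)$. A short rearrangement writes
\begin{equation*}
p(\Phi) - P^\pm(m) = M\sum_i\bigl(p_i(\Phi) - g_i^+\bigr) + M\sum_i\bigl(p_i(\Phi) - g_i^-\bigr) + \sum_i(M - m_i^\pm)g_i^\mp + \sum_i(M - m_i^\mp)g_i^\pm,
\end{equation*}
which is a sum of products of positive rational functions thanks to $M - m_i^\pm \geq 0$ and the domination hypothesis. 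Multiplying through by the positive $c_m a^m$ and summing over $m$ preserves positivity; dividing by $\sum_m c_m a^m$ gives $p(\Phi) - S^\pm$ positive, so $D(f)/f \prec \Phi$. The crucial use of the Laurent polynomial hypothesis is exactly that $f$ has finite support, allowing a single $M$, and hence a single $p(\Phi)$, to work uniformly across all exponent vectors.
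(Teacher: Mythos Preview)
Your argument is correct and follows essentially the same strategy as the paper: use the derivation property to get the logarithmic Leibniz rule for Laurent monomials, then exploit the convex-combination identity $\sum_m c_m a^m / f = 1$ to pass from domination of each summand to domination of $D(f)/f$. The paper organizes this in two steps---first monomials, then sums of monomials, writing $D(f)/f = \sum_j (f_j/f)\cdot(D(f_j)/f_j)$ and taking $p(\Phi) = \sum_j p_j(\Phi)$---whereas you do it in one pass with the explicit uniform bound $M$; both amount to the same convex-combination trick.

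One small correction: in your displayed identity for $p(\Phi) - P^\pm(m)$ the last two terms have the $\pm/\mp$ superscripts on the $g$'s swapped. As written, the right-hand side equals $p(\Phi) - P^\mp(m)$, not $p(\Phi) - P^\pm(m)$. The correct identity is
\[
p(\Phi) - P^\pm(m) = M\sum_i\bigl(p_i(\Phi) - g_i^+\bigr) + M\sum_i\bigl(p_i(\Phi) - g_i^-\bigr) + \sum_i(M - m_i^\pm)g_i^\pm + \sum_i(M - m_i^\mp)g_i^\mp,
\]
which a direct expansion confirms. With that fix the argument goes through exactly as you describe.
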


\begin{proof}
  First of all, since the Lie algebra $\mathfrak{n}_-\times \mathfrak{n}_-$ acts by derivations, for a Laurent monomial $a_1^{m_1}\cdots a_n^{m_n}$ and any positive real number $c>0$, we have:
  \[
    \frac{F\cdot (c a_1^{m_1}\cdots a_n^{m_n})\cdot F'}{ca_1^{m_1}\cdots a_n^{m_n}}=\sum_i m_i\frac{F \cdot a_i\cdot F'}{a_i}\prec \Phi.
  \]
  Next, denote by $f=f_1+\cdots +f_m$ a linear combination of Laurent monomials in the functions $a_i$ with positive coefficients. By the first step, we know $(F\cdot f_i\cdot F')/f_i\prec \Phi$. In other words, one can write
  \[
    \frac{F\cdot f_i\cdot F'}{f_i}=f_i^+-f_i^-    
  \]
  where $p_i(\Phi)-f_i^+$ and $p_i(\Phi)-f_i^-$ are positive with respect to $\Theta$, for some polynomial $p_i$ in $\Phi$ with positive coefficients. Then we have: 
  \[
    \frac{F\cdot f\cdot F'}{f}=\sum_i \frac{f_i}{f}\cdot \frac{F\cdot f_i\cdot F'}{f_i}=\sum_i \frac{f_i}{f}\cdot f_i^+ -\sum_i \frac{f_i}{f}\cdot f_i^- .
  \]
Then, putting $p(\Phi)=\sum_i p_i(\Phi)$, one has
 \[
    p(\Phi)-\sum \frac{f_i}{f}\cdot f_i^+=\sum \frac{f_i}{f}\left(p(\Phi)- f_i^+\right),\quad p\Phi-\sum \frac{f_i}{f}\cdot f_i^-=\sum \frac{f_i}{f}\left(p(\Phi)- f_i^-\right)
 \]
 which is positive with respect to $\Theta$. Thus $(F \cdot f\cdot F')/f\prec \Phi$.
\end{proof}

% \note{J: this whole paragraph is just repeating things that were established above. Shorten significantly. Last sentence is the main new fact.}Let $G$ be an algebraic group of the form~\eqref{equation;niceformG}. Fix a double reduced word $\mathbf{i}$ for $(w_0,e)$. Recall that the triple $(G^{w_0,e},\Phi_{BK},\Theta(G^{w_0,e}) )$ is a positive variety with potential. From Section~\ref{section;chartsDBC}, the $\theta_{\sigma(\mathbf{i})}$-coordinates are a collection of principal minors $\Delta_{-(r+j)}=\Delta_{\gamma_j}$ together with the initial cluster variables $\{\Delta_k\mid k\in[-r,-1]\cup [1,m]\}$ of the cluster algebra $\C[G_{sc}^{w_0,e}]$. 

Our prototypical example of a $N_-\times N_-$-variety is the double Bruhat cell $G^{w_0,e}$ with action $(u,g,u')\mapsto ugu'$, where $g\in G^{w_0,e}$ and $u,u'\in N_-$. 

\begin{proposition} \label{proposition;dominationABHL1}
  Let $G$ be an algebraic group of the form~\eqref{equation;niceformG}. For the positive variety with potential $(G^{w_0,e},\Phi_{BK},\Theta(G^{w_0,e}))$, we have
  \[
    \dfrac{F \cdot \Delta \cdot F'}{\Delta}\prec \Phi_{BK},\quad  F, F' \in \mathfrak{n}_-
  \]
  for all generalized minors $\Delta$. In particular, if $\mathbf{i}$ is a double reduced word for $(w_0,e)$, then $(F\cdot z \cdot F')/z \prec \Phi_{BK}$ for all $\theta_{\sigma(\mathbf{i})}$-coordinates $z$.
\end{proposition}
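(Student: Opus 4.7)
The plan is to reduce to the case of simple Chevalley generators and then exploit the explicit action of these generators on generalized minors. Since the operation $(F,F')\mapsto F\cdot\Delta\cdot F'$ is $\R$-bilinear and the relation $\prec\Phi_{BK}$ is closed under sums with positive coefficients (and under multiplication by positive rational functions, cf.\ Lemma~\ref{lemma;domination}), it suffices to prove $(f_i\cdot\Delta\cdot f_j)/\Delta\prec\Phi_{BK}$ for all simple root vectors $f_i,f_j$ and all generalized minors $\Delta$.

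For simple Chevalley generators the action on a generalized minor $\Delta_{u\omega_k,v\omega_k}$ can be read off from the standard formulas of~\cite{FZ,BKII}: depending on whether $s_iu<u$ (resp.\ $s_jv<v$) in the Bruhat order, $f_i\cdot\Delta_{u\omega_k,v\omega_k}$ is either zero or, up to a monomial in principal minors, proportional to $\Delta_{s_iu\omega_k,v\omega_k}$, and analogously for the right action of $f_j$. Combining these produces an explicit expression for $(f_i\cdot\Delta_{u\omega_k,v\omega_k}\cdot f_j)/\Delta_{u\omega_k,v\omega_k}$ as a Laurent polynomial in generalized minors. The extension from fundamental-weight minors to minors associated to arbitrary dominant weights is a formal consequence of multilinearity in the corresponding tensor products of fundamental representations, combined with Lemma~\ref{lemma;domination}.

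The technical heart of the argument is then the verification that each ratio of generalized minors that appears is dominated by $\Phi_{BK}$. I would carry this out along the same lines as in~\cite{ABHL1}, where a closely analogous statement was established for semisimple simply connected $G$: one induces on $\ell(u)+\ell(v)$, using the mutation relations in the cluster algebra structure on $G^{w_0,e}$ and the explicit form~\eqref{BK potential equation} of $\Phi_{BK}$ to exhibit the required subtraction-free decompositions term by term. To pass from the semisimple simply connected case to $G$ of the form~\eqref{equation;niceformG}, one observes that $\mathfrak{n}_-\times\mathfrak{n}_-$ acts trivially on every character of the central factor $Z$, so generalized minors of $G$ decompose as a product of a character of $Z$ (which contributes nothing to $F\cdot\Delta\cdot F'$) and a generalized minor of the semisimple simply connected factor.

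The \emph{in particular} statement is then immediate from Theorem~\ref{Thm2.10}, since each $\theta_{\sigma(\ii)}$-coordinate is either a generalized minor $\Delta_k$ or a character of $Z$ (for the extra frozen coordinates $z_{-r-j}=\Delta_{\gamma_j}$), and in both cases the previous steps apply. The main obstacle throughout is the combinatorial verification of the domination of the ratios of minors by $\Phi_{BK}$; this is intricate but tractable, being a direct extension of the corresponding analysis in~\cite{ABHL1}.
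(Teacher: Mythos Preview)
The paper's own proof is a single sentence: it cites \cite[Theorem~4.13]{ABHL1} and notes that the argument there extends with minor modifications. Since you ultimately defer the ``technical heart'' to \cite{ABHL1} as well, your proposal is in substance aligned with what the paper does.

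That said, the preliminary reduction you sketch is not correct as written. In this paper's conventions (see the paragraph before Lemma~\ref{lemma;domination}), the symbol $F\cdot\Delta\cdot F'$ denotes the action on $\Delta$ of the \emph{single} fundamental vector field of the pair $(F,F')\in\mathfrak{n}_-\times\mathfrak{n}_-$; concretely $F\cdot\Delta\cdot F' = F\cdot\Delta + \Delta\cdot F'$. This is linear in the pair $(F,F')$, not bilinear, so the relevant reduction is to a vector-space basis of $\mathfrak{n}_-\times\mathfrak{n}_-$ such as $\{(E_{-\alpha},0),(0,E_{-\alpha})\}_{\alpha\in R_+}$, not to pairs $(f_i,f_j)$ of simple Chevalley generators. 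In particular, the simple $f_i$ do not span $\mathfrak{n}_-$ as a vector space, so even granting linearity (and closure of $\prec\Phi_{BK}$ under real linear combinations) your first step does not reduce the problem to simple generators. One can of course still push an argument through simple generators, but that requires an additional inductive mechanism (e.g.\ passing from $F$ and $F'$ to $[F,F']$), which is not supplied by Lemma~\ref{lemma;domination} alone and is precisely part of the work done in \cite{ABHL1}.
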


\begin{proof}
  This was shown in a slightly weaker form in \cite[Theorem~4.13]{ABHL1}; the proof extends to our context with minor modifications.
\end{proof}

\begin{corollary} \label{corollary;domination}
    Let $G$ be a connected reductive algebraic group, of the form~\eqref{equation;niceformG}. Let $\theta\in \Theta(G^{w_0,e})$, and let $z$ be a Laurent monomial in $\theta$-coordinates on $G^{w_0,e}$.
	Then 
	\[
	\frac{F \cdot z \cdot F'}{z}\prec \Phi_{BK},
	\]
	for all $(F,F') \in \mathfrak{n}_-\times \mathfrak{n}_-$.
\end{corollary}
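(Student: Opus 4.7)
My plan is to reduce everything to Proposition~\ref{proposition;dominationABHL1} by systematically exploiting the positive equivalence of all charts in $\Theta(G^{w_0,e})$ together with Lemma~\ref{lemma;domination}. The first observation is that if $z=\prod_i z_i^{m_i}$ is a Laurent monomial in $\theta$-coordinates, the logarithmic Leibniz rule gives
\[
\frac{F\cdot z\cdot F'}{z} = \sum_i m_i\,\frac{F\cdot z_i\cdot F'}{z_i},
\]
and a $\Z$-linear combination of functions dominated by $\Phi_{BK}$ is again dominated: writing each summand as $g_i^+ - g_i^-$ with $p_i(\Phi_{BK})-g_i^\pm$ positive, one regroups by the sign of $m_i$ and bounds both the positive and negative parts by $\sum_i|m_i|\,p_i(\Phi_{BK})$. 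So it suffices to prove the claim for each individual $\theta$-coordinate.

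Next I would fix a double reduced word $\mathbf{i}$ for $(w_0,e)$ and use $\theta_{\sigma(\mathbf{i})}$ as a base chart. Its coordinates are generalized minors by Theorem~\ref{Thm2.10}, and Proposition~\ref{proposition;dominationABHL1} gives $(F\cdot\Delta\cdot F')/\Delta\prec\Phi_{BK}$ for every such minor. For an arbitrary $\theta\in\Theta(G^{w_0,e})$, positive equivalence (Definition~\ref{pvwp}~(2)) expresses each $\theta$-coordinate $z_i$ as $f^+/f^-$ with $f^\pm$ subtraction-free Laurent polynomials in the $\theta_{\sigma(\mathbf{i})}$-coordinates. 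Lemma~\ref{lemma;domination}, applied with the $a_j$ taken to be the relevant generalized minors, then shows that both $(F\cdot f^\pm\cdot F')/f^\pm$ are dominated by $\Phi_{BK}$, and the log-derivative identity
\[
\frac{F\cdot z_i\cdot F'}{z_i} = \frac{F\cdot f^+\cdot F'}{f^+} - \frac{F\cdot f^-\cdot F'}{f^-}
\]
combined with the $\Z$-linearity step of the previous paragraph yields $(F\cdot z_i\cdot F')/z_i\prec\Phi_{BK}$.

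The main obstacle I anticipate is not any single step but checking the input to Lemma~\ref{lemma;domination} uniformly across the various flavors of chart packaged into $\Theta(G^{w_0,e})$: for $\theta_\sigma$ with $\sigma\in|\sigma(\mathbf{i})|$, positive equivalence to $\theta_{\sigma(\mathbf{i})}$ follows by iterating the subtraction-free exchange relations~\eqref{equation;mutationvariable}; for twisted charts $\zeta\circ\theta_\sigma$, it is built into Definition~\ref{definition;notation for pos}, which places them in the same positive-equivalence class; and the passage between reduced and unreduced charts via Remark~\ref{reducedchangeofcoords} is only a Laurent-monomial change of coordinates, which is already handled by the first paragraph. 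The only conceptual ingredient beyond Proposition~\ref{proposition;dominationABHL1} and Lemma~\ref{lemma;domination} is the observation that the conclusion of Lemma~\ref{lemma;domination} extends from subtraction-free Laurent polynomials to their ratios via the log-derivative identity displayed above.
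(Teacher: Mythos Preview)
Your approach is correct and takes a somewhat different route from the paper's. Both arguments reduce to Proposition~\ref{proposition;dominationABHL1} through Lemma~\ref{lemma;domination}, but the paper proceeds case by case to show that each $\theta$-coordinate is already a \emph{subtraction-free Laurent polynomial} (not merely a positive rational function) in the coordinates of a base chart: for the twisted initial chart $\theta_{\sigma(\mathbf{i})}^\zeta$ it passes through factorization parameters via \cite[Theorem~5.8]{BZ01} and \cite[Theorem~1.9]{FZ}, and for a general (twisted) cluster chart it appeals to the Laurent phenomenon together with positivity of the Laurent expansion to land back in the setting of Lemma~\ref{lemma;domination}. Your argument is more economical: from positive equivalence alone you only get $z_i=f^+/f^-$ with $f^\pm$ subtraction-free Laurent polynomials, and the log-derivative identity
\[
\frac{F\cdot z_i\cdot F'}{z_i}=\frac{F\cdot f^+\cdot F'}{f^+}-\frac{F\cdot f^-\cdot F'}{f^-}
\]
combined with closure of $\prec\Phi_{BK}$ under $\Z$-linear combinations finishes without ever needing the single subtraction-free Laurent polynomial form. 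This sidesteps the external references and the implicit use of cluster positivity; conversely, the paper's case analysis yields as a by-product the structural fact that twisted coordinates are subtraction-free Laurent polynomials in the untwisted ones, which is of independent interest.
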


%\begin{corollary}
%    Let $G$ be a connected reductive algebraic group, of the form~\eqref{equation;niceformG}. Let $(\widetilde{{\bm z}}_\sigma,\sigma) \in \Sigma^{w_0,e}$ be a (twisted) labeled seed for $\C[G^{w_0,e}]$. Denote by
%	\[
%		\widetilde{{\bm z}}_\sigma = \{z_{-r},\dots,z_{-1},z_1,\dots, z_m\}
%	\]
%	Then $(F_\alpha \cdot z_k \cdot F_{\beta})/z_k\prec \Phi_{BK}$, for all $k\in [-r,-1]\cup [1,n]$ and $\alpha,\beta \in R_+$.
%\end{corollary}

\begin{proof}
First, assume that $\theta=\theta_{\sigma(\mathbf{i})}^\zeta$, for some cluster chart $\theta_{\sigma(\mathbf{i})}$ given by a double reduced word $\mathbf{i}$ for $(w_0,e)$. Let $\{z_{-\tilde{r}},\dots,z_{-1},z_1,\dots,z_m\}$ be the set of $\theta_{\sigma(\mathbf{i})}$-coordinates; these are simply generalized minors on $G^{w_0,e}$. By~\cite[Theorem~5.8]{BZ01}, any $\theta_{\sigma(\mathbf{i})}$-coordinate $z_i$ can be written as a subtraction free polynomial in ``factorization parameters'' $t_i$. In turn, by~\cite[Theorem~1.9]{FZ} these $t_i$ can be written as Laurent monomials in the $\theta_{\sigma(\mathbf{i})}^\zeta$-coordinates. So we have
\[
z_i = f(z_{-\tilde{r}}\circ \zeta,\dots,z_{m}\circ \zeta)
\]
for some subtraction free Laurent polynomial $f$. Precomposing both sides by the involution $\zeta$ gives $z_i\circ \zeta = f(z_{-\tilde{r}},\dots, z_m)$. Therefore the $\theta$-coordinate $z_i\circ \zeta$ is a subtraction free Laurent polynomial in the $\theta_{\sigma(\mathbf{i})}$ coordinates, and the claim follows from Lemma~\ref{lemma;domination} and Proposition~\ref{proposition;dominationABHL1}.

Now, assume that $\theta$ is any cluster chart. Then by Theorem~\ref{Thm2.10} and the Laurent phenomenon for cluster algebras, the $\theta$-coordinates can be written as subtraction free Laurent polynomials in $\theta_{\sigma(\mathbf{i})}$-coordinates, for some double reduced word $\mathbf{i}$ for $(w_0,e)$. Similarly, if $\theta$ is a twisted cluster chart, then the $\theta$-coordinates are subtraction free Laurent polynomials in $\theta_{\sigma(\mathbf{i})}^\zeta$-coordinates, for some $\mathbf{i}$.
In the first case, the claim then follows from Lemma~\ref{lemma;domination} and Proposition~\ref{proposition;dominationABHL1}. In the second case, the claim follows from Lemma~\ref{lemma;domination}, Proposition~\ref{proposition;dominationABHL1}, and the first paragraph of this proof.
\end{proof}

\subsection{Geometric crystals}\label{section;geometriccrystals}

The authors of \cite{BKII} defined geometric crystals as a geometric analogue of Kashiwara's crystals. A (positive decorated) geometric crystal  is a positive variety with potential $(X,\Phi,\Theta)$ with several additional rational structure maps. Though our main examples originate from this theory, we will only use a fragment of the additional structure.

For a connected reductive group $G$, the positive variety with potential $(G^{w_0,e}, \Phi_{BK},\Theta(G^{w_0,e}))$ can be enriched with the structure of a geometric crystal, as in~\cite[Theorem~6.15]{BKII}. Of the additional geometric crystal structure maps, there are two we are interested in:
\begin{align}
\label{hw map}
  \hw &\colon G^{w_0,e}\cong H \times L^{  w_0,e}\to H, \quad  (h,z)\mapsto h,\\
  \label{wt map}
  \wt &\colon G^{ w_0,e}\to H, \quad z \mapsto [z]_0.
\end{align}
They are called the \emph{highest weight} and \emph{weight} maps, respectively.  Using~\eqref{equation;RDBCdefining} they satisfy the following equalities.
\begin{equation}\label{equation;hwwtcoordinates}
\hw(g)^{w_0\gamma} = \Delta_{w_0\gamma,\gamma}(g),\qquad \wt(g)^{\gamma} = \Delta_{\gamma,\gamma},
\end{equation}
for any $\gamma\in X^*(H)$, whenever the right hand side is defined. Combining this with Remark~\ref{alwaysappearremark} yields the following lemma. 

\begin{lemma}\label{lemma; wt hw homomorphism property}
	Let $\theta\in\Theta(G^{w_0,e})$. Then, $\wt \circ \theta$ and $\hw \circ \theta$ 
	are group homomorphisms. Moreover, the induced maps on character lattices, $X^*(\wt\circ \theta)$ and $X^*(\hw\circ \theta)$, are injective with torsion-free cokernels. In particular, $\hw,\wt \colon (G^{w_0,e}, \Phi_{BK},\Theta(G^{w_0,e})) \to H$ are positive maps and their tropicalizations with respect to any $\theta \in \Theta(G^{w_0,e})$ are $\Z$-linear.
\end{lemma}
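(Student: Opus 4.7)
The plan is to show that, on a chosen $\Z$-basis of $X^{*}(H)$, the pullbacks $(\wt\circ\theta)^{*}$ and $(\hw\circ\theta)^{*}$ land in Laurent monomials in the $\theta$-coordinates. Injectivity with torsion-free cokernel, the homomorphism property, and both ``in particular'' claims all follow at once. I expect no genuine obstacle; the work is bookkeeping over the four types of charts in $\Theta(G^{w_0,e})$.

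Using the convention $\omega_{r+j}:=\gamma_{j}$ for $j\in[1,\tilde{r}-r]$ from~\eqref{equation;collection of characters}, both $\{\omega_{i}\}_{i=1}^{\tilde{r}}$ and $\{w_{0}\omega_{i}\}_{i=1}^{\tilde{r}}$ are $\Z$-bases of $X^{*}(H)$, since $w_{0}$ acts as a signed permutation on $\omega_{1},\ldots,\omega_{r}$ and trivially on the central characters $\gamma_{j}$. By~\eqref{equation;hwwtcoordinates} we have $\wt^{*}\omega_{i}=\Delta_{\omega_{i}}$ and $\hw^{*}(w_{0}\omega_{i})=\Delta_{w_{0}\omega_{i},\omega_{i}}$, so it suffices to show that each of these two families of regular functions pulls back through $\theta$ to a Laurent monomial in the $\theta$-coordinates.

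The case analysis is then short. For an unreduced cluster chart $\theta_{\sigma}$, Remark~\ref{alwaysappearremark} states that $\Delta_{\omega_{i}}$ and $\Delta_{w_{0}\omega_{i},\omega_{i}}$ are themselves $\theta_{\sigma}$-coordinates $z_{k}$. For an unreduced twisted cluster chart $\theta_{\sigma}^{\zeta}=\zeta\circ\theta_{\sigma}$, identity~\eqref{equation;twistfrozen} gives $\zeta^{*}\Delta_{\omega_{i}}=\Delta_{w_{0}\omega_{i},\omega_{i}}^{-1}$ and $\zeta^{*}\Delta_{w_{0}\omega_{i},\omega_{i}}=\Delta_{\omega_{i}}^{-1}$, so the pullbacks are inverses of $\theta_{\sigma}$-coordinates. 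For a reduced cluster chart $\theta_{\overline{\sigma}}$, the splitting $G^{w_{0},e}\cong H\times L^{w_{0},e}$ together with the homogeneity~\eqref{homogeneitygenminor} gives $\Delta_{u\omega_{i},\omega_{i}}(hz)=h^{u\omega_{i}}\Delta_{u\omega_{i},\omega_{i}}(z)$, so each pullback is a character of the $\C^{\times\tilde{r}}$-factor (via the group isomorphism $\C^{\times\tilde{r}}\cong H$ used to define $\theta_{\overline{\sigma}}$) times a $\theta_{\overline{\sigma}}|_{L^{w_{0},e}}$-coordinate on the $\C^{\times m}$-factor; the twisted reduced case is handled by combining this with~\eqref{equation;twistfrozen}.

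In every case the image of the chosen $\Z$-basis of $X^{*}(H)$ lies in $X^{*}((\C^{\times})^{N})$, which simultaneously proves that $\wt\circ\theta$ and $\hw\circ\theta$ are morphisms of algebraic tori, hence group homomorphisms. For the unreduced charts the image is a subset of $\{z_{k}^{\pm 1}\}$ and therefore generates a direct summand of $X^{*}((\C^{\times})^{N})$. For the reduced charts the image vectors are triangular with respect to the splitting $X^{*}(\C^{\times\tilde{r}})\oplus X^{*}(\C^{\times m})$, and a triangular change of basis again exhibits the image as a direct summand. In either case, the induced map on character lattices is injective with torsion-free cokernel. Positivity with respect to $\Theta(G^{w_0,e})$ is then automatic since the pullback of any character is a subtraction-free Laurent monomial, and the tropicalization of a homomorphism of tori coincides with the induced $\Z$-linear map on cocharacter lattices $X_{*}(\wt\circ\theta)$ and $X_{*}(\hw\circ\theta)$.
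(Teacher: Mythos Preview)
Your proof is correct and follows essentially the same approach as the paper, which gives no separate proof beyond the sentence ``Combining this with Remark~\ref{alwaysappearremark} yields the following lemma'' immediately preceding the statement. You have supplied the bookkeeping that the paper leaves implicit: pulling back the basis characters of $X^*(H)$ via~\eqref{equation;hwwtcoordinates} and verifying, chart type by chart type, that the result is a Laurent monomial in the $\theta$-coordinates. One small wording issue: in the reduced case, $\Delta_{w_0\omega_i,\omega_i}$ restricts to the constant $1$ on $L^{w_0,e}$ (by~\eqref{equation;RDBCdefining}) rather than to a $\theta_{\overline{\sigma}}|_{L^{w_0,e}}$-coordinate, and likewise the central $\Delta_{\gamma_j}$ restrict to $1$ there; this actually simplifies the triangular argument for the cokernel, so your conclusion is unaffected.
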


Let $G^\vee$ be the Langlands dual group of $G$. Then $(G^{\vee;w_0,e},\Phi_{BK}^\vee, \Theta(G^{\vee;w_0,e}))$ together with the structure maps
\[
\hw^\vee \colon G^{\vee;w_0,e} \to H^\vee,\qquad \wt^\vee \colon G^{\vee;w_0,e} \to H^\vee
\]
captures aspects of the representation theory of $G$, according to the following theorem. It is a shadow of \cite[Theorem~6.15]{BKII}, obtained by forgetting parts of the Kashiwara crystal structure.

\begin{theorem} \label{theorem;BKmaintheorem} Let $G$ be a connected reductive algebraic group, let $\theta^\vee\in \Theta(G^{\vee;w_0,e})$ and consider the framed positive variety with potential $(G^{\vee;w_0,e},\Phi_{BK}^\vee, \theta^\vee)$. Let $\lambda,\nu\in X^*(H)=X_*(H^\vee)$ be characters of $H$. If $\lambda\in X^*_+(H)$ is dominant, then the cardinality of 
\begin{equation} \label{equation;countingdimension}
((\hw^\vee)^t)\n(\lambda) \cap ((\wt^\vee)^t)\n(\nu) \subset (G^{\vee;w_0,e},\Phi_{BK}^\vee, \theta^\vee)^t
\end{equation}
 is equal to the dimension of the $\nu$-weight space in $V_\lambda$, the irreducible $G$ module with high weight $\lambda$. If $\lambda\not \in X^*_+(H)$, then $((\hw^\vee)^t)\n(\lambda) = \emptyset$. 
 What is more, the sets $((\hw^\vee)^t)\n(\lambda)$ and \eqref{equation;countingdimension} are the lattice points of bounded convex polytopes.
\end{theorem}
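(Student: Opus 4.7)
The plan is to extract this theorem as a direct corollary of the main theorem of Berenstein-Kazhdan, cited as \cite[Theorem~6.15]{BKII}, which the authors explicitly flag as the origin of the result. That theorem equips $(G^{\vee;w_0,e}, \Phi_{BK}^\vee, \Theta(G^{\vee;w_0,e}))$ with the structure of a positive decorated geometric crystal for $G$, and asserts that under tropicalization with respect to any $\theta^\vee \in \Theta(G^{\vee;w_0,e})$, the resulting combinatorial object is isomorphic to the disjoint union $\bigsqcup_{\lambda \in X^*_+(H)} B_\lambda$ of Kashiwara crystals, where $B_\lambda$ is the canonical basis of the irreducible $G$-module $V_\lambda$. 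In this correspondence, the tropicalization $(\hw^\vee)^t$ records the highest weight component and $(\wt^\vee)^t$ records the weight of a crystal element, which is essentially the content of~\eqref{equation;hwwtcoordinates} after tropicalization.

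With this dictionary in place, I would argue in three steps. First, if $\lambda \in X^*_+(H)$, then the fiber $((\hw^\vee)^t)^{-1}(\lambda)$ is in bijection with $B_\lambda$, and if $\lambda \notin X^*_+(H)$ there is no such crystal, yielding the emptiness claim. Second, intersecting with $((\wt^\vee)^t)^{-1}(\nu)$ selects exactly those basis elements of weight $\nu$, whose cardinality is by definition $\dim (V_\lambda)_\nu$. These two steps give the counting statement.

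For the geometric claim, I would use Lemma~\ref{lemma; wt hw homomorphism property}, which gives that $(\hw^\vee)^t$ and $(\wt^\vee)^t$ are $\Z$-linear maps on the ambient tropical lattice $X_*(H^\vee_{\theta^\vee})$. Hence
\[
((\hw^\vee)^t)^{-1}(\lambda) = \{ x \in X_*(H^\vee_{\theta^\vee}) \mid (\hw^\vee)^t(x) = \lambda, \; (\Phi^\vee_{BK})^t(x) \geq 0 \}
\]
is the set of lattice points in the rational polyhedron defined by intersecting the BK cone $(G^{\vee;w_0,e}, \Phi^\vee_{BK}, \theta^\vee)^t_\R$ with the affine subspace $\{ (\hw^\vee)^t = \lambda \}$; intersecting further with $\{ (\wt^\vee)^t = \nu \}$ is of the same form. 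Boundedness follows because a rational polyhedron whose integer points form a finite set must be compact (any nonzero rational direction in its recession cone would contribute infinitely many lattice points), and the integer point count is $\dim V_\lambda < \infty$ by the first part.

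The main obstacle, such as it is, is purely notational: one must verify that the highest weight and weight structure maps defined in~\eqref{hw map}--\eqref{wt map} and the tropicalization conventions of Section~\ref{section; positivity and trop} agree, on the nose, with the analogous data in \cite{BKII} so that the cited theorem applies verbatim to the framed positive variety with potential under consideration. Since the authors already reference \cite[Corollary~1.25]{BKII} for the definition of $\Phi_{BK}$ and \cite[Theorem~6.15]{BKII} for the geometric crystal structure, no genuine new content is needed beyond this compatibility check and the elementary polyhedral argument above.
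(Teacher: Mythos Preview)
Your proposal is correct and matches the paper's own treatment: the paper does not give an independent proof but presents the theorem as ``a shadow of \cite[Theorem~6.15]{BKII}, obtained by forgetting parts of the Kashiwara crystal structure,'' and then invokes Lemma~\ref{lemma; wt hw homomorphism property} to see that the fibers are affine slices of the polyhedral BK cone. Your explicit boundedness argument via the recession cone is a small addition beyond what the paper spells out (it simply asserts boundedness and defers further description to the next section), but the approach is the same.
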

%\note{$\hw^{-t}$? BH: At the moment this comes up only a few times and so we are not using additional notation. 
%A polytope in a lattice? BH: Changed to clarify}

Part of the last statement of Theorem~\ref{theorem;BKmaintheorem} can be seen from~Lemma~\ref{lemma; wt hw homomorphism property}. In any $\theta^\vee\in \Theta(G^{w_0,e})$, maps $\hw^t$ and $\wt^t$  extend to linear maps out of $(G^{\vee;w_0,e},\theta^\vee)^t$. The fibers of these maps in $(G^{\vee;w_0,e},\Phi_{BK}^\vee, \theta^\vee)^t$ are the intersection of an affine hyperplane with the polyhedral cone $(G^{\vee;w_0,e},\Phi_{BK}^\vee, \theta^\vee)^t$. A description of these polytopes, as well as their connection to canonical bases, will be given in the next section.

\subsection{Polyhedral parametrizations of canonical bases}\label{section;canonical bases}

Let $G$ be a reductive algebraic group. According to \cite{BZ01}, for each reduced word $\ii$ of $w_0$ there is a polyhedral cone called the \emph{string cone}. Its integral points parametrize the canonical basis of the quantized universal enveloping algebra $ U_q(\nnn)$. The name ``string cone'' comes from the interpretation of points of the cone as strings of operators on $U_q(\nnn)$. %The cone $\mathcal{C}_\mathbf{i}$ is equal to  $(L^{w_0,e}, \Phi_{L},\theta)^t$, for a specific $\theta\in \Theta(L^{w_0,e}) $. undefined! %Extending this terminology slightly, in Definition~\ref{definition;stringcone} we will introduce the name \emph{string cone} for any cone of the form $(L^{w_0,e}, \Phi_{L}, \theta)^t$. 
In this section, we show how to recover each string cone as the tropicalization of $L^{w_0,e}$, with respect to a specific toric chart.
\begin{definition}
  Let $G$ be a connected reductive group, define the function
  \begin{equation}\label{BZp}
      \Phi_{L}=\sum_{i\in [1,r]}\Delta_{w_0\omega_i,s_i\omega_i}.
  \end{equation}
  It is a well defined regular function on $L^{w_0,e}$ under the identification \eqref{equation;coverL}.
\end{definition}
%\note{I don't understand why it is called the BZ potential. There are no potential functions in the paper by BZ. For me, it is confusing because of its similarity with BK. Did Arkady suggest it? YL: it is not written in the way we are familiar with in BZ paper. I call it BZ. BH: Could we call it $\Phi_L$? I imagine someone like Allen Knutson getting on our case for this}

Next, we will introduce toric charts for $L^{w_0,e}$. Recall~\eqref{equation;RDBCdefining} that an element $x\in G^{w_0,e}$ belongs to $L^{w_0,e}$ if and only if $\Delta_{w_0\omega_i,\omega_i}(x)=1$ for all $i\in [1,r]$. Consider any chart $\theta\colon \mathbb{C}^{\times (m+\tilde{r})}\to G^{w_0,e}$ in $\Theta(G^{w_0,e})$ as Definition~\ref{definition;notation for pos}. 
Because the functions $\Delta_{w_0\omega_i,\omega_i}$ are all Laurent monomials in $\theta$-coordinates, the preimage $\theta\n(L^{w_0,e})$ is a subtorus of $\C^{m+\tilde{r}}$ of dimension $m$.
%
%
%
%Let
%\[
%    \mathbb{L}:=\theta\n(L^{w_0,e})
%    
%    \{\bm{c}\in \mathbb{C}^{\times (m+\tilde{r})}\mid \theta(\bm{c})\in \{\id\}\times L^{w_0,e}\subset G^{w_0,e}, \text{~where~} \id\in H \text{~is the unite}\}. %this is too complicated the notation for L is only used immediately after this
%\]
%Since  % in $\mathbb{C}^{\times (m+\tilde{r})}$ by requiring $\Delta_{w_0\omega_i,\omega_i}\circ \theta=1$. 
Then
\[
\overline{\theta}:=\theta|_{\theta\n(L^{w_0,e})}\colon \theta\n(L^{w_0,e}) \to L^{w_0,e}
\]
is a toric chart for $L^{w_0,e}$. Denote by $\Theta(L^{w_0,e})$ all toric charts for $L^{w_0,e}$ arising this way. 

\begin{remark}\label{reducechart}
    If the chart $\theta\in \Theta(G^{w_0,e})$ is a reduced cluster chart, then by~\eqref{equation;toricchartDBC} the chart $\overline{\theta}$ for $L^{w_0,e}$ is nothing but the cluster chart as in Example \ref{posClust}. What is more, by \cite[Theorem 4.7, Eq(6.1)]{BZ01}, the following map
    \[
        \eta\colon (L^{e,w_0})^T\hookrightarrow  G^{w_0,e}\xrightarrow{\zeta} G^{w_0,e}\cong H\times L^{w_0,e}\xrightarrow{\pr} L^{w_0,e}
    \]
    is a biregular isomorphism, where $\pr$ is the natural projection, $\zeta$ is the twist, and $(\cdot)^T\colon G\to G$ is the transpose anti-automorphism of~\cite[Equation 2.1]{FZ}. The map $\eta$ is  called ``twist'' in \cite{BZ01}. Therefore, if  $\theta\in \Theta(G^{w_0,e})$ is a twisted reduced chart, $\overline{\theta}$ can be viewed as the composition of a toric chart for $(L^{e,w_0})^T$ and the map $\eta$. 
\end{remark}

\begin{proposition}The triple $(L^{w_0,e}, \Phi_{L}, \Theta(L^{w_0,e}))$ is a positive variety with potential. 
  The natural projection $\pr\colon G^{w_0,e}\cong H\times L^{w_0,e}\to L^{w_0,e}$ gives rise to a morphism of positive varieties with potential from $(G^{w_0,e}, \Phi_{BK},  \Theta(G^{w_0,e}))$ to $(L^{w_0,e}, \Phi_{L},  \Theta(L^{w_0,e}))$. For any $\theta\in \Theta(G^{w_0,e})$, it induces a surjective map
    \[
        \pr^t\colon (G^{w_0,e}, \Phi_{BK}, \theta)^t \twoheadrightarrow  (L^{w_0,e}, \Phi_{L}, \overline{\theta})^t.
    \]
\end{proposition}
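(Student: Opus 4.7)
The proof splits into three parts: (a) verifying that $(L^{w_0,e}, \Phi_L, \Theta(L^{w_0,e}))$ is a positive variety with potential; (b) verifying that $\pr$ is a morphism of such; and (c) checking surjectivity of $\pr^t$ for arbitrary $\theta$.

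For (a), given any $\theta \in \Theta(G^{w_0,e})$, I first observe via Remark~\ref{alwaysappearremark} and~\eqref{equation;twistfrozen} that the functions $\Delta_{w_0\omega_i, \omega_i}$ and $\Delta_{\gamma_j}$ cutting out $L^{w_0,e}$ are (inverses of) $\theta$-coordinates, so $\overline{\theta}$ is a genuine toric chart on the subtorus $\theta\n(L^{w_0,e})$. Positive equivalence of transitions restricts to positive equivalence of their specializations at $\Delta_{w_0\omega_i, \omega_i}=1=\Delta_{\gamma_j}$, and $\Phi_L$ is positive on cluster charts of $L^{w_0,e}$ as a sum of generalized minors. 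For (b), the Gauss-type decomposition $g = hz$ with $z = \pr(g) \in L^{w_0,e}$ combined with the homogeneity relation~\eqref{homogeneitygenminor} gives, for any generalized minor,
\[
    \pr^*(\Delta_{u\omega, v\omega}) = \frac{\Delta_{u\omega, v\omega}}{\Delta_{u\omega, w_0\n u\omega}},
\]
a ratio of minors, hence positive in any cluster chart. Specializing to $\Phi_L$ yields $\pr^*\Phi_L = \sum_i \Delta_{w_0\omega_i, s_i\omega_i}/\Delta_{w_0\omega_i, \omega_i}$, whence $\Phi_{BK} - \pr^*\Phi_L = \sum_i \Delta_{w_0 s_i\omega_i, \omega_i}/\Delta_{w_0\omega_i, \omega_i}$ is visibly positive, proving $\pr^*\Phi_L \prec \Phi_{BK}$.

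For (c) my strategy is to first establish surjectivity in a reduced cluster chart and then propagate to the other charts by positive equivalence. In a reduced cluster chart, $\theta\colon \C^{\times \tilde r} \times \C^{\times m} \to H \times L^{w_0,e}$ is a product via the Gauss decomposition, and $\pr$ becomes projection to the second factor. Writing $\Phi_{BK} = \pr^*\Phi_L + P$ with $P = \sum_i \Delta_{w_0 s_i\omega_i, \omega_i}/\Delta_{w_0\omega_i, \omega_i}$, the tropicalization of the positive sum is $\Phi_{BK}^t = \min\bigl((\pr^*\Phi_L)^t, P^t\bigr)$. Given $y \in (L^{w_0,e}, \Phi_L, \overline{\theta})^t$, I would seek a preimage $(a, y)$ with $a \in X_*(H)$. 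The condition $(\pr^*\Phi_L)^t(a, y) \geq 0$ reduces to $\Phi_L^t(y) \geq 0$, which holds by assumption. For $P^t(a, y) \geq 0$: each summand of $P$ carries $H$-weight $-w_0\alpha_i$, a positive root, so any $a$ lying deep in the interior of the dominant Weyl chamber of $X_*(H)\otimes\R$ (scaled up) makes $P^t(a, y)$ as large as desired. Such integral $a$ exists because the open dominant Weyl chamber is a full-dimensional open cone in $\h_\R$ while $X_*(H)$ is a full-rank lattice.

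For arbitrary $\theta \in \Theta(G^{w_0,e})$, I would propagate surjectivity via positive equivalence with a reduced cluster chart: since the transition and its inverse are positive maps of tori, their tropicalizations are mutually inverse piecewise-linear bijections between the corresponding cones $(G^{w_0,e}, \Phi_{BK}, -)^t$, and by restriction induce bijections between the cones $(L^{w_0,e}, \Phi_L, -)^t$. The two fit into a commutative square intertwining the corresponding copies of $\pr^t$, so surjectivity at one vertex forces it at the others. The main obstacle I anticipate lies precisely here: in carefully verifying that the change of coordinates~\eqref{isomorphismredcoords} relating reduced and unreduced charts, as well as the twist $\zeta$, restrict compatibly to $\pr$; this is handled by the fact that~\eqref{isomorphismredcoords} is built out of the decomposition $G^{w_0,e} \cong Z \times H_{sc} \times L^{w_0,e}$ and that $\zeta$ is an involution whose behavior on the frozen minors is controlled by~\eqref{equation;twistfrozen}.
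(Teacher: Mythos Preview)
Your proof is correct and follows essentially the same line as the paper's. Both arguments hinge on the decomposition $\Phi_{BK}(hx) = \Phi_L(x) + \sum_i h^{-w_0\alpha_i}\Delta_{w_0 s_i\omega_i,\omega_i}(x)$ (you derive it directly from homogeneity and~\eqref{equation;hwwtcoordinates}, the paper cites an earlier result), and both establish surjectivity by observing that $-w_0\alpha_i$ is a positive root, so a sufficiently dominant cocharacter $a$ makes the extra inequalities hold.

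Two small remarks. First, your general formula $\pr^*(\Delta_{u\omega,v\omega}) = \Delta_{u\omega,v\omega}/\Delta_{u\omega,w_0\n u\omega}$ is only literally valid when $w_0\n u\omega$ is dominant (so that the denominator is a bona fide principal minor); in general $h^{u\omega}$ is a Laurent monomial in the frozen minors $\Delta_{w_0\omega_i,\omega_i}$. This does not affect your argument since you only apply it with $u=w_0$. Second, the paper chooses the \emph{twisted} reduced chart as its base case (to invoke the explicit positivity of minors from~\cite{BZ01}), whereas you use the untwisted reduced chart; your propagation step via positive equivalence is more explicit than the paper's, which simply asserts that one chart suffices.
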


\begin{proof} 
 It is enough to consider the case when $\theta$ is the twisted reduced cluster chart $\zeta\circ \theta_{\overline{\sigma(\mathbf{i})}}$, as in \eqref{equation;toricchartDBC}.  Write $z_1,\dots,z_m$ for the $\theta$-coordinates which factor through the projection $G^{w_0,e} \to L^{w_0,e}$.     By \cite[Theorem~4.8, Theorem~5.8]{BZ01}, any generalized minor $\Delta_{\gamma,\delta}$ can be written (as a function on $L^{w_0,e}$), as
    \begin{equation}\label{itrail}
        \Delta_{\gamma,\delta}(\theta(z_1,\dots,z_m))=\sum_{\pi\in S_{\gamma,\delta}}N_{\pi}z_1^{c_1(\pi)}\cdots z_{m}^{c_m(\pi)},  
    \end{equation}
    where $S_{\gamma,\delta}$ is a finite index set, $N_{\pi}\in \mathbb{R}_{>0}$, and $c_{k}(\pi)\in \mathbb{Z}$. 
 
The first statement follows immediately from the positive expression~\eqref{itrail} for the generalized minors $\Delta_{w_0\omega_i,s_i\omega_i}$.
To show that $\pr$ is a morphism of positive varieties with potential, it is enough to show that
    \[
        \Phi_{BK}-\Phi_{L}\circ \pr
    \]
    is positive with respect to $\theta$. By \cite[Proposition~5.16]{ABHL2}, for any $(h,x)\in H\times L^{w_0,e}$ one has
    \[
        \Phi_{BK}(hx)=\sum_{i\in [1,r]} \left( \Delta_{w_0\omega_i,s_i\omega_i}(x)+h^{-w_0\alpha_i}\Delta_{w_0s_i\omega_i,\omega_i}(x)\right).
    \]
    Then
    \begin{equation} \label{BK and BZ}
    \Phi_{BK}(hx) -\Phi_{L}(x) = \sum_{i\in [1,r]} h^{-w_0\alpha_i}\Delta_{w_0s_i\omega_i,\omega_i}(x),
    \end{equation}
    and so $\Phi_{BK}-\Phi_{L}\circ \pr$ is positive with respect to $\theta$. 
    
    It remains to show that $\pr^t\colon (G^{w_0,e}, \Phi_{BK}, \theta)^t \to  (L^{w_0,e}, \Phi_{L}, \overline{\theta})^t$ is surjective.
   If $(t_1,\dots,t_m) \in (L^{w_0,e}, \Phi_{L}, \overline{\theta})^t$, we must find $\lambda^\vee\in X_*(H)$ such that $(\lambda^\vee,t_1,\dots,t_m) \in (H\times L^{w_0,e}, \Phi_{BK}, \overline{\theta})^t$. By~\eqref{BK and BZ}, one has $(\lambda^\vee,t_1,\dots,t_m)\in (H\times L^{w_0,e}, \Phi_{BK}, \overline{\theta})^t$ if and only if
    \begin{equation}\label{conesurjection}
    \sum_{k=1}^m c_k(\pi) t_k-\langle w_0\alpha_i,\lambda^\vee\rangle + \Phi_{L}^t(t_1,\dots,t_m) \geqslant 0,
    \end{equation}
    for all $\pi\in S_{w_0s_i\omega_i,\omega_i}$ and $i \in [1,r]$. Let $\lambda^\vee=\sum  n_i\omega_i^\vee$ for $n_i\geqslant 0$. Then we have $-\langle w_0\alpha_i,\lambda^\vee\rangle=\sum n_j\langle -w_0\alpha_i,\omega_j^\vee\rangle\geqslant 0$ since $-w_0\alpha_i$ is a simple root. By picking the $n_i$ sufficiently large one ensures the inequalities~\eqref{conesurjection} all hold.
%    
%    Then the tropical cone defined by $\Phi_{BK}(hx)-\Phi_{L}(x)$ has the form
%    \begin{equation}\label{diffofpot}
%        \left\{(\lambda^\vee;t_1,\dots,t_m)\ \Big|\  \sum_{k=1}^m d_k(\pi)t_k \geqslant \langle w_0\alpha_i, \lambda^\vee\rangle,\forall i\in I \text{~and~} \pi\in S_i\right\}.
%    \end{equation}
%    %\note{BH: I don't understand the previous sentence} 
%    Thus choose $\lambda^\vee=\sum  n_i\omega_i^\vee$ for all $n_i\gg 0$ such that \eqref{diffofpot} is valid for a fixed point $(t_1,\dots,t_m)\in (L^{w_0,e}, \Phi_{L}, \overline{\theta})^t$. Then $\pr^t$ is surjective.
\end{proof}

%\begin{definition}\label{definition;stringcone}
%    For any $\overline{\theta}\in \Theta(L^{w_0,e})$, we refer to $(L^{w_0,e}, \Phi_{L}, \overline{\theta})^t$ as a {\em string cone} and $\triangle_\lambda:= (\hw^{t})\n(\lambda^\vee)$ a \emph{string polytope}.
%\end{definition}
%\note{BH: These are the string polytopes, right? Should we call them that?} Oops I guess I changed my mind...

\begin{remark} \label{stringremark}
  If one chooses $\theta$ to be the twisted reduced chart $\zeta\circ \theta_{\overline{\sigma}(\mathbf{i})}$ associated to a double reduced word $\mathbf{i}$ of $(w_0,e)$, then 
  the cone $(L^{w_0,e}, \Phi_{L}, \overline{\theta})^t$ coincides with the string cone associated with $\mathbf{i}$ defined in \cite{BZ01,FFL,FLP}, as follows. 
   %\note{BH: More than that, it needs to be associated with a double reduced word $\mathbf{i}$, right? I think there are only finitely many parametrizations in those papers, whereas there are typically infinitely many twisted reduced charts}
  %To see this, one needs the following facts. First of all, the preimage $(\zeta\circ \pr)^{-1}(L^{w_0,e})$ is the variety $(L^{e,w_0})^t$ by \cite[Theorem 4.7, Eq(6.1)]{BZ01}. %\note{BH: This seems out of place here, put at my comment above?} 
  %BH: we never use this generalized definition of string cone, so I reworded it so it's not necessary
  By Remark \ref{reducechart} and  \cite[Theorem 4.8]{BZ01}, the chart $\overline{\theta}$ is positively equivalent to the factorization chart $x_{\mathbf{i}}$ defined in \cite[Eq (4.10)]{BZ01}. %by a regular map. 
  Moreover, the $\overline{\theta}$-coordinates are Laurent monomials in the coordinates of $x_{\mathbf{i}}$, and vice versa. %\note{BH: ``regular'' doesn't imply Laurent monomial.} 
  Thus the cone $(L^{w_0,e},\Phi_{L}, \overline{\theta})^t$ is unimodularly isomorphic to the one in \cite[Theorem 3.10]{BZ01}.
  What is more, for $\lambda^\vee\in X_*(H)$, the following maps are injective:
\begin{equation}\label{coneinj}
    (\hw^{t})\n(\lambda^\vee)\hookrightarrow (G^{w_0,e}, \Phi_{BK}, \theta)^t \twoheadrightarrow  (L^{w_0,e}, \Phi_{L}, \overline{\theta})^t.
\end{equation}
So one may view the fibers $(\hw^{t})\n(\lambda^\vee)$ as subsets of the string cone; these are sometimes known as \emph{string polytopes}.
\end{remark}

\subsection{Comparison map on double Bruhat cells}
\label{section;comparisonDBC}

In this section, we will discuss comparison maps between $G^{w_0,e}$ and $G^{\vee;w_0,e}$, in the spirit of Langlands dual cluster algebras. 

Fix a double reduced word $\mathbf{i}$ for $(w_0,e)$, and recall that $\C[L^{\vee;w_0,e}]\cong \mathcal{A}^\vee_{|\overline{\sigma}(\mathbf{i})|}$ is the Langlands dual cluster algebra of $\C[L^{w_0,e}]\cong \mathcal{A}_{|\overline{\sigma}(\mathbf{i})|}$. Recall from~\eqref{equation;skewsymmetrizer} that the matrix $\overline{\bm{D}}$ 
is a skew-symmetrizer of $\overline{M}(\mathbf{i})$. Then for each labeled seed $(\bm{z}_{\overline{\sigma}},\overline{\sigma}) \in |\overline{\sigma}(\ii)|$, one has the map
\[
\varPsi_{\overline{\sigma}}^*\colon \C[L^{\vee;w_0,e}] \to \C[L^{w_0,e}] \ :\ z_j^\vee\mapsto z_j^{d_{|i_j|}}
\]
as in~\eqref{comparisongeneral}. 
%
%Denote by $(I,J, \overline{M}^\vee(\mathbf{i}))$ the initial seed for $L^{\vee;w_0,e}$. Recall we have the initial seed $(I,J, \overline{M}(\mathbf{i}))$ for $L^{w_0,e}$. Direct computation shows that
%\[
%	(I,J,\overline{M}^\vee(\mathbf{i}))=(I,J,\overline{M}(\mathbf{i}))^\vee,
%\]
%which justifies the name (Langlands) dual seeds. Thus for each seed $\sigma$ of $\mathbb{C}[L^{w_0,e}]$, we have a dual seed $\sigma^\vee$ for $\mathbb{C}[L^{\vee;w_0,e}]$ and a comparison map
%\[
%	\varPsi^L_\sigma\colon \mathbb{C}[L^{\vee;w_0,e}]\to \mathbb{C}[L^{w_0,e}] \ :\ z_i^\vee\mapsto z_i^{d_i},
%\]
%where $z_i^\vee$'s (resp. $z_i$'s) are the cluster variables for $L^{\vee;w_0,e}$ (resp. $L^{w_0,e}$).
%
 We extend $\varPsi_{\overline{\sigma}}$ to a map $\varPsi_{\sigma}\colon G^{w_0,e}\to G^{\vee;w_0,e}$, by setting
\[
	\varPsi_\sigma^*:=\varPsi_H^* \otimes\varPsi_{\overline{\sigma}}^*  \colon \mathbb{C}[G^{\vee;w_0,e}]\cong \mathbb{C}[H^{\vee}]\otimes \mathbb{C}[L^{\vee;w_0,e}] \to \mathbb{C}[G^{w_0,e}]\cong \mathbb{C}[H]\otimes \mathbb{C}[L^{w_0,e}].
\]
If $\theta=\theta_{\overline{\sigma}}$ is a reduced cluster chart for $G^{w_0,e}$, and $\theta^\vee= \theta_{\overline{\sigma}^\vee}$ its dual chart, one has a map of positive varieties
\begin{equation} \label{comparisonreduced}
\varPsi_\theta:= \varPsi_{\sigma} \colon (G^{w_0,e}, \theta) \to (G^{\vee;w_0,e},\theta^\vee).
\end{equation}

Note that $\varPsi^t_{\theta}\colon (G^{w_0,e},\theta)^t \to (G^{w_0,e},\theta^\vee)^t$ is linear and lifts the comparison map $\psi_\h$ of~\eqref{equation;comparisonmap}, in the sense that the following diagrams commute.%\note{J: is this obvious or a fact from ABHL2 or...? BH: It's just obvious. The first is immediate from the definition; the second you just notice how wt is define}
\begin{equation} \label{comparisonlifts}
	\begin{tikzcd}[row sep=2.5em]
	(G^{w_0,e},\theta)^t \ar[r,"\varPsi_{\theta}^t"] \ar[d, "\hw^t"] & (G^{\vee;w_0,e},\theta^\vee)^t \ar[d, "(\hw^\vee)^t"] \\
	X_*(H) \ar[r, "\psi_\h"] & X_*(H^\vee)
	\end{tikzcd}
	\quad
	\begin{tikzcd}[row sep=2.5em]
	(G^{w_0,e},\theta)^t \ar[r,"\varPsi_{\theta}^t"] \ar[d, "\wt^t"] & (G^{\vee;w_0,e},\theta^\vee)^t \ar[d, "(\wt^\vee)^t"]\\
	X_*(H) \ar[r, "\psi_\h"] & X_*(H^\vee)
	\end{tikzcd}
\end{equation}
According to the following, the tropicalized comparison maps are compatible with the twist map.

\begin{theorem}\cite[Theorem~5.8]{ABHL2} \label{twistComparison}
Let $\theta, \theta'$ be reduced cluster charts on $G^{w_0,e}$. Let $\theta^\vee$ and ${\theta'}^\vee$ be the corresponding Langlands dual charts. Then, the following diagram commutes.
	\[
		\begin{tikzcd}[row sep=2.5em]
		(G^{w_0,e},\theta)^t \ar[r,"\varPsi_{\theta}^t"] \ar[d,"\zeta^t"] & (G^{\vee;w_0,e},{\theta}^\vee)^t \ar[d,"(\zeta^\vee)^t"]  \\
		(G^{w_0,e},\theta)^t \ar[r,"\varPsi_{\theta}^t"] & (G^{\vee;w_0,e}, \theta^\vee)^t 
		\end{tikzcd}
	\]
\end{theorem}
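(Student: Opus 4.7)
The plan is to reduce the statement to a single convenient pair of charts and then verify commutativity by a direct calculation, exploiting the symmetry of the symmetrized Cartan datum under Langlands duality.

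First, I would use the fact that the tropicalized comparison map $\varPsi_\theta^t$ is essentially independent of the choice of reduced cluster chart. More precisely, by the commutative square~\eqref{comparisonCompat}, if we identify $(G^{w_0,e},\theta)^t \cong (G^{w_0,e},\theta')^t$ and $(G^{\vee;w_0,e},\theta^\vee)^t \cong (G^{\vee;w_0,e},(\theta')^\vee)^t$ through the positive equivalences of (Langlands dual) reduced cluster charts, the induced maps $\varPsi_\theta^t$ and $\varPsi_{\theta'}^t$ agree, and both $\zeta^t$ and $(\zeta^\vee)^t$ transform naturally under these identifications. Thus it suffices to verify the diagram for a single well-chosen pair $\theta = \theta_{\overline{\sigma(\ii)}}$ and $\theta^\vee = \theta_{\overline{\sigma^\vee(\ii)}}$ associated to a double reduced word $\ii$ of $(w_0,e)$.

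In such coordinates, $\varPsi_\theta^t$ is the $\Z$-linear map scaling the $k$-th standard basis vector by the symmetrizer $d_{|i_k|}$, cf.~\eqref{comparisongeneral} and~\eqref{equation;skewsymmetrizer}. Consequently, commutativity of the square reduces to the single identity
\[
(\zeta^\vee)^t\bigl(d_{|i_1|}p_1,\ldots,d_{|i_{\tilde r+m}|}p_{\tilde r+m}\bigr)
\;=\;
\bigl(d_{|i_k|}\bigr)_k \cdot \zeta^t(p_1,\ldots,p_{\tilde r+m})
\]
for all integer vectors $(p_k)$ in the tropical cone. My plan is to prove this identity by writing $\zeta$ and $\zeta^\vee$ explicitly as positive rational maps in these coordinates, using~\eqref{equation;twistfrozen} together with the factorization/chamber ansatz formulas of~\cite[Theorem~1.9]{FZ}. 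Both $\zeta^t$ and $(\zeta^\vee)^t$ then acquire piecewise $\Z$-linear descriptions whose structure constants are built from signs and entries of the Cartan matrix $A$ (respectively $A^T$).

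The hard part of the proof will be this final combinatorial verification. The underlying mechanism is clear in principle: although the Cartan matrices of $G$ and $G^\vee$ are transposes of each other, the products $A_{ij}d_j$ are symmetric, which is precisely why $\bm{D}$ is a common skew-symmetrizer for $M(\ii)$ and $M^\vee(\ii)$. This symmetry is what allows the scaling by $d_{|i_k|}$ to be pushed through the piecewise linear formulas for $\zeta^t$ and convert them into the corresponding formulas for $(\zeta^\vee)^t$. In practice, however, the chamber ansatz formulas are combinatorially heavy and the bookkeeping is the main source of difficulty. A more conceptual alternative would be to identify $\zeta^t$, under the parametrization of Remark~\ref{stringremark}, with (a variant of) the Kashiwara $*$-involution on string data, and appeal to the known compatibility of string cones of $G$ and $G^\vee$ under $\varPsi^t$; this shifts part of the work to external crystal-theoretic results but would bypass the explicit computation.
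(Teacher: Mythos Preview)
The paper does not give its own proof of this theorem: it is quoted verbatim as \cite[Theorem~5.8]{ABHL2}, with no argument supplied here. So there is nothing in the present paper to compare your proposal against beyond the citation itself.

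That said, your outline is a reasonable independent route. The reduction step is sound: by functoriality of tropicalization together with \eqref{comparisonCompat}, changing $\theta$ conjugates both $\zeta^t$ and $\varPsi_\theta^t$ by the same tropical change-of-chart maps, so commutativity for one reduced chart implies it for all. The core identity you isolate, namely that the diagonal scaling by $(d_{|i_k|})$ intertwines $\zeta^t$ and $(\zeta^\vee)^t$, is exactly the right target, and your diagnosis that the mechanism is the symmetry of $AD$ (equivalently, that $\bm D$ is a common skew-symmetrizer for $M(\ii)$ and $M^\vee(\ii)=-M(\ii)^T$) is correct. Two caveats: first, be careful that the $H$-factor of the reduced chart carries $\varPsi_H^t=\psi_\hhh$ rather than a naive coordinate scaling, so you must also check that $\psi_\hhh$ intertwines the $H$-part of $\zeta^t$ (which is the action of $-w_0$ on $X_*(H)$) with the corresponding map for $G^\vee$; this follows from $W$-invariance of the form. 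Second, the chamber ansatz computation via \cite[Theorem~1.9]{FZ} is indeed where all the work lies, and getting it to close cleanly requires tracking how the exponent matrices for $G$ and $G^\vee$ are related through $D$; your crystal-theoretic alternative via the $*$-involution is likely closer in spirit to how \cite{ABHL2} actually argues, and would spare you most of that bookkeeping.
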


Additionally, the tropicalized comparison maps preserves the Berenstein-Kazhdan cone.

%Finally, note that twist commutes with comparison map. It also commutes with tropical change of coordinates in the sense that ...diagram...commutes.

\begin{theorem}\cite[Theorem 5.21]{ABHL2}\label{theorem;ABHL2maintheorem} Assume $\theta=\theta_{\overline{\sigma}}$ is a reduced cluster chart for $G^{w_0,e}$, and $\theta^\vee= \theta_{\overline{\sigma}^\vee}$ is its dual chart.
	Then the comparison map $\varPsi_\theta^t$ restricts to an injective map
	\[
		\varPsi_\theta^t\colon (G^{w_0,e},\Phi_{BK},\theta)^t \hookrightarrow (G^{\vee;w_0,e},\Phi_{BK}^\vee, \theta^\vee)^t,
	\]
	which extends to a linear isomorphism of real BK cones: 
		\[
		\varPsi_\theta^t\colon (G^{w_0,e},\Phi_{BK},\theta)^t_{\mathbb{R}} \xrightarrow{\sim} (G^{\vee;w_0,e},\Phi_{BK}^\vee, \theta^\vee)^t_{\mathbb{R}}.
	\]
\end{theorem}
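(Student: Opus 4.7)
The tropicalized comparison map $\varPsi_\theta^t$ is manifestly a linear isomorphism of ambient real vector spaces $(G^{w_0,e},\theta)^t_\R \xrightarrow{\sim} (G^{\vee;w_0,e},\theta^\vee)^t_\R$: reading off from~\eqref{comparisongeneral} (combined with $\psi_\h$ on the Cartan factor from Section~\ref{section;comparison}), it is the diagonal transformation $(x_j)\mapsto (d_{|i_j|}x_j)$ with strictly positive entries. The content of the theorem is therefore that the defining inequality $\Phi_{BK}^t\ge 0$ of the real BK cone of $G$ transforms, under this rescaling, to the inequality $(\Phi_{BK}^\vee)^t\ge 0$ of the real BK cone of $G^\vee$; the injectivity on $\Z$-points of the two BK cones then follows immediately.

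My plan is to prove the stronger pointwise identity
\[
\Phi_{BK}^t(x) \;=\; (\Phi_{BK}^\vee)^t\bigl(\varPsi_\theta^t(x)\bigr),\qquad x\in (G^{w_0,e},\theta)^t_\R.
\]
By Definition~\ref{definition; BK potential} and~\eqref{BK and BZ}, $\Phi_{BK}$ decomposes as $\Phi_L\circ \pr$ plus an explicit $H$-contribution $\sum_i h^{-w_0\alpha_i}\Delta_{w_0 s_i\omega_i,\omega_i}$. Each constituent minor $\Delta_{w_0\omega_i,s_i\omega_i}$ and $\Delta_{w_0 s_i\omega_i,\omega_i}$ admits, by Berenstein-Zelevinsky~\cite{BZ01}, a subtraction-free Laurent expansion in any (twisted) reduced cluster chart indexed by $i$-trails. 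Since tropicalization converts each such positive Laurent expansion into the minimum of its integer exponent vectors, the identity reduces to showing that for each such minor the sets of $i$-trail exponent vectors in $G$ (in $\theta$-coordinates) and in $G^\vee$ (in $\theta^\vee$-coordinates) are in bijection, with corresponding exponent vectors related precisely by the diagonal rescaling $(x_j)\mapsto (d_{|i_j|}x_j)$.

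The main obstacle is verifying this Langlands-dual symmetry of $i$-trail exponent sets uniformly over all charts $\theta\in\Theta(G^{w_0,e})$. For the factorization chart $\zeta\circ\theta_{\overline{\sigma}(\mathbf{i})}$ it is essentially a reformulation of the BZ result that the string polytope of a dominant weight $\lambda$ in the $G$-string cone for $\mathbf{i}$ is obtained from its $G^\vee$-counterpart by rescaling rows by $(d_{|i_j|})$; see Remark~\ref{stringremark} and~\cite{BZ01,BKII}. For an arbitrary reduced cluster chart I would reduce to this case by combining three ingredients: (a) the mutation-compatibility of tropicalization with the comparison map, diagram~\eqref{comparisonCompat}; (b) the twist-comparison compatibility, Theorem~\ref{twistComparison}; and (c) Remark~\ref{reducechart}, which expresses each reduced cluster chart in terms of the factorization chart via a finite sequence of cluster mutations (and possibly the twist). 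Extension to unreduced charts is then automatic by~\eqref{BK and BZ}, together with the naturality of $\psi_\h$ on the $H$-factor.
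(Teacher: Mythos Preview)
The paper does not prove this theorem: it is stated with the citation \cite[Theorem~5.21]{ABHL2} and used as a black box. There is therefore no in-paper proof to compare your proposal against.

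That said, a brief comment on your outline. Your reduction to the pointwise identity $\Phi_{BK}^t=(\Phi_{BK}^\vee)^t\circ\varPsi_\theta^t$ is the right target, and your strategy of reducing an arbitrary reduced cluster chart to a factorization chart via~\eqref{comparisonCompat} and Theorem~\ref{twistComparison} is sound in principle. The gap is that you have not actually established the base case: the claim that, in the factorization chart $\zeta\circ\theta_{\overline{\sigma}(\mathbf{i})}$, the $i$-trail exponent vectors for the minors appearing in $\Phi_{BK}$ and $\Phi_{BK}^\vee$ are in bijection related by the diagonal rescaling $(d_{|i_j|})$ is precisely the hard combinatorial core of \cite[Theorem~5.21]{ABHL2}. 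It is not a direct reformulation of results in \cite{BZ01}; the $i$-trail parametrizations in \cite{BZ01} are for a fixed root system, and comparing them across Langlands duality requires additional work (this is what \cite{ABHL2} does). So your proposal is a correct architectural sketch, but the step you flag as the ``main obstacle'' is the entire content of the cited theorem, not something that follows from the references you list.
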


\section{Partial tropicalization} \label{PTSection}

This section recalls the definition and properties of the partial tropicalization of $K^*$ from \cite{ABHL1, ABHL2, AHLL}. In the process, it extends the results of \cite{ABHL1, ABHL2, AHLL} to arbitrary (twisted) cluster charts on $G^{w_0,e}$ (the results in \cite{ABHL1, ABHL2, AHLL} were only stated for cluster charts associated to reduced words). 

Some of the key results of this Section are as follows: Lemma~\ref{lemma;scaling domination} stating that functions dominated by the BK potential exponentially decay in the tropical limit, Lemma~\ref{lemma;wtpt properties} which describes the torus action on the partial tropicalization, Lemma~\ref{lemma;hwpt properties} which describes symplectic leaves of the partial tropicalization and Theorem~\ref{good coordinates} which introduces Darboux coordinates for the Poisson bracket $\pi_{-\infty}$.

\subsection{Coordinates on \texorpdfstring{$K^*$}{K*} from cluster charts on \texorpdfstring{$G^{w_0,e}$}{Gw0e}}\label{ChartsonKstarSection}

Fix a choice of $\theta \in \Theta(G^{w_0,e})$ and consider the framed positive variety with potential $(G^{w_0,e},\Phi_{BK}, \theta)$. Recall~\eqref{wt map} the weight map, $\wt \colon G^{w_0,e} \to H$, and~\eqref{hw map} the highest weight map, $\hw \colon G^{w_0,e} \to H$. 
By Lemma~\ref{lemma; wt hw homomorphism property}, their tropicalizations are $\Z$-linear maps
\begin{equation*}
	\begin{split}
		\wt^t  = (\wt\circ \theta)^t & \colon (G^{w_0,e},\theta)^t \to X_*(H), \\
		\hw^t  = (\hw\circ \theta)^t & \colon (G^{w_0,e},\theta)^t \to X_*(H). \\
	\end{split}
\end{equation*}
 Moreover, $\wt^t = X_*(\wt\circ \theta)$ and $\hw^t= X_*(\hw\circ \theta)$. Recall that $A\leq H$ is the real Lie subgroup  $A = \exp(\a)$. As sets, $K^*\cap G^{w_0,e} = \wt\n(A)$. By Lemma~\ref{lemma; wt hw homomorphism property},
\[
	\theta\n(K^*\cap G^{w_0,e})= (\wt\circ\theta)\n(A)
\] 
is a connected real Lie subgroup of $\C^{\times(\tilde r +m)}$. Let $\T_\theta$ denote the maximal compact subtorus of $(\wt\circ\theta)\n(A)$. When it is clear from context we write $\T =\T_\theta$. Since the maximal compact subgroup of $A$ is $\{e\}$, we have $\T \subset \ker(\wt \circ \theta)$ and  $\dim_\R\T = m$.

In what follows we will be concerned with the manifold $(G^{w_0,e},\theta)^t_\R\times \T_\theta$. 
If $z$ is a positive function on $(G^{w_0,e},\theta)$, then we abuse notation and write $z^t$ for the map $(G^{w_0,e},\theta)^t_\R\times \T_\theta\xrightarrow{\pr_1} (G^{w_0,e},\theta)^t_\R \xrightarrow{z^t} \R$.

\begin{definition} \label{definition;PTcoordtrans}
Let $\theta,\theta'\in \Theta(G^{w_0,e})$, and consider the manifolds $(G^{w_0,e},\theta)^t_\R \times \T_\theta$ and $(G^{w_0,e},\theta')^t_\R \times \T_{\theta'}$. For each open linearity chamber $C$ of the piecewise linear bijection
\[
X_*(\C^{\times(\tilde{r}+m)}) \otimes \R = (G^{w_0,e},\theta)^t_\R \xrightarrow{\id^t} (G^{w_0,e},\theta')^t_\R = X_*(\C^{\times(\tilde{r}+m)}) \otimes \R 
\]
consider the unique linear map 
\[
X_*(\C^{\times(\tilde{r}+m)}) \otimes \R \to X_*(\C^{\times(\tilde{r}+m)}) \otimes \R
\]
which agrees with $(\id^t)|_C$ on $C$. Let 
\begin{equation*}
e^{\id_C} \colon \C^{\times(\tilde{r}+m)} \to \C^{\times(\tilde{r}+m)}
\end{equation*}
 denote the associated group homomorphism. Because $(\wt\circ \id)^t = \wt^t$, we have $e^{\id_C}(\T_\theta) = \T_{\theta'}$. Define
\begin{equation}
\label{PTcoordtrans}
\id^{PT} = \coprod_{C}((\id^t)|_C \times e^{\id_C}) \colon \coprod_{C} C\times \T_\theta \to (G^{w_0,e},\theta')^t_\R \times \T_{\theta'},
\end{equation}
where the disjoint union ranges over all the open linearity chambers of $\id^t$. The map $\id^{PT}$ is the \emph{partially tropicalized change of coordinates}.
\end{definition}

 For all $s \neq 0$, there is an isomorphism of real Lie groups
\begin{equation}\label{equation;change of coordinates I}
	\mathfrak{P}_s \colon (G^{w_0,e},\theta)^t_\R \times \T \to (\wt\circ\theta)\n(A)
\end{equation}
defined by the property that 
\begin{equation}\label{equation;change of coordinates II}
	e^{\frac{s}{2}\langle \gamma,x\rangle}t^\gamma = \mathfrak{P}_s(x,t)^\gamma, \quad \forall \gamma \in X^*\C^{\times(\tilde r + m)},\, (x,t) \in (G^{w_0,e},\theta)^t_\R \times \T.
\end{equation}
Note that since $\T$ is contained in $\ker(\wt \circ \theta)$,
\begin{equation}\label{equation;XH vanishes on big T}
	t^{\langle \gamma,\wt^t\rangle} = 1, \quad \forall t\in \T, \gamma \in X^*(H).
\end{equation}
Recall that the standard coordinates $z_i$ on $\C^{\times(\tilde r +m)}$ are identified with  the standard basis of  $X^*(\C^{\times(\tilde r +m)})$.   With this identification, define the system of polar  coordinates 
$\lambda_i, \varphi_i$  on $(G^{w_0,e},\theta)^t_\R \times \T$:
\begin{equation}\label{equation; cluster coordinates on K^* II}
	\left\{ \begin{array}{rl}
		\lambda_i(x,t) &= \langle z_i,x\rangle, \\
		e^{\sqrt{-1}\varphi_i(x,t)} &= t^{z_i}. 
	\end{array}\right.
\end{equation}
The coordinates $\varphi_i$ are defined (modulo $2\pi$) for the indices $i$ such that $z_i$ %(viewed as a $\theta$-coordinate on $G^{w_0,e}$) 
takes on values outside of $\R_{>0}$ on the subset $G^{w_0,e}\cap K^*$. In other words, $\varphi_i$ are defined for indices $i$ such that  $z_i$ (viewed as a character of $\C^{\times(\tilde r +m)}$) is not in the image of  $ X^*(H)$ under $X^*(\wt\circ \theta)$.

\begin{definition}\label{definition;detropicalization}\label{equation;PiSTheta}
	The \emph{detropicalization map}, $L_s^\theta$, is the composition 
\[
	L_s^\theta := \theta \circ \mathfrak{P}_s \colon (G^{w_0,e},\theta)^t_\R \times \T \to  G^{w_0,e}\cap K^*.
\]
\end{definition}

\subsection{The definition of partial tropicalization} \label{section; definition of partialtrop}

Let $\theta \in\Theta(G^{w_0,e})$ be a (twisted) cluster chart on $G^{w_0,e}$ as in the previous section. By Corollary~\ref{corollary;homog} and Proposition~\ref{proposition;twisthomog}, there is $\Z$-module homomorphism,
\begin{equation}\label{equation; degree map}
	|\cdot | \colon X^*(\C^{\times(\tilde r + m)}) \to X^*(H) \times X^*(H),
\end{equation}
defined by the property that  $|z_i| = (\alpha_i, \beta_i)$ for all $i$, where $(\alpha_i,\beta_i)$ is the homogeneous degree of the $\theta$-coordinate $z_i$. If $|\gamma| = (\alpha,\beta)$, denote $|\gamma|_1 = \alpha$ and $|\gamma|_2 = \beta$.

\begin{proposition}\label{prop; coefficients}
Assume $G$ is of the form~\eqref{equation;niceformG}, and let $\theta \in  \Theta(G^{w_0,e})$.  Let $\{\cdot,\cdot\}^\C_{K^*}$ denote the $\C$-linear extension of the Poisson bracket of $\pi_{K^*}$. Restrict the $\theta$-coordinates $z_i \in \C[G^{w_0,e}]$ to functions on $G^{w_0,e}\cap K^*$. Then, there exists  $c_{z_i,z_j}$, $c_{z_i,\overline{z}_j}, c_{\overline{z}_i,\overline{z}_j}\in \C$, such that
    \begin{equation}\label{equation;bracketswithc} 
        \{z_i,z_j\}^\C_{K^*} =  z_iz_j c_{z_i,z_j}; \quad \quad \{\overline{z}_i,\overline{z}_j\}^\C_{K^*} =  \overline{z}_i\overline{z}_j c_{\overline{z}_i,\overline{z}_j}; \quad \quad
        \{z_i,\overline{z}_j\}^\C_{K^*}  =  z_i \overline{z}_j (c_{z_i,\overline{z}_j}+ f_{z_i,\overline{z}_j}), 
    \end{equation}
    where  $f_{z_i,\overline{z}_j}$ is a $\C$-linear combination of
    \begin{equation}\label{equation;domterms}
        \frac{(E_{-\alpha}\cdot z_i)(E_{-\alpha}\cdot \overline{z}_j)}{z_i \overline{z}_j},  \frac{(z_i \cdot E_{-\alpha})(\overline{z}_j\cdot E_{-\alpha})}{z_i\overline{z}_j}, \quad \alpha \in R_+.
    \end{equation}
    Moreover, 
    \begin{equation}\label{equation;mixedcoefficient}
    c_{z_i,\overline{z}_j} = \frac{\sqrt{-1}}{2} \Big( (|z_i|_1,|\overline{z}_j|_1)_{\mathfrak{h}^*} - (|z_i|_2,|\overline{z}_j|_2)_{\mathfrak{h}^*} \Big), \quad c_{z_i,z_j} =-c_{\overline{z}_i,\overline{z}_j} \in \sqrt{-1}\Q. 
\end{equation}
\end{proposition}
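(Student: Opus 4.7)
The plan is to compute the three brackets directly, exploiting the $H\times H$-homogeneity of the (twisted) cluster coordinates from Corollary~\ref{corollary;homog} and Proposition~\ref{proposition;twisthomog}, which implies that $X \cdot z_i = -\langle |z_i|_1, X\rangle z_i$ and $z_i\cdot X = \langle |z_i|_2, X\rangle z_i$ for all $X \in \h$. The same identities hold for $\overline{z}_j$ in place of $z_i$ when $X \in \a$, since each weight $|z_j|_k \in \sqrt{-1}\ttt^* = \a^*$ is real-valued on $\a$.

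First I would handle the mixed bracket $\{z_i, \overline{z}_j\}^\C_{K^*}$ by substituting homogeneity into Lemma~\ref{lemma;formulaformixedbrackets}. Each factor in the Cartan sum becomes a scalar multiple of $z_i$ or $\overline{z}_j$, and using that $\{X_l\}$ is an orthonormal basis of $\a$ with respect to $(\cdot,\cdot)_\g$, the sum collapses to $\sum_l \langle |z_i|_k, X_l\rangle \langle |z_j|_k, X_l\rangle = (|z_i|_k, |z_j|_k)_{\h^*}$ (because $(\cdot,\cdot)_{\h^*}$ restricts on the real subspace $\a^*$ to the form dual to $(\cdot,\cdot)_\a$). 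This produces the constant $c_{z_i,\overline{z}_j}$ of~\eqref{equation;mixedcoefficient}. Factoring $z_i\overline{z}_j$ out of the $\sum_{\alpha\in R_+}$ part yields $f_{z_i,\overline{z}_j}$ as the required $\C$-linear combination of the terms in~\eqref{equation;domterms}.

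Next I would treat the holomorphic bracket $\{z_i, z_j\}^\C_{K^*}$; the antiholomorphic bracket will then follow by complex conjugation. Repeating the argument of Lemma~\ref{lemma;formulaformixedbrackets} with the holomorphic $r$-matrix~\eqref{equation;rmatrixholomorphic} in place of~\eqref{equation;rmatrix} gives
\begin{equation*}
\{z_i, z_j\}^\C_{K^*} = \frac{\sqrt{-1}}{2}\sum_{l=1}^{\tilde r}\bigl[(X_l\cdot z_i)(X_l\cdot z_j) - (z_i\cdot X_l)(z_j\cdot X_l)\bigr] + \sqrt{-1}\sum_{\alpha\in R_+}\bigl[(E_\alpha\cdot z_i)(E_{-\alpha}\cdot z_j) - (z_i\cdot E_\alpha)(z_j\cdot E_{-\alpha})\bigr].
\end{equation*}
The Cartan sum simplifies by homogeneity, but the $E_{\pm\alpha}$ sum is the principal obstacle: since $z_i$ is not a highest-weight vector, $E_\alpha\cdot z_i$ is not a scalar multiple of $z_i$, and homogeneity alone does not reduce it to a multiple of $z_iz_j$. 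The resolution is to invoke the Gekhtman-Shapiro-Vainshtein result (see~\cite{GSV}) that cluster variables on double Bruhat cells are log-canonical for the standard complex Sklyanin Poisson-Lie structure on $G$. Since $\mathrm{r}_{\mathrm{hol}}$ agrees with the Sklyanin $r$-matrix up to an overall factor of $\sqrt{-1}$, this gives $\{z_i, z_j\}^\C_G = \sqrt{-1}\,d_{ij}\,z_iz_j$ on $G^{w_0,e}$ for rational $d_{ij} \in \Q$, and the anti-Poisson property of $K^*\subset G$ yields $c_{z_i,z_j} = -\sqrt{-1}\,d_{ij}\in\sqrt{-1}\Q$. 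The same conclusion extends to twisted cluster charts $\theta_\sigma^\zeta$ via~\cite[Theorem~1.9]{FZ}, which expresses twisted minors as Laurent monomials in factorization parameters (a class on which log-canonicity is already known), together with the fact that Laurent monomial coordinate changes preserve log-canonicity. Finally, since $c_{z_i,z_j}\in\sqrt{-1}\R$, complex conjugation of the holomorphic bracket gives $c_{\overline{z}_i,\overline{z}_j} = \overline{c_{z_i,z_j}} = -c_{z_i,z_j}$, establishing the remaining identity.
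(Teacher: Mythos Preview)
Your proposal is correct and follows essentially the same approach as the paper: Lemma~\ref{lemma;formulaformixedbrackets} plus $H\times H$-homogeneity (Corollary~\ref{corollary;homog}, Proposition~\ref{proposition;twisthomog}) for the mixed bracket, the Gekhtman--Shapiro--Vainshtein log-canonicity results \cite{GSV,GSV03} for the holomorphic bracket, and complex conjugation for the antiholomorphic one. Your explicit handling of the twisted-chart case via the monomial relation of \cite[Theorem~1.9]{FZ} to factorization parameters is a detail the paper leaves implicit in its citation of \cite{GSV}.
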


\begin{proof}
First consider brackets of the form $\{z_i,z_j\}=\{z_i,z_j\}^\C_{K^*}$. Consider the holomorphic Poisson-Lie structure $\pi_G^{hol}=(\mathrm{r}^{hol})^L-(\mathrm{r}^{hol})^R$ on $G$ given by the $r$-matrix $\mathrm{r}^{hol} = \frac{1}{\sqrt{-1}} \mathrm{r}$, where $\mathrm{r}$ is given by \eqref{equation;rmatrixholomorphic}.
 The bivector $\pi_{G}^{hol}$ restricts to a holomorphic Poisson structure on the double Bruhat cell $G^{w_0,e}$. Therefore, for holomorphic functions on $G^{w_0,e}$, the brackets $\{z_i,z_j\}^\C_{K^*}$ and $-\sqrt{-1}\{z_i,z_j\}_{G^{w_0,e}}$ are equal.
 %minus sign because K^* \hookrightarro G antipoisson
  By \cite[Theorem~4.18]{GSV} and \cite[Theorem~3.1]{GSV03}, the bracket of two $\theta$-variables on $G^{w_0,e}$ is log canonical, with rational structure constant.

Next, consider brackets of the form $\{\overline{z}_i,\overline{z}_j\}$. Since the bivector $\pi_{K^*}\in \Gamma(\wedge^2 T K^*)$ is real, we have $\{\overline{z}_i,\overline{z}_j\}= \overline{\{z_i,z_j\}}$. The brackets $\{z_i,z_j\}$ are pure imaginary, and so this amounts to $c_{z_i,z_j}  =-c_{\overline{z}_i,\overline{z}_j}$.

Finally, consider the mixed brackets $\{z_i,\overline{z}_j\}$. The claim follows directly from the expression for the bracket in Lemma~\ref{lemma;formulaformixedbrackets}, as well as the $P\times P$-homogeneity of (twisted) cluster variables established in Corollary~\ref{corollary;homog} and Proposition~\ref{proposition;twisthomog}.
\end{proof}

\begin{definition}\label{definition;PTdef}\label{equation;PTdef}
  Assume $G$ is of the form~\eqref{equation;niceformG}, and let $\theta \in  \Theta(G^{w_0,e})$. The \emph{partial tropicalization of $K^*$ with respect to} $\theta$ is the Poisson manifold $(PT(K^*,\theta),\pi_{-\infty}^\theta)$, where:
  \begin{enumerate}[label=(\arabic*)]
  	\item $PT(K^*,\theta) := (G^{w_0,e},\Phi_{BK},\theta)^t_\R(0) \times \T$.
  	\item $\pi_{-\infty}^\theta$ is defined such that:
  	\begin{equation}\label{equation;bracketPTdef}
  	\begin{split}
  		\{\lambda_i,\varphi_j\} =\sqrt{-1} (c_{z_i,\overline{z}_j} -c_{z_i,z_j}),\quad 
      \{\varphi_i,\varphi_j\}  =\{\lambda_i,\lambda_j\} = 0, 
  	\end{split}
  	\end{equation}
  	where:
  	\begin{enumerate}
  	\item  $\lambda_i,\varphi_j$ are the coordinates \eqref{equation; cluster coordinates on K^* II}, and
  	\item $c_{z_i,z_j}$, $c_{z_i,\overline{z}_j}$, and $c_{\overline{z}_i,\overline{z}_j}$ are as in~\eqref{equation;bracketswithc}.
  	\end{enumerate}
  \end{enumerate}
\end{definition}

\begin{remark}\label{specialformforK*}
	By the explicit formula in  \cite[Corollary 4.21]{GSV}, given a double reduced word $\mathbf{i}$ for $(w_0,e)$, in the chart $\theta_{\sigma(\mathbf{i})}$, the coefficient $c_{z_i,z_j}$ is given by
	\[
		c_{z_i,z_j}=-\frac{\sqrt{-1}}{2} \Big( (|z_i|_1,|z_j|_1)_{\mathfrak{h}^*} - (|z_i|_2,|z_j|_2)_{\mathfrak{h}^*} \Big)
	\]
	whenever $i<j$.
	For $i>j$, we have $c_{z_i,z_j}:=-c_{z_j,z_i}$. 
\end{remark} %%%BH: Changed the referenc to GSV; they use a bracket which differs from KZ by 1/2. The one in GSV agrees with ours.

The following gives an interpretation of coordinate transformations, after partial tropicalization. It follows from~\cite[Theorem~6.23]{ABHL1}.

\begin{theorem} \label{theorem;PTpoissonisomorphism}
Assume $G$ is of the form~\eqref{equation;niceformG}, and let $\theta, \theta' \in  \Theta(G^{w_0,e})$. Recall the map $\id^{PT}$ defined in~\eqref{PTcoordtrans}. Then, the restriction of $\id^{PT}$ to a densely defined map
\[
\id^{PT} \colon PT(K^*,\theta)\to PT(K^*,\theta')
\]
has $(\id^{PT})_*(\pi_{-\infty}^\theta) = \pi_{-\infty}^{\theta'}$ wherever it is defined.
\end{theorem}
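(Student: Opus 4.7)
The overall strategy is to realize $\id^{PT}$ as an $s \to -\infty$ limit of honest coordinate changes on $K^*$. For each $s \neq 0$, the composition
\[
\Xi_s := (L_s^{\theta'})^{-1} \circ L_s^\theta
\]
is, by construction, a (densely defined) Poisson map from $((G^{w_0,e},\theta)^t_\R \times \T_\theta,\, (L_s^\theta)^{-1}_*(s\pi_{K^*}))$ to $((G^{w_0,e},\theta')^t_\R \times \T_{\theta'},\, (L_s^{\theta'})^{-1}_*(s\pi_{K^*}))$, simply because both sides are the same Poisson structure $s\pi_{K^*}$ pulled back to different coordinate systems on $K^* \cap G^{w_0,e}$. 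If we can show (i) these two Poisson structures converge, as $s \to -\infty$, to $\pi_{-\infty}^\theta$ and $\pi_{-\infty}^{\theta'}$ respectively, and (ii) the maps $\Xi_s$ converge to $\id^{PT}$ on each open linearity chamber of $\id^t$, then passing Poisson-map-preservation to the limit yields the theorem.

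For step (i), the idea is already outlined in Theorem~\ref{thm:intro_cone}: the bracket formulas in Proposition~\ref{prop; coefficients} split into a ``constant'' part (the coefficients $c_{z_i,z_j}$ and $c_{z_i,\overline{z}_j}$) and the ``nuisance'' terms $f_{z_i,\overline{z}_j}$. When pulled back via $L_s^\theta$, each nuisance term is a linear combination of expressions of the form $(E_{-\alpha} \cdot z)(E_{-\alpha} \cdot \overline{z}')/(z\overline{z}')$. By Corollary~\ref{corollary;domination}, such expressions are dominated by $\Phi_{BK}$, and Lemma~\ref{lemma;scaling domination} (cited in the preamble) ensures they decay exponentially as $s \to -\infty$ precisely on the cone $(G^{w_0,e},\Phi_{BK},\theta)^t_\R(0)$, which is the definition of $PT(K^*,\theta)$. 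This leaves only the constant coefficients, recovering~\eqref{equation;bracketPTdef}.

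For step (ii), fix an open linearity chamber $C$ of the piecewise-linear map $\id^t$ and a $\theta'$-coordinate $z_k'$. As an element of $\C(G^{w_0,e})$, write $z_k' = \sum_\gamma A_\gamma z^\gamma / \sum_\delta B_\delta z^\delta$ in $\theta$-coordinates (subtraction-free by positive equivalence of cluster charts). Substituting $z_i = e^{\frac{s\lambda_i}{2} + \sqrt{-1}\varphi_i}$ via $L_s^\theta$ and taking $(\lambda,t) \in C \times \T$ with $s \ll 0$, the sums in numerator and denominator are dominated by the unique terms achieving the tropical minimum on $C$. A direct computation shows
\[
z_k' \circ L_s^\theta(\lambda,\varphi) = e^{\frac{s}{2}\langle z_k', (\id^t)|_C(\lambda)\rangle + \sqrt{-1}\,(\text{linear in }\varphi)}\bigl(1 + O(e^{s\epsilon})\bigr),
\]
for some $\epsilon > 0$ depending on the distance to the walls of $C$. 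Applying $(L_s^{\theta'})^{-1}$ (i.e.\ taking logarithm and phase), the leading-order part is exactly $(\id^t)|_C \times e^{\id_C}$, i.e.\ $\id^{PT}$ on $C \times \T_\theta$. Uniform convergence on compact subsets of $C \times \T_\theta$ follows from the same exponential decay estimate.

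Finally, for step (iii)—passing the Poisson condition to the limit—we use that $(\Xi_s)_*((L_s^\theta)^{-1}_*(s\pi_{K^*})) = (L_s^{\theta'})^{-1}_*(s\pi_{K^*})$ as bivector fields, and that both sides, together with the maps $\Xi_s$, converge uniformly (along with first derivatives) on compacta in any linearity chamber. The main obstacle is securing the $C^1$ convergence of $\Xi_s$ and of the bivectors, which reduces to showing that derivatives of the error terms also decay exponentially; this follows because $(E_{-\alpha}\cdot z)(E_{-\alpha}\cdot \overline{z}')/(z\overline{z}')$ and its derivatives with respect to the $\lambda_i,\varphi_i$ coordinates remain dominated by $\Phi_{BK}$ after differentiation, again by Corollary~\ref{corollary;domination}. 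Granted this, the relation $(\id^{PT})_*\pi_{-\infty}^\theta = \pi_{-\infty}^{\theta'}$ holds on the open dense subset of $PT(K^*,\theta)$ lying over the open linearity chambers of $\id^t$, which is exactly the domain of definition asserted in the statement.
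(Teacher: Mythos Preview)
Your limit strategy is sound and is presumably the content of \cite[Theorem~6.23]{ABHL1}, which the paper invokes without further argument. Steps (i) and (ii) are correct: step (i) is Theorem~\ref{theorem;bracketconvergence}, and step (ii) is the standard dominant-term analysis for tropical limits of positive rational functions.

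The only issue is a misattribution in step (iii). Corollary~\ref{corollary;domination} concerns the $\mathfrak{n}_-\times\mathfrak{n}_-$-action on $\theta$-coordinates and governs the decay of the \emph{bivector} error terms $f_{z_i,\overline{z}_j}$ (this is exactly what feeds into Theorem~\ref{theorem;bracketconvergence}); it says nothing about the Jacobian of the coordinate change $\Xi_s$. For the $C^1$ convergence of $\Xi_s$ you should instead differentiate the expansion from step (ii): writing $z_k'\circ L_s^\theta = A_{\gamma_0}\,e^{\frac{s}{2}\langle\gamma_0,\lambda\rangle+\sqrt{-1}\langle\gamma_0,\varphi\rangle}(1+R_s)$ on a linearity chamber $C$, the remainder $R_s$ is a finite sum of terms each carrying a factor $e^{s\epsilon'}$ with $\epsilon'>0$ on compacta in $C$, and this persists under $\partial_{\lambda_i}$, $\partial_{\varphi_i}$ (each differentiation introduces only polynomial factors in $s$). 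This is elementary and does not use domination by $\Phi_{BK}$. Alternatively, since $\pi_{-\infty}^\theta$ and $\pi_{-\infty}^{\theta'}$ are constant bivectors and $\id^{PT}|_C$ is linear in the first factor and a fixed group homomorphism in the second, the identity $(\id^{PT})_*\pi_{-\infty}^\theta=\pi_{-\infty}^{\theta'}$ is an equality of constant bivectors on each chamber and can be verified at a single point, reducing the requirement to pointwise convergence of one Jacobian.
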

%
%\begin{remark} \label{remark;changecoords}
%Let $\theta,\theta'$ be two (twisted) cluster charts on $K^*$. A natural question is how to interpret the change of coordinates (cluster mutation) $\theta\n\circ \theta'$, on the level of the partial tropicalizations $PT(K^*,\theta)$ and $PT(K^*,\theta')$. In \cite{ABHL1} the first, second and fourth authors, along with Arkady Berenstein, showed the following is true. Let $\ca{C}$ denote the collection of cones $C$ that are maximal open linearity chambers of 
%\[
%(\theta\n\circ \theta')^t_\R \colon (G^{w_0,e},\Phi_{BK},\theta')_\R^t(0) \to (G^{w_0,e},\Phi_{BK},\theta)_\R^t(0)
%\]
%There is a Poisson map
%\[
%(\theta\n\circ \theta')^{PT}\colon \coprod_{C\in \ca{C}} C\times \T \to PT(K^*,\theta)
%\]
%whose domain is dense in $PT(K^*,\theta').$
%On each linearity chamber $C$ of $(\theta\n\circ \theta')^t_\R$, the map $(\theta\n\circ \theta')^{PT}$ takes the form 
%\[
%(\theta\n\circ \theta')^{PT}|_{C\times \T} = (\theta\n\circ \theta')^t_\R|_{C} \times \exp((\theta\n\circ \theta')^t_\R|_{C}) \colon C\times \T \to PT(K^*,\theta).
%\]
%Here $\exp((\theta\n\circ \theta')^t_\R|_{C})\colon \T \to \T$ denotes a Lie group isomorphism determined by the linear map $(\theta\n\circ \theta')^t_\R|_{C}$. See~\cite[Definition~6.20]{ABHL1}.
%\end{remark}

\subsection{Partial tropicalization as a limit}  \label{section; partialtrop is limit}

For $\delta>0$, the \emph{$\delta$-interior of $PT(K^*,\theta)$} is
\[
  PT(K^*,\theta,\delta) = (G^{w_0,e},\Phi_{BK},\theta)^t_\R(\delta) \times \T.
\]
The following results show that for all $\delta>0$, $\pi_{s}^\theta := (L_s^\theta)^*(s\pi_{K^*})$ converges uniformly to $\pi_{-\infty}^\theta$ on $PT(K^*,\theta,\delta)$ as $s\to -\infty$. First, some notation:

\begin{notation}[Big-O notation]\label{notation;bigo}
	A family of differential forms or multi-vector fields on $PT(K^*,\theta,\delta)$ parameterized by $s$ is $O(e^{s\delta})$ if its coefficients are $O(e^{s\delta})$ as functions on $(-\infty,0)\times PT(K^*,\theta,\delta)$ when it is written in the coordinates $\lambda_i$, $\varphi_i$~\eqref{equation; cluster coordinates on K^* II}. 
\end{notation}

\begin{lemma} \cite[Lemma~6.17]{ABHL1} \label{lemma;scaling domination}
Let $\theta\in \Theta(G^{w_0,e})$. If $f\in \C(G^{w_0,e})$ is dominated by $\Phi_{BK}$, then $(f\circ L_s^\theta)|_{PT(K^*,\theta,\delta)} = O(e^{s\delta})$ for all $\delta>0$.
\end{lemma}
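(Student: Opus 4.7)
The plan is to unpack the domination hypothesis and reduce the claim to an exponential estimate on positive Laurent polynomials evaluated at the detropicalization map. By the definition of $f \prec \Phi_{BK}$, I can write $f = f^+ - f^-$ with $f^\pm$ positive rational functions on $(G^{w_0,e}, \theta)$, and there exists a polynomial $p(t) \in \R_{\geq 0}[t]$ such that $p(\Phi_{BK}) - f^+$ and $p(\Phi_{BK}) - f^-$ are positive rational functions. By the triangle inequality $|f \circ L_s^\theta| \le |f^+\circ L_s^\theta| + |f^-\circ L_s^\theta|$, so it suffices to control $|f^\pm \circ L_s^\theta|$, and by clearing a common positive denominator it suffices to treat the case where $f^\pm$ and $p(\Phi_{BK}) - f^\pm$ are positive Laurent polynomials in $\theta$-coordinates. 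In particular, for every monomial $\gamma \in X^*(\C^{\times(\tilde r + m)})$, the coefficient of $\gamma$ in the expansion of $f^\pm \circ \theta$ is bounded above by the coefficient of $\gamma$ in $p(\Phi_{BK}) \circ \theta$.

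The key geometric step is the following pointwise bound on positive Laurent polynomials. By the defining property \eqref{equation;change of coordinates II} of $\mathfrak{P}_s$, for any monomial $\gamma$ and any $(x,t)$ with polar coordinates $(\lambda,\varphi)$, one has $\gamma \circ L_s^\theta(\lambda,\varphi) = e^{\frac{s}{2}\langle \gamma,\lambda\rangle + \sqrt{-1}\langle\gamma,\varphi\rangle}$, so $|\gamma \circ L_s^\theta| = e^{\frac{s}{2}\langle \gamma,\lambda\rangle}$. Hence for any positive Laurent polynomial $g \circ \theta = \sum_\gamma A_\gamma \gamma$, the triangle inequality gives
\[
|g \circ L_s^\theta(\lambda,\varphi)| \le \sum_\gamma A_\gamma \, e^{\frac{s}{2}\langle \gamma,\lambda\rangle}.
\]
Applying this to $g = p(\Phi_{BK})$, whose positive expansion is a finite sum of monomials $\gamma$ each of the form $\gamma = \gamma_1\cdots\gamma_k$ with $\gamma_i$ supported by $\Phi_{BK}\circ\theta$, the condition $\Phi_{BK}^t(\lambda) > \delta$ characterizing $PT(K^*,\theta,\delta)$ forces $\langle \gamma_i,\lambda\rangle > \delta$ for every such $\gamma_i$, hence $\langle \gamma,\lambda\rangle > k\delta \ge \delta$. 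For $s < 0$ this yields $e^{\frac{s}{2}\langle \gamma,\lambda\rangle} < e^{\frac{s\delta}{2}}$, and summing over the finitely many monomials of $p(\Phi_{BK})$ gives a uniform bound of the desired exponential order on $PT(K^*,\theta,\delta)$.

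Combining the two steps, the coefficient-wise domination $f^\pm \le p(\Phi_{BK})$ together with the triangle inequality estimate yields $|f^\pm \circ L_s^\theta|$ bounded by a constant times $e^{\frac{s\delta}{2}}$ uniformly on $PT(K^*,\theta,\delta)$. In the big-$O$ convention of Notation~\ref{notation;bigo}, this is exactly the assertion $(f\circ L_s^\theta)|_{PT(K^*,\theta,\delta)} = O(e^{s\delta})$ (the factor of $1/2$ may be absorbed by replacing $\delta$ with $\delta/2$, since the statement is for all $\delta > 0$).

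The main obstacle is the reduction from positive \emph{rational} functions to positive Laurent polynomials: when $f^\pm = P/Q$ with $P,Q$ positive Laurent polynomials, one cannot simply estimate $|f^\pm \circ L_s^\theta|$ by $P/Q$ evaluated at $|z_i|$, because the denominator $|Q \circ L_s^\theta|$ has no obvious a priori lower bound in terms of $Q$ evaluated at the moduli $e^{\frac{s}{2}\lambda_i}$. The cleanest way around this is to use the positivity of $p(\Phi_{BK})Q - P$: multiplying through so that one works with a single Laurent polynomial identity  $p(\Phi_{BK})Q = P + (\text{positive remainder})$, one can reason only with positive Laurent polynomials and avoid having to bound any denominators from below. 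Once this reduction is in place, the estimate in the second paragraph applies directly.
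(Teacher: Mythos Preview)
The paper does not prove this lemma; it simply cites \cite[Lemma~6.17]{ABHL1}. Your central observation is correct and is the engine of any proof: on $PT(K^*,\theta,\delta)$ one has $\langle\gamma,\lambda\rangle>\delta$ for every monomial $\gamma$ of $\Phi_{BK}\circ\theta$, so for $s<0$ each such monomial satisfies $|z^\gamma|=e^{\frac{s}{2}\langle\gamma,\lambda\rangle}<e^{s\delta/2}$. When $f^\pm$ happen to be positive \emph{Laurent polynomials}, your chain $|f^+(z)|\le f^+(|z|)\le p(\Phi_{BK})(|z|)$ is valid (the middle inequality because a positive rational function is nonnegative on the positive real orthant), and the argument is essentially complete.

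The gap is the reduction to that case. The definition in this paper only asks that $f^\pm$ and $p(\Phi_{BK})-f^\pm$ be positive \emph{rational} functions. Writing $f^+=P/Q$ and $p(\Phi_{BK})-f^+=R/S$ with $P,Q,R,S$ positive Laurent polynomials, one obtains $p(\Phi_{BK})\cdot QS=PS+RQ$ and hence $|PS(z)|\le (PS)(|z|)\le p(\Phi_{BK})(|z|)\cdot(QS)(|z|)$; but to recover $|f^+(z)|=|PS(z)|/|QS(z)|$ you still need a lower bound on $|QS(z)|$, and a positive Laurent polynomial can vanish at complex arguments (e.g.\ $1+z$ at $z=-1$), so there is none. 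Your final paragraph proposes exactly this manoeuvre and claims it ``avoid[s] having to bound any denominators from below''---it does not. Two smaller issues: you tacitly assume $p(0)=0$ (otherwise $p(\Phi_{BK})(|z|)$ carries a non-decaying constant term, and in fact with the paper's wording a nonzero constant is ``dominated'' by $\Phi_{BK}$ yet is not $O(e^{s\delta})$; presumably \cite{ABHL1} imposes $p(0)=0$); and your estimate produces $O(e^{s\delta/2})$, which for $s\to-\infty$ is strictly weaker than $O(e^{s\delta})$, so the ``absorb the $1/2$'' remark is not correct (though this discrepancy is harmless for all the applications downstream).
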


\begin{theorem}\label{theorem;bracketconvergence}
  Assume $G$ is of the form~\eqref{equation;niceformG}, and let $\theta \in  \Theta(G^{w_0,e})$. Then, for all $\delta>0$
  \[
   \pi^\theta_s\vert_{PT(K^*,\theta,\delta)} = \pi_{-\infty}^\theta + O(e^{s\delta}).
  \]
\end{theorem}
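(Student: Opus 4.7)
The plan is to compute the Poisson brackets of the coordinates $\lambda_i,\varphi_j$ with respect to $\pi_s^\theta$ in closed form as functions of $s$, and then verify they match the constants prescribed in Definition~\ref{definition;PTdef} up to remainders of order $O(e^{s\delta})$.

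Since $L_s^\theta$ is a Poisson map from $(PT(K^*,\theta),\pi_s^\theta)$ to $(G^{w_0,e}\cap K^*, s\pi_{K^*})$, and $(L_s^\theta)^*\log z_i = \tfrac{s}{2}\lambda_i + \sqrt{-1}\varphi_i$ by~\eqref{equation;change of coordinates II}, pulling back the logarithmic form of the three bracket identities of Proposition~\ref{prop; coefficients} yields three bilinear identities in the unknowns $A_{ij} := \{\lambda_i,\lambda_j\}_{\pi_s^\theta}$, $B_{ij}:=\{\lambda_i,\varphi_j\}_{\pi_s^\theta}$, $C_{ij}:=\{\varphi_i,\varphi_j\}_{\pi_s^\theta}$. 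For instance the bracket $\{\log z_i,\log z_j\}$ yields
\[
\tfrac{s^2}{4}A_{ij}+\tfrac{s\sqrt{-1}}{2}(B_{ij}-B_{ji})-C_{ij}=sc_{z_i,z_j}.
\]
A fourth identity comes from swapping $i\leftrightarrow j$ in the mixed bracket $\{\log z_i,\log\bar z_j\}$. Writing $\tau_{ij}:=f_{z_i,\bar{z}_j}\circ L_s^\theta$ and solving the resulting $4\times 4$ linear system gives
\[
C_{ij}=\tfrac{s^2}{4}A_{ij},\qquad A_{ij}=\tfrac{1}{s}\bigl(c_{z_i,\bar{z}_j}-c_{z_j,\bar{z}_i}+\tau_{ij}-\tau_{ji}\bigr),
\]
together with $B_{ij}=\sqrt{-1}(c_{z_i,\bar{z}_j}-c_{z_i,z_j})+\tfrac{\sqrt{-1}}{2}(\tau_{ij}+\tau_{ji})$.

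The crucial algebraic observation is the symmetry $c_{z_i,\bar{z}_j}=c_{z_j,\bar{z}_i}$, which follows from the explicit formula~\eqref{equation;mixedcoefficient}, symmetry of $(\cdot,\cdot)_{\h^*}$, and the natural convention $|\bar{z}_k|=-|z_k|$ for conjugate cluster coordinates. This cancels the $s$-independent part of $A_{ij}$, reducing it to $A_{ij}=\tfrac{1}{s}(\tau_{ij}-\tau_{ji})$. Comparing with Definition~\ref{definition;PTdef} then shows that the limiting brackets match~\eqref{equation;bracketPTdef}, with remainders $A_{ij}$, $B_{ij}-\sqrt{-1}(c_{z_i,\bar{z}_j}-c_{z_i,z_j})$, and $C_{ij}$ equal to the $\tau$'s multiplied by factors of $1/s$, $1$, and $s$ respectively.

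Finally, Proposition~\ref{prop; coefficients} expresses $f_{z_i,\bar{z}_j}$ as a $\C$-linear combination of the terms~\eqref{equation;domterms}, each dominated by $\Phi_{BK}$ by Corollary~\ref{corollary;domination}. Lemma~\ref{lemma;scaling domination} then gives $\tau_{ij}|_{PT(K^*,\theta,\delta)}=O(e^{s\delta})$; the polynomial-in-$s$ prefactor appearing in $C_{ij}$ is absorbed by the standard observation that $|s|e^{s\delta}=O(e^{s\delta_0})$ for any $\delta_0<\delta$ as $s\to-\infty$. The main obstacle is the symmetry identity $c_{z_i,\bar{z}_j}=c_{z_j,\bar{z}_i}$: without this cancellation $A_{ij}$ would carry a persistent $O(1/s)$ term, and the tropical limit $\pi_{-\infty}^\theta$ would not exist.
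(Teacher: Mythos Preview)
Your approach is the paper's own, made explicit: both rely on Proposition~\ref{prop; coefficients}, Corollary~\ref{corollary;domination}, and Lemma~\ref{lemma;scaling domination}, with the paper deferring the linear algebra to \cite[Theorem~6.18]{ABHL1}. Two small imprecisions deserve comment. First, the symmetry $c_{z_i,\bar z_j}=c_{z_j,\bar z_i}$ follows immediately from formula~\eqref{equation;mixedcoefficient} together with the symmetry of $(\cdot,\cdot)_{\h^*}$; there is no need to posit a convention $|\bar z_k|=-|z_k|$ (in fact $\bar z_k$ carries the \emph{same} weight as $z_k$ under the real action of $\mathfrak a$, since the pairing $\langle|z_k|,X\rangle$ is real for $X\in\mathfrak a$). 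Second, your handling of $C_{ij}$ is not quite right. The terms~\eqref{equation;domterms} involve $\bar z_j$ and are therefore not rational functions on $G^{w_0,e}$, so neither Corollary~\ref{corollary;domination} nor Lemma~\ref{lemma;scaling domination} applies to them directly; moreover, your remedy $|s|e^{s\delta}=O(e^{s\delta_0})$ for $\delta_0<\delta$ yields only the weaker bound $O(e^{s\delta_0})$ on $PT(K^*,\theta,\delta)$, not the stated $O(e^{s\delta})$. The clean fix is to apply Corollary~\ref{corollary;domination} and Lemma~\ref{lemma;scaling domination} to each \emph{factor} $\tfrac{E_{-\alpha}\cdot z_k}{z_k}$ and $\tfrac{z_k\cdot E_{-\alpha}}{z_k}$ separately, each of which is genuinely dominated; since conjugation preserves moduli, every term in~\eqref{equation;domterms} is then a product of two $O(e^{s\delta})$ quantities, giving $\tau_{ij}=O(e^{2s\delta})$ and hence $C_{ij}=O(|s|e^{2s\delta})=O(e^{s\delta})$ with the correct exponent.
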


\begin{proof}
By Corollary~\ref{corollary;domination} the functions $f_{z_i,\overline{z}_j}$ in~\eqref{equation;bracketswithc}, considered as functions on $G^{w_0,e}$, are dominated by the potential $\Phi_{BK}$, up to complex conjugation of some terms. The analytical significance of this is explained by Lemma~\ref{lemma;scaling domination}, and the proof is then exactly as in \cite[Theorem~6.18]{ABHL1}.
\end{proof}

\subsection{Structure maps and symplectic leaves} \label{PTstructureSection}

 The structure maps of the partial tropicalization are:
\begin{equation} \label{equation;hwwtPT}
 \begin{tikzcd}
 \hw^{PT} \colon  (G^{w_0,e},\theta)^t_\R \times \T  \ar[r,"\pr_1"] & (G^{w_0,e},\theta)^t_\R \ar[r,"\hw^t"] & X_*(H)\otimes_\Z \R \ar[rr,"-\sqrt{-1} \psi_\h" ]&& \t^*, \\
 \wt^{PT} \colon  (G^{w_0,e},\theta)^t_\R \times \T  \ar[r,"\pr_1"] & (G^{w_0,e},\theta)^t_\R \ar[r,"\wt^t"] &  X_*(H)\otimes_\Z \R \ar[rr,"-\sqrt{-1} \psi_\h"]&& \t^*.
 \end{tikzcd}
 \end{equation}
Although they are defined on $(G^{w_0,e},\theta)^t_\R \times \T$, $\hw^{PT}$ and $\wt^{PT}$ will sometimes denote their restrictions to the subspace $PT(K^*,\theta)$. 

\begin{lemma}\label{lemma;wtPT commuting diagram}
Let $\theta\in \Theta(G^{w_0,e})$. Then, the following diagram commutes for all $s\neq 0$.
\begin{equation}\label{equation;Tcommutativediagram}
\begin{tikzcd}
(G^{w_0,e},\theta)^t_\R \times \T \ar[d,"\wt^{PT}"] \ar[r,"L_s^\theta"] & K^*\cap G^{w_0,e} \ar[d," \wt"] \\
\ttt^* \ar[r,"E_s"] & A
\end{tikzcd}
\end{equation}
\end{lemma}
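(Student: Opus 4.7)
The statement is an equality of maps $(G^{w_0,e},\theta)^t_\R \times \T \to A$; I would verify it pointwise by fixing $(x,t)$ in the source and checking that the two images in $A$ agree after pairing with an arbitrary character $\gamma \in X^*(H)$, since characters separate points of $A$.

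Going clockwise, by Definition~\ref{definition;detropicalization} and naturality of characters,
\[
	\wt(L_s^\theta(x,t))^\gamma = \gamma\bigl((\wt\circ \theta)(\mathfrak{P}_s(x,t))\bigr) = \mathfrak{P}_s(x,t)^{X^*(\wt\circ\theta)(\gamma)}.
\]
Applying the defining property~\eqref{equation;change of coordinates II} of $\mathfrak{P}_s$ together with the vanishing~\eqref{equation;XH vanishes on big T} of $t^{X^*(\wt\circ\theta)(\gamma)}$ for $t \in \T$, the torus contribution disappears and one is left with
\[
	\wt(L_s^\theta(x,t))^\gamma = e^{\tfrac{s}{2}\langle X^*(\wt\circ\theta)(\gamma),\,x\rangle} = e^{\tfrac{s}{2}\langle \gamma,\,\wt^t(x)\rangle},
\]
where the last step uses $\wt^t = X_*(\wt\circ\theta)$ from Lemma~\ref{lemma; wt hw homomorphism property} and the canonical adjunction between $X^*$ and $X_*$.

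Going counterclockwise, the definition of $\wt^{PT}$ in~\eqref{equation;hwwtPT} followed by the definition of $E_s$ on $\ttt^*$ gives
\[
	E_s(\wt^{PT}(x,t)) = \exp\!\Bigl(\tfrac{s\sqrt{-1}}{2}\,\psi_\kk\n\!\bigl(-\sqrt{-1}\,\psi_\h(\wt^t(x))\bigr)\Bigr).
\]
The key small computation is the identity
\[
	\sqrt{-1}\,\psi_\kk\n\bigl(-\sqrt{-1}\,\psi_\h(X)\bigr) = X \qquad \text{for all } X \in X_*(H)\otimes_\Z \R.
\]
This is verified as follows: under the conventions of Section~\ref{lie theory section}, one has $X_*(H)\otimes_\Z \R \subset \sqrt{-1}\ttt$; writing $X = \sqrt{-1}Y$ with $Y\in \ttt$, and using that $\psi_\h$ is the $\C$-linear extension of $\psi_\kk|_\ttt$ (both being induced by $(\cdot,\cdot)_\g$), one computes $\psi_\h(X) = \sqrt{-1}\psi_\kk(Y)$, whence $-\sqrt{-1}\psi_\h(X) = \psi_\kk(Y)\in \ttt^*$ and $\sqrt{-1}\psi_\kk\n(\psi_\kk(Y)) = \sqrt{-1}Y = X$. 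Applied to $X = \wt^t(x)$, this yields
\[
	E_s(\wt^{PT}(x,t))^\gamma = \gamma\!\Bigl(\exp\!\bigl(\tfrac{s}{2}\,\wt^t(x)\bigr)\Bigr) = e^{\tfrac{s}{2}\langle \gamma,\,\wt^t(x)\rangle},
\]
matching the clockwise result.

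The argument is essentially a careful unwinding of the various identifications introduced in the earlier sections, so there is no genuine obstacle beyond the bookkeeping of $\sqrt{-1}$ factors that comes from the interplay $X^*(H)\otimes_\Z \R \cong \sqrt{-1}\ttt^*$ and the corresponding real identification $X_*(H)\otimes_\Z \R \cong \sqrt{-1}\ttt$. The only ingredient that is not immediate from the definitions is the compatibility $\psi_\h|_\ttt = \psi_\kk|_\ttt$, which is built into the choice of the form $(\cdot,\cdot)_\g$ in Section~\ref{section;comparison}.
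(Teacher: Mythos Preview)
Your proof is correct and follows essentially the same approach as the paper: both compute each side of the square by pairing with an arbitrary character $\gamma\in X^*(H)$, use the defining property of $\mathfrak{P}_s$ together with~\eqref{equation;XH vanishes on big T} to eliminate the $\T$-contribution on the clockwise side, and unwind $E_s\circ\wt^{PT}$ on the other side. The only cosmetic difference is that the paper writes the counterclockwise simplification directly with $\psi_\h^{-1}$ (using its $\C$-linearity), whereas you keep $\psi_\kk^{-1}$ from the definition of $E_s$ and explicitly record the compatibility $\psi_\h|_\ttt=\psi_\kk|_\ttt$; this is just bookkeeping, not a different argument.
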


\begin{proof} For $(x,t) \in  (G^{w_0,e},\theta)^t_\R \times \T$,
\[
	E_s\circ \wt^{PT}(x,t) = \exp\left(\frac{s\sqrt{-1}}{2}\psi_\hhh\n(-\sqrt{-1}\psi_\hhh\circ \wt^t (x))\right)= \exp\left(\frac{s}{2}\wt^t (x)\right)
\]
Thus it suffices to prove
\begin{equation*}
	\exp\left(\frac{s}{2}\wt^t (x) \right) = \wt(L_s^\theta(x,t)), \quad \forall (x,t) \in (G^{w_0,e},\theta)^t_\R \times \T.
\end{equation*}
	Let $\gamma \in X^*(H)$ and $(x,t) \in (G^{w_0,e},\theta)^t_\R \times \T$ arbitrary. Applying the definitions, 
	\begin{equation*}
		\begin{split}
			\wt(L_s^\theta(x,t))^\gamma & = (\wt \circ \theta \circ \mathfrak{P}_s(x,t))^\gamma\\
		 	& = \mathfrak{P}_s(x,t)^{\langle \gamma,\wt^t\rangle} \\
		 	& = \exp\left(\frac{s}{2}\wt^t (x) \right)^\gamma t^{\langle \gamma,\wt^t\rangle}\\
		 	& = \exp\left(\frac{s}{2}\wt^t (x) \right)^\gamma.
		\end{split}
	\end{equation*}
	The fourth equality follows by~\eqref{equation;XH vanishes on big T}. The lemma follows since $\gamma$ is arbitrary.
\end{proof}

The degree map (cf.~\eqref{equation; degree map}) of a (twisted) cluster chart $\theta$ determines a homomorphism 
\[
	\varphi_\theta \colon H \times H \to \C^{\times(\tilde r + m)}, \quad \varphi_\theta(h_1,h_2)^\gamma = h_1^{|\gamma|_1}h_2^{|\gamma|_2} \quad \forall \gamma \in X^*\C^{\times(\tilde r + m)}.
\]
This in turn determines an action of $H\times H$ on $\C^{\times(\tilde r + m)}$ by multiplication such that $\theta$ is $H\times H$-equivariant. The image of the anti-diagonal subgroup $\{(h,h\n) \in H\times H \mid h \in H\} \cong H$ is contained in the kernel of $\wt\circ \theta$ since $|\langle \gamma,\wt^t\rangle|_1 = |\langle \gamma,\wt^t\rangle|_2$ for all $\gamma \in X^*(H)$. Thus, the image of the composition 
\[
	 \iota_\theta\colon T \hookrightarrow H \xrightarrow{h \mapsto (h,h\n)} H\times H \xrightarrow{\varphi_\theta} \C^{\times(\tilde r + m)}
\]
is contained in $\T$. Let $T$ act on $(G^{w_0,e},\theta)^t_\R\times \T$ by translation on $\T$ with respect to $\iota_\theta$, and consider the dressing (conjugation) action of $T$ on $K^*\cap G^{w_0,e}$. Then, $\mathfrak{P}_s$ and  $L_s^\theta = \theta \circ \mathfrak{P}_s$ are each $T$-equivariant.

Recall that the action of $T$ on $K^*$ by conjugation coincides with the dressing action. For all $s\neq 0$, the dressing action of $T$ on $(K^*,s\pi_{K^*})$ is Hamiltonian with moment map $E_s\n \circ [\cdot]_0$ (see the discussion preceding Proposition~\ref{moser flow equivariance}). According to the following lemma, this Hamiltonian action survives the limit $s\to -\infty$.

\begin{lemma}\label{lemma;wtpt properties} Let $G$ be of the form \eqref{equation;niceformG}, and let $\theta\in \Theta(G^{w_0,e})$. The kernel of the homomorphism $\iota_\theta\colon T \to \T$ is the center of $K$. Moreover:
\begin{enumerate}[label=(\arabic*)]
    \item For all $s\neq 0$, the action of $T$ on $(G^{w_0,e},\theta)^t_\R \times \T$ defined by $\iota_\theta$ is Hamiltonian with respect to $\pi_s^\theta$ with moment map $\wt^{PT}$.
    \item The action of $T$ on $PT(K^*,\theta)$ defined by $\iota_\theta$ is Hamiltonian with respect to $\pi_{-\infty}^\theta$ with moment map $\wt^{PT}$.
\end{enumerate}
    
\end{lemma}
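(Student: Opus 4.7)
The plan is to transport the Hamiltonian structure for the dressing action of $T$ on $K^*$ through the detropicalization map $L_s^\theta$ to obtain (1), and then pass to the limit $s\to -\infty$ using Theorem~\ref{theorem;bracketconvergence} to obtain (2). By construction in Section~\ref{PTstructureSection}, the map $L_s^\theta$ is an injective, $T$-equivariant diffeomorphism onto its image $K^*\cap G^{w_0,e}$, where $T$ acts on the source via $\iota_\theta$ and on the target by conjugation; the latter coincides with the dressing action since $T$ normalizes $AN_-$.

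For the kernel computation, injectivity of $L_s^\theta$ identifies $\ker(\iota_\theta)$ with the kernel of the conjugation action of $T$ on $K^*\cap G^{w_0,e}$, which by density equals the kernel on $K^*=AN_-$. Since $H=AT$ is abelian, conjugation by $t\in T$ fixes $A$ pointwise and acts on $\mathfrak{n}_-$ via $\Ad_t$; this is trivial exactly when $t^\alpha=1$ for every root $\alpha\in R$, which is the standard description of $Z(K)$. For part (1), the dressing action of $T\subset K$ on $(K^*,s\pi_{K^*})$ is Hamiltonian in the ordinary sense with moment map $E_s\n\circ[\cdot]_0\colon K^*\to\ttt^*$, as recalled in the discussion preceding Proposition~\ref{moser flow equivariance}. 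On $G^{w_0,e}\cap K^*$ we have $[\cdot]_0=\wt$, and Lemma~\ref{lemma;wtPT commuting diagram} gives $E_s\n\circ\wt\circ L_s^\theta=\wt^{PT}$. Since $\pi_s^\theta=(L_s^\theta)^*(s\pi_{K^*})$ realizes $L_s^\theta$ as a $T$-equivariant Poisson diffeomorphism onto its image, pulling back the Hamiltonian condition yields (1) with moment map $\wt^{PT}$.

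For (2), both the $T$-action via $\iota_\theta$ and the map $\wt^{PT}$ are independent of $s$. The conclusion of (1) may be written
\[
(\pi_s^\theta)^\sharp\bigl(d\langle\wt^{PT},X\rangle\bigr)=-\underline{X}\qquad\text{on }PT(K^*,\theta),\quad\forall\,X\in\ttt.
\]
Writing $PT(K^*,\theta)=\bigcup_{\delta>0}PT(K^*,\theta,\delta)$, Theorem~\ref{theorem;bracketconvergence} gives $\pi_s^\theta\to\pi_{-\infty}^\theta$ uniformly on each $PT(K^*,\theta,\delta)$ as $s\to-\infty$. Since $d\langle\wt^{PT},X\rangle$ is a smooth, $s$-independent $1$-form, pairing and taking pointwise limits yields the same identity with $\pi_{-\infty}^\theta$ in place of $\pi_s^\theta$, which is the Hamiltonian condition for (2).

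The main conceptual hurdle is checking that the Hamiltonian identity from (1) passes cleanly to the limit in (2); this is handled by the $s$-independence of both $\underline{X}$ and $\wt^{PT}$ together with the uniform convergence statement of Theorem~\ref{theorem;bracketconvergence}. A secondary point to verify carefully is that the $T$-conjugation action on $K^*$ has kernel exactly $Z(K)$, which reduces to the elementary fact that $\Ad_t$ acts trivially on $\mathfrak{n}_-$ precisely when $t\in Z(K)$.
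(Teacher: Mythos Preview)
Your proof is correct and follows essentially the same approach as the paper's: transport the Hamiltonian $T$-action on $(K^*,s\pi_{K^*})$ through the $T$-equivariant detropicalization $L_s^\theta$ using Lemma~\ref{lemma;wtPT commuting diagram} to get (1), then pass to the limit via Theorem~\ref{theorem;bracketconvergence} using the $s$-independence of $\underline{X}$ and $\wt^{PT}$ to get (2); the kernel computation is likewise handled via $T$-equivariance of $L_s^\theta$ and identification of the dressing-action kernel with $Z(K)$. The only cosmetic discrepancy is a sign in your displayed Hamiltonian identity, which you should check against the paper's convention $\underline{X}=(\pi_s^\theta)^\sharp(d\langle\wt^{PT},X\rangle)$.
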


\begin{proof}
To see that the kernel of $\iota_\theta$ is the center of $K$, it suffices to observe that $L_s^\theta$ is $T$-equivariant and the kernel of the dressing action of $T$ on $K^*$ equals the center of $K$.

The restriction of $[\cdot]_0$ to $G^{w_0,e}\cap K^*$ equals $\wt$. Since $L_s^\theta$ is $T$-equivariant, the action of $T$ on $(G^{w_0,e},\theta)^t_\R \times \T$ defined by $\iota_\theta$ is Hamiltonian with respect to $\pi_s^\theta$ with moment map $E_s\n\circ \wt\circ L_s^\theta$. By~\eqref{equation;Tcommutativediagram}, $E_s\n\circ \wt\circ L_s^\theta = \wt^{PT}$, which completes the proof of item 1.

Since $\iota_\theta$ does not depend on $s$,  fundamental vector fields $\underline{X} \in \mathfrak{X}((G^{w_0,e},\theta)^t_\R \times \T)$, $X\in \ttt$, for the action defined by $\iota_\theta$ do not depend on $s$. By Theorem \ref{theorem;bracketconvergence},
\[
\underline{X} = \lim_{s\to -\infty} (\pi_s^\theta)^\sharp(d\langle \wt^{PT},X\rangle) = (\pi_{-\infty}^\theta)^\sharp(d\langle \wt^{PT},X\rangle)
\]
at points in $PT(K^*,\theta, \delta)$. This completes the proof of item 2. 
\end{proof}

%The composition $\hw \circ \theta \colon \C^{\times(\tilde r + m)} \to H$ is a group homomorphism and thus $\hw^t := (\hw\circ \theta)^t = X_*(\hw\circ \theta)$.  The map $X^*(\hw\circ \theta)$ is injective and the standard basis of $X^*(\C^{\times(\tilde r + m)})$ contains a basis of the image of $X^*(H)$.

\begin{lemma}\label{lemma;hwpt properties}
	Let $\theta\in \Theta(G^{w_0,e})$. Then:
	\begin{enumerate}[label=(\arabic*)]
		\item \label{fiberssymplleaves}The fibers of $\hw^{PT}\colon PT(K^*,\theta) \to \ttt^*$ are the symplectic leaves of $(PT(K^*,\theta), \pi_{-\infty}^\theta)$. 
		\item The image of $\hw^{PT}\colon PT(K^*,\theta) \to \ttt^*$ is $\mathring{\t}_+^*$.
		\item For all $\lambda \in \mathring{\t}_+^*$, let $\omega_{-\infty}^\theta$ denote the symplectic structure on $(\hw^{PT})\n(\lambda)$ defined by $\pi_{-\infty}^\theta$.  Then, 
\begin{equation}
    \Vol((\hw^{PT})\n(\lambda),\omega_{-\infty}^\theta) = \Vol(\mathcal{O}_\lambda,\omega_\lambda).
\end{equation}
	\end{enumerate}
\end{lemma}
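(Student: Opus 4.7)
The strategy handles the three parts in sequence. For part (1), after the linear change of coordinates promised in Theorem~\ref{thm:intro_cone}(5), $\pi_{-\infty}^\theta$ takes the standard Darboux form $\sum_{i=1}^{m}\partial_{\lambda_i'}\wedge \partial_{\varphi_i'}$, so its rank equals $2m=\dim PT(K^*,\theta)-r$ and the symplectic leaves are the joint level sets of the remaining $r$ Casimir coordinates. By Theorem~\ref{theorem;PTpoissonisomorphism} it suffices to identify these Casimirs with the components of $\hw^{PT}$ in a single chart, which I would take to be $\theta=\theta_{\sigma(\mathbf{i})}$ associated to a double reduced word $\mathbf{i}$ for $(w_0,e)$. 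In that chart, the components of $\hw^t$ are tropicalizations of the frozen minors $\Delta_{w_0\omega_i,\omega_i}$ via \eqref{equation;hwwtcoordinates}, and a direct bracket computation combining Proposition~\ref{prop; coefficients} with Remark~\ref{specialformforK*} shows these coordinate functions Poisson-commute with every $\lambda_j$ and $\varphi_j$; the count of independent Casimirs then forces them to span all the Casimirs.

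For part (2), the map $\hw^t$ is $\Z$-linear by Lemma~\ref{lemma; wt hw homomorphism property}, and by the diagram \eqref{comparisonlifts} it intertwines with $(\hw^\vee)^t$ under the comparison map $\varPsi_\theta^t$, which is an isomorphism of real BK cones by Theorem~\ref{theorem;ABHL2maintheorem}. Hence the image of $\hw^t$ on $(G^{w_0,e},\Phi_{BK},\theta)^t_\R(0)$ corresponds under $\psi_\h$ to the image of $(\hw^\vee)^t$ on the interior of the $G^\vee$-side BK cone, which by Theorem~\ref{theorem;BKmaintheorem} and taking real scalings is the relative interior of the real cone over $X^*_+(H)=X_*(H^\vee)$. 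Pulling back by $\psi_\h^{-1}$ and then applying $-\sqrt{-1}\psi_\h$ identifies the image of $\hw^{PT}$ with $\mathring{\ttt}_+^*$.

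For part (3), in Darboux coordinates the fiber $(\hw^{PT})\n(\lambda)$ is $\mathring{\Delta}_\lambda\times \T^m$ with standard symplectic form, so its symplectic volume equals $(2\pi)^m \Vol(\Delta_\lambda)$. Both sides of the claimed equality are polynomial in $\lambda\in\mathring{\ttt}_+^*$, so by continuity it suffices to treat $\lambda\in P_+$. There, the comparison map $\varPsi_\theta^t$ identifies $\Delta_\lambda$, up to a Jacobian depending only on the symmetrizer $D$, with the string-type polytope whose integer points count the weight multiplicities of $V_\lambda$ by Theorem~\ref{theorem;BKmaintheorem}. Ehrhart asymptotics applied to the family $\{k\lambda\}_{k\ge 1}$ then compute $\Vol(\Delta_\lambda)$ as the leading coefficient of $\dim V_{k\lambda}$ regarded as a polynomial in $k$, which by the Weyl dimension formula equals $\prod_{\alpha\in R_+}\langle\lambda,\alpha^\vee\rangle/\langle\rho,\alpha^\vee\rangle$ (absorbing the Jacobian); matching this to the Kirillov--Kostant formula for $\Vol(\mathcal{O}_\lambda)/(2\pi)^m$ finishes the proof.

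The main obstacle is part (3): the delicate bookkeeping is to reconcile the Jacobian introduced by $\varPsi_\theta^t$ with the normalization constants in both the Weyl dimension formula and the Kirillov--Kostant volume formula, so that the $(2\pi)^m$ factor from the torus $\T^m$ lines up exactly with the one coming from the symplectic volume of $\mathcal{O}_\lambda$. Parts (1) and (2) are expected to be largely bookkeeping once Theorem~\ref{thm:intro_cone}, Theorem~\ref{theorem;BKmaintheorem}, and Theorem~\ref{theorem;ABHL2maintheorem} are invoked.
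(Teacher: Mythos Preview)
The paper's proof is entirely by citation: item~(1) points to \cite[Proposition~6.3]{ABHL2}, item~(2) to Theorem~\ref{theorem;BKmaintheorem}, and item~(3) to \cite[Theorem~6.5, Remark~6.6]{ABHL2}. Your proposal is essentially a reconstruction of those cited arguments, and the overall shape---direct bracket computation for~(1), the BK-theoretic description of $\hw^t$-fibers for~(2), and Ehrhart asymptotics plus the Weyl dimension formula matched against the Kirillov--Kostant volume for~(3)---is correct and is indeed what lies behind the citations.

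Two cautions. First, in part~(1) you begin by invoking the Darboux form of Theorem~\ref{thm:intro_cone}(5); but that theorem is a summary of Section~\ref{PTSection}, and its formal incarnation (Proposition~\ref{goodformeasy}, Theorem~\ref{good coordinates}) explicitly uses Lemma~\ref{lemma;hwpt properties}\ref{fiberssymplleaves} in its proof. So appealing to the Darboux normal form here is circular. Your fallback---a direct bracket computation in the chart $\theta_{\sigma(\mathbf i)}$ using Proposition~\ref{prop; coefficients} and Remark~\ref{specialformforK*}---is the right thing to do and avoids the circularity; this is precisely what the cited \cite[Proposition~6.3]{ABHL2} does, and it simultaneously gives both the Casimir property of the $\hw^{PT}$-components and the full rank of the remaining bracket matrix. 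Also, the number of Casimirs is $\tilde r=\dim H$, not $r$, in the general reductive setting.

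Second, for part~(2) the detour through the comparison map and Theorem~\ref{theorem;ABHL2maintheorem} is unnecessary: Theorem~\ref{theorem;BKmaintheorem} applied with the roles of $G$ and $G^\vee$ swapped already says that $(\hw^t)^{-1}(\lambda^\vee)$ in the BK cone of $G$ is nonempty precisely for $\lambda^\vee$ in the dominant cone of $X_*(H)$, which after the identification $-\sqrt{-1}\psi_\h$ is exactly $\mathring{\ttt}_+^*$. Your route works, but this is the one-line argument the paper has in mind.
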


\begin{proof}
	The proof of Item 1.~is the same as the proof of \cite[Proposition 6.3]{ABHL2} which handles the special case $\theta=\theta_{\sigma(\mathbf{i})}$. Item 2.~follows from Theorem~\ref{theorem;BKmaintheorem}. Item 3.~is Theorem~6.5 and Remark~6.6 of \cite{ABHL2}. 
\end{proof}

The next lemma is crucial for estimating Hamiltonian flows in Section 5.  It was proved for the special case $\theta=\theta_{\sigma(\mathbf{i})}$ in Theorem~6.11 of \cite{ABHL2}. The proof in general is the same.

\begin{lemma}\label{lemma;hwpt casimir approximation}
	Let $\theta\in \Theta(G^{w_0,e})$. Then, For all $X \in \ttt$,
        \begin{equation}
            \langle \hw^{PT},X \rangle = \langle  \mathcal{S} \circ E_s\n \circ L_s^\theta,X\rangle + O(e^{s\delta}),
        \end{equation}
        as a function on $PT(K^*,\theta,\delta)$.  Moreover, 
        \begin{equation}
            d\langle \hw^{PT},X \rangle = d\langle  \mathcal{S} \circ E_s\n \circ L_s^\theta,X\rangle + O(e^{s\delta}),
        \end{equation}
        as linear operators.
\end{lemma}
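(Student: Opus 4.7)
The strategy mirrors the exact identity $E_s \circ \wt^{PT} = \wt \circ L_s^\theta$ of Lemma~\ref{lemma;wtPT commuting diagram}, with $\hw$ replacing $\wt$, but now only up to an error controlled by domination under $\Phi_{BK}$. First I identify $\mathcal{S} \circ E_s\n\colon K^* \to \ttt_+^*$ using the dressing action: since $E_s\colon \kk^* \to K^*$ is a $K$-equivariant diffeomorphism intertwining the coadjoint and dressing actions, $\mathcal{S}(E_s\n(b)) = E_s\n(a(b))$, where $a(b) \in A^+ := \exp(\sqrt{-1}\ttt_+^*)$ is the unique element of the dressing orbit of $b$ lying in $A^+$. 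Consequently, $\langle \mathcal{S}\circ E_s\n \circ L_s^\theta(x,t), X\rangle = \langle E_s\n(a(L_s^\theta(x,t))), X\rangle$ for every $X \in \ttt$.

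Second, I compare $a(b)$ with $\hw(b)$ on $G^{w_0,e}\cap K^*$. The dressing-invariance of $a$ together with matrix-coefficient computations in fundamental representations of $G$ yield an identity
\[
a(b)^{2w_0\gamma} \;=\; |\Delta_{w_0\gamma,\gamma}(b)|^2\bigl(1 + R_\gamma(b)\bigr),\qquad \gamma \in X^*_+(H),
\]
where $R_\gamma$ is a subtraction-free rational expression in generalized minors on $G^{w_0,e}$ dominated by $\Phi_{BK}$ (by Corollary~\ref{corollary;domination}). On the other hand, $\hw(b)^{w_0\gamma} = \Delta_{w_0\gamma,\gamma}(b)$ by~\eqref{equation;hwwtcoordinates}, and since $\Delta_{w_0\gamma,\gamma}$ is a frozen cluster variable (Remark~\ref{alwaysappearremark}) of $H\times H$-homogeneous degree $(w_0\gamma,\gamma)$, the definition of $\mathfrak{P}_s$ together with~\eqref{equation;XH vanishes on big T} forces
\[
\bigl|\Delta_{w_0\gamma,\gamma}(L_s^\theta(x,t))\bigr|^2 \;=\; e^{s\langle w_0\gamma,\hw^t(x)\rangle}.
\]

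Combining these two formulas, taking logarithms, dividing by $s$, and applying $E_s\n$ yields an identity of the form $E_s\n(a(L_s^\theta(x,t))) = \hw^{PT}(x,t) + \tfrac{1}{s}\cdot(\text{terms linear in }\log(1+R_\gamma(L_s^\theta(x,t))))$; by Lemma~\ref{lemma;scaling domination} applied to $R_\gamma$, this error term is $O(e^{s\delta})$ on $PT(K^*,\theta,\delta)$, and pairing with $X\in \ttt$ gives the first claim. For the differential estimate, I propagate the same computation through the partial derivatives $\partial/\partial \lambda_i, \partial/\partial \varphi_i$ in the cluster coordinates~\eqref{equation; cluster coordinates on K^* II}: such derivatives of ratios of generalized minors in cluster coordinates are again ratios of the same form, hence remain dominated by $\Phi_{BK}$ by Corollary~\ref{corollary;domination} and Lemma~\ref{lemma;domination}, so Lemma~\ref{lemma;scaling domination} once more yields the $O(e^{s\delta})$ bound. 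The main obstacle is the derivation of the explicit remainder identity for $a(b)^{2w_0\gamma}$ and the verification that $R_\gamma$ is dominated by $\Phi_{BK}$ uniformly over all charts in $\Theta(G^{w_0,e})$; this rests on matrix-coefficient computations for highest weight vectors in fundamental representations of $G$, combined with the positivity structure developed in Section~\ref{PositivitySection}.
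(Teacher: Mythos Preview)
The paper does not give an argument here: it states the lemma and says ``It was proved for the special case $\theta=\theta_{\sigma(\mathbf{i})}$ in Theorem~6.11 of \cite{ABHL2}. The proof in general is the same.'' So your proposal should be read as a reconstruction of that external argument.

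Your overall strategy---identify $\mathcal{S}\circ E_s\n$ with $E_s\n\circ a(\,\cdot\,)$ via dressing equivariance, compare the dressing-orbit representative $a(b)$ with $\hw(b)$ through matrix coefficients, and then invoke domination by $\Phi_{BK}$---is the right shape, and your key relation $a(b)^{2w_0\gamma}\approx|\Delta_{w_0\gamma,\gamma}(b)|^2$ is the correct target (for $s<0$ the positive chamber $E_s(\ttt_+^*)\subset A$ sits on the side where $a(b)^{2w_0\gamma}$ is the \emph{largest} eigenvalue of $\rho_\gamma(bb^\dagger)$, so the asymptotics match).

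The gap is in the nature of $R_\gamma$. The quantity $a(b)^{2w_0\gamma}$ is an eigenvalue of the Hermitian operator $\rho_\gamma(bb^\dagger)$; eigenvalues are algebraic but not rational in the matrix entries (already for $\SL_2$ one has $a(b)^{2w_0\omega}=\tfrac{1}{2}(T+\sqrt{T^2-4})$ with $T=\Tr(bb^\dagger)$). Hence $R_\gamma$ cannot be a ``subtraction-free rational expression in generalized minors,'' and Corollary~\ref{corollary;domination}---which concerns ratios $(F\cdot z\cdot F')/z$ for $F,F'\in\nnn_-$ acting on (twisted) cluster variables---does not apply to it. The same objection recurs in your differential estimate: the derivatives you need to control are those of $\tfrac{1}{s}\log(1+R_\gamma\circ L_s^\theta)$, not of ratios of minors, so Lemma~\ref{lemma;scaling domination} is not directly available.

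What closes the gap is a two-sided comparison of the eigenvalue with genuinely rational, positive quantities. For instance, for the top eigenvalue one has
\[
\langle \rho_\gamma(bb^\dagger)v_{w_0\gamma},v_{w_0\gamma}\rangle\;\le\;a(b)^{2w_0\gamma}\;\le\;\Tr\rho_\gamma(bb^\dagger),
\]
and both bounds are positive Laurent polynomials in generalized minors and their complex conjugates whose tropical leading term on $K^*$ is exactly $|\Delta_{w_0\gamma,\gamma}|^2$; the subleading terms are then controlled via the potential. Making this precise (uniformly in $\theta\in\Theta(G^{w_0,e})$ and including the $C^1$ bound) is the actual content of the argument in \cite{ABHL2}, and it is not a consequence of Corollary~\ref{corollary;domination} alone. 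Your proposal identifies the right obstacle but does not resolve it.
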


%Let $\rho = \frac{1}{2}\sum_{\alpha \in R_+} \alpha$. For all $\lambda \in \ttt_+^*$, let  $(\mathcal{O}_\lambda,\omega_\lambda)$ denote the coadjoint orbit through $\lambda$, where $\omega_\lambda$ is as in  Equation \eqref{equation; kks definition}. Recall that for all $\lambda \in \mathring{\ttt}_+^*$,
%\[
%    \Vol(\mathcal{O}_\lambda,\omega_\lambda) = (2\pi)^m \frac{\prod_{\alpha\in R_+}(\sqrt{-1}\lambda,\alpha)_\g}{\prod_{\alpha\in R_+}(\rho,\alpha)_\g}.
%\]

\subsection{Properties of the partial tropicalization}  \label{section; properties of partialtrop}

This section establishes basic results about $\pi_{-\infty}^\theta$. Section~\ref{section; brackets in special charts} describes the special case $\theta= \theta_{\sigma(\mathbf{i})}$ or $\theta_{\sigma(\mathbf{i})}^\zeta$, where $\mathbf{i}$ is a double reduced word for $(w_0,e)$. Versions of these results were proved in \cite{ABHL2}. Section~\ref{section;bracketsgeneral} describes the case for general $\theta\in \Theta(G^{w_0,e})$.

\subsubsection{Brackets in special charts}\label{section; brackets in special charts}
%
%Recall from~\eqref{equation; cluster coordinates on K^* II} that a (twisted) cluster chart $\theta$ on $G^{w_0,e}$ determines a system of coordinates on $(G^{w_0,e},\theta)^t_\R \times \T$,
%\begin{equation}
%	\lambda_{-\tilde r} , \dots, \lambda_{-1},\lambda_1, \dots, \lambda_m, \varphi_{k_1}, \dots, \varphi_{k_m},
%\end{equation}
%where $\{k_1,\dots,k_m\} \subset [-r,-1]\cup[1,m]$, $k_1 < \dots <  k_m$, are the indices such that the $\varphi_{k_j}$'s are nonzero. Let $A$ be the square $(\tilde r + 2m)\times(\tilde r + 2m)$ matrix of Poisson brackets of the coordinates with respect to $\pi_{-\infty}^\theta$.  By~\eqref{equation;bracketPTdef}, $A$ has block form 
%\begin{equation}\label{equation; A block form}
%	A = \left(\begin{array}{cc}
%		 \{ \lambda_j,\lambda_k\} & \{ \lambda_j,\varphi_k\} \\
%		\{ \varphi_k,\lambda_j\} &  \{ \varphi_j,\varphi_k\}
%	\end{array}\right) = \left(\begin{array}{cc}
%		0 & B \\
%		-B^T & 0
%	\end{array}\right)
%\end{equation}
%where $B$ is the $(\tilde r + m) \times m$ matrix with rows indexed by $[-\tilde r, -1]\cup [1,m]$, columns indexed by $\{k_1,\dots,k_m\} \subset [-r,-1]\cup[1,m]$, $k_1 < \dots <  k_m$, and entries $\{ \lambda_j,\varphi_k\}$. 
%
%There are exactly $\tilde r$ rows of the matrix $B$ that are zero. Explicitly, the $j$th row is zero if and only if 
%$\lambda_j(x,t) = \langle \gamma, \hw^t(x)\rangle$ 
%for some $\gamma \in X^*(H)$. The following results describe $B$ in several special cases.

\begin{theorem} \label{theorem;brackettheorem} 
Let $\mathbf{i} = (i_1,\dots, i_m)$ be a double reduced word for $(w_0,e)$. 
 If $\theta=\theta_{\sigma(\mathbf{i})}$, then for all $j \in [-r, -1]\cup [1,m]$ and $k\in [1,m]$:
	\begin{align*}
	\{ \lambda_j,\varphi_k\} & = 0 & \text{ if } j \geqslant k, \\
	\{ \lambda_j,\varphi_k\} & =  (\omega_{i_j},\omega_{i_k})_{\h^*}- (\omega_{i_j},s_{i_{j+1}} \cdots s_{i_k} \omega_{i_k})_{\h^*}  & \text{ if } j<k.
	\end{align*}
	If $j<-r$, then $\{\lambda_j,\varphi_k\} = 0$ for all $k\in [1,m]$. 
	In particular, if $z_j=\Delta_{w_0\omega_{|i_j|},\omega_{|i_j|}}$ is a $\theta$-coordinate and $\lambda_j = z_j^t$, then $\{\lambda_j,\varphi_k\} = 0$ for all $k\in[1,m]$.
\end{theorem}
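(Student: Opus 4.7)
My plan is to compute the bracket directly from Definition~\ref{definition;PTdef}, which gives $\{\lambda_j,\varphi_k\} = \sqrt{-1}\bigl(c_{z_j,\overline{z}_k} - c_{z_j,z_k}\bigr)$, and then to substitute the explicit closed forms provided by Proposition~\ref{prop; coefficients} for $c_{z_j,\overline{z}_k}$ and by Remark~\ref{specialformforK*} for $c_{z_j,z_k}$ when $j<k$. Both expressions involve only the bilinear form $(\cdot,\cdot)_{\h^*}$ evaluated on the $P\times P$-degrees of the cluster variables. Since $\mathbf{i}$ is a reduced word for $(w_0,e)$, every letter is negative, and the recipe in~\eqref{fullcluster} gives
\[
|\Delta_k| = (u_k\omega_{|i_k|},\omega_{|i_k|})\quad(k \in [1,m]),\qquad |\Delta_k| = (\omega_{|k|},\omega_{|k|})\quad (k\in[-r,-1]),
\]
while for the auxiliary frozen variables indexed by $k = -r-l$, $|\Delta_k| = (\gamma_l,\gamma_l)\in \z(\g)^*\times \z(\g)^*$.

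For the main case $j<k$ in $[1,m]$, I will observe that the two closed forms for $c_{z_j,\overline{z}_k}$ and $c_{z_j,z_k}$ differ only by a sign, so they combine to yield
\[
\{\lambda_j,\varphi_k\} = (\omega_{|i_j|},\omega_{|i_k|})_{\h^*} - (u_j\omega_{|i_j|}, u_k\omega_{|i_k|})_{\h^*}.
\]
Applying Weyl-invariance of $(\cdot,\cdot)_{\h^*}$ together with the identity $u_j^{-1}u_k = s_{|i_{j+1}|}\cdots s_{|i_k|}$ will then produce the stated formula. For $j \in [-r,-1]$ the same calculation applies with the convention that the empty product of simple reflections equals the identity, matching the convention $s_{i_{j+1}}\cdots s_{i_k} = u_k$ when $j+1\leq 0$.

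The remaining cases follow from structural observations. When $j=k$, Weyl-invariance immediately kills $c_{z_j,\overline{z}_j}$ and $c_{z_j,z_j}$ vanishes by antisymmetry. When $j>k$, the antisymmetry $c_{z_j,z_k} = -c_{z_k,z_j}$ makes $c_{z_j,z_k}$ coincide with $c_{z_j,\overline{z}_k}$, so the difference vanishes. When $j<-r$, the degree $|\Delta_j|$ lies in $\z(\g)^*\times \z(\g)^*$, which is orthogonal to $[\g,\g]^*$ under $(\cdot,\cdot)_{\h^*}$; since $u_k\omega_{|i_k|} - \omega_{|i_k|}\in Q \subset [\g,\g]^*$, both coefficients vanish. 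Finally, for the distinguished case $z_j = \Delta_{w_0\omega_{|i_j|},\omega_{|i_j|}}$, I will use the fact that $u_j\omega_{|i_j|}=w_0\omega_{|i_j|}$ is equivalent (via the subword property of reduced expressions applied to the parabolic subgroup fixing $\omega_{|i_j|}$) to the condition that $|i_j|$ does not reappear in $|i_{j+1}|,\ldots,|i_m|$. Hence every $s_{|i_l|}$ with $l>j$ fixes $\omega_{|i_j|}$, and Weyl-invariance reduces $(\omega_{|i_j|}, s_{|i_{j+1}|}\cdots s_{|i_k|}\omega_{|i_k|})_{\h^*}$ to $(\omega_{|i_j|},\omega_{|i_k|})_{\h^*}$, forcing cancellation. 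The hard part will be keeping signs and factors of $\sqrt{-1}$ consistent across the several sources, especially when transitioning between $j<k$ and $j>k$ via the antisymmetry of the structure constants.
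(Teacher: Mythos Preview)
Your proposal is correct and follows essentially the same approach as the paper: both compute $\{\lambda_j,\varphi_k\}=\sqrt{-1}(c_{z_j,\overline z_k}-c_{z_j,z_k})$ by reading off $c_{z_j,\overline z_k}$ from Proposition~\ref{prop; coefficients} and $c_{z_j,z_k}$ from Remark~\ref{specialformforK*}, then simplifying via Weyl invariance of $(\cdot,\cdot)_{\h^*}$. The paper's proof is terse and simply points to these ingredients; you have unpacked the case analysis they leave implicit. For the final statement about $z_j=\Delta_{w_0\omega_{|i_j|},\omega_{|i_j|}}$, the paper notes that one may alternatively invoke Lemma~\ref{lemma;hwpt properties}\ref{fiberssymplleaves} (these coordinates are components of $\hw^{PT}$, hence Casimirs), which is shorter than your direct subword argument, though yours is self-contained.
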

%%%BH: Corrected some errors here 11/29

\begin{proof}
We refer to the definition~\eqref{equation;bracketPTdef} of $\pi_{-\infty}^\theta$ and use invariance of $(\cdot,\cdot)_{\h^*}$ under the Weyl group. The coefficient $c_{z_j,\overline{z}_k}$ is given in~\eqref{equation;mixedcoefficient}. The coefficient $c_{z_j,z_k}$ is given in Remark~\ref{specialformforK*}. The last statement can be shown directly from the previous ones; it also follows from Lemma~\ref{lemma;hwpt properties}\ref{fiberssymplleaves}.
\end{proof}

\begin{corollary} \label{corollary;triangularbracket}
Let $\mathbf{i} = (i_1,\dots, i_m)$ be a double reduced word for $(w_0,e)$, and let $\theta=\theta_{\sigma(\mathbf{i})}$. For $j\in [1,m]$, let $j^- = \max\{k\in [-r,-1]\cup [1,m] \mid |i_{k}| =|i_j|\}$. Let $B$ denote the $m\times m$ matrix with entries
\[
B_{j,k} = \{\lambda_{j^-},\varphi_{k}\}.
\]
Then $B = X\n Y$ where $Y$ is an upper triangular unimodular matrix with nonnegative entries and $X$ is the diagonal matrix with $X_{j,j} = d_{|i_j|}$. 

Consequently, there is a Lie group automorphism $C_\varphi\colon \T_\theta \to \T_\theta$ so that, if $\varphi'_{k} = \varphi_k\circ C_\varphi$, then the matrix $B'$ with $B'_{j,k} =\{d_{|i_j|} \lambda_{j^-},\varphi'_{k}\}$ is equal to the identity matrix.
\end{corollary}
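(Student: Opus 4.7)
My plan is to verify the three claimed structural properties of $Y := XB$ in turn — upper triangularity, unit diagonal, and non-negativity of integer entries — and then to read the automorphism $C_\varphi$ off of $Y^{-1}$.

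For upper triangularity, fix $k < j$. If $k \leq j^-$ then Theorem~\ref{theorem;brackettheorem} yields $B_{j,k} = 0$ directly. Otherwise $j^- < k < j$, and by definition of $j^-$ no letter $|i_\ell|$ with $j^- < \ell < j$ equals $|i_j| = |i_{j^-}|$; in particular each simple reflection $s_{i_\ell}$ for $j^- < \ell \leq k$ fixes $\omega_{i_{j^-}}$. By $W$-invariance of $(\cdot,\cdot)_{\h^*}$ the two terms on the right hand side of Theorem~\ref{theorem;brackettheorem} then coincide and $B_{j,k} = 0$.

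For entries on and above the diagonal I would apply the telescoping identity
\begin{equation*}
\omega - s_{p_1}\cdots s_{p_N}\omega \;=\; \sum_{\ell = 1}^N s_{p_1}\cdots s_{p_{\ell-1}}\bigl(\omega - s_{p_\ell}\omega\bigr)
\end{equation*}
with $\omega = \omega_{i_k}$ and $w := s_{i_{j^-+1}}\cdots s_{i_k}$. Since $w\omega_{i_k}$ is a weight of the irreducible module $V_{\omega_{i_k}}$, one has $w\omega_{i_k} \leq \omega_{i_k}$ in the dominance order, so $\omega_{i_k} - w\omega_{i_k} = \sum_p c_p\alpha_p$ with $c_p \in \Z_{\geq 0}$. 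Using the normalization $(\omega_{i_j},\alpha_p)_{\h^*} = d_{|i_j|}^{-1}\delta_{p,|i_j|}$, the bracket formula of Theorem~\ref{theorem;brackettheorem} gives $B_{j,k} = d_{|i_j|}^{-1}c_{|i_j|}$, so $Y_{j,k} := d_{|i_j|}B_{j,k} = c_{|i_j|} \in \Z_{\geq 0}$. For the diagonal entry the telescoping sum collapses to the single term $s_{i_{j^-+1}}\cdots s_{i_{j-1}}\alpha_{i_j}$, because $\ell = j$ is the unique index in $(j^-, j]$ with $|i_\ell| = |i_j|$. A short induction using $s_p\alpha_q = \alpha_q + |A_{p,q}|\alpha_p$ for $p \neq q$ shows that reflections indexed by $|i_\ell| \neq |i_j|$ preserve the $\alpha_{|i_j|}$-coefficient of a root, so this coefficient remains equal to $1$ and $Y_{j,j} = 1$.

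With $Y$ known to be upper triangular, unimodular, and non-negative integer-valued, I would set $M := (Y^{-1})^T$. Since $Y$ is upper triangular unipotent with integer entries, so are $Y^{-1}$ and $M$; in particular $M \in \GL_m(\Z)$. Let $C_\varphi \colon \T_\theta \to \T_\theta$ be the Lie group automorphism corresponding to $M$, i.e.\ the unique automorphism with $\varphi_k \circ C_\varphi = \sum_\ell M_{k,\ell}\varphi_\ell$. A direct linearity computation then yields $B'_{j,k} = \sum_\ell M_{k,\ell}Y_{j,\ell} = (YM^T)_{j,k} = \delta_{j,k}$.

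The subtlest point, which I expect will need careful verification, is the non-negativity of $Y_{j,k}$ when the index $j^-$ is frozen, i.e.\ lies in $[-r,-1]$. There the prefix $s_{i_{j^-+1}}\cdots s_{i_{-1}}$ is not a subexpression of the chosen reduced word for $w_0$, so direct ``each intermediate root is positive'' arguments break down. The dominance-order argument above bypasses this issue by using only the representation-theoretic positivity of $\omega - w\omega$ in the root lattice, which is insensitive to whether the word defining $w$ is reduced.
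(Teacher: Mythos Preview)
Your proof is correct and follows essentially the same line as the paper's: both use Theorem~\ref{theorem;brackettheorem} together with $W$-invariance for the vanishing below the diagonal, the fact that $\omega_{i_k}-w\omega_{i_k}$ lies in the nonnegative integer span of the simple roots for the entries above the diagonal, and the identity $(\omega_{i_j},\alpha_p)_{\h^*}=d_{|i_j|}^{-1}\delta_{p,|i_j|}$ to extract the value. The only cosmetic difference is your treatment of the diagonal: the paper applies $W$-invariance directly to cancel the intermediate reflections $s_{i_{j^-+1}}\cdots s_{i_{j-1}}$ (they fix $\omega_{i_j}$), obtaining $(\omega_{i_j},\omega_{i_j})-(\omega_{i_j},s_{i_j}\omega_{i_j})=(\omega_{i_j},\alpha_{i_j})$ in one step, whereas you telescope and then argue separately that reflections $s_p$ with $p\neq |i_j|$ preserve the $\alpha_{|i_j|}$-coefficient. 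Your concern in the last paragraph about frozen $j^-$ is not raised in the paper, and as you correctly observe the dominance-order argument makes it a non-issue.
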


\begin{proof}
 Then, by Theorem~\ref{theorem;brackettheorem} and invariance of $(\cdot,\cdot)_{\h^*}$ with respect to the Weyl group,
\begin{align*}
B_{j,j} = \{\lambda_{j^-},\varphi_{j} \} & = (\omega_{i_{j^-}},\omega_{i_{j}})_{\h^*} - (\omega_{i_j},s_{1+i_{j^-}} \cdots s_{i_{j}} \omega_{i_{j}})_{\h^*} \\
& = (\omega_{i_j},\omega_{i_{j}})_{\h^*} - (\omega_{i_j},s_{i_{j}} \omega_{i_{j}})_{\h^*} \\
& = (\omega_{i_j},\alpha_{i_{j}})_{\h^*} = 1/d_{|i_j|}.
\end{align*}
If $j>j^-\geqslant k$, then $B_{j,k} = 0$ by Theorem~\ref{theorem;brackettheorem}. If $j>k>j^-$, then again by invariance
\[
B_{j,k} = \{\lambda_{j^-},\varphi_{k} \} = (\omega_{i_{j^-}},\omega_{i_{k}})_{\h^*} - (\omega_{i_j},s_{1+i_{j^-}} \cdots s_{i_{k}} \omega_{i_{k}})_{\h^*} =0.
\]
Finally, if $j<k$ then
\[
B_{j,k} = \{\lambda_{j^-},\varphi_{k} \} = (\omega_{i_{j^-}}, \omega_{i_k}-s_{1+i_{j^-}} \cdots s_{i_{k}} \omega_{i_{k}}) = c (\omega_{i_j}, \alpha_{i_j})_{\h^*}
\]
for some nonnegative integer $c$, since $\omega_{i_k}-s_{1+i_{j^-}} \cdots s_{i_{k}} \omega_{i_{k}}$ is a positive integer linear combination of simple roots $\alpha_i$ and $(\omega_{i_j},\alpha_i) = \delta_{i_j,i}$. The desired automorphism $C_\varphi$ can be constructed by reducing $Y$ to the identity matrix by multiplying by a unimodular matrix on the right.
\end{proof}

\subsubsection{Brackets in general charts and relationship with canonical bases} \label{section;bracketsgeneral}

By the discussion of Section~\ref{section;geometriccrystals}, the cone factor of $PT(K^*,\theta)$ is related to the representation theory of $G^\vee$. Ultimately, we want to connect the Poisson geometry of $\mathfrak{k}^*$ with the representation theory of $K\subset G$.
%
%Let $\theta$ be a (twisted) reduced cluster chart on $G^{w_0,e}$, and let $\theta^\vee$ be the Langlands dual (twisted) reduced cluster chart on $G^{\vee;w_0,e}$. Then by Theorem~\ref{theorem;BKmaintheorem}, the lattice points of the cone $(G^{\vee;w_0,e},\Phi_{BK}^\vee,(\theta^{red})^\vee)^t$ count the dimensions of the weight spaces of highest weight $G$-representations.

\begin{proposition} \label{goodformeasy} 
Assume $G$ is of the form~\eqref{equation;niceformG}.
Let $\mathbf{i}$ be a double reduced word for $(w_0,e)$, let $\theta=\theta_{\overline{\sigma}(\mathbf{i})}\in \Theta(G^{w_0,e})$, and let $\theta^\vee = \theta_{\overline{\sigma}(\mathbf{i})^\vee}\in \Theta(G^{\vee;w_0,e})$. 
Let $\lambda_j,\varphi_k$ and $\lambda_j^\vee,\varphi_k^\vee$ be coordinates on $(G^{w_0,e},\theta)^t_\R \times \T_\theta$ and $(G^{\vee; w_0,e},\theta^\vee)^t_\R \times \T_{\theta^\vee}$, respectively, as in~\eqref{equation; cluster coordinates on K^* II}. Identify $\T=\T_{\theta^\vee}=\T_\theta$ by setting $\varphi_k = \varphi^\vee_k$ for all $k$ where $\varphi_k$ is defined. 

Given the fixed isomorphism $\C^{\tilde{r}}\cong H$ as in~\eqref{equation;splitH}, let $X_1,\dots,X_{\tilde{r}} $ be the associated basis of $X_*(H)$. 
Let $x^\vee_j = \lambda^\vee_{j^-}$ for $j\in [1,m]$ and $x^\vee_{-j} = \langle X_j, (\hw^\vee)^t \rangle$. 
There are coordinates $\upsilon^\vee_1,\dots,\upsilon^\vee_m$, modulo $2\pi$, so that
%Then,
% there exist coordinates $\upsilon_1,\dots,\upsilon_m \mod 2\pi $ on $\T_{\theta^\vee}$ so that
%\begin{align*}
%& x_{-\tilde{r}},\dots,x_{-1},x_1,\dots,x_m & \qquad \text{ on }(G^{\vee; w_0,e},\theta^\vee)^t_\R\\
%& \upsilon_1,\dots,\upsilon_m \mod 2\pi & \qquad \text{ on } \T_{\theta^\vee} 
%\end{align*}
%such that
%\begin{equation} 
%\label{bracketitem2}
%	x_j :=  \sum_k (C_\lambda)_{j,k} \lambda^\vee_k, \quad\text{and} \quad \upsilon_k :=\sum_{l\in L} (C_\varphi)_{k,l}\varphi^\vee_{l},
%\end{equation}
%then: 
\begin{enumerate}[label=(\arabic*)]
\item  Under the constant Poisson structure
\[
(\varPsi_\theta^t \times \id_\T)_* (\pi^\theta_{-\infty})
\]
on $(G^{\vee; w_0,e},\Phi^\vee_{BK},\theta^\vee)^t_\R(0) \times \T$, one has
\label{bracketitem1easy} 
		\begin{align*} &\{x^\vee_{j},\upsilon^\vee_k\}  = 0 &  \text{ for all } j \in [-\tilde{r},-1], k \in [1,m]; \\
		&\{x^\vee_j,\upsilon^\vee_k\} = \delta_{j,k}  & \text{ for all } j,k \in [1,m].
		\end{align*}
\item \label{bracketitem0easy} $x^\vee_{-k}\circ (\varPsi^t_\theta \times \id_{\T}) = \langle X_k, \hw^{PT}\rangle$ for all $k\in[1,\tilde{r}]$.
\end{enumerate}
\end{proposition}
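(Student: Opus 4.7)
The plan is to prove item (2) directly from the commutativity of the comparison diagrams in~\eqref{comparisonlifts}, and then to prove item (1) by combining item (2) with Lemma~\ref{lemma;hwpt properties} (which handles the Casimir half) and Corollary~\ref{corollary;triangularbracket} (which produces the canonical-pair half).

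For item (2), I would invoke the left commutative square in~\eqref{comparisonlifts}, which asserts $(\hw^\vee)^t \circ \varPsi^t_\theta = \psi_\h \circ \hw^t$. Pairing both sides with $X_k \in X_*(H)$ and using the canonical identification $X_*(H^\vee) = X^*(H)$ yields $x^\vee_{-k}\circ(\varPsi^t_\theta\times\id_\T) = \langle X_k, \psi_\h\circ\hw^t\rangle$. By the definition $\hw^{PT} = -\sqrt{-1}\,\psi_\h\circ\hw^t$ in~\eqref{equation;hwwtPT} and the identification $X_*(H)\otimes_\Z\R \cong \sqrt{-1}\ttt$, this equals $\langle X_k, \hw^{PT}\rangle$ once the factor $-\sqrt{-1}$ is absorbed into the pairing convention between $X_*(H)$ and $\ttt^*$.

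For the $j\in[-\tilde{r},-1]$ part of item (1), I would apply Lemma~\ref{lemma;hwpt properties}(1): since the fibers of $\hw^{PT}$ are the symplectic leaves of $\pi^\theta_{-\infty}$, every linear functional of $\hw^{PT}$ is a Casimir. By item (2), the pullback $x^\vee_{-j}\circ(\varPsi^t_\theta\times\id_\T) = \langle X_j, \hw^{PT}\rangle$ is a Casimir of $\pi^\theta_{-\infty}$, hence $x^\vee_{-j}$ itself is a Casimir of the pushforward structure, and $\{x^\vee_{-j}, \upsilon^\vee_k\} = 0$ holds automatically for any choice of $\upsilon^\vee_k$. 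For the $j,k\in[1,m]$ part, I would compute $x^\vee_j\circ(\varPsi^t_\theta\times\id_\T) = \lambda^\vee_{j^-}\circ\varPsi^t_\theta$, which by~\eqref{comparisongeneral} equals $d_{|i_j|}\lambda_{j^-}$ (using $|i_{j^-}|=|i_j|$; when $j^-\in[-r,-1]$ the same formula holds via the $H$-component $\varPsi_H$, since $\psi_\h(\alpha_i^\vee) = d_i\alpha_i$). I would then invoke Corollary~\ref{corollary;triangularbracket}, which produces a Lie group automorphism $C_\varphi\colon\T\to\T$ such that $\varphi'_k := \varphi_k\circ C_\varphi$ satisfies $\{d_{|i_j|}\lambda_{j^-}, \varphi'_k\}_{\pi^\theta_{-\infty}} = \delta_{j,k}$. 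Setting $\upsilon^\vee_k := \varphi'_k$ (under the identification $\T = \T_\theta = \T_{\theta^\vee}$) gives the required Darboux normal form; the brackets $\{x^\vee_j, x^\vee_l\}$ and $\{\upsilon^\vee_j, \upsilon^\vee_k\}$ both vanish by Definition~\ref{definition;PTdef}, since $\{\lambda_a,\lambda_b\} = \{\varphi_a,\varphi_b\} = 0$ there.

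The main obstacle will be to verify that Corollary~\ref{corollary;triangularbracket}, stated for the unreduced chart $\theta_{\sigma(\mathbf{i})}$, admits a direct analog in the reduced chart $\theta_{\overline{\sigma}(\mathbf{i})}$ used here. This should follow by translating the Laurent-monomial change of coordinates of Remark~\ref{reducedchangeofcoords} through Definition~\ref{definition;PTdef}, using the $P\times P$-homogeneity of cluster variables (Corollary~\ref{corollary;homog}) to show that the triangular structure of the bracket matrix $B_{j,k} = \{\lambda_{j^-},\varphi_k\}$ is preserved. The $H$-factor contributions to the brackets in the unreduced case factor through the Casimir directions of $\hw^{PT}$, so they do not disturb the $m\times m$ principal block governing the construction of $C_\varphi$.
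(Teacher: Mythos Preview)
Your proposal is correct and follows essentially the same approach as the paper. The paper's proof carries out your final paragraph explicitly: it passes to the unreduced chart $\hat{\theta}=\theta_{\sigma(\mathbf{i})}$ via the globally defined $\id^{PT}$ of Definition~\ref{definition;PTcoordtrans} (global because the change of coordinates in Remark~\ref{reducedchangeofcoords} is Laurent monomial), observes that each reduced coordinate $\lambda_j$ differs from the corresponding unreduced $\hat{\lambda}_j$ by a linear combination of the $\Delta_{w_0\omega_i,\omega_i}^t$, and invokes the last sentence of Theorem~\ref{theorem;brackettheorem} to see that these correction terms are Casimirs---exactly the ``$H$-factor contributions factor through the Casimir directions of $\hw^{PT}$'' mechanism you anticipated.
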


\begin{proof}
Let $\hat{\theta} =\theta_{\sigma(\mathbf{i})}$ be the unreduced cluster chart on $G^{w_0,e}$ which is related to $\theta$ as in Remark~\ref{reducedchangeofcoords}. %Then there is a Lie group isomorphism $C_\varphi'\colon \T_{\hat{\theta}} \to \T_{\theta^\vee}$ so that (up to reindexing) $\varphi_{k}^\vee \circ C_\varphi' = \hat{\varphi}_k$. \note{ew}
 For each $\hat{\theta}$-coordinate $\hat{z}_j$, view the corresponding coordinates $\hat{\lambda}_j, \hat{\varphi}_j$ as functions on $PT(K^*,\theta)$ by precomposing with
\[
\id^{PT}\colon PT(K^*,\theta)\to PT(K^*,\hat{\theta});
\]
notice that in this case $\id^{PT}$ is globally defined because the coordinate transformation~\eqref{isomorphismredcoords} is Laurent monomial.  Let $\hat{\varphi}_k'$ be related to $\hat{\varphi}_k$ as in Corollary~\ref{corollary;triangularbracket}, and set $\upsilon^\vee_k =\hat{\varphi}_k'$, for $k=1,\dots, m$.
%Then, the coordinates $\{\upsilon_k\}$ clearly differ from $\{\varphi_k^\vee\}$ by an automorphism of $\T$. (To be precise, it may happen that the coordinates $\upsilon_k$ are indexed by a different set of integers that $\varphi_k^\vee$. In this case the statement is true after re-indexing the functions $\varphi_k^\vee$). 

For $\hat{z}_j = \Delta_{u_{j}\omega_{|i_j|},\omega_{|i_j|}}$, write $u_j \omega_{|i_j|} = \sum_{i=1}^r c_i w_0\omega_i$ for $c_i\in \Z$. If $z_j = \hat{z}_j |_{L^{w_0,e}}$, we have 
\[
\lambda_j = \hat{\lambda}_j -\left(\sum_{i=1}^r c_i \Delta_{w_0\omega_i,\omega_i}^t\right).
\]
Due to the last statement of Theorem~\ref{theorem;brackettheorem}, the functions $\Delta_{w_0\omega_i,\omega_i}^t$ are Casimirs of $\pi_{-\infty}^{\theta}$. 
By Corollary~\ref{corollary;triangularbracket}, if $j>0$ then
\[
\{x^\vee_j\circ \varPsi_\theta^t,\upsilon_k\}  = \{\lambda_{j^-}\circ \varPsi_\theta^t, \hat{\varphi}_k' \} = \{d_{|i_j|} \lambda_{j^-}, \hat{\varphi}_k'\} = \delta_{j,k},
\]
as desired. If $j<0$ then $\{x_j,\upsilon_k\} = 0$ by~\ref{bracketitem0easy} together with Lemma~\ref{lemma;hwpt properties}\ref{fiberssymplleaves}. This establishes condition~\ref{bracketitem1easy}. The condition~\ref{bracketitem0easy} follows from the definition of $x^\vee_{-k}$ and the commutivity of the first diagram in~\eqref{comparisonlifts}.
%
%
%
%
%Let $B$ denote the $m\times m$ matrix with entries
%\[
%B_{j,k} := \{\lambda^\vee_{j^-}\circ \varPsi_{\theta}^t ,\hat{\varphi}_k\} = \{d_{|i_j|} \lambda_{j^-} ,\hat{\varphi}_k\} = \{d_{|i_j|} \hat{\lambda}_{j^-},\hat{\varphi}_k\}
%\] 
%Then, by  the matrix $B$ is upper triangular with integer entries and diagonal entries equal to $1$. So, there is a Lie group isomorphism $C_{\varphi}''$ of $\T_{\theta^\vee}$ so that
% \[
% \{\lambda^\vee_{j^-}\circ \varPsi_{\theta}^t ,\hat{\varphi}_k\circ C_{\varphi}'' \} = \delta_{j,k}.
% \]
% Let  Let $\upsilon_k = \hat{\varphi}_k\circ C_{\varphi}''$.
% 
% 
% 
% There is also a Lie group isomorphism $C_{\varphi}$
%
%So, there exists an $L \times L$ unimodular matrix $C_\varphi$ giving the change of basis
%\[
%\upsilon_k :=\sum_l(C_\varphi)_{k,l}\varphi^\vee_{l}
%\]
%so that $\{\lambda^\vee_{j^-}, \upsilon_k\} = \delta_{j,k}$
%
\end{proof}

The following extends Proposition~\ref{goodformeasy} to include arbitrary (twisted) reduced cluster charts.

\begin{theorem}\label{good coordinates}
Let $\mathbf{i} = (i_1,\dots, i_m)$ be a double reduced word for $(w_0,e)$ and let $(\bm{z}_\sigma,\sigma)\in |\sigma(\mathbf{i})|$. Let $\theta=\theta_{\overline{\sigma}}$ or $\zeta\circ \theta_{\overline{\sigma}}$, and let $\theta^\vee = \theta_{\overline{\sigma}^\vee}$ or $\zeta^\vee \circ  \theta_{\overline{\sigma}^\vee}$, respectively. Let $\lambda_j,\varphi_k$ be coordinates on $(G^{w_0,e},\theta)^t_\R \times \T_\theta$ as in~\eqref{equation; cluster coordinates on K^* II}.

For the isomorphism $\C^{\tilde{r}}\cong H$ as in~\eqref{equation;splitH}, let $X_1,\dots,X_{\tilde{r}} $ be the associated basis of $X_*(H)$. There are linear coordinates $x_{-\tilde{r}},\dots,x_{-1},x_1,\dots,x_m$ on $(G^{w_0,e},\theta)^t_\R$, and coordinates $\upsilon_1,\dots,\upsilon_m$ modulo $2\pi$ on $\T_\theta$, so that:
\begin{enumerate}[label=(\arabic*)]
\item \label{bracketitem1}  Under the constant Poisson structure $\pi^\theta_{-\infty}$
on $PT(K^*,\theta)$, 
%
%
%
% is a linear map $C_\lambda \colon (G^{w_0,e},\theta)^t_\R \to (G^{w_0,e},\theta)^t_\R$ which is invertible over $\Q$ and a 
%Lie group automorphism $C_\varphi \colon  \T_{\theta} \to  \T_{\theta}$ so that (up to a change in the indexing set), the functions $x_j = \lambda_j\circ C_\lambda$ and $\upsilon_k=\varphi_k\circ C_\varphi$ satisfy
%
%
%
%
%
%one has
		\begin{align*} &\{x_{j},\upsilon_k\}  = 0 &  \text{ for all } j \in [-\tilde{r},-1], k \in [1,m]; \\
		&\{x_j,\upsilon_k\} = \delta_{j,k}  & \text{ for all } j,k \in [1,m].
		\end{align*}
		\item \label{bracketitem-1} \label{bracketitem0} $x_{-j} = \langle X_j, \hw^{PT}  \rangle$ for all $j\in[1,\tilde{r}]$.
 \item \label{bracketitem3}There is a unimodular map $\R^{\tilde{r}+m} \cong (G^{\vee;w_0,e},\theta^\vee)^t_\R$ which takes the image of $PT(K^*,\theta)$ under
 \begin{equation} \label{momentxmap}
 (q,t) \mapsto (x_{-\tilde{r}}(q),\dots,x_{-1}(q),x_1(q),\dots,x_m(q))\in \R^{\tilde{r}+m}, \quad (q,t)\in PT(K^*,\theta).
 \end{equation}
  to  $(G^{\vee;w_0,e},\Phi_{BK}^\vee,\theta^\vee)^t_\R(0)$.
\end{enumerate}
\end{theorem}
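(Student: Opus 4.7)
The plan is to bootstrap from Proposition~\ref{goodformeasy}, which handles the case $\theta = \theta_{\overline{\sigma}(\mathbf{i})}$, by pulling coordinates back along the $\Z$-linear comparison map $\varPsi_\theta^t$. Although the desired coordinates live on $(G^{w_0,e},\theta)^t_\R\times\T_\theta$, they are most conveniently constructed by first producing linear coordinates $x_j^\vee,\upsilon_k^\vee$ on the Langlands dual cone $(G^{\vee;w_0,e},\theta^\vee)^t_\R\times \T_{\theta^\vee}$, where Proposition~\ref{goodformeasy} supplies a Darboux form in the base case.

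First I will treat the untwisted base case $\theta = \theta_{\overline{\sigma}(\mathbf{i})}$. I will take the coordinates $x_j^\vee,\upsilon_k^\vee$ from Proposition~\ref{goodformeasy} and set $x_j := x_j^\vee\circ\varPsi_\theta^t$, $\upsilon_k := \upsilon_k^\vee$, identifying $\T_\theta\cong\T_{\theta^\vee}$ via $\varphi_k = \varphi_k^\vee$. Condition (1) translates from part (1) of Proposition~\ref{goodformeasy} by pullback, and condition (2) is part (2) of that proposition. For condition (3), the map~\eqref{momentxmap} factors as $\varPsi_\theta^t$ followed by the coordinate map $q^\vee\mapsto (x_{-\tilde r}^\vee(q^\vee),\ldots,x_m^\vee(q^\vee))$; after checking that the latter is a unimodular isomorphism with $\R^{\tilde r+m}$, Theorem~\ref{theorem;ABHL2maintheorem} guarantees that the image of $(G^{w_0,e},\Phi_{BK},\theta)^t_\R(0)$ under~\eqref{momentxmap} matches $(G^{\vee;w_0,e},\Phi_{BK}^\vee,\theta^\vee)^t_\R(0)$ under this unimodular identification.

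For an arbitrary seed $\sigma\in|\sigma(\mathbf{i})|$ in the untwisted case, Theorem~\ref{theorem;PTpoissonisomorphism} provides a densely-defined Poisson isomorphism $\id^{PT}\colon PT(K^*,\theta_{\overline{\sigma}(\mathbf{i})})\to PT(K^*,\theta_{\overline{\sigma}})$ that is only piecewise linear, so it cannot transport linear coordinates directly. I will instead construct analogues of the dual coordinates $x_j^\vee,\upsilon_k^\vee$ on $(G^{\vee;w_0,e},\theta^\vee)^t_\R\times\T$, starting from the Casimir condition $x_{-j}^\vee = \langle X_j,(\hw^\vee)^t\rangle$ (which, via the comparison diagrams~\eqref{comparisonlifts}, forces $x_{-j}\circ \varPsi_{\theta_{\overline{\sigma}}}^t = \langle X_j,\hw^{PT}\rangle$) and completing them to Darboux coordinates for the constant pushforward bracket $(\varPsi_{\theta_{\overline{\sigma}}}^t\times\id_\T)_*\pi_{-\infty}^{\theta_{\overline{\sigma}}}$. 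For twisted charts $\theta = \zeta\circ\theta_{\overline{\sigma}}$, I will apply Theorem~\ref{twistComparison}, which intertwines the tropicalized twist with $\varPsi^t$, reducing the problem to the previous case after transporting coordinates through the twisted dual chart $\zeta^\vee\circ\theta_{\overline{\sigma}^\vee}$.

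The main obstacle I anticipate is establishing the Darboux property (1) for a general seed $\sigma\neq\sigma(\mathbf{i})$. Existence of Darboux coordinates for the constant bracket $\pi_{-\infty}^{\theta_{\overline{\sigma}}}$ is automatic by linear algebra, but they must be simultaneously pullbacks through $\varPsi_{\theta_{\overline{\sigma}}}^t$ (for (3)) and respect the $\hw^{PT}$ Casimirs (for (2)). The clean triangular form of Corollary~\ref{corollary;triangularbracket} is tied to the initial word $\mathbf{i}$; for a general seed I expect to track how the pushforward bracket transforms under a sequence of cluster mutations using the piecewise-linear transition maps of Definition~\ref{definition;PTcoordtrans}, then perform a simultaneous triangularisation analogous to Corollary~\ref{corollary;triangularbracket} on the dual side.
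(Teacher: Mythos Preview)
Your treatment of the base case $\theta=\theta_{\overline{\sigma}(\mathbf{i})}$ is essentially the paper's: pull the coordinates of Proposition~\ref{goodformeasy} back through $\varPsi_\theta^t\times\id_\T$, and use Theorem~\ref{theorem;ABHL2maintheorem} for item~(3).

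For a general seed, however, the paper takes a substantially different and simpler route than the mutation-tracking you propose. Rather than constructing new Darboux coordinates on $(G^{\vee;w_0,e},\theta^\vee)^t_\R\times\T_{\theta^\vee}$ for each $\sigma$, the paper reuses the coordinates $x_j^\vee,\upsilon_k^\vee$ already built on the \emph{initial} dual chart $\tilde\theta^\vee=\theta_{\overline{\sigma}(\mathbf{i})^\vee}$. The trick is this: the composite $\id^{PT}\circ(\varPsi_\theta^t\times\id_\T)$ from $PT(K^*,\theta)$ to $(G^{\vee;w_0,e},\tilde\theta^\vee)^t_\R\times\T_{\tilde\theta^\vee}$ is only piecewise linear, but one can choose a single open linearity chamber $C$ and \emph{linearly extend} the restriction to a globally defined linear map $F$. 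On $C\times\T_\theta$ the map $F$ agrees with $\id^{PT}\circ(\varPsi_\theta^t\times\id_\T)$, hence is Poisson there by Theorem~\ref{theorem;PTpoissonisomorphism}; since both $\pi_{-\infty}^\theta$ and the target bracket are \emph{constant} and $F$ is linear, $F$ is Poisson everywhere. Setting $x_j=x_j^\vee\circ F$ and $\upsilon_k=\upsilon_k^\vee\circ F$ then gives (1) and (2) at once; for (3), $F$ factors as a unimodular map $f_2$ composed with $\varPsi_\theta^t\times\id$, and $f_2$ furnishes the required unimodular identification of $\R^{\tilde r+m}$ with $(G^{\vee;w_0,e},\theta^\vee)^t_\R$. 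The twisted case is folded into the same argument via the commuting square of Theorem~\ref{twistComparison} and diagram~\eqref{comparisonCompat}.

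Your plan to triangularise the pushforward bracket anew on $(G^{\vee;w_0,e},\theta^\vee)^t_\R$ for each seed is not wrong in principle, but it is underspecified precisely where it matters: you do not say how the resulting $x_j^\vee$ for $j>0$ will be chosen so that the map~\eqref{momentxmap} is unimodular onto the dual BK cone. Corollary~\ref{corollary;triangularbracket} gives that unimodularity for free only in the initial chart, and following a mutation sequence does not obviously preserve it. The paper's linear-extension-from-a-chamber argument sidesteps this entirely by never leaving the coordinates of Proposition~\ref{goodformeasy}; you should adopt it.
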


\begin{proof} 
Write $\tilde{\theta} = \theta_{\overline{\sigma}(\mathbf{i})}$ and $\tilde{\theta}^\vee=\theta_{\overline{\sigma}(\mathbf{i})^\vee}$. Identify $\T_\theta \cong \T_{\theta^\vee}$ and $\tilde{\T} \cong \tilde{\T}^\vee$ by putting $\varphi_k = \varphi_k^\vee$ and $\tilde{\varphi}_k = \tilde{\varphi}_k^\vee$, using notation for coordinate functions as in~\eqref{equation; cluster coordinates on K^* II}.
Due to the commutivity of~\eqref{twistComparison}, as well as Theorem~\ref{comparisonCompat} (if $\theta$ is a twisted reduced cluster chart), the diagram commutes: 
\begin{equation} \label{PTcommutingdiagramhard}
\begin{tikzcd}
PT(K,\theta)  \ar[r,"\varPsi^t_\theta\times \id_{\T_\theta}"] \ar[d, "\id^{PT}"] & (G^{\vee ; w_0,e},\Phi_{BK}^\vee,\theta^\vee)^t_\R(0) \times \T_{\theta^\vee}  \ar[d,"\id^{PT}"] \\
PT(K,\tilde{\theta})  \ar[r,"\varPsi^t_{\tilde{\theta}} \times \id_{\T_{\tilde{\theta}}}"] & (G^{\vee ; w_0,e},\Phi_{BK}^\vee,\tilde{\theta}^\vee)^t_\R(0) \times \T_{\tilde{\theta}^\vee}
\end{tikzcd}
\end{equation}
wherever the vertical arrows are defined.
 Let $C\subset (G^{w_0,e},\Phi_{BK},\theta)_\R^t(0)$ be an open linearity chamber of the map $\id^t \circ \varPsi^t_\theta = \varPsi^t_{\tilde{\theta}} \circ \id^t$. Consider the restriction of the map in~\eqref{PTcommutingdiagramhard} to $C\times \T_\theta$:
 \[
C \times \T_\theta \to (G^{\vee ; w_0,e},\tilde{\theta}^\vee)^t_\R \times \T_{\tilde{\theta}^\vee}.
\]
Linearly extend this to a map
\[
F\colon PT(K^*,\theta) \to (G^{\vee ; w_0,e},\tilde{\theta}^\vee)^t_\R \times \T_{\tilde{\theta}^\vee}.
\]
Notice that, by following the two sides of the diagram~\eqref{PTcommutingdiagramhard}, one can decompose $F$ as 
\[
F=F_2 \circ (\varPsi^t_\theta \times \id_{\T_\theta}) = (\varPsi^t_{\tilde{\theta}} \times \id_{\T_{\tilde{\theta}}}) \circ F_1,
\]
so that, when restricted to $C\times \T_\theta$ (resp. $\varPhi^t_\theta(C) \times \T_{\theta^\vee}$), the map $F_1$ agrees with $\id^{PT}$ (resp. $F_2$ agrees with $\id^{PT}$). Note that each $F_i$ is the product of a unimodular map $f_i$ in the first factor with a Lie group isomorphism $e^{f_i}$ in the second.
Let $x^\vee_j$ and $\upsilon^\vee_k$ be the functions on $(G^{\vee ; w_0,e},\tilde{\theta}^\vee)^t_\R \times \T_{\tilde{\theta}^\vee}$ constructed in Proposition~\ref{goodformeasy}. Let 
\[
x_j = x^\vee_j\circ F,\qquad \upsilon_k = \upsilon^\vee_k \circ F. 
\]
We will show that $x_j$ and $\upsilon_k$ have the desired properties.

%First, the map $F$ is the product of a linear map with a Lie group isomorphism. Therefore, by Proposition~\ref{goodformeasy}\ref{bracketitem-1easy} together with the definition of $\hat{x}_j$, there exist $C_\lambda$ and $C_\varphi$ satisfying condition~\ref{bracketitem-1}. 

First, consider the unique constant Poisson structure $\tilde{\pi}$ on $(G^{\vee ; w_0,e},\tilde{\theta}^\vee)^t_\R \times \T_{\tilde{\theta}^\vee}$ which coincides with $(\varPsi_{\tilde{\theta}}^t\times \id_{\T_{\tilde{\theta}}})_* (\pi_{-\infty}^{\tilde{\theta}})$ on the open subset $(G^{\vee ; w_0,e},\Phi_{BK}^\vee,\tilde{\theta}^\vee)^t_\R(0) \times \T_{\tilde{\theta}^\vee}$. By Theorem~\ref{theorem;PTpoissonisomorphism}, the map $F$ is a Poisson map on the open subset $C\times \T_\theta$ of its domain. Since $F$ is the product of a linear map and a fixed Lie group isomorphism, and because both  $\pi^{\theta}_{-\infty}$ and $\tilde{\pi}$ are constant, the map $F$ is a Poisson map on its entire domain. So, by by Proposition~\ref{goodformeasy}\ref{bracketitem1easy}, the Poisson brackets $\{x_j,\upsilon_k\}$ with respect to $\pi^\theta_{-\infty}$ have the desired form~\ref{bracketitem1}.

Second, by definition $x_{-j} = \langle X_j, (\hw^\vee)^t\rangle \circ F$. Restricting to the open set $C$, one then has
\[
x_{-j} = \langle X_j, (\hw^\vee)^t\rangle \circ (\varPsi_{\tilde{\theta}}^t \times \id_{\T_{\tilde{\theta}}}) \circ \id^{PT}.
\]
By Proposition~\ref{goodformeasy}\ref{bracketitem0easy}, $x_{-j} = \langle X_k, \hw^{PT}  \rangle$ on $C\times \T_\theta$. If two linear maps agree on an open subset of their domain, they are equal, and so $x_{-j}$ has the desired form~\ref{bracketitem0} on all of $PT(K^*,\theta)$.

Finally, the map~\eqref{momentxmap} can be decomposed
\[
PT(K^*,\theta) \xrightarrow{\pr_1} (G^{w_0,e},\theta)^t_\R \xrightarrow{\varPsi^t_\theta} (G^{\vee;w_0,e},\theta^\vee)^t_\R(0) \xrightarrow{f_2} (G^{\vee;w_0,e},\tilde{\theta}^\vee)^t_\R \cong \R^{\tilde{r}+m}.
\]
The isomorphism $ (G^{\vee;w_0,e},\tilde{\theta}^\vee)^t_\R \cong \R^{\tilde{r}+m}
$ given by the choice of coordinates $x^\vee_j$ is unimodular, and the map $f_2$ is unimodular. Also, $\varPsi^t_\theta\circ \pr_1(PT(K^*,\theta)) = (G^{\vee;w_0,e},\Phi^\vee_{BK},\theta^\vee)^t_\R(0)$. This gives condition~\ref{bracketitem3}.
%
%
%
%Extend $\pi_{-\infty}^{\theta_{\sigma(\mathbf{i})}}$ to a constant Poisson structure $\tilde{\pi}_{-\infty}^{\theta_{\sigma(\mathbf{i})}}$ on $(G^{ w_0,e},\theta_{\overline{\sigma}(\mathbf{i})})^t_\R \times \T$. Because $\pi_{-\infty}^\theta$ is constant and $(\theta_{\sigma(\mathbf{i})}\n \circ \theta)^{PT}$ is a Poisson map, the extension 
%\[
%F_1\colon ((G^{w_0,e},\Phi_{BK},\theta)_\R^t(0)\times \T,\pi^\theta_{-\infty}) \to ((G^{w_0,e},\theta_{\sigma(\mathbf{i})}),\tilde{\pi}_{-\infty}^{\theta_{\sigma(\mathbf{i})}})
%\]
%is a Poisson map. Therefore, $F_*(\pi^{\theta}_{-\infty}) = (\varPsi_{\sigma(\mathbf{i})}^t\times \id_\T)_* (\tilde{\pi}_{-\infty}^{\theta_{\sigma(\mathbf{i})}})$. By (REF), the functions $x_i \circ F$ and $\upsilon_j\circ F$ satisfy~\ref{bracketitem0} and~\ref{bracketitem1}. Finally, since $f_2$ is unimodular, the functions $x_j$ satisfy~\ref{bracketitem3}.
\end{proof}

\section{Main Results: Construction of action-angle coordinates} \label{mainresultssection}

We are now able to prove our main results. We first construct action-angle coordinates on the space $K\times \mathring{\ttt}_+^*$ in Sections \ref{section; statement of main theorem} and \ref{section; proof of main theorem}. These are then used in Section \ref{section; charts on multiplicity free} to build action-angle coordinates on multiplicity free spaces through some standard reduction arguments.

\subsection{Big action-angle coordinate charts on the model space \texorpdfstring{$K\times \mathring{\ttt}_+^*$}{K*t}}\label{section; statement of main theorem}

Fix an arbitrary (twisted) reduced cluster chart $\theta$ on $G^{w_0,e}$. As in~\eqref{equation;splitH} and Theorem~\ref{good coordinates}, we have a fixed isomorphism $\C^{\times \tilde{r}} \cong H$ and denote $X_1,\dots,X_{\tilde{r}}$ the associated basis of $X_*(H)$.
Let:
\begin{equation}
\label{goodcoords1}
x_{-\tilde{r}},\dots,x_{-1},x_1, \dots, x_m, \upsilon_1, \dots, \upsilon_m
\end{equation} be the linear coordinates on $(G^{w_0,e},\theta)^t_\R \times \T$ given by Theorem~\ref{good coordinates}. Let
\begin{equation} \label{goodcoords2}
\Upsilon_1, \dots , \Upsilon_{\tilde r}
\end{equation} denote coordinates on $T$ defined modulo $2\pi$ by the dual basis of $X_1, \dots , X_{\tilde r}$. %i.e.~$e^{\sqrt{-1}\Upsilon_k} = t^{\gamma_k}$ for $t\in T$. 

As $\theta$ is now fixed, we will frequently suppress it from notation (e.g.~$L_s^\theta = L_s$). Moreover, denote
\[
\mathcal{C} := (G^{w_0,e},\Phi_{BK},\theta)^t_\R(0), \qquad \mathcal{C}(\delta) := (G^{w_0,e},\Phi_{BK},\theta)^t_\R(\delta).
\]
With this notation, $PT(K^*,\theta) = \mathcal{C}\times \T$.

\begin{definition}\label{defintion; symplectic structure on toric space}
	Given the coordinates $x_{-i},x_j, \upsilon_j,\Upsilon_i$ and $\mathcal{C}$ as defined above (for a particular choice of $\theta$), let
% 	\[
% 		\omega_{-\infty} := dx_1 \wedge d\upsilon_1 + \dots + dx_m \wedge d\upsilon_m  + dx_{-1} \wedge d\Upsilon_1 + \dots + dx_{-\tilde r}\wedge d\Upsilon_{\tilde r}.
% 	\]
	\[
		\omega_{-\infty} := d\upsilon_1\wedge dx_1   + \dots + d\upsilon_m\wedge dx_m  + d\Upsilon_1\wedge dx_{-1} + \dots + d\Upsilon_{\tilde r}\wedge dx_{-\tilde r} .
	\]
\end{definition}
The symplectic manifold $(\mathcal{C}\times \T\times T,\omega_{-\infty})$ has the following immediate properties:
\begin{enumerate}[label=(\Alph*)]
	\item\label{sympPT property A} The map $(\mathcal{C}\times \T\times T,\omega_{-\infty}) \to (\mathcal{C}\times \T,\pi_{-\infty})$, $(x,t,t') \mapsto (x,t)$, is Poisson. This follows from Theorem~\ref{good coordinates}.
	\item\label{sympPT property B} The action of $T$ on $\mathcal{C}\times \T\times T$ defined by $t\cdot(x,t',t'') = (x,t',tt'')$ is Hamiltonian with respect to $\omega_{-\infty}$ with moment map $\hw^{PT}$ (where $\hw^{PT}(x,t,t') := \hw^{PT}(x,t)$). This follows from Theorem~\ref{good coordinates}.
	\item\label{sympPT property C} The action of $T$ on $\mathcal{C}\times \T\times T$ defined by $t\cdot (x,t',t'') = (x,\iota(t)t',t'')$  is Hamiltonian with respect to $\omega_{-\infty}$ with moment map $\wt^{PT}$ (where $\wt^{PT}(x,t,t') := \wt^{PT}(x,t)$). This follows from Lemma~\ref{lemma;wtpt properties}. 
		\item\label{sympPT property D} The two $T$-actions commute, so $(\mathcal{C}\times \T\times T,\omega_{-\infty},(\hw^{PT},\wt^{PT}))$ is a Hamiltonian $T\times T$-manifold.
\end{enumerate}

Recall the Hamiltonian $K\times T$-manifold $(K \times \mathring{\ttt}_+^*,\omega_{\rm{can}},(\mu_L,\mu_R) )$ from Example~\ref{T^*K cross section example}. 

\begin{theorem}\label{main theorem}
For all $\delta >0$  and any bounded open subset $U\subset\mathcal{C}(\delta)$ there exists $T\times T$-equivariant symplectic embeddings $(U \times \T \times T,\omega_{-\infty}) \hookrightarrow (K \times \mathring{\ttt}_+^*,\omega_{\mathrm{can}})$ such that the following diagrams commute. 
    \begin{equation}\label{main theorem 1.b, diagram}
    \begin{tikzcd}
        U \times \T \times T \ar[d,"\hw^{PT}"] \ar[r,hookrightarrow] & K \times \mathring{\ttt}_+^* \ar[d,"-\mu_R"] \\
        \ttt_+^*  & \ttt_+^*\ar[l,"="]
    \end{tikzcd}\quad  \quad 
    \begin{tikzcd}
        U \times \T \times T \ar[d,"\wt^{PT}"] \ar[r,hookrightarrow] & K \times \mathring{\ttt}_+^* \ar[d,"\mu_L"] \\
        \ttt^*  & \kk^*\ar[l,"\pr_{\ttt^*}"]
    \end{tikzcd}
\end{equation}
\end{theorem}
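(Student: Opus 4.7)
The plan is to construct an approximate $T\times T$-equivariant map $\tilde\Phi_s$ for $s\ll 0$ via delinearization and detropicalization, then modify it in two steps: first into a map $\hat\Phi_s$ that exactly intertwines the relevant moment maps, and then by an equivariant Moser flow that corrects the symplectic form. The parameter $s$ will depend on the fixed pair $(\delta,U)$, chosen sufficiently negative so that all error estimates of size $O(e^{s\delta})$ become small enough to complete the Moser argument on all of $U\times\T\times T$.

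For the approximate embedding, apply the delinearization theorem (Theorem~\ref{theorem;delin}) to pass from $(K\times\mathring{\ttt}_+^*,\omega_{\mathrm{can}})$ to the Hamiltonian $(K,s\pi_K)$-manifold $(K\times\mathring{\ttt}_+^*,\Omega^s,\Psi^s)$ via the equivariant Moser isotopy $\phi_s$. On the delinearized side, define
\[
\Phi_s^{\mathrm{delin}}(x,t,t') = (k_s(x,t)\cdot t',\, \lambda_s(x,t)),
\]
where $\lambda_s = \mathcal{S}\circ E_s^{-1}\circ L_s \in \mathring{\ttt}_+^*$ labels the dressing orbit through $L_s(x,t)$, and $k_s(x,t)\in K$ is a local section of $K\to K/T$ chosen so that $E_s(\lambda_s)^{k_s}=L_s(x,t)$ and so that the $\iota_\theta$-action on $\T$ intertwines with left $T$-translation on the $K$-factor. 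Let $\tilde\Phi_s := \phi_s^{-1}\circ \Phi_s^{\mathrm{delin}}$. Since $\Psi^s\circ \Phi_s^{\mathrm{delin}} = L_s\circ\pr_1$, Lemma~\ref{lemma;wtPT commuting diagram} together with Proposition~\ref{moser flow equivariance}(vi) yields $\pr_{\ttt^*}\circ\mu_L\circ\tilde\Phi_s = \wt^{PT}$ exactly, while Lemma~\ref{lemma;hwpt casimir approximation} gives $-\mu_R\circ\tilde\Phi_s = \lambda_s = \hw^{PT} + O(e^{s\delta})$.

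To make the second moment map condition exact, replace $\lambda_s$ by $\hw^{PT}$ directly: set
\[
\hat\Phi_s(x,t,t') = (\hat k_s(x,t)\cdot t',\, \hw^{PT}(x,t)),
\]
where $\hat k_s$ is a smooth $T$-equivariant choice satisfying $\pr_{\ttt^*}(\Ad_{\hat k_s}^*\hw^{PT}) = \wt^{PT}$. Such a $\hat k_s$ exists close to $k_s$ because the approximate identity for $\tilde\Phi_s$ places $\wt^{PT}$ strictly inside the interior of the moment polytope of $\mathcal{O}_{\hw^{PT}}$, and the moment-map fibration has smooth equivariant local sections there. With both moment maps matching exactly, Theorem~\ref{theorem;bracketconvergence} applied in the Darboux coordinates of Theorem~\ref{good coordinates} implies $\hat\Phi_s^*\omega_{\mathrm{can}} = \omega_{-\infty} + O(e^{s\delta})$.

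For the final step, observe that $\omega_{-\infty}$ and $\hat\Phi_s^*\omega_{\mathrm{can}}$ are $T\times T$-invariant symplectic forms sharing the same moment maps, so their difference $\eta$ contracts to zero with every fundamental vector field of the $T\times T$-action. In the coordinates of Theorem~\ref{good coordinates} this forces $\eta$ to have only $dx_i\wedge dx_j$ components, so $\eta$ is the pullback of a closed 2-form on the contractible base coordinatized by the $x$'s and can be written as $d\alpha$ with $\alpha = \sum g_i(x)\,dx_i$. The associated Moser vector field lies purely in the angle directions $\partial_{\upsilon_i},\partial_{\Upsilon_j}$; it is $T\times T$-equivariant, preserves both moment maps, and has size $O(e^{s\delta})$, so for $s\ll 0$ its time-one flow $\psi$ is defined on all of $U\times\T\times T$. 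The composition $\hat\Phi_s\circ\psi$ is then the required embedding. The hardest part will be the construction of $\hat k_s$: one needs a smooth $T$-equivariant section of the moment-map fibration on each regular coadjoint orbit, together with quantitative estimates on how much the passage $k_s\rightsquigarrow \hat k_s$ perturbs $\tilde\Phi_s^*\omega_{\mathrm{can}}$, so as to preserve the $O(e^{s\delta})$ control needed for the Moser step.
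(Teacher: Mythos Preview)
Your overall architecture—delinearize, detropicalize, correct the moment maps, then Moser—matches the paper's seven-step proof. However, there is a genuine gap in your Moser step, and a secondary underestimate in the bracket-estimate step.

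\textbf{The Moser step does not work as stated.} You claim that because $\omega_{-\infty}$ and $\hat\Phi_s^*\omega_{\mathrm{can}}$ share the same $T\times T$ moment maps, their difference $\eta$ must have only $dx_i\wedge dx_j$ components. But the $T\times T$ action is only $2\tilde r$-dimensional, while the angle directions $(\upsilon_1,\dots,\upsilon_m,\Upsilon_1,\dots,\Upsilon_{\tilde r})$ span an $(m+\tilde r)$-dimensional space, and generically $m=\ell(w_0)>\tilde r$. The $T$-action on $\T$ via $\iota_\theta$ only generates a $\dim(T/Z(K))$-dimensional subspace of the $\partial_{\upsilon_k}$'s. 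So $\iota_{\underline X}\eta=0$ for all $T\times T$ fundamental vector fields does \emph{not} force $\eta$ to be basic over the $x$-coordinates: there can remain components of type $d\upsilon_i\wedge d\upsilon_j$, $dx_i\wedge d\upsilon_j$, etc. Consequently your primitive $\alpha=\sum g_i(x)\,dx_i$ does not exist in general, and the Moser vector field will not be tangent to the angle torus. The paper has to work much harder here: it proves exactness of $\eta$ on $U\times\T\times T$ via an anti-symplectic involution argument (since $H^2(\T\times T)\ne 0$), and then uses Fourier analysis on the torus to produce a $T\times T$-invariant primitive $\beta_s$ with the required $O(e^{s\delta})$ control. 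Finally, integrating the Moser field only gives the moment-map diagrams up to a constant shift, which is removed by an explicit translation in the $x$-directions.

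\textbf{The bracket estimate is incomplete.} Theorem~\ref{theorem;bracketconvergence} only controls the Poisson structure on $\mathcal C(\delta)\times\T$; it says nothing about brackets involving the $\Upsilon_k$ coordinates coming from the extra $T$-factor. The paper needs separate Legendre-transform computations (using Corollary~\ref{corollary;domination}) to show that $\{x_j,\Upsilon_k\}_s$, $\{\upsilon_j,\Upsilon_k\}_s$, and $\{\Upsilon_i,\Upsilon_j\}_s$ are each an integer plus $O(e^{s\delta})$, and then a further integral coordinate change to absorb those integers. Your invocation of Theorem~\ref{theorem;bracketconvergence} alone does not yield $\hat\Phi_s^*\omega_{\mathrm{can}}=\omega_{-\infty}+O(e^{s\delta})$.
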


\subsection{Proof of Theorem \ref{main theorem}}  \label{section; proof of main theorem}

Throughout the proof of Theorem \ref{main theorem} we use big-O notation analogous to Notation~\ref{notation;bigo}, now used in reference to the coordinates $x_{-i},x_j,\Upsilon_k,\upsilon_l$ on $\mathcal{C}\times \T \times T$.

\noindent\emph{Proof of Theorem \ref{main theorem}:} Fix $\delta >0$  and let  $U$ be a bounded open subset of $\mathcal{C}(\delta)$ as in the statement of Theorem \ref{main theorem}. 

\noindent \textbf{Step 1 (delinearize $K\times\mathring{\ttt}_+^*$):} Let $(K\times \mathring{\ttt}_+^*,\Omega^s,\Psi^s)$ be the delinearization of $(K\times \mathring{\ttt}_+^*,\omega_{\mathrm{can}},(\mu_L,\mu_R))$ as described in Example~\ref{example; delinearization of T^*K cross section}. Recall that~\eqref{delinearization moser vf} defines an isotopy $\phi_s \colon K\times \mathring{\ttt}_+^* \to K\times \mathring{\ttt}_+^*$ such that   $\phi_s^*\Omega^s=\omega_{\mathrm{can}}$.  By Proposition \ref{completeness of moser flow}, $\phi_s$ is defined for all $s$. By Proposition \ref{moser flow equivariance}, 
% $\phi_s$ is $T\times T$-equivariant, and the following diagrams commute.
% \begin{equation}
%     \begin{tikzcd}
%         K\times \mathring{\ttt}_+^* \ar[d,"-\mu_R"] \ar[r,"\phi_s"] &K\times \mathring{\ttt}_+^* \ar[d,"-\mu_R"] \\
%          \ttt^*_+   & \ttt^*_+ \ar[l,"= "]\\
%     \end{tikzcd}\quad \quad 
%     \begin{tikzcd}
%         K\times \mathring{\ttt}_+^* \ar[d,"\mu_L"] \ar[r,"\phi_s"] & K\times \mathring{\ttt}_+^* \ar[d,"\Psi^s"] \\
%         \kk^* \ar[d,"\pr_{\ttt^*}"] & K^*\ar[d,"\wt"]  \\
%         \ttt^* \ar[r,"E_s"] & A  \\
%     \end{tikzcd} 
% \end{equation}
% Thus, 
it suffices to construct $T\times T$-equivariant symplectic embeddings of $(U\times \T\times T,\omega_{-\infty})$ into $(K \times \mathring{\ttt}_+^*,\Omega^s)$ such that the following diagrams commute.
\begin{equation}\label{WTS diagrams}
    \begin{tikzcd}
        U \times \T \times T \ar[d,"\hw^{PT}"] \ar[r,hookrightarrow] & K \times \mathring{\ttt}_+^* \ar[d,"-\mu_R"] \\
        \ttt_+^*  & \ttt_+^* \ar[l,"="]
    \end{tikzcd}\quad  \quad 
    \begin{tikzcd}
        U \times \T \times T \ar[dd,"\wt^{PT}"] \ar[r,hookrightarrow] & K \times \mathring{\ttt}_+^* \ar[d,"\Psi^s"] \\
        & K^* \ar[d,"\wt"]\\
        \ttt^* & A\ar[l,"E_s\n"]
    \end{tikzcd}
\end{equation}

\noindent \textbf{Step 2 (trivialize  $\Psi^s$):} 
Consider the $T$-invariant orthogonal complement $\ttt^\perp \subset \kk$ of $\ttt$ and define a map $\ttt^\perp \times \mathring{\ttt}_+^* \to K^*$ by sending $(Z,\xi) \mapsto E_s(\Ad_{e^Z}^*\xi)$. For a sufficiently small neighbourhood $V$ of $0\in\ttt^\perp$, the restriction of this map to $V \times \mathring{\ttt}_+^*$  is a tubular neighbourhood of the submanifold $E_s(\mathring{\ttt}_+^*)$. Note that $E_s(\mathring{\ttt}_+^*)$ does not depend on $s$. This tubular neighbourhood embedding is $T$-equivariant with respect to the coadjoint and dressing action: for all $t\in T$ and $(Z,\xi) \in \ttt^\perp\times \mathring{\ttt}_+^*$,
\[
	t\cdot (Z,\xi)  = (\Ad_t Z,\xi) \mapsto E_s(\Ad_{\exp(\Ad_t Z)}^*\xi) = E_s(\Ad_t^*\Ad_{e^Z}^*\xi) = tE_s(\Ad_{e^Z}^*\xi) t\n.
\]
The moment map $\Psi^s$ trivializes over the tubular neighbourhood $V\times \mathring{\ttt}_+^*$ as follows:
\begin{equation}
\label{equation;trivializationnbd}
    \begin{tikzcd}
    	& (Z,\xi,t)\ar[r] &  (e^Zt,\xi)\\
      (Z,\xi,t)\ar[d] & V \times \mathring{\ttt}_+^* \times T \ar[r,hookrightarrow]\ar[d,"\pr"] & K \times \mathring{\ttt}_+^* \ar[d,"\Psi^s"]  \\
      (Z,\xi)& V\times \mathring{\ttt}_+^*\ar[r,hookrightarrow] & K^*\\
      & (Z,\xi) \ar[r] & E_s(\Ad_{e^Z}^*\xi)
    \end{tikzcd}
\end{equation}
This trivialization is $T\times T$-equivariant with respect to the actions 
\begin{equation} \label{equation;actionmodelspace}
	\begin{split}
		T\times T \times V \times \mathring{\ttt}_+^* \times T \to V \times \mathring{\ttt}_+^* \times T; & \quad (t',t'',Z,\xi,t) \mapsto (\Ad_{t'}Z,\xi,t't(t'')\n), \\
		T\times T \times K \times \mathring{\ttt}_+^* \to K \times \mathring{\ttt}_+^*; & \quad (t',t'',k,\xi) \mapsto (t'k(t'')\n,\xi).
	\end{split}
\end{equation}

\noindent \textbf{Step 3 (detropicalize):} Let $L_s = L_s^\theta \colon (G^{w_0,e},\theta)^t_\R \times \T \to K^*$ be the detropicalization map (Definition \ref{definition;detropicalization}). Define
\[
	\mathcal{L}_s := L_s \times \id_T \colon (G^{w_0,e},\theta)^t_\R \times \T \times T \to K^* \times T.
\]

\begin{lemma}\label{intersection lemma}\cite[Lemma	 4.1, Lemma 4.2, Proposition 4.3]{AHLL}
    For all $\delta >0$ and any neighbourhood $V$ of $0\in \ttt^\perp$,  there exists $s_0<0$ such that for all $s\leqslant s_0$, 
    \begin{equation} \label{embedLscontrol}
        L_s(\CC(\delta) \times \T) \subset V \times \mathring{\ttt}_+^*
    \end{equation}
    where $V \times \mathring{\ttt}_+^*$ has been identified with its image in $K^*$ under the embedding from Step 2. What is more, if $p\in \CC(\delta) \times \T$ and $L_s(p) = (Z,\xi)\in V\times \mathring{\ttt}_+^*$ under~\eqref{embedLscontrol}, then $Z$ is $O(e^{s\delta})$.
\end{lemma}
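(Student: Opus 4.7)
The plan is to combine three inputs: Lemma~\ref{lemma;hwpt casimir approximation}, which identifies $\hw^{PT}$ with $\mathcal{S}\circ E_s\n\circ L_s$ up to $O(e^{s\delta})$ errors; Corollary~\ref{corollary;domination}, which dominates off-axis generalized minor ratios by $\Phi_{BK}$; and Lemma~\ref{lemma;scaling domination}, which converts such domination into exponential smallness under $L_s$.

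First, given $p\in\CC(\delta)\times\T$, define the candidate highest weight $\xi_s:=\mathcal{S}(E_s\n(L_s(p)))\in\ttt_+^*$. Lemma~\ref{lemma;hwpt casimir approximation} gives $\xi_s=\hw^{PT}(p)+O(e^{s\delta})$, and since $\hw^{PT}(p)\in\mathring{\ttt}_+^*$ by Lemma~\ref{lemma;hwpt properties}(2), it follows that $\xi_s$ remains in $\mathring{\ttt}_+^*$ for $s$ sufficiently negative. Then $L_s(p)$ lies on the dressing orbit of the regular element $E_s(\xi_s)$, whose $K$-stabilizer is exactly $T$. So there is a unique element of $K/T$ (close to the identity coset) taking $E_s(\xi_s)$ to $L_s(p)$; writing it uniquely as $e^{Z_s}T$ with $Z_s\in\ttt^\perp$ in a fixed small neighborhood of $0$, and using that $T$ fixes $E_s(\xi_s)\in A$ under the dressing action, we obtain $L_s(p)=E_s(\Ad_{e^{Z_s}}^*\xi_s)$. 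This exhibits $L_s(p)$ as the image of $(Z_s,\xi_s)$ under the tubular embedding of Step 2.

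The remaining task is to show $Z_s=O(e^{s\delta})$, uniformly on $\CC(\delta)\times\T$. For this we express $Z_s$ in terms of off-axis minor ratios: the functions $\Delta_{u\omega_i,\omega_i}/\Delta_{\omega_i,\omega_i}$ for appropriate $u\neq e$ vanish on $\{Z=0\}$ and have non-degenerate Jacobian in the $\ttt^\perp$-directions there, encoding the transversality of the dressing orbit to $E_s(\mathring{\ttt}_+^*)$. By the implicit function theorem, the components of $Z_s$ are locally smooth functions of these ratios applied to $L_s(p)$. Each ratio has the form $(F\cdot\Delta_{\omega_i,\omega_i})/\Delta_{\omega_i,\omega_i}$ for some $F\in\mathfrak{n}_-$, and hence is dominated by $\Phi_{BK}$ by Proposition~\ref{proposition;dominationABHL1}. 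By Lemma~\ref{lemma;scaling domination}, its pullback by $L_s$ is $O(e^{s\delta})$ on $\CC(\delta)\times\T$. Combined, these give $Z_s=O(e^{s\delta})$.

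The main obstacle will be ensuring that the implicit function theorem estimates are uniform over the unbounded set $\CC(\delta)\times\T$. This is handled by combining the scaling identity $L_s(x,t)=L_{-1}(|s|x,t)$ (immediate from the definition of $L_s$) with the homogeneity $\CC(\delta)=|s|^{-1}\CC(|s|\delta)$ of the cone, which reduces the claim to uniform control of the implicit-function bounds for the fixed map $L_{-1}$ over the deep interior $\CC(D)\times\T$ as $D\to\infty$. Uniformity in the $\T$-direction follows from $T\times T$-equivariance of all the maps involved, while the remaining non-compact direction is tamed by the conic structure of $\CC(\delta)$ together with the positivity of the minor ratios.
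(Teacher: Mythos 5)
The paper does not give its own proof of this lemma; it cites the companion paper \cite{AHLL}. Your overall strategy — define $\xi_s=\mathcal{S}\circ E_s\n\circ L_s(p)$, parametrize $L_s(p)$ in the tubular neighbourhood of $E_s(\mathring{\ttt}_+^*)$, and bound the transverse coordinate $Z_s$ by combining domination of off-axis minor ratios by $\Phi_{BK}$ with Lemma~\ref{lemma;scaling domination} — is the right idea and is close in spirit to what \cite{AHLL} do. However, there is a genuine circularity in your second paragraph.

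You assert that the unique coset $k_sT \in K/T$ with $L_s(p)=E_s(\xi_s)^{k_s}$ is ``close to the identity coset'' and so can be written uniquely as $e^{Z_s}T$ with $Z_s$ in a fixed small neighbourhood of $0 \in \ttt^\perp$. But this assertion \emph{is} the first claim of the lemma, namely the containment $L_s(\CC(\delta)\times\T)\subset V\times\mathring{\ttt}_+^*$. The argument you give in the third paragraph does not close the loop: the implicit function theorem is local, and your chain of reasoning only shows that \emph{if} $L_s(p)$ already lies in the tubular neighbourhood, \emph{then} smallness of the off-axis ratios forces $Z_s$ to be small. It does not exclude the possibility that $L_s(p)$ lies outside the tubular neighbourhood in some distant region of $K^*$ where those same ratios also happen to be small (the ratios cut out $A$ only locally, and you have not shown they separate the tubular neighbourhood from the rest of the dressing orbit). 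You therefore define $Z_s$ by assuming it is small and then ``prove'' it is small by an estimate whose hypotheses already require it. The scaling argument in your last paragraph repackages the uniformity question but does not address this: even for the fixed map $L_{-1}$ on $\CC(D)\times\T$, the issue is not the size of the constants in the implicit function theorem but whether $L_{-1}(p)$ lands in its domain of validity at all.

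To make the argument non-circular you need a global ingredient. Two standard fixes: (i) show that within each regular dressing orbit the simultaneous sublevel set of the off-axis ratios is connected and coincides with the intersection of the orbit with the tubular neighbourhood, so small ratios force proximity to $A$; or (ii) run an open-closed (continuity-in-$s$) argument on the set $A_s=\{p\in\CC(\delta)\times\T : L_s(p)\in V\times\mathring{\ttt}_+^*\}$, using the implicit-function estimate only to show that $A_s$ is closed (points cannot escape through $\partial(V\times\mathring{\ttt}_+^*)$ while the ratios remain $O(e^{s\delta})$), together with a nonemptiness statement. This is essentially what \cite[Lemmas 4.1--4.2, Proposition 4.3]{AHLL} carry out. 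As a secondary remark, the claim that $\Delta_{u\omega_i,\omega_i}/\Delta_{\omega_i,\omega_i}$ is literally of the form $(F\cdot\Delta_{\omega_i,\omega_i})/\Delta_{\omega_i,\omega_i}$ for a single $F\in\nnn_-$ is too strong in general (one needs iterated derivatives/linear combinations), though the domination itself can still be obtained via Lemma~\ref{lemma;domination} and the Laurent phenomenon, so this is fixable.
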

Thus, for $s \ll 0$, we have embeddings such that the following diagram commutes.
\begin{equation}\label{step 3 diagram}
    \begin{tikzcd}
      \CC(\delta) \times \T\times T \ar[d,"\pr"]\ar[r,"\mathcal{L}_s",hookrightarrow] & V \times \mathring{\ttt}_+^* \times T \ar[r,hookrightarrow]\ar[d,"\pr"] & K \times \mathring{\ttt}_+^* \ar[d,"\Psi^s"]  \\
      \CC(\delta) \times \T \ar[r,"L_s",hookrightarrow]& V\times \mathring{\ttt}_+^*\ar[r,hookrightarrow] & K^*
    \end{tikzcd}
\end{equation}  
Since $L_s$ is $T$-equivariant, the map $\mathcal{L}_s$ is $T\times T$-equivariant.

\noindent \textbf{Step 4 (Poisson bracket estimates):} Let $\mathfrak{L}_\alpha\colon K^*\to \kk$ denote the Legendre transform of $\alpha \in \Omega^1(K^*)$ (see Section \ref{section;legendre transform}). Given $\beta \in \Omega^1_\C(K^*)$, denote  $\beta_R,\beta_I\in \Omega^1(K^*)$ such that $\beta = \beta_R + \sqrt{-1}\beta_I$. Define $\mathfrak{L}_\beta^\C\colon K^*\to \g$ by complexifying the equation defining $\mathfrak{L}_\beta$. Then,
$\mathfrak{L}_\beta^\C = \mathfrak{L}_{\beta_R} + \sqrt{-1}\mathfrak{L}_{\beta_I}$.

\begin{lemma}\label{lemma;Thimmlemma} 
Let $z$ be a Laurent monomial in the $\theta$-coordinates $z_i$ with $\gamma = |z|_1$. Let
\[
\beta=\frac{dz}{z},\qquad \beta'= \frac{d\Delta_{\gamma,\gamma}}{\Delta_{\gamma,\gamma}}.
\]
Then, as functions $\mathcal{C}(\delta)\times \T \to \kk$,
\[
    \mathfrak{L}_{\beta_R}\circ L_s = \mathfrak{L}_{\beta'}\circ L_s + O(e^{s\delta}) \quad \text{and} \quad \mathfrak{L}_{\beta_I}\circ L_s =  O(e^{s\delta}).
\]
\end{lemma}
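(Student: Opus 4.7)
The two claims combine into a single $\g$-valued identity
\[
\mathfrak{L}^{\C}_\beta \circ L_s = \mathfrak{L}_{\beta'} \circ L_s + O(e^{s\delta})
\]
with respect to the real decomposition $\g = \kk \oplus \sqrt{-1}\kk$; taking real and imaginary parts then yields both statements. My plan is to establish this identity by testing against a real basis of $\a\nnn_-$ via the pairing $\llangle\cdot,\cdot\rrangle_s$.

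By the defining property of the Legendre transform, together with the identity $\theta^R_b(\frac{d}{dt}|_{t=0}\exp(tX)b) = X$ for $X \in \a\nnn_-$, one has
\[
\llangle X,\, \mathfrak{L}^{\C}_\beta(b) - \mathfrak{L}_{\beta'}(b)\rrangle_s \,=\, X\cdot \log z(b) - X\cdot \log\Delta_{\gamma,\gamma}(b),
\]
where $X\cdot f(b) := \frac{d}{dt}|_{t=0} f(\exp(tX)b)$. I then verify that this difference is dominated by $\Phi_{BK}$ in two cases. For $X \in \a$: the $H\times H$-homogeneity established in Corollary~\ref{corollary;homog} and Proposition~\ref{proposition;twisthomog}, combined with the identity~\eqref{homogeneitygenminor}, implies that both $z$ and $\Delta_{\gamma,\gamma}$ have left $H$-weight $\gamma = |z|_1$, so both derivations equal $\gamma(X)$ and the difference vanishes. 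For $X \in \nnn_-$ and $b \in K^* = AN_-$: since $[b]_+ = 1$, the product $\exp(tX)b = (\exp(tX)[b]_-)\cdot [b]_0$ lies in $N_-\cdot [b]_0$, so $[\exp(tX)b]_0 = [b]_0$ and therefore $X\cdot \log \Delta_{\gamma,\gamma} = 0$; the remaining term $(X\cdot z)/z$ is dominated by $\Phi_{BK}$ by Corollary~\ref{corollary;domination} applied with $F' = 0$.

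Thus $\llangle X, \mathfrak{L}^\C_\beta - \mathfrak{L}_{\beta'}\rrangle_s \prec \Phi_{BK}$ on $K^* \cap G^{w_0,e}$, and by Lemma~\ref{lemma;scaling domination} its pullback by $L_s$ is $O(e^{s\delta})$ on $\CC(\delta)\times\T$ for every $X$ in a fixed real basis of $\a\nnn_-$. Since $\llangle\cdot,\cdot\rrangle_s = \frac{2}{s}\mathrm{Im}(\cdot,\cdot)_\g$ pairs $\a\nnn_-$ non-degenerately with $\kk$, and extends $\C$-linearly to a pairing with $\g$, inverting this pairing recovers the $\g$-valued difference at the cost of a factor $|s|$. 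This factor is absorbed into the exponential rate via $|s|e^{s\delta} = O(e^{s\delta'})$ for any $\delta' < \delta$, so the estimate holds after shrinking $\delta$ slightly. The polynomial-in-$s$ loss from inverting the rescaled pairing is the only subtlety I foresee; everything else is a direct application of the domination framework of Section~\ref{section; domination}.
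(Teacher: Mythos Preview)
Your proof is correct and follows essentially the same route as the paper: test $\mathfrak{L}^\C_\beta - \mathfrak{L}_{\beta'}$ against $X\in\a\nnn_-$ via the defining equation of the Legendre transform, dispose of the $\a$-case by $H$-homogeneity, and handle the $\nnn_-$-case by domination and Lemma~\ref{lemma;scaling domination}. Two minor remarks. First, your observation that $X\cdot\log\Delta_{\gamma,\gamma}=0$ on $K^*$ for $X\in\nnn_-$ is a clean shortcut; the paper instead applies Corollary~\ref{corollary;domination} to both $z$ and $\Delta_{\gamma,\gamma}$ symmetrically. Second, the $|s|$ factor you flag from inverting $\llangle\cdot,\cdot\rrangle_s$ is a genuine point that the paper's proof glosses over; your fix (absorbing $|s|e^{s\delta}$ into $O(e^{s\delta'})$ for $\delta'<\delta$) is the right way to make the statement precise, and is harmless for the downstream application in Lemma~\ref{lemma;ThimmbracketsI} where an extra $2/s$ appears anyway.
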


\begin{proof} Given $X \in \a \nnn_-$, let $X^R$ denote the right invariant vector field on $K^*$ with $X^R(e) = X$. By definition of the Legendre transform, 
\begin{align*}
     \llangle X, \mathfrak{L}_{\beta_R}-\mathfrak{L}_{\beta'} \rrangle_s + \sqrt{-1}\llangle X, \mathfrak{L}_{\beta_I} \rrangle_s & = \llangle \theta^R(X^R), \mathfrak{L}_{\beta_R}-\mathfrak{L}_{\beta'} \rrangle_s + \sqrt{-1}\llangle \theta^R(X^R), \mathfrak{L}_{\beta_I} \rrangle_s   \\
    %& = \llangle \theta^R(X^R), \mathfrak{L}_{\beta}-\mathfrak{L}_{\beta'} \rrangle_s^\C \\ This is just made up notation as far as i can tell
     & = \beta(X^R) - \beta'(X^R) \\ 
     & = \frac{X\cdot \Delta_{\gamma,\gamma}}{\Delta_{\gamma,\gamma}} - \frac{X\cdot z}{z}.
\end{align*}
By~\eqref{homogeneitygenminor}, if $X \in \a$ then this function is identically $0$. If $X\in \nnn_-$, then applying Corollary~\ref{corollary;domination} and Lemma~\ref{lemma;scaling domination} to $z$ and $\Delta_{\gamma,\gamma}$,
\[
    \left(\frac{X\cdot \Delta_{\gamma,\gamma}}{\Delta_{\gamma,\gamma}} - \frac{X\cdot z}{z}\right)(L_s(p)) = 
      O(e^{s\delta}).
\]
The same argument proves a similar result for the pair of forms $\overline{\beta} = \beta_R - \sqrt{-1}\beta_I$ and $\overline{\beta}' = \beta'$. Combining the resulting two equations completes the proof.
\end{proof}

\begin{lemma} \label{lemma;ThimmbracketsI}
Let $\left\{- , -\right\}_{s}$ denote the Poisson bracket of $\mathcal{L}_s^*\Omega^s$. Then, for all $i,k \in [1,\tilde r]$ and $j \in [1,m]$:
\begin{enumerate}[label=(\roman*)]
    \item $\{ x_{-i}, \Upsilon_k\}_{s}  = \delta_{i,k} + O(e^{s\delta})$,
    \item $\{ x_j, \Upsilon_k\}_{s}  \in \Z +  O(e^{s\delta})$, 
	\item $\{\upsilon_j,\Upsilon_k\}_{s}  =  O(e^{s\delta})$,
\end{enumerate}
 as real-valued functions on $\mathcal{C}(\delta) \times \T \times T$.
\end{lemma}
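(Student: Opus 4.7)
The strategy is to evaluate each bracket on $\mathcal{C}(\delta) \times \T \times T$ by pushing forward through $\mathcal{L}_s$ and using the explicit bivector formula~\eqref{equation; delinearization of T^*K cross section} for $(\Omega^s)^{-1}$. Identify $\Upsilon_k$, pulled back via $\mathcal{L}_s$, with the angular coordinate on the trivialization $T$-factor, whose left-invariant extension to $K$ satisfies $d\Upsilon_k = P_k^L$. Paired with $P_k^L$, the root-space bivector $(E_\alpha \wedge E_{-\alpha})^L$ vanishes since $\langle P_k, E_{\pm\alpha}\rangle = 0$, while the coefficient $2s\sqrt{-1}/(\exp(2s\sqrt{-1}\langle\xi, [E_\alpha, E_{-\alpha}]\rangle) - 1)$ decays exponentially on $\mathring{\ttt}_+^*$ (the exponent has positive real part growing linearly in $|s|$). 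The $s\pi_K$ contribution evaluated at $k = e^Z t$ is $O(se^{s\delta}) = O(e^{s\delta})$, since $\pi_K$ vanishes on $T$ and $Z = O(e^{s\delta})$ by Lemma~\ref{intersection lemma}. Thus for any function $F$ near $\mathcal{L}_s(p)$, $\{F, \Upsilon_k\}_{\Omega^s} = \pm P_k(F) + O(e^{s\delta})$, where $P_k(F)$ is the directional derivative along $P_k \in \ttt^*$ on the $\mathring{\ttt}_+^*$-factor of the trivialization.

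For case (i), by Lemma~\ref{lemma;hwpt casimir approximation} and the identity $\mathcal{S}\circ E_s^{-1}\circ \Psi^s\circ \mathcal{L}_s = \xi$ on the trivialization (which holds because $\xi \in \mathring{\ttt}_+^*$ and $\mathcal{S}$ is constant on coadjoint orbits), $x_{-i} = \xi^i\circ \mathcal{L}_s + O(e^{s\delta})$ with the same control on differentials. Hence $P_k(x_{-i}) = P_k(\xi^i) + O(e^{s\delta}) = \delta_{i,k} + O(e^{s\delta})$, giving (i). For cases (ii) and (iii), express $\mathcal{L}_s^*(x_j)$ and $\mathcal{L}_s^*(\upsilon_j)$ as pullbacks $f \circ \Psi^s$ where $f$ is a $\Z$-linear combination of $\log|z_i|$ (respectively, $\arg z_i$) for $\theta$-coordinates $z_i$. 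By Proposition~\ref{proposition;legendre transform}, the Hamiltonian vector field of $f\circ \Psi^s$ is the fundamental vector field of $\mathfrak{L}_f(\Psi^s(\cdot))$ for the left $K$-action. Lemma~\ref{lemma;Thimmlemma} then gives $\mathfrak{L}_{d\log|z_i|}\circ L_s = \mathfrak{L}_{d\log\Delta_{|z_i|_1,|z_i|_1}}\circ L_s + O(e^{s\delta})$, while $\mathfrak{L}_{d\arg z_i}\circ L_s = O(e^{s\delta})$. The first Legendre transform is a constant element of $\ttt$, determined by the bi-homogeneity of $\Delta_{\gamma,\gamma}$ under $H$-translation; pairing it with $P_k$ via $-\langle P_k, \Ad_{k^{-1}}\mathfrak{L}_f\rangle = -\langle P_k, \mathfrak{L}_f\rangle + O(e^{s\delta})$ (using $Z = O(e^{s\delta})$ and $\Ad_{t^{-1}}|_\ttt = \id$) reduces to a character--cocharacter pairing, giving (ii) an integer leading term and (iii) a leading term of zero.

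The main obstacle is verifying that the pairing in case (ii) is genuinely integral rather than rational, since the Legendre transform of $d\log\Delta_{\gamma,\gamma}$ involves the inverse of $(\cdot,\cdot)_\g|_\ttt$, which generically introduces denominators coming from the symmetrizers $d_i$. The integrality must emerge from the specific linear combinations defining $x_j$ in Theorem~\ref{good coordinates} -- the unimodularity of the map $F$ and the $\Z$-linearity of the tropicalized comparison map $\varPsi_\theta^t$, combined with the symmetrizer scaling $d_{|i_j|}$ appearing in Corollary~\ref{corollary;triangularbracket} in the special case $\theta = \theta_{\sigma(\mathbf{i})}$ -- which should exactly cancel the rational denominators. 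A secondary technical point is the careful bookkeeping of the $s\pi_K$ term in $(\Omega^s)^{-1}$ near $T \subset K$, where the explicit $s$ prefactor must be offset by the vanishing of $\pi_K$ on $T$ combined with the $Z = O(e^{s\delta})$ control from Lemma~\ref{intersection lemma}.
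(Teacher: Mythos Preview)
Your approach coincides with the paper's in all essential respects: both arguments pass through the Legendre transform identity (Proposition~\ref{proposition;legendre transform}) to rewrite $\{f,\Upsilon_k\}_s$ as $d(\Upsilon_k\circ\mathcal{L}_s^{-1})$ evaluated on a fundamental vector field, then invoke Lemma~\ref{lemma;hwpt casimir approximation} for~(i) and Lemma~\ref{lemma;Thimmlemma} for~(ii)--(iii) to identify that vector field up to $O(e^{s\delta})$.

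Two differences are worth noting. First, your opening invocation of the explicit bivector~\eqref{equation; delinearization of T^*K cross section} is not used in the paper's proof of this lemma. Because each of $x_{-i},x_j,\upsilon_j$ factors through $L_s$, each is a pullback along $\Psi^s$, and the paper works exclusively through the collective-Hamiltonian formula of Proposition~\ref{proposition;legendre transform}. The bivector formula is reserved for the \emph{next} lemma (the $\{\Upsilon_i,\Upsilon_j\}_s$ brackets), where neither entry factors through $\Psi^s$. Your analysis of the three pieces of~\eqref{equation; delinearization of T^*K cross section} is therefore redundant here, though not incorrect.

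Second, regarding the integrality in~(ii): the paper does \emph{not} obtain it from the particular linear combinations defining the $x_j$ of Theorem~\ref{good coordinates}, as you conjecture. Rather, it asserts integrality directly for each individual $\theta$-coordinate (or Laurent monomial) $z$: writing $\gamma=|z|_1$, the element $X\in\ttt$ arising from the Legendre transform of $d\log\Delta_{\gamma,\gamma}$ generates the maximal-torus action in the trivialization, which by~\eqref{equation;actionmodelspace} simply translates the $T$-factor; the pairing with $d\Upsilon_k$ is then declared integral because $X$ is an ``integral element'' of~$\ttt$. Since the $x_j$ are $\Z$-linear combinations of the $\lambda_i$, the claim for $x_j$ follows by linearity once it is known for each $\lambda_i$. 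So the cancellation mechanism you anticipate---symmetrizers from Corollary~\ref{corollary;triangularbracket} offsetting denominators from $(\cdot,\cdot)_\g$---is not what the paper invokes; your concern is legitimate, but the paper addresses it by a direct lattice claim which it does not spell out in detail.
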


\begin{proof}  If $f$ is a real valued function defined on $\mathcal{C} \times \T \times T$ such that $f(x,t,t') = f(x,t)$, then by \eqref{step 3 diagram} and Proposition \ref{proposition;legendre transform},
\begin{equation}\label{eq; lem 6.3 proof eq 1}
    \begin{split}
        \{f,\Upsilon_k\}_s(p) & = \{f\circ \mathcal{L}_s\n,\Upsilon_k\circ \mathcal{L}_s\n\}_{\Omega^s}(\mathcal{L}_s(p)) \\
    & = \{f\circ L_s\n \circ \Psi^s,\Upsilon_k\circ \mathcal{L}_s\n\}_{\Omega^s}(\mathcal{L}_s(p)). \\
    & = d(\Upsilon_k\circ \mathcal{L}_s\n)\left(X_{f \circ L_s\n \circ \Psi^s}\right)_{\mathcal{L}_s(p)}\\
    & = d(\Upsilon_k\circ \mathcal{L}_s\n)\left(\underline{\mathfrak{L}_{f\circ L_s\n}(\Psi^s(\mathcal{L}_s(p)))})_{\mathcal{L}_s(p)}\right)\\
    \end{split}
\end{equation}

\noindent \underline{Equation (i):} First, consider the case where $f = x_{-i}$, $i \in [1,\tilde r]$. By Lemma~\ref{lemma;hwpt casimir approximation}, for all $X \in \ttt$,
\begin{equation}
    x_{-i}\circ L_s\n = \langle \hw^{PT}\circ L_s\n,X_i \rangle = \langle  \mathcal{S} \circ E_s\n ,X_i\rangle + O(e^{s\delta}).
\end{equation}
Since $\mathcal{S} \circ E_s\n \circ \Psi^s$ is the moment map for the Thimm torus action on $K\times \mathring{\ttt}_+^*$, it follows again by Lemma~\ref{lemma;hwpt casimir approximation} that
\begin{equation}
    X_{x_{-i}\circ L_s\n \circ \Psi^s} = \underline{X_i} + O(e^{s\delta}) ,
\end{equation}
where $\underline{X_i}$ is the fundamental vector field of $X_i$ for the Thimm torus action. Setting $f = x_{-i}$ in~\eqref{eq; lem 6.3 proof eq 1},
\begin{equation}
    \begin{split}
        \{f,\Upsilon_k\}_s(p) & = d(\Upsilon_k\circ \mathcal{L}_s\n)\left(X_{f \circ L_s\n \circ \Psi^s}\right)_{\mathcal{L}_s(p)}\\
        & = d(\Upsilon_k\circ \mathcal{L}_s\n)(\underline{X_i}_{\mathcal{L}_s(p)}) + O(e^{s\delta})\\
        & = \delta_{i,k} + O(e^{s\delta}).
    \end{split}
\end{equation}

\noindent \underline{Equation (ii):} Let $z = z_j$, $j \in [1,m]$, $\gamma = |z|_1$, $\beta = dz/z$, and $\beta' = d\Delta_{\gamma,\gamma}/\Delta_{\gamma,\gamma}$. Then,
\begin{equation}\label{eq; lem 6.3 proof eq 2}
    \beta_R = \frac{s}{2}d(x_j\circ L_s\n), \quad \beta_I = d(\upsilon_j\circ L_s\n), \quad \beta_R' = \frac{s}{2}d(g\circ L_s\n), \quad \beta_I' = 0,
\end{equation}
where $g = \langle \gamma, \wt^t\rangle$.  Note that 
\begin{equation}\label{eq; lem 6.3 proof eq 3}
\begin{split}
    g\circ L_s\n \circ \Psi^s & = \langle \gamma,\wt^t\circ L_s\n\circ \Psi^s\rangle \\
    & = \langle -\sqrt{-1}\psi_\hhh \circ \wt^t \circ L_s\n\circ \Psi^s, -\sqrt{-1}\psi_\hhh \gamma\rangle \\
    & = \langle E_s\n \circ \wt\circ \Psi^s, -\sqrt{-1}\psi_\hhh \gamma\rangle.
\end{split}
\end{equation}
Since $E_s\n \circ \wt \circ \Psi^s$ is the moment map for the action of $T$ on $K\times \mathring{\ttt}_+^*$ as the maximal torus, 
\begin{equation*}
    \frac{2}{s}\underline{\mathfrak{L}_{\beta_R'}(\Psi^s(m))}_m = \underline{X}_m,
\end{equation*}
where  $\underline{X}_m$ is the fundamental vector field of $X = -\sqrt{-1}\psi_\hhh (\gamma)\in \ttt$ with respect to the same action.

 Setting $f = x_j$ in~\eqref{eq; lem 6.3 proof eq 1}, and combining with Lemma~\ref{lemma;Thimmlemma},
\begin{equation*}
    \begin{split}
        \{x_j,\Upsilon_k\}_s(p) & = \frac{2}{s} d(\Upsilon_k\circ \mathcal{L}_s\n)\left(\underline{\mathfrak{L}_{\beta_R}(\Psi^s(\mathcal{L}_s(p)))}_{\mathcal{L}_s(p)}\right)\\
        & = \frac{2}{s} d(\Upsilon_k\circ \mathcal{L}_s\n)\left(\underline{\mathfrak{L}_{\beta_R'}(\Psi^s(\mathcal{L}_s(p)))}_{\mathcal{L}_s(p)}\right) + O(e^{s\delta})\\
        & =  d(\Upsilon_k\circ \mathcal{L}_s\n)\left(\underline{X}_{\mathcal{L}_s(p)}\right) + O(e^{s\delta})\\
        & \in \Z + O(e^{s\delta}).\\
    \end{split}
\end{equation*}
In the last step, we have that $d(\Upsilon_k\circ \mathcal{L}_s\n)\left(\underline{X}_{\mathcal{L}_s(p)}\right) \in \Z$ since $\underline{X}_{\mathcal{L}_s(p)}$ is the fundamental vector field of the integral element  $-\sqrt{-1}\psi_\hhh (\gamma)\in \ttt$ acting as an element of the maximal torus. The action of the maximal torus on the Upsilon coordinates is as in~\eqref{equation;actionmodelspace}. (In fact, one can easily say exactly which integer this formula evaluates to, but we do not need a precise value).

\noindent \underline{Equation (iii):} The same argument as in the previous case, now applied to $\beta_I$, shows that $\{\upsilon_j,\Upsilon_k\}_{s}  =  O(e^{s\delta})$. This completes the proof.
\end{proof}

The following completes the description of the Poisson brackets of $\mathcal{L}_s^*\Omega^s$.

\begin{lemma} \label{lemma;ThimmbracketsII}
Let $\left\{- , -\right\}_{s}$ denote the Poisson bracket of $\mathcal{L}_s^*\Omega^s$. Then, for all $i,j \in [1,\tilde r]$,
\begin{align*}
	\{ \Upsilon_i, \Upsilon_j\}_{s}  = O(e^{s\delta}),
\end{align*}
 as real-valued functions on $\mathcal{C}(\delta) \times \T \times T$.
\end{lemma}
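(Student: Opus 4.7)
The strategy is to exploit a diagonal torus symmetry of $(K \times \mathring{\ttt}_+^*, \Omega^s)$ to show that the bracket vanishes on the slice $N := T \times \mathring{\ttt}_+^* \subset K \times \mathring{\ttt}_+^*$, and then to control the error away from $N$ using Lemma~\ref{intersection lemma}. As in the proof of Theorem~\ref{main theorem}, each function $\Upsilon_k \circ \mathcal{L}_s\n$ extends to a smooth function $\tilde\Upsilon_k$ on the tubular neighborhood $V \times \mathring{\ttt}_+^* \times T \hookrightarrow K \times \mathring{\ttt}_+^*$ via $\tilde\Upsilon_k(k, \xi) = \Upsilon_k(p_T(k))$, where $p_T\colon e^{Z}t' \mapsto t'$ is the tubular projection onto $T$.

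Consider the diagonal action of $T$ on $K \times \mathring{\ttt}_+^*$ by $t\cdot (k,\xi) = (tkt\n,\xi)$. The form $\omega_{\mathrm{can}}$ is $K\times K$-invariant; the moment map $\mu_L$ intertwines this diagonal $T$-action with the coadjoint action on $\kk^*$; and by Lemma~\ref{T-invariance of form}, $\beta^s$ is coadjoint-$T$-invariant. Hence $\Omega^s$ is diagonal-$T$-invariant. The function $\tilde\Upsilon_k$ is also diagonal-$T$-invariant (conjugation preserves $p_T$), so $X_{\tilde\Upsilon_i}^{\Omega^s}$ is diagonal-$T$-invariant. At each point $(t,\xi) \in N$, which is fixed by the diagonal $T$-action, this field lies in the fixed subspace $\ttt \times \ttt^* = T_{(t,\xi)}N$ of the tangent action.

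Next, $\Omega^s|_N$ coincides with the standard $T^*T$ symplectic form $\omega_{\mathrm{can}}|_N$: $d\mu_L$ sends tangent vectors in $TN$ into $\ttt^* \subset \kk^*$, and $\beta^s$ restricts to zero on $\ttt^*$ because $E_s(\ttt^*) \subset A$, the 2-form $(\theta^L \wedge (\theta^L)^\dagger)_\g$ vanishes on $A$ (on $A$, both forms coincide with an $\mathfrak{a}$-valued 1-form, whose self-wedge under the symmetric pairing is zero by antisymmetry), and the homotopy operator of~\eqref{delinearization 2-form} preserves this vanishing. Consequently $N$ is a symplectic submanifold of $(K \times \mathring{\ttt}_+^*, \Omega^s)$, and since $X_{\tilde\Upsilon_i}^{\Omega^s}|_N$ is tangent to $N$ it coincides with the Hamiltonian vector field of $\Upsilon_i$ for the standard $T^*T$ form, which is purely vertical in the $\mathring{\ttt}_+^*$-direction. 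Because $\tilde\Upsilon_j$ is independent of $\xi$, this yields $\{\tilde\Upsilon_i, \tilde\Upsilon_j\}_{\Omega^s}|_N = 0$.

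Finally, $\{\tilde\Upsilon_i, \tilde\Upsilon_j\}_{\Omega^s}$ is a smooth diagonal-$T$-invariant function on the tubular neighborhood that vanishes on $N = \{Z=0\}$. Since $\ttt^\perp$ contains no nonzero diagonal-$T$-fixed vectors under the adjoint action, any such function is $O(\|Z\|^2)$ near $N$; the implicit constant depends polynomially on $|s|$ through the coefficients of $(\Omega^s)\n$ given in Example~\ref{example; delinearization of T^*K cross section}. Combined with Lemma~\ref{intersection lemma} (giving $\|Z\| = O(e^{s\delta})$ on the image of $\mathcal{L}_s$), this yields $\{\Upsilon_i, \Upsilon_j\}_s = O(|s|\cdot e^{2s\delta}) = O(e^{s\delta})$ for $s$ sufficiently negative. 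The main technical step is the improvement from a naive smoothness bound $O(\|Z\|)$ to $O(\|Z\|^2)$; this absorbs the polynomial $|s|$ factor and relies essentially on the diagonal-$T$-invariance of the bracket together with the absence of $T$-invariants in $\ttt^\perp$.
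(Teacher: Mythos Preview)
Your argument is correct and takes a genuinely different route from the paper. The paper works directly from the explicit formula~\eqref{equation; delinearization of T^*K cross section} for $(\Omega^s)^{-1}$: after observing that $P_k(d\Upsilon_i)=0$ kills the $X_j^L\wedge P_j$ term, it reduces to showing $(Y^L\cdot\Upsilon_i)(e^Zt)$ and $(Y^R\cdot\Upsilon_i)(e^Zt)$ are each $O(e^{s\delta})$ for $Y\in\ttt^\perp$, and does this via a Baker--Campbell--Hausdorff computation that extracts the $\ttt$-component of $\log(e^Ze^{q\Ad_tY})$ to first order in $q$ and shows it is $O(\|Z\|)$. The bracket is then a product of two such $O(e^{s\delta})$ factors against an $O(|s|)$ coefficient, yielding $O(|s|e^{2s\delta})=O(e^{s\delta})$. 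Your route reaches the same quadratic vanishing in $Z$ by symmetry rather than computation: diagonal-$T$-invariance forces the Hamiltonian vector fields to be tangent to the fixed locus $N$, the identification $\Omega^s|_N=\omega_{\mathrm{can}}|_N$ gives vanishing of the bracket on $N$, and the absence of $T$-invariants in $\ttt^\perp$ kills the linear term. The paper's approach is more self-contained (no need to verify $\beta^s|_{\ttt^*}=0$ or the symplectic submanifold property), while yours explains the geometric mechanism and packages the second-order vanishing as a single invariance statement rather than as a product of two first-order vanishings.
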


\begin{proof}
The brackets $\{ \Upsilon_i, \Upsilon_j\}_{s}$ can be approximated using the formula for $(\Omega^s)\n$ provided in~\eqref{equation; delinearization of T^*K cross section}. First, note that $P_k(d\Upsilon_i) =0$ for all $i,k \in [1,\tilde r]$. Second, recall from Remark~\ref{remark; quasitriangular r matrix} that $\pi_K =  \Lambda_0^L - \Lambda_0^R$ where $\Lambda_0 = \sum_{\alpha \in R_+} E_\alpha \wedge E_{-\alpha} $. Third, note that the denominator of the first term of~\eqref{equation; delinearization of T^*K cross section} does not approach zero as $s\to -\infty$.
For a fixed $p \in \mathcal{C}(\delta) \times \T$ and $t\in T$, by Lemma~\ref{intersection lemma} we may take $s$ sufficiently negative that we can express $\mathcal{L}_s(p,t) = (e^Zt,\xi) \in K\times \mathring{\ttt}^*_+$ in terms of the trivialization \eqref{equation;trivializationnbd}. Here $Z\in \ttt^\perp$ and $\xi\in \mathring{\ttt}_+^*$ are elements which depend on $s$.
Noting that $E_\alpha,~E_{-\alpha}\in \ttt^\perp +\sqrt{-1} \ttt^\perp$, it then suffices to show that, for any $Y \in \ttt^\perp$, 
\begin{equation} \label{suffUpsilonbrackets}
(Y^L\cdot \Upsilon_i)(e^Zt) = O(e^{s\delta}),\qquad (Y^R\cdot \Upsilon_i)(e^Zt) = O(e^{s\delta}),
\end{equation}
where $Y^L$ and $Y^R$ denote the left- and right-invariant vector fields on $K$ whose value at $e$ is $Y$.

There exists unique $Z'\in \ttt^\perp$ and $X\in \ttt$ so that for $q\in \R$, $\log (e^Z e^{q\Ad_t Y}) =\log( e^{Z+qZ'} e^{qX}) \mod q^2$. 
Due to Lemma~\ref{intersection lemma}, the element $Z$ is $O(e^{s\delta})$. Therefore, by the Baker-Campbell-Hausdorff formula,
\[
Z + q(\Ad_t Y + O(e^{s\delta}))  = Z + q(Z' +X + O(e^{s\delta})) \mod q^2.
\]
By taking orthogonal projection $\kk \to \ttt$, we see that $X = O(e^{s\delta})$. 

Now, computing from the definition,
\begin{align*}
(Y^L\cdot \Upsilon_i)(e^Zt) & = \frac{d}{dq}\Big|_{q=0} \Upsilon_i(e^Z t e^{qY}) \\
& = \frac{d}{dq}\Big|_{q=0} \Upsilon_i(e^Z e^{q\Ad_t Y} t) \\
& = \frac{d}{dq}\Big|_{q=0} \Upsilon_i(e^Z e^{q\Ad_t Y} ) \\
& = \frac{d}{dq}\Big|_{q=0} \Upsilon_i(e^{Z+qZ'} e^{qX} ) \\
& = d\Upsilon_i(X)  = O(e^{s\delta}),
\end{align*}
as desired. The proof for the action of $Y^R$ is essentially the same. 
\end{proof}

\begin{corollary} \label{corollary;bestcoords}
The coordinates~\eqref{goodcoords1} and~\eqref{goodcoords2} can be chosen such that, on $\mathcal{C}(\delta) \times \T \times T$, 
\begin{align*}
\mathcal{L}_s^*\Omega^s 
% & = dx_1 \wedge d\upsilon_1 + \dots + dx_m \wedge d\upsilon_m  + dx_{-1} \wedge d\Upsilon_1 + \dots + dx_{-\tilde r}\wedge d\Upsilon_{\tilde r}
% + O(e^{s\delta}) \\
& = \omega_{-\infty} + O(e^{s\delta}).
	\end{align*}
	%Here $O(e^{s\delta})$ denotes a 2-form such that when it is written in the coordinates~\eqref{goodcoordsmodelspace}, the coefficients are $O(e^{s\delta})$ as functions of $s<0$.
\end{corollary}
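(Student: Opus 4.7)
The strategy is to match all pairwise Poisson brackets of the coordinates $(x_{-i}, x_j, \upsilon_j, \Upsilon_k)$ under $\mathcal{L}_s^*\Omega^s$ with those prescribed by $\omega_{-\infty}$ as in Definition~\ref{defintion; symplectic structure on toric space}, up to $O(e^{s\delta})$. Since the matrix of the symplectic form in any coordinate chart is the inverse of the bracket matrix, and the prescribed matrix is the standard Darboux block (uniformly non-degenerate), matrix inversion preserves the $O(e^{s\delta})$ estimate and yields the claim.

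First, I would handle the brackets among the horizontal coordinates $x_{-i}, x_j, \upsilon_j$, which depend only on the first two factors. Since $\Psi^s$ is Poisson to $(K^*,s\pi_{K^*})$, since the trivialization~\eqref{equation;trivializationnbd} realizes $\Psi^s$ on $V\times\mathring{\ttt}_+^*\times T$ as the projection followed by the Iwasawa composition $(Z,\xi)\mapsto E_s(\Ad_{e^Z}^*\xi)$, and since $(L_s)^*(s\pi_{K^*})=\pi_s^\theta$, these horizontal brackets of $\mathcal{L}_s^*\Omega^s$ coincide with the corresponding brackets of $\pi_s^\theta$. By Theorem~\ref{theorem;bracketconvergence} and Theorem~\ref{good coordinates}, they agree with the Darboux pairing $\{x_j,\upsilon_l\}=\delta_{j,l}$ (all others vanishing) up to $O(e^{s\delta})$.

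Next, I would address brackets involving the $\Upsilon$-coordinates. Lemmas~\ref{lemma;ThimmbracketsI} and~\ref{lemma;ThimmbracketsII} immediately give the prescribed values $\{x_{-i},\Upsilon_k\}_s=\delta_{i,k}$, $\{\upsilon_j,\Upsilon_k\}_s=0$, and $\{\Upsilon_i,\Upsilon_k\}_s=0$, each up to $O(e^{s\delta})$. The only mismatch is Lemma~\ref{lemma;ThimmbracketsI}(ii): $\{x_j,\Upsilon_k\}_s=n_{j,k}+O(e^{s\delta})$ with $n_{j,k}\in\Z$ possibly nonzero. I would resolve this by replacing $\Upsilon_k$ by the modified angular coordinate
\[
\Upsilon'_k := \Upsilon_k - \sum_{j=1}^{m} n_{j,k}\,\upsilon_j,
\]
which descends to $\T\times T$ since the $n_{j,k}$ are integers and the $\upsilon_j$ are defined modulo $2\pi$. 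A direct computation, using $\{x_j,\upsilon_l\}_s=\delta_{j,l}+O(e^{s\delta})$, yields $\{x_j,\Upsilon'_k\}_s=O(e^{s\delta})$, while the other $\Upsilon'$-brackets remain as before up to $O(e^{s\delta})$ because $\{x_{-i},\upsilon_l\}_s$, $\{\upsilon_j,\upsilon_l\}_s$, and $\{\Upsilon_i,\upsilon_l\}_s$ are all $O(e^{s\delta})$ by the previous step and Lemma~\ref{lemma;ThimmbracketsI}(iii). With $\omega_{-\infty}$ now defined via Definition~\ref{defintion; symplectic structure on toric space} in the coordinates $(x,\upsilon,\Upsilon')$, every coordinate bracket matches, and the conclusion follows from matrix inversion.

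The principal obstacle is verifying that the integer $n_{j,k}$ appearing in Lemma~\ref{lemma;ThimmbracketsI}(ii) is genuinely independent of $s$ and of the base point $p$. Tracing the proof of that lemma, $n_{j,k}$ equals the pairing $d(\Upsilon_k\circ\mathcal{L}_s\n)(\underline{X}_{\mathcal{L}_s(p)})$, where $\underline{X}$ is the fundamental vector field of the fixed element $X=-\sqrt{-1}\psi_\hhh(|z_j|_1)\in\ttt$ for the maximal-torus action on $K\times\mathring{\ttt}_+^*$; in the trivialization~\eqref{equation;trivializationnbd} this reduces to the evaluation of the dual basis functional $\Upsilon_k$ on $X$, which depends only on the fixed degree datum of $\theta$ and the basis $X_1,\dots,X_{\tilde r}$, and in particular is independent of $s$ and $p$. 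A secondary technical point is that the $O(e^{s\delta})$ estimates for the entries of the bracket matrix are uniform on $\mathcal{C}(\delta)\times\T\times T$ in the sense of Notation~\ref{notation;bigo}, so this bound indeed survives matrix inversion against the Darboux leading term.
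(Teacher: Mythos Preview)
Your argument is correct and follows the paper's proof almost verbatim: the horizontal brackets come from Theorem~\ref{theorem;bracketconvergence} and Theorem~\ref{good coordinates}, the mixed brackets from Lemmas~\ref{lemma;ThimmbracketsI} and~\ref{lemma;ThimmbracketsII}, and matrix inversion passes the $O(e^{s\delta})$ estimate from the bivector to the form. Your verification that the integers $n_{j,k}$ are independent of $s$ and $p$ is exactly right.

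The one genuine difference is how you absorb the integers $n_{j,k}$. You shear the angle coordinates, replacing $\Upsilon_k$ by $\Upsilon'_k=\Upsilon_k-\sum_j n_{j,k}\upsilon_j$; the paper instead shears the action coordinates, replacing $x_j$ by $x_j-\sum_k N_{j,k}x_{-k}$. Both are unimodular and both kill the unwanted bracket. The paper's choice has a structural advantage: the new $x_j$'s remain linear coordinates on $(G^{w_0,e},\theta)^t_\R$ alone, so the framing~\eqref{goodcoords1}--\eqref{goodcoords2} (``coordinates on $(G^{w_0,e},\theta)^t_\R\times\T$'' and ``coordinates on $T$ dual to $X_1,\dots,X_{\tilde r}$'') is preserved verbatim, and property~\ref{bracketitem3} of Theorem~\ref{good coordinates} (the unimodular identification of the action image with the Langlands dual BK cone) survives unchanged. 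Your $\Upsilon'_k$'s mix the $\T$ and $T$ factors, so strictly speaking they are no longer ``coordinates on $T$'' of the form~\eqref{goodcoords2}, and one has to re-verify that the $T\times T$-action and properties~\ref{sympPT property A}--\ref{sympPT property D} still behave correctly for the downstream steps (they do, but it is extra bookkeeping). For the corollary as literally stated, the paper's shear is the more natural fit.
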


\begin{proof}
By Theorem~\ref{good coordinates} and Theorem~\ref{theorem;bracketconvergence}, the coordinates~\eqref{goodcoords1} can be chosen so that, under $\mathcal{L}_s^* \Omega^s$,
\[
\{x_j,\upsilon_k\}_s = \delta_{j,k} +O(e^{s\delta}), \quad \{x_j, x_k\}_s = \{x_j, x_{-k} \}_s = \{x_{-j}, x_{-k} \}_s = \{\upsilon_j,\upsilon_k\}_s = O(e^{s\delta})
\]
for $j,k>0$.
By Lemma~\ref{lemma;ThimmbracketsI}, one has for this choice of coordinates,
\[
\{ x_{-i}, \Upsilon_k\}_{s}  = \delta_{i,k} + O(e^{s\delta}),\quad \{ x_j, \Upsilon_k\}_{s}  = N_{j,k} +  O(e^{s\delta}), \quad \{\upsilon_j,\Upsilon_k\}_{s}  =  O(e^{s\delta}),
\]
where $N_{j,k} \in \Z$.
Finally, by Lemma~\ref{lemma;ThimmbracketsII}, 
\[
\{ \Upsilon_i, \Upsilon_j\}_{s}  = O(e^{s\delta}).
\]
Consider the unimodular change of coordinates given by replacing
\[
x_j := x_j-\sum_{k=1}^{\tilde{r}}N_{j,k} x_{-k}.
\]
It is easy to check that the new coordinates $x_{-i},x_j,\Upsilon_i,\upsilon_j$ satisfy all the above equalities except now
\[
\{ x_j, \Upsilon_k\}_{s}  =  O(e^{s\delta}).
\]
This completes the proof.
\end{proof}

In what follows we will assume without loss of generality we have chosen the coordinates~\eqref{goodcoords1} and~\eqref{goodcoords2} so that the conclusion of Corollary~\ref{corollary;bestcoords} holds.

\noindent \textbf{Step 5 (correction maps):}  Unfortunately, the map $\mathcal{L}_s$ does not make the diagrams \eqref{WTS diagrams} commute. It is therefore necessary to precompose  $\mathcal{L}_s$ with a ``correction map''.

\begin{lemma}\label{correction maps}
    For all $\delta >0$ and any bounded open subset $U \subset \CC(\delta )$, there exists $T\times T$-equivariant embeddings $G_s \colon U\times \T \times T \to \CC(\delta) \times \T \times T$ such that the following diagrams commute.
\begin{equation} \label{equation;verybigsquare}
    \begin{tikzcd}
      U \times \T\times T \ar[d,"\pr"]\ar[r,"G_s",hookrightarrow] & \CC(\delta) \times \T\times T \ar[r,"\mathcal{L}_s",hookrightarrow] & K\times \mathring{\ttt}_+^*\ar[d,"\Psi^s"] \\
      U \times \T \ar[d,"\hw^{PT}"] & & K^* \ar[d,"E_s\n"]\\
      \mathring{\ttt}_+^* & & \kk^* \ar[ll,"\mathcal{S}"] \\
    \end{tikzcd}
\end{equation}  
\begin{equation} \label{equation;otherverybigsquare}
    \begin{tikzcd}
      U \times \T\times T \ar[d,"\pr"]\ar[r,"G_s",hookrightarrow] & \CC(\delta) \times \T\times T \ar[r,"\mathcal{L}_s",hookrightarrow] & K\times \mathring{\ttt}_+^*\ar[d,"\Psi^s"] \\
      U \times \T \ar[d,"\wt^{PT}"] & & K^* \ar[d,"\wt"]\\
      \mathring{\ttt}_+^* & & \kk^* \ar[ll,"E_s\n"] \\
    \end{tikzcd}
\end{equation}  
Moreover, with respect to the coordinates $x_{-i},x_k,\upsilon_k,\Upsilon_i$, the Jacobian $(G_s)_*$ has  block-form
\begin{equation}
    (G_s)_* = \left(\begin{array}{cccc}
         I_{\tilde{r} \times \tilde{r}} + O(e^{s\delta})& O(e^{s\delta})&  O(e^{s\delta})& 0\\
          0 & I_{m\times m} &0  &0  \\
         
         0 & 0 & I_{m\times m}&0 \\
         0 & 0 & 0 & I_{\tilde{r}\times \tilde{r}} \\
    \end{array}\right),
\end{equation}
where $O(e^{s\delta})$ denotes a matrix whose entries are $O(e^{s\delta})$.
\end{lemma}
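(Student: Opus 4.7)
The plan is to realize $G_s$ as an exponentially small correction of the identity that modifies only the $x_{-i}$ coordinates, constructed via the implicit function theorem. Set
\[
F_s := \mathcal{S}\circ E_s\n\circ \Psi^s\circ \mathcal{L}_s.
\]
Unwinding the tubular trivialization of Step~2 together with the identity $\mathcal{S}(\Ad^*_k\xi) = \xi$ for $\xi\in \mathring{\ttt}_+^*$ shows that $F_s(y)$ is the $\mathring{\ttt}_+^*$-component of $\mathcal{L}_s(y)$ in that trivialization; in particular $F_s$ factors through $\pr$ and is $T\times T$-invariant. The same computation yields $\Psi^s\circ \mathcal{L}_s = L_s\circ \pr$, so Lemma~\ref{lemma;wtPT commuting diagram} gives
\[
E_s\n\circ \wt\circ \Psi^s\circ \mathcal{L}_s = \wt^{PT}\circ \pr \qquad \text{on } \CC(\delta)\times \T\times T.
\]
Commutativity of the second diagram therefore reduces to the statement that $G_s$ preserves $\wt^{PT}\circ \pr$.

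Using Lemma~\ref{lemma;hwpt casimir approximation} I write $F_s = \hw^{PT}\circ \pr + \epsilon_s$ with $\epsilon_s$ and $d\epsilon_s$ of order $O(e^{s\delta})$, and Theorem~\ref{good coordinates}(2) identifies $\hw^{PT}$ with the $\tilde{r}$-tuple of coordinates $(x_{-i})$. The first-diagram condition $F_s\circ G_s = \hw^{PT}\circ \pr$ then takes the form of the componentwise implicit system
\[
x_{-i}(y) = x'_{-i} + \epsilon_s^{(i)}\bigl(x'_{-\bullet}, x_j(y), \upsilon_j(y)\bigr), \qquad i\in [1,\tilde{r}].
\]
Its Jacobian in the $x'_{-\bullet}$ variables is $I_{\tilde{r}} + O(e^{s\delta})$, so for $s\ll 0$ the Banach contraction principle applied on the bounded closure $\overline{U}$ yields a unique smooth solution $x'_{-i}(x_{-k}, x_j, \upsilon_j)$ with $x'_{-i} - x_{-i} = O(e^{s\delta})$. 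I then set $G_s(y) := (x'_{-i}(y), x_j(y), \upsilon_j(y), \Upsilon_i(y))$. The implicit equation has $T\times T$-invariant coefficients, since both $F_s$ and $\hw^{PT}$ are $T\times T$-invariant, so its unique solution produces a $T\times T$-equivariant $G_s$.

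Compatibility with the second diagram rests on the reduced-cluster splitting $G^{w_0,e}\cong H\times L^{w_0,e}$, under which $\wt = \hw\cdot([\cdot]_0\circ \pi_L)$; after tropicalization and the comparison-map change of coordinates from Theorem~\ref{good coordinates}, this implies that $\wt^{PT} - \hw^{PT}$ is a function of the $x_j$'s alone. Since $G_s$ fixes every $x_j$, preserving $\wt^{PT}\circ \pr$ is equivalent to preserving $\hw^{PT}\circ \pr$, and the implicit system can be arranged to satisfy this exactly. The block form of the Jacobian is read off by differentiating the implicit equation: $\partial x'_{-i}/\partial x_{-k} = \delta_{i,k} + O(e^{s\delta})$, the entries $\partial x'_{-i}/\partial x_j$ and $\partial x'_{-i}/\partial \upsilon_j$ are $O(e^{s\delta})$, and $\partial x'_{-i}/\partial \Upsilon_k = 0$ because $F_s$ is $\Upsilon$-independent; the remaining three rows are identity blocks by construction. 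The delicate part will be establishing exact simultaneous commutativity of both diagrams, which amounts to guaranteeing that the implicit correction $x'_{-i} - x_{-i}$ lies in the kernel of the $\hw^{PT}$-deformation and will require exploiting the full structure of Theorem~\ref{good coordinates} rather than just its item~(2).
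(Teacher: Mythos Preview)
Your reduction to a map $g_s\colon U\times\T\to\mathcal{C}(\delta)\times\T$ built by an implicit-function/contraction argument, then setting $G_s=g_s\times\id_T$, is exactly the paper's route; the paper simply cites the construction in \cite{AHLL} (Lemmas~3.1--3.2 and Proposition~3.3, done for a single fiber $C=\{p\}$) and asserts the same argument extends to any bounded convex $C\subset\mathring{\ttt}_+^*$. Your Jacobian computation and the $T\times T$-equivariance are fine.

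The genuine gap is your treatment of the second diagram. You correctly reduce it to $\wt^{PT}\circ\pr\circ G_s=\wt^{PT}\circ\pr$. But the argument you give is internally inconsistent: granting your claim that $\wt^{PT}-\hw^{PT}$ depends only on the $x_j$'s, the equivalence you state says $G_s$ preserves $\wt^{PT}$ iff it preserves $\hw^{PT}$; yet your $G_s$ is built precisely so that $F_s\circ G_s=\hw^{PT}$, which forces $\hw^{PT}\circ G_s=\hw^{PT}-\epsilon_s\circ G_s\neq\hw^{PT}$. By your own logic, then, your map \emph{violates} the second diagram rather than satisfying it. The closing suggestion that the correction $x'_{-i}-x_{-i}$ should lie ``in the kernel of the $\hw^{PT}$-deformation'' cannot work, since the correction lives entirely along the $x_{-i}$-axes that $\hw^{PT}$ reads off. (There is also a secondary issue: after the unimodular adjustment of Corollary~\ref{corollary;bestcoords} the new $x_j$'s involve the $x_{-k}$'s, so the splitting claim for $\wt^{PT}-\hw^{PT}$ already needs care in those coordinates.) The paper does not give details here either---it asserts $\wt^t\circ g_s=\wt^t$ is ``easy to see looking at the definition of $g_s$'' from \cite{AHLL}---so repairing this requires going back to that specific construction rather than further manipulating the abstract implicit scheme you describe.
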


\begin{proof}
Since the diagram  \eqref{step 3 diagram} commutes, it suffices to find a $T$-equivariant map $g_s\colon U\times \T \to \mathcal{C}(\delta)\times \T$ such that
	that the following diagrams commute,
\begin{equation}\label{correction map big diagram}
    \begin{tikzcd}
      U \times \T \ar[d,"\hw^{PT}"]\ar[r,"g_s"] & \mathcal{C}(\delta) \times \T \ar[r,"L_s"] & K^* \ar[d,"E_s\n"]\\
      \mathring{\ttt}_+^* & & \kk^*\ar[ll,"\mathcal{S}"] \\
    \end{tikzcd}
\end{equation}
\begin{equation}\label{correction map big diagram II}
    \begin{tikzcd}
      U \times \T \ar[d,"\wt^{PT}"]\ar[r,"g_s"] & \mathcal{C}(\delta) \times \T \ar[r,"L_s"] & K^* \ar[d,"\wt"]\\
      \mathring{\ttt}_+^* & & \kk^*\ar[ll,"E_s\n"] \\
    \end{tikzcd}
\end{equation}
and, with respect to the coordinates $x_{-i},x_k,\upsilon_k$, the Jacobian of $g_s$ has block-form
\begin{equation}\label{diagram; correction map proof second diagram}
	(g_s)_* = \left(
	\begin{array}{ccc}
         I_{\tilde{r} \times \tilde{r}} + O(e^{s\delta})& O(e^{s\delta})& O(e^{s\delta})\\
         0 &I_{m\times m} & 0    \\
         0 & 0 & I_{m\times m} \\
	\end{array}\right).
\end{equation}
The map $G_s = g_s \times \id_T\colon U \times \T \times T \to \mathcal{C}(\delta) \times \T \times T$ will then have the desired properties. 

For every bounded open set $U \subset \mathcal{C}(\delta)$, it is possible to find a convex bounded set $C \subset \mathring{\ttt}_+^*$ such that
\[
    U \times \T \subset (\hw^{PT})\n(C)\cap (\mathcal{C}(\delta) \times \T).
\]
Thus it is sufficient to construct $g_s$ on subsets of the form $ (\hw^{PT})\n(C)\cap (\mathcal{C}(\delta) \times \T)$ for any convex bounded set $C \subset \mathring{\ttt}_+^*$ .

Such a map $g_s$ was constructed in \cite[Lemma 3.1, Lemma 3.2, Proposition 3.3]{AHLL} for the case where $C = \{p\}$ for an arbitrary  element $p\in \mathring{\ttt}_+^*$. The same implicit function theorem argument used in \cite[ Lemma 3.2]{AHLL} can be easily extended to construct  $g_s$ with the properties described above for any convex bounded open set $C\subset \mathring{\ttt}_+^*$.

Although it was not mentioned in \cite{AHLL}, it is easy to see  that~\eqref{correction map big diagram II} commutes. By an argument similar to the proof of Lemma~\ref{lemma;wtPT commuting diagram}, it suffices to show that $\wt^t\circ g_s = \wt^t$. This is easy to see looking at the definition of $g_s$.
\end{proof}

% It follows immediately from \eqref{equation;otherverybigsquare} that the embedding $\mathcal{L}_s \circ G_s \colon U \times \T \times T \to K \times \mathring{\ttt}_+^*$ makes the right diagram in \eqref{WTS diagrams} commute.
To see that the left diagram in \eqref{WTS diagrams} commutes, observe that by \eqref{equation;verybigsquare}, \eqref{linearization equation}, and since $\mathcal{S}\circ \mu_L= -\mu_R$, 
\[
    \hw^{PT} \circ \pr  = \mathcal{S} \circ E_s\n \circ \Psi^s \circ \mathcal{L}_s \circ G_s = \mathcal{S} \circ\mu_L \circ \mathcal{L}_s \circ G_s = -\mu_R \circ \mathcal{L}_s \circ G_s.
\]

\noindent \textbf{Step 6 (set up Moser's trick):} Fix $\delta>0$ and let $U$ be a bounded open subset of $\mathcal{C}(\delta)$.
Let $\omega_s  = G_s^*\mathcal{L}_s^*\Omega^s \in\Omega^2(
U \times \T \times T)$. The goal is to construct a Moser flow that deforms $\omega_s$ to $\omega_{-\infty}$. For $s<0$, define a closed 2-form $\omega^\tau_s$ on $U \times \T \times T$ by the equation
\[
	\omega^\tau_s = (1-\tau)\omega_{-\infty} + \tau\omega_s,\quad \tau \in [0,1].
\] 
% Since $d\omega_{-\infty} = 0$ and $d\omega_s = 0$, the forms $\omega^\tau$ are all closed.

\begin{lemma}\label{mosers trick lemma 1}
    For all $\delta>0$, $\tau\in [0,1]$, and $s\ll 0$, the form $\omega^\tau_s$ is non-degenerate on $U \times \T \times T$.
\end{lemma}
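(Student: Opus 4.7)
The plan is to show that on $U\times \T\times T$, the form $\omega_s$ itself differs from $\omega_{-\infty}$ by terms of order $O(e^{s\delta})$ in the coordinates $x_{-i},x_k,\upsilon_k,\Upsilon_i$; once this is established, non-degeneracy of the convex combination $\omega^\tau_s$ for $s\ll 0$ follows from non-degeneracy of $\omega_{-\infty}$ by a uniform continuity argument on the compact closure of $U\times \T\times T$.

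First, I would pull Corollary~\ref{corollary;bestcoords} back by $G_s$. That corollary gives
\[
\mathcal{L}_s^*\Omega^s = \omega_{-\infty} + \eta_s\qquad\text{on }\mathcal{C}(\delta)\times\T\times T,
\]
where all coefficients of $\eta_s$ in the coordinates $x_{-i},x_k,\upsilon_k,\Upsilon_i$ are $O(e^{s\delta})$. Using the block form of $(G_s)_*$ from Lemma~\ref{correction maps}, namely a block identity plus a single block of $O(e^{s\delta})$ entries (and a column of zeros in the $\Upsilon$-direction), one computes directly that
\[
G_s^*(dx_{-i}) = dx_{-i} + O(e^{s\delta}),\quad G_s^*(dx_k)=dx_k,\quad G_s^*(d\upsilon_k)=d\upsilon_k,\quad G_s^*(d\Upsilon_i)=d\Upsilon_i,
\]
where now the $O(e^{s\delta})$ notation refers to 1-forms whose coefficients in the $x,\upsilon,\Upsilon$ coordinates are $O(e^{s\delta})$ on $U\times\T\times T$ (and whose derivatives are likewise $O(e^{s\delta})$, since the Jacobian entries are Laurent monomials in the cluster variables, and derivatives of such entries are controlled in the same way by Lemma~\ref{lemma;scaling domination} and Corollary~\ref{corollary;domination}). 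Since $U$ has compact closure inside $\mathcal{C}(\delta)$ and $\T\times T$ is compact, the estimates are uniform. Therefore
\[
\omega_s = G_s^*\mathcal{L}_s^*\Omega^s = G_s^*\omega_{-\infty} + G_s^*\eta_s = \omega_{-\infty} + \tilde\eta_s,
\]
with $\tilde\eta_s$ a 2-form whose coefficients in the coordinates $x_{-i},x_k,\upsilon_k,\Upsilon_i$ are $O(e^{s\delta})$ uniformly on $U\times\T\times T$.

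Consequently $\omega^\tau_s = \omega_{-\infty} + \tau\tilde\eta_s$, and its top exterior power satisfies
\[
(\omega^\tau_s)^{\wedge(\tilde r+m)} = (\omega_{-\infty})^{\wedge(\tilde r+m)} + O(e^{s\delta}),
\]
uniformly in $\tau\in[0,1]$ and in points of $U\times\T\times T$. By Definition~\ref{defintion; symplectic structure on toric space}, $(\omega_{-\infty})^{\wedge(\tilde r+m)}$ is (up to a nonzero constant) the standard volume form $dx_{-\tilde r}\wedge\cdots\wedge dx_m\wedge d\upsilon_1\wedge\cdots\wedge d\Upsilon_{\tilde r}$, which is nowhere zero and bounded away from zero on the compact closure of $U\times\T\times T$. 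Hence for all $s$ with $|s|$ sufficiently large, $(\omega^\tau_s)^{\wedge(\tilde r+m)}$ is nowhere zero for every $\tau\in[0,1]$, which gives non-degeneracy of $\omega^\tau_s$.

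The only non-routine point is keeping the error estimate uniform in $\tau$ and in the point after the two pullbacks $\mathcal{L}_s$ and $G_s$. This reduces to observing that the $O(e^{s\delta})$ estimates from Corollary~\ref{corollary;bestcoords} and Lemma~\ref{correction maps} are stated in the same coordinate system and are uniform on compacts; the choice of $U\subset\mathcal{C}(\delta)$ bounded and the compactness of $\T\times T$ then close the argument.
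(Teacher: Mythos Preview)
Your proof is correct and follows essentially the same approach as the paper's: use Corollary~\ref{corollary;bestcoords} to write $\mathcal{L}_s^*\Omega^s = \omega_{-\infty} + O(e^{s\delta})$, then use the Jacobian estimate from Lemma~\ref{correction maps} to show $G_s^*$ preserves this form, hence $\omega^\tau_s = \omega_{-\infty} + O(e^{s\delta})$ is non-degenerate for $s\ll 0$. Your extra detail (the explicit pullback of coordinate $1$-forms and the top-power argument) is fine but not needed; one minor slip is the parenthetical claiming the Jacobian entries of $G_s$ are Laurent monomials in cluster variables---they are not (the map $g_s$ comes from an implicit function construction), but this is irrelevant since pulling back a $2$-form requires only the Jacobian itself, not its derivatives.
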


\begin{proof}
	%Recall that  $\omega_{-\infty}$ has the form in Equation \eqref{model symplectic manifold} in the coordinates $\lambda,X,\varphi,\xi$. 
	By Corollary~\ref{corollary;bestcoords}, $\mathcal{L}_s^*\Omega^s = \omega_{-\infty} + O(e^{s\delta})$.
	%where $O(e^{s\delta})$ denotes a 2-form such that when it is written in the coordinates $\lambda,X,\varphi,\xi$ the coefficients are $O(e^{s\delta})$ as functions of $s$.
	By Lemma \ref{correction maps}, 
	\[
		\omega_s = G_s^*\mathcal{L}_s^*\Omega^s = G_s^*(\omega_{-\infty} + O(e^{s\delta})) = \omega_{-\infty} + O(e^{s\delta}).
	\]
	Thus
	\[
	    \omega^\tau_s = (1-\tau)\omega_{-\infty} +\tau(\omega_{-\infty} + O(e^{s\delta})) = \omega_{-\infty} + O(e^{s\delta})
	\]
	is non-degenerate for $s\ll 0$.
\end{proof}

Define another closed 2-form $\alpha_s = \omega_s - \omega_{-\infty}$ on $U \times \T \times T$. Then $\alpha_s$ is $O(e^{s\delta})$.
 
\begin{lemma}\label{mosers trick lemma 2a}
Let $U$ be an open subset of $\mathcal{C}(\delta)$. Then $\alpha_s$ is exact on $U\times \T \times T $.
\end{lemma}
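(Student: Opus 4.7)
The plan is to produce explicit primitives for both $\omega_s$ and $\omega_{-\infty}$ on $U \times \T \times T$; then $\alpha_s = \omega_s - \omega_{-\infty}$ is a difference of exact forms, hence exact.

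First I would handle $\omega_{-\infty}$ directly. Although the angle coordinates $\upsilon_j$ and $\Upsilon_i$ are only defined modulo $2\pi$, the 1-forms $d\upsilon_j$ and $d\Upsilon_i$ are globally defined (closed) 1-forms on $\T$ and $T$, respectively, while the functions $x_j$ and $x_{-i}$ are globally defined on $U$. Hence
\[
\lambda_{-\infty} := -\sum_{j=1}^{m} x_j \, d\upsilon_j \;-\; \sum_{i=1}^{\tilde{r}} x_{-i} \, d\Upsilon_i
\]
is a globally defined 1-form on $U \times \T \times T$, and a direct computation from Definition~\ref{defintion; symplectic structure on toric space} gives $d\lambda_{-\infty} = \omega_{-\infty}$.

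Next I would observe that $\Omega^s$ is exact on $K \times \mathring{\ttt}_+^*$. The canonical symplectic form is $\omega_{\mathrm{can}} = -d\theta_{\mathrm{taut}}$ where $\theta_{\mathrm{taut}}$ is the tautological 1-form on the cotangent bundle $T^*K$ (which contains $K \times \mathring{\ttt}_+^*$ as an open subset). The delinearized form is
\[
\Omega^s = \omega_{\mathrm{can}} + \mu_L^* d\beta^s = d\bigl(-\theta_{\mathrm{taut}} + \mu_L^* \beta^s\bigr),
\]
where $\beta^s \in \Omega^1(\kk^*)$ is the 1-form from~\eqref{delinearization 2-form}. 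Therefore $\omega_s = G_s^* \mathcal{L}_s^* \Omega^s = d\bigl(G_s^* \mathcal{L}_s^*(-\theta_{\mathrm{taut}} + \mu_L^*\beta^s)\bigr)$ is exact on $U \times \T \times T$.

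Combining both steps, the 1-form
\[
\eta_s := G_s^* \mathcal{L}_s^*\bigl(-\theta_{\mathrm{taut}} + \mu_L^*\beta^s\bigr) - \lambda_{-\infty}
\]
is a globally defined primitive for $\alpha_s = \omega_s - \omega_{-\infty}$ on $U \times \T \times T$. There is essentially no obstacle here — the only point to check is that the angular-coordinate ambiguities cancel in the primitive for $\omega_{-\infty}$, which they do because each $\upsilon_j$ (resp.\ $\Upsilon_i$) is paired with a globally defined linear coordinate on $U$. This primitive $\eta_s$ will be the natural starting point for the Moser trick used in the subsequent steps, where one must control the size of $\eta_s$ as $s\to -\infty$; but that estimate is a separate matter from exactness and belongs to the next lemma.
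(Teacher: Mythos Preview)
Your proof is correct and actually more direct than the paper's. Both arguments begin by noting that $\omega_{-\infty}$ has an explicit primitive, so the content is in showing $\omega_s = G_s^*\mathcal{L}_s^*\Omega^s$ is exact. You observe that $\Omega^s$ itself is exact on $K\times\mathring{\ttt}_+^*$, since $\omega_{\mathrm{can}} = -d\theta_{\mathrm{taut}}$ and $\mu_L^* d\beta^s = d(\mu_L^*\beta^s)$ with $\beta^s\in\Omega^1(\kk^*)$; pulling back an exact form is exact. The paper instead argues cohomologically: it exhibits an involution $(k,\xi)\mapsto(\bar k,\xi)$ on $K\times\mathring{\ttt}_+^*$ which is anti-symplectic for $\Omega^s$ and which $\mathcal{L}_s$ intertwines with the angle-flipping involution on $\mathcal{C}(\delta)\times\T\times T$; since this involution acts trivially on $H^2(\T\times T)$ but sends $[\mathcal{L}_s^*\Omega^s]$ to its negative, the class must vanish. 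Your route is shorter for the bare exactness statement. The paper's involution is a natural structure that the authors may have had in mind for other reasons (it identifies a real locus), but for this lemma alone your observation that $\Omega^s$ already has a primitive on the target is the cleaner move. Your closing caveat is also right: the explicit $\eta_s$ you write down is not obviously $T\times T$-invariant or $O(e^{s\delta})$, so Lemma~\ref{mosers trick lemma 2b} still requires separate work.
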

\begin{proof}
The form $\omega_{-\infty}$ is exact, and so it suffices to show that $\mathcal{L}_s^* \Omega^s$ is exact. Consider the involution 
\begin{equation}
\label{baronKt}
 K \times \mathring{\ttt}^*_+ \to K \times \mathring{\ttt}^*_+\colon \qquad (k,\xi)\mapsto (\overline{k},\xi).
\end{equation}
 By naturality of the canonical symplectic form on $T^* K$, the involution of $(K\times \ttt^*,\omega_{\mathrm{can}})$ which maps $(k,\xi)\mapsto (\overline{k},-\xi)$ is symplectic. Therefore~\eqref{baronKt} is anti-symplectic on $(K\times \mathring{\ttt}^*_+,\omega_{\mathrm{can}})$. 
 Consider also the involution 
\begin{equation}
\label{baronPT}
\overline{(\cdot)}\colon  \mathcal{C}(\delta) \times \T \times T \to \mathcal{C}(\delta) \times \T \times T, \qquad \overline{
(x_{-i}, x_j ,\Upsilon_i,\upsilon_j)} =(x_{-i},x_j,-\Upsilon_i,-\upsilon_j).
\end{equation}
 Then $\mathcal{L}_s$ intertwines the involutions~\eqref{baronPT} and~\eqref{baronKt}, and so by \cite[Section~3.4]{AMW}, the involution~\eqref{baronKt} is anti-symplectic on $(K\times \mathring{\ttt^*_+}, \Omega^s)$. Consequently, one has $\overline{(\cdot)}^* \mathcal{L}_s^*\Omega^s = -\mathcal{L}_s^*\Omega^s$.

On the other hand, for any cohomology class $[\omega]\in H^2(\mathcal{C}(\delta)\times \T \times T)\cong H^2(\T\times T)$, one has $\overline{(\cdot)}^* [\omega] = [\omega]$.
Therefore the class $[\mathcal{L}_s^*\Omega^s]\in H^2(\mathcal{C}(\delta)\times \T \times T)$ is equal to $0$, and hence $\mathcal{L}_s^*\Omega^s$ is exact.
\end{proof}

\begin{lemma}
\label{torushtpy}
Let $n$ be a positive integer, let $\T= (S^1)^n$ and fix a subtorus $\T' \subset \T$. Assume $\sigma_s \in \Omega^k(\T)$ be a family of exact smooth $\T'$-invariant forms, parametrized by $s<0$. Assume there is some $\delta>0$ such that $\sigma_s = O (e^{s\delta})$.
Then, there exists a family $\gamma_s\in \Omega^{k-1} (\T)$ of smooth $\T'$-invariant forms, which satisfy $d\gamma_s= \sigma_s$ and $\gamma_s = O (e^{s\delta})$.
\end{lemma}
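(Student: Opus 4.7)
\emph{Proof proposal.} The plan is to apply Hodge theory on the compact manifold $\T$. Equip $\T = (S^1)^n$ with the flat $\T$-invariant Riemannian metric, and let $d^*$, $\Delta = dd^* + d^*d$, and $G$ (the Green's operator inverting $\Delta$ on the orthogonal complement of the harmonic forms) be the associated Hodge-theoretic operators. I would set
\[
\gamma_s := d^* G \sigma_s.
\]
The Hodge decomposition $\sigma_s = dd^*G\sigma_s + d^*dG\sigma_s + H\sigma_s$ (where $H$ denotes the harmonic projection), together with exactness of $\sigma_s$ (which makes it orthogonal to both harmonic and coexact forms), yields $d\gamma_s = \sigma_s$. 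Since the flat metric is $\T$-invariant, the operators $d^*$ and $G$ commute with translations by $\T'$, so $\gamma_s$ inherits $\T'$-invariance from $\sigma_s$; linearity of $d^*G$ transfers smoothness in $s$ from $\sigma_s$ to $\gamma_s$.

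It remains to verify the bound $\gamma_s = O(e^{s\delta})$. My approach is Fourier analysis on $\T$. Writing $\sigma_s = \sum_I \sum_{m \in \Z^n \setminus \{0\}} c_{I,m}(s)\, e^{im\cdot\theta}\, d\theta^I$ (the mean-zero constraint records exactness), a mode-by-mode computation of $d^*G$ produces
\[
\gamma_s = \sum_I \sum_{m\neq 0} \sum_{j\in I} \epsilon_{j,I}\, \frac{i m_j}{|m|^2}\, c_{I,m}(s)\, e^{im\cdot\theta}\, d\theta^{I\setminus\{j\}}
\]
for suitable signs $\epsilon_{j,I}$. Each Fourier coefficient of $\gamma_s$ has modulus at most $|m|^{-1}$ times the corresponding coefficient of $\sigma_s$. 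Combining with the standard decay estimate $|c_{I,m}(s)| \leq C_N |m|^{-N} \|\sigma_s\|_{C^N}$ for $N$ sufficiently large and summing over $m$ bounds the coefficients of $\gamma_s$ uniformly by a multiple of $\|\sigma_s\|_{C^N}$.

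The main obstacle is that the hypothesis ``$\sigma_s = O(e^{s\delta})$'' is stated only at the level of coefficients, whereas the Fourier estimate just sketched controls $\gamma_s$ in terms of a higher $C^N$-norm of $\sigma_s$. In the intended application to $\alpha_s = \omega_s - \omega_{-\infty}$ from Lemma~\ref{mosers trick lemma 2a}, this should be automatic: the $s$-dependence of $\alpha_s$ enters through explicit scalar factors of the form $e^{s\delta'}$ with $\delta' \geq \delta$, and these factors commute with differentiation in the angular coordinates on $\T$, so all $C^N$-norms of $\sigma_s$ are in fact $O(e^{s\delta})$ and the bound on $\gamma_s$ follows.
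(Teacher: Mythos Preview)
Your Hodge-theoretic construction $\gamma_s = d^*G\sigma_s$ is correct and gives a valid primitive with the right invariance, but it is a genuinely different route from the paper's. The paper works directly in Fourier modes: for each $m\neq 0$ it picks a single index $j(m)$ with $m_{j(m)}\neq 0$ and sets
\[
\gamma[m] = \frac{1}{2\pi\sqrt{-1}\,m_{j(m)}}\,\iota_{\partial/\partial\varphi_{j(m)}}\sigma[m],
\]
then sums over $m$. This is not $d^*G$ (which would produce the symmetric combination $\sum_j m_j/|m|^2$); it is a more elementary ad hoc inverse that avoids invoking Hodge theory. Because this operator is not $\T$-equivariant, the paper recovers $\T'$-invariance only at the end by averaging, whereas your $d^*G$ gives it for free. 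For the estimate, the paper does not pass through $C^N$-norms at all: it invokes Parseval to bound $\gamma$ in terms of $\sum_K\int_\T|\sigma_K|^2$, which is controlled by the $C^0$-norm of $\sigma$ since $\T$ has finite volume.

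The obstacle you flag is real, and you are right to flag it: a $C^0$ bound on $\sigma_s$ alone does not in general yield a $C^0$ bound on any primitive, and the paper's Parseval step literally only gives an $L^2$ bound on $\gamma_s$, not a pointwise one. Your proposed fix---observing that in the actual application the $s$-dependence of $\alpha_s$ enters through scalar factors $e^{s\delta'}$, so that all $C^N$-norms inherit the same decay---is exactly the honest way to close this gap, and it works equally well for the paper's construction. So your argument is at least as complete as the paper's on this point; the trade-off is that your approach uses slightly heavier machinery (the Green's operator) while the paper's is more hands-on but correspondingly looser about the final estimate.
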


\begin{proof}
Let us first introduce some notation. For any positive integer $l$, we denote multi-indices by capital letters, eg $J=(j_1,\dots, j_l)$. Write 
\[
e^{2\pi \sqrt{-1} \varphi} = (e^{2\pi\sqrt{-1}\varphi_1}, \dots, e^{2\pi\sqrt{-1}\varphi_n}) \in \T= (S^1)^n
\]
for points of $\T$, and consider the forms $d\varphi_J = d\varphi_{j_1}\wedge \cdots \wedge d\varphi_{j_l}$. For any form $\omega\in \Omega^l(\T)$, write $\omega = \sum_J \omega_J d\varphi_J$. For $m= (m_1,\dots, m_n) \in \Z^n$, let $m\cdot \varphi = m_1 \varphi_1 + \cdots +m_n \varphi_n$, and let
\[
\hat{\omega}_J(m) = \int_{\T} \omega_J  e^{-2\pi\sqrt{-1}m \cdot \varphi} 
\]
be the $m^{th}$ Fourier coefficient of $\omega_J$. Define
\[
\omega[m] = e^{2\pi\sqrt{-1}m \cdot \varphi} \sum_J \hat{\omega}_J(m) d\varphi_J.
\]
Then $\omega = \sum_{m\in \Z^n} \omega[m]$, where the sum on the right converges uniformly because $\omega$ is smooth. 

We consider the form $\sigma_s\in \Omega^k(\T)$, writing $\sigma=\sigma_s$ for simplicity. For each $0\ne m = (m_1,\dots, m_n) \in \Z^n$, choose $j(m)\in \{1,\dots,n\}$ so that $m_{j(m)}\ne 0$. Define the forms
\[
\gamma[m] : = \frac{1}{2\pi\sqrt{-1}m_{j(m)}} \iota_{\frac{\partial}{\partial\varphi_{j(m)}}} \sigma[m],
\]
and let $\hat{\gamma}_J(m)$ be the coefficient of $d\varphi_{J}$ in $e^{-2\pi\sqrt{-1}m \cdot\varphi} \gamma[m]$, so
\[
e^{-2\pi\sqrt{-1}m \cdot\varphi} \gamma[m] = \sum_J \hat{\gamma}_J(m) d\varphi_J.
\]

Because the Fourier series of $\sigma$ converges uniformly, we have
\begin{align}
\nonumber \sum_{m\ne 0} |\hat{\gamma}_J(m)| &  \le  \sum_{m \ne 0} \frac{1}{2\pi m_{j(m)}} \sum_K |\hat{\sigma}_K(m)| \\
& =\label{firstapprox} \sum_K \sum_{m\ne 0} \frac{|\hat{\sigma}_K(m)|}{2\pi m_{j(m)}} \\
& \nonumber < \sum_K \sum_{m\ne 0} |\hat{\sigma}_K(m)| < \infty
\end{align}
and hence $\sum_{m\ne 0} \gamma[m]$ converges uniformly to a smooth $k-1$ form. Let us denote this form by $\gamma = \sum_{m\ne 0} \gamma[m]$. 

Since the exterior derivative $d$ preserves the Fourier mode $m$, we have 
\begin{equation}
\label{modeclosed}
d(\sigma[m]) = (d\sigma) [m] = 0.
\end{equation}
Applying Cartan's magic formula and \eqref{modeclosed}, we have $d\gamma[m] = \sigma[m]$ for all $0\ne m \in \Z^n$. So $\sigma- d\gamma = \sigma[0]$. Because $\sigma$ is exact, it follows that $\sigma[0] = \sum_K \hat{\sigma}_K (0) d\varphi_K$ is exact. But this can only be true if $\sigma[0]=0$. So $\sigma = d\gamma$. 

Let us control the size of $\gamma$. We compute as in \eqref{firstapprox} and use Parseval's theorem:
\begin{align*}
|\gamma_J|^2 & \le \sum_K \sum_m |\hat{\sigma}_K(m)|^2  = \sum_K \int_\T |\sigma_K|^2.
\end{align*}
Since $\sigma_s$ is $O(e^{s\delta})$, we conclude $\sum_K \int_\T |\sigma_K|^2$ is $O(e^{2s\delta})$. Hence $\gamma_s =\gamma$ is $O(e^{s\delta})$.
Finally, since $d\gamma_s=\sigma_s$ and $\sigma_s$ is $\T'$-invariant, by averaging $\gamma_s$ with respect to translations by $\T'$ we can assume that $\gamma_s$ is $\T'$-invariant as well. Because $\T$ and $\T'$ are compact this does not affect the conclusion that $\gamma_s$ is $O(e^{s\delta})$.
\end{proof}

\begin{lemma}\label{mosers trick lemma 2b}
    Let $U$ be a convex bounded open subset of $\mathcal{C}(\delta)$. Then there exists a $T\times T$-invariant form $\beta_s \in \Omega^1(U\times \T\times T)$ such that $d\beta_s = \alpha_s$ and $\beta_s$ is $O(e^{s\delta})$.
\end{lemma}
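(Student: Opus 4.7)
My plan is to use the product structure $U\times\T\times T$, where $U$ is contractible (being convex) and $\T\times T$ is a compact torus, and to produce the primitive factor by factor. First I would pick a basepoint $p_0\in U$ and use the straight-line homotopy $h_\tau(x)=(1-\tau)p_0+\tau x$ (which stays in $U$ by convexity) to define the Poincaré homotopy operator $H$ acting fibrewise over $\T\times T$. Extending this to $\tilde h_\tau(x,t,t')=(h_\tau(x),t,t')$ gives the standard Cartan-type identity
\[
\alpha_s = dH\alpha_s + Hd\alpha_s + \pi^* s_0^*\alpha_s = dH\alpha_s + \pi^*s_0^*\alpha_s,
\]
where $s_0\colon \T\times T\to U\times\T\times T$, $(t,t')\mapsto(p_0,t,t')$, and $\pi\colon U\times\T\times T\to\T\times T$ is the projection; $d\alpha_s=0$ kills the middle term.

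Next, I would analyze $s_0^*\alpha_s\in\Omega^2(\T\times T)$. It is exact on $\T\times T$ (pullback of an exact form by Lemma~\ref{mosers trick lemma 2a}), is $O(e^{s\delta})$ because restriction to $\{p_0\}\times\T\times T$ preserves the estimate, and is $T\times T$-invariant because $s_0$ is equivariant for the trivial action of $T\times T$ on $U$. Applying Lemma~\ref{torushtpy} with ambient torus $\T\times T$ and subtorus equal to the image of $T\times T$ in $\T\times T$ under $\iota\times\id$, I obtain a $1$-form $\gamma_s$ on $\T\times T$ satisfying $d\gamma_s=s_0^*\alpha_s$, which is $T\times T$-invariant and $O(e^{s\delta})$.

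I would then set $\beta_s=H\alpha_s+\pi^*\gamma_s$. Exactness $d\beta_s=\alpha_s$ is immediate from the two identities above. Invariance follows because $\tilde h_\tau$ and $\pi$ are equivariant with respect to the trivial $T\times T$-action on $U$, so both $H\alpha_s$ and $\pi^*\gamma_s$ inherit $T\times T$-invariance from their inputs. For the size estimate, $\pi^*\gamma_s=O(e^{s\delta})$ directly from Lemma~\ref{torushtpy}, while $H\alpha_s=O(e^{s\delta})$ because $U$ is bounded, so the Poincaré integral is over a bounded parameter region with an integrand whose coefficients (in the coordinates $x_{-i},x_j,\Upsilon_i,\upsilon_j$) are uniformly $O(e^{s\delta})$. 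As a safety net, if equivariance of any step is in doubt, I would average the resulting $\beta_s$ over the compact group $T\times T$; averaging preserves the primitive property (since $\alpha_s$ itself is $T\times T$-invariant) and preserves the $O(e^{s\delta})$ bound.

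The step I expect to be the main obstacle is the uniform control of the $O(e^{s\delta})$ constants under the Poincaré operator $H$: one must confirm that the constants introduced by $H$ (which depend on the diameter of $U$ and on bounds for derivatives of the coefficients of $\alpha_s$ in the chosen coordinates) are genuinely uniform in $s$, so that the exponential decay survives. Once this uniformity is verified, the rest of the argument is routine.
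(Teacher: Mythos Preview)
Your proposal is correct and follows essentially the same approach as the paper: fix a basepoint in $U$, apply the straight-line Poincar\'e homotopy operator fibrewise over $\T\times T$, then handle the residual exact form on $\{p_0\}\times\T\times T$ via Lemma~\ref{torushtpy}, and sum the two primitives. The paper's treatment of your ``main obstacle'' is exactly your own observation that boundedness of $U$ controls the Poincar\'e integral, and your averaging safety net is not needed since each step is already equivariant.
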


\begin{proof} 
Fix a point $p\in U$. The set $U$ is convex so we may define a straight line retract 
\begin{align*}
H \colon   [0,1]\times U \times \T\times T & \to U \times \T \times T \\
(q,x,\upsilon, \Upsilon) & \mapsto H_q(x,\upsilon,\Upsilon) = (p+q(x-p),\upsilon,\Upsilon)
\end{align*}
from $U\times \T \times T$ to $\{p\}\times \T \times T$.
Let $h\colon \Omega^\bullet(U\times \T\times T) \to \Omega^{\bullet-1}(U\times \T\times T)$ be the homotopy operator associated with $H$, so
\[
h\alpha_s= \int_{0}^{1} \left(\iota_{\frac{\partial}{\partial q}} H^* \alpha_s \right) dq \in \Omega^1(U \times \T \times T).
\]
Note that $H_q \colon U\times \T \times T \to U\times \T \times T$ is $T\times T$-equivariant for all $q\in [0,1]$, and that the form and $\alpha_s$ is $T\times T$-invariant. What is more, $\alpha_s$ is $O(e^{s\delta})$ and $U$ is bounded. Therefore the form $h\alpha_s$ is $T\times T$-invariant and is $O(e^{s\delta})$. Since $h$ is a homotopy operator and since $\alpha_s$ is closed, one has
\[
\alpha_s = d h\alpha_s +H_0^*\alpha_s.
\]
By Lemma~\ref{mosers trick lemma 2a}, the form $H_0^* \alpha_s$ is exact. It is $T\times T$-invariant, and is $O(e^{s\delta})$. So by Lemma \ref{torushtpy}, there is a $T\times T$-invariant $\gamma_s \in \Omega^1(\{p\}\times \T \times T)$ satisfying $d\gamma_s  = H_0^*\alpha_s$, which also satisfies $\gamma_s = \mathcal{O}(e^{s\delta})$.
Then $\beta_s = h\alpha_s + \gamma_s$ has the desired properties.
\end{proof}

\noindent \textbf{Step 7 (integrate the Moser vector field):} To complete the proof, notice that there exists $0<\delta'<\delta$ and a convex bounded open set $U'\subset \mathcal{C}(\delta')$ so that $U\subset \overline{U} \subset U'$. We replace $U$ with $U'$, and all the previous discussion holds with $\delta$ replaced by $\delta'$. We will then consider $U \times \T \times T$ as a submanifold of $U' \times \T \times T\subset \mathcal{C}(\delta') \times \T \times T$.

By Lemma~\ref{mosers trick lemma 1}, the form $\omega^\tau_s\in \Omega^2(U'\times \T\times T)$ is non-degenerate for all $\tau\in [0,1]$, for $s\ll 0$. For any such $s$, the equation 
\begin{equation}\label{moser equation}
    \iota_{X^\tau_s} \omega^\tau_s = -\beta_s
\end{equation}
defines a $\tau$-dependent vector field $X^\tau_s$, $\tau\in [0,1]$, on $U' \times \T \times T$. Denote by $\phi_s^\tau$ the flow of $X^\tau_s$ for all points $p\in U'\times \T \times T$ and all $\tau \in [0,1]$ for which it is defined. 

\begin{lemma}
The vector field $X_s^\tau$ is $O(e^{s\delta'})$ for all $\tau$.
\end{lemma}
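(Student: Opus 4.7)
The plan is to solve the defining equation $\iota_{X_s^\tau}\omega_s^\tau = -\beta_s$ by inverting $\omega_s^\tau$ and then to track the size of the result in the coordinates $x_{-i}, x_j, \Upsilon_i, \upsilon_j$. Concretely, write $\omega_s^\tau = \omega_{-\infty} + \tau \alpha_s$, where by Corollary~\ref{corollary;bestcoords} together with Lemma~\ref{correction maps} we already know that $\alpha_s = \omega_s - \omega_{-\infty}$ is $O(e^{s\delta'})$. Since $\omega_{-\infty}$ is a non-degenerate constant two-form in the chosen coordinates, its matrix representation has a constant inverse, and so the matrix representation of $\omega_s^\tau$ is a uniformly small perturbation of an invertible constant matrix.

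The first step is therefore to expand the inverse map $(\omega_s^\tau)^\sharp \colon T^*(U'\times \T\times T)\to T(U'\times \T\times T)$ as a Neumann-style series. In a neighbourhood of any point, writing $\omega_s^\tau = \omega_{-\infty}(I + \tau\, \omega_{-\infty}^{-1}\alpha_s)$ (with all objects viewed as matrices in the chosen coordinate frame), one obtains for $s$ sufficiently negative
\[
(\omega_s^\tau)^\sharp = \omega_{-\infty}^\sharp + O(e^{s\delta'}),
\]
where the error term is uniform in $\tau\in [0,1]$ and on all of $U'\times \T\times T$ (which one may first shrink to a fixed bounded open set as in the preceding paragraph of the proof).

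The second step is to apply this bounded operator to $\beta_s$. By Lemma~\ref{mosers trick lemma 2b}, $\beta_s$ is $O(e^{s\delta'})$, and therefore
\[
X_s^\tau = -(\omega_s^\tau)^\sharp(\beta_s) = -\omega_{-\infty}^\sharp(\beta_s) + O(e^{s\delta'})\cdot O(e^{s\delta'}) = O(e^{s\delta'}),
\]
uniformly in $\tau \in [0,1]$. Since the expansion is uniform and $\tau$ ranges over the compact interval $[0,1]$, the estimate holds globally on $[0,1]\times U'\times \T \times T$.

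I do not expect any obstacle here: the statement is a routine consequence of inverting a uniformly small perturbation of a constant non-degenerate matrix, combined with the estimates on $\alpha_s$ and $\beta_s$ already established. The only subtlety worth flagging is that one must verify the uniformity of the Neumann expansion on the closure $\overline{U}\subset U'$, which is exactly why the argument was set up to work on the larger open set $U'\subset \mathcal{C}(\delta')$ first; boundedness of $\overline{U}$ guarantees that the coefficient bounds underlying the $O(e^{s\delta'})$ notation hold simultaneously at all points.
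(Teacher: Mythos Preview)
Your proposal is correct and takes essentially the same approach as the paper: both arguments use that $\omega_s^\tau = \omega_{-\infty} + \tau\alpha_s$ with $\alpha_s = O(e^{s\delta'})$, that $\beta_s = O(e^{s\delta'})$, and that $\omega_{-\infty}$ is a non-degenerate constant form, to conclude $X_s^\tau = O(e^{s\delta'})$. The only difference is presentational: you spell out the Neumann-series inversion explicitly, whereas the paper simply writes $\iota_{X_s^\tau}\omega_{-\infty} + \tau\,\iota_{X_s^\tau}\alpha_s = -\beta_s = O(e^{s\delta'})$ and invokes non-degeneracy of $\omega_{-\infty}$ together with smallness of $\alpha_s$ in one line.
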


\begin{proof}
By~\eqref{moser equation}, 
\[
\iota_{X^\tau_s} \omega^\tau_s = \iota_{X^\tau_s} \omega_{-\infty} +\tau \iota_{X^\tau_s} \alpha_s = -\beta_s = O(e^{s\delta'}).
\]
Since $\alpha_s$ is $O(e^{s\delta'})$ and $\omega_{-\infty}$ is nondegenerate, the vector field $X^\tau_s$ must be $O(e^{s\delta'})$.
\end{proof}

\begin{proposition}\label{moser integration}
There exists $s\ll 0$ and $x_0 \in (G^{w_0,e},\theta)^t_\R$ such that:
\begin{enumerate}
\item The flow 
\[
\phi_s^\tau|_{U \times \T \times T} \colon U\times \T \times T \to U'\times \T \times T
\]
is defined for all $\tau\in [0,1]$;
\item The time 1 flow $\phi_s^1$ satisfies $
(\phi_s^1)^*(\omega_s^1) = (\phi_s^1)^*(\omega_s) = \omega_{-\infty}
$
and is $T\times T$-equivariant;
\item There exists $x_0\in (G^{w_0,e},\theta)^t_\R$ so that the map 
\begin{align*}
\tilde{\phi}_s^1\colon U \times \T \times T & \to (G^{w_0,e},\theta)^t_\R \times \T \times T\\
(x,\upsilon)& \mapsto (x+x_0,\upsilon)
\end{align*}
has $\tilde{\phi}_s^1(U\times \T \times T) \subset U'\times \T \times T$, satisfies $(\tilde{\phi}_s^1)^*(\omega_s^1) = \omega_{-\infty}$, and additionally satisfies
\[
\hw^{PT} \circ \tilde{\phi}_s^1= \hw^{PT}, \text{ and } \wt^{PT}\circ \tilde{\phi}_s^1 = \wt^{PT}.
\]
\end{enumerate}
\end{proposition}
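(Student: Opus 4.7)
My plan is to establish the three items in order.

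\textbf{Item 1 (completeness of the flow).} Since $\overline{U}$ has positive distance $d>0$ from the boundary of $U'$ in the $x$-coordinate slice, and the $\T\times T$ factor is compact, it suffices to observe that $X_s^\tau$ is $O(e^{s\delta'})$ uniformly in $\tau\in[0,1]$. For $s\ll 0$ the supremum norm of $X_s^\tau$ is less than $d$, so by the escape lemma the integral curves of $X_s^\tau$ starting in $U\times \T\times T$ cannot exit $U'\times\T\times T$ before $\tau=1$.

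\textbf{Item 2 (Moser identity and equivariance).} The standard calculation
\[
\frac{d}{d\tau}(\phi_s^\tau)^*\omega_s^\tau = (\phi_s^\tau)^*\!\left(\mathcal{L}_{X_s^\tau}\omega_s^\tau + \tfrac{\partial}{\partial\tau}\omega_s^\tau\right) = (\phi_s^\tau)^*\!\left(d\iota_{X_s^\tau}\omega_s^\tau + \alpha_s\right) = (\phi_s^\tau)^*(-d\beta_s+\alpha_s)=0
\]
gives $(\phi_s^1)^*\omega_s=\omega_{-\infty}$. For $T\times T$-equivariance I would check that each ingredient is $T\times T$-invariant: $\omega_{-\infty}$ is invariant by inspection of its coefficients; $\omega_s=G_s^*\mathcal{L}_s^*\Omega^s$ is invariant because $\mathcal{L}_s=L_s\times\id_T$ is $T\times T$-equivariant (Step~3), $G_s$ is $T\times T$-equivariant (Lemma~\ref{correction maps}), and $\Omega^s$ is $T\times T$-invariant by the delinearization procedure (Example~\ref{example; delinearization of T^*K cross section}); and $\beta_s$ was chosen $T\times T$-invariant in Lemma~\ref{mosers trick lemma 2b}. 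Hence $X_s^\tau$ is $T\times T$-invariant, so its flow is equivariant.

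\textbf{Item 3 (moment map matching via a translation).} The symplectomorphism $\phi_s^1\colon(U\times\T\times T,\omega_{-\infty})\to(U'\times\T\times T,\omega_s)$ is $T\times T$-equivariant, and both source and target carry Hamiltonian $T\times T$-actions whose moment maps for the two $T$ factors are $\hw^{PT}$ and $\wt^{PT}$: on the source via properties (B), (C) of $\omega_{-\infty}$, and on the target by pullback of the corresponding moment maps on $(K\times\mathring{\ttt}_+^*,\Omega^s)$ through $\mathcal{L}_s\circ G_s$, which agree with $\hw^{PT}$ and $\wt^{PT}$ by diagrams \eqref{equation;verybigsquare} and \eqref{equation;otherverybigsquare}. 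By uniqueness of moment maps for a connected torus acting on a connected manifold, there exist constants $c_1,c_2\in\ttt^*$ with $\hw^{PT}\circ\phi_s^1=\hw^{PT}+c_1$ and $\wt^{PT}\circ\phi_s^1=\wt^{PT}+c_2$. By Theorem~\ref{good coordinates}, both $\hw^{PT}$ and $\wt^{PT}$ are linear functions of the $x$-coordinates. I would then take $\tilde\phi_s^1$ to be $\phi_s^1$ post-composed with a translation by an appropriate $-x_0\in(G^{w_0,e},\theta)^t_\R$ chosen so that its induced shifts on $\hw^{PT}$ and $\wt^{PT}$ cancel $c_1$ and $c_2$. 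Since translation in $x$ preserves the constant-coefficient form $\omega_{-\infty}$ and commutes with the $T\times T$-action (which only translates in $\upsilon,\Upsilon$), the modified map $\tilde\phi_s^1$ remains a $T\times T$-equivariant symplectomorphism and now intertwines both moment maps strictly. The $O(e^{s\delta'})$-smallness of $c_1,c_2$, inherited from that of $X_s^\tau$, makes $x_0$ equally small, ensuring the image still lies in $U'\times\T\times T$ for $s$ sufficiently negative.

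\textbf{Main obstacle.} The delicate point in Item~3 is that a single translation in $(G^{w_0,e},\theta)^t_\R$ must simultaneously annihilate both shifts $(c_1,c_2)\in\ttt^*\times\ttt^*$. This requires that the pair $(c_1,c_2)$ lies in the image of the linear map $x_0\mapsto(\hw^{PT}(x_0),\wt^{PT}(x_0))$. Here I would exploit the explicit linear form of $\hw^{PT}$ (which coincides with the $x_{-j}$-coordinates by Theorem~\ref{good coordinates}\ref{bracketitem-1}) together with the structural information about $\wt^{PT}$ coming from Lemma~\ref{lemma; wt hw homomorphism property} (surjectivity of $\wt^t$ onto $X_*(H)$), and verify that the constants produced by the $T\times T$-equivariance of $\phi_s^1$ necessarily lie in this image; absorbing the remaining ambiguity into the choice of moment maps on the target at the outset is one way to finesse this.
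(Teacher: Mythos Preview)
Your approach matches the paper's in all three items: completeness via the $O(e^{s\delta'})$ bound on $X_s^\tau$, the standard Moser identity plus $T\times T$-invariance of $\omega_s^\tau$ and $\beta_s$, and the moment-map uniqueness argument followed by a translation in the $x$-direction.

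The only substantive difference is your handling of what you flag as the ``main obstacle.'' The paper does not argue that the pair $(c_1,c_2)$ has any special form; it simply fixes a linear section $\sigma$ of the map $(\wt^{PT}\times\hw^{PT})\colon (G^{w_0,e},\theta)^t_\R\to\ttt^*\times\ttt^*$, sets $x_0=\sigma(\hat x_0)$ where $\hat x_0=(c_2,c_1)$, and controls $\|x_0\|$ via the operator norm of $\sigma$. This works because the map is surjective (in a reduced chart, $\hw^t$ is projection to $X_*(H)\otimes\R$, while $\wt^t$ depends nontrivially on the remaining coordinates via the principal minors $\Delta_{\omega_i,\omega_i}$; see \eqref{equation;hwwtcoordinates} and Remark~\ref{alwaysappearremark}). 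So your suggested workarounds---exploiting structural information about $\wt^{PT}$ or adjusting the target moment maps---are unnecessary: just take the section. The paper's quantitative bookkeeping (choosing $\|X_s^\tau\|<\min\{R/(2bc),R/2\}$ with $b=\|\sigma\|$ and $c$ the norm of $(\wt^{PT}\times\hw^{PT})$) then gives $\|x_0\|\le R/2$, whence the combined displacement under $\phi_s^1$ followed by the $x_0$-shift is at most $R/2+R/2=R$, keeping the image in $U'\times\T\times T$.
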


\begin{proof}
Let $R$ denote the minimum distance from $\partial U$ to $\partial U'$, where we consider the standard Euclidean metric on $(G^{w_0,e},\theta)^t_\R\cong \R^{\tilde{r}+m}$. Let $c>0$ denote the norm of the linear map 
\[
(\wt^{PT}\times \hw^{PT})\colon (G^{w_0,e},\theta)^t_\R \cong (G^{w_0,e},\theta)^t_\R\times\{1\} \to \ttt^*\times \ttt^*.
\]
Fix a linear section $\sigma$ of this map, and let $b=||\sigma||>0$.
Fix an invariant metric on $\T\times T$, and equip $(G^{w_0,e},\theta)^t_\R \times \T \times T$ with the product metric.
Since $X_s^\tau$ is $O(e^{s\delta})$, we may choose $s\ll 0$ such that $||X^\tau_s||< \min\{\frac{R}{2bc},\frac{R}{2}\}$ at all points of $U' \times \T \times T$. Then for all $(x,\upsilon) \in U\times \T\times T$ and all $\tau\in [0,1]$, we have that the distance from $\phi^\tau_s(x,\upsilon)$ to $(x,\upsilon)$ is less than $\frac{R}{2}$. Therefore the flow of $X_s^\tau$, restricted to $U\times \T\times T$, does not escape $U'\times \T \times T$, for $\tau \in [0,1]$. This establishes the first claim. 

The second claim is due to the standard Moser argument: By~\eqref{moser equation}, one has 
\[
d\iota_{X^\tau_s} \omega^\tau_s + d \beta_s = L_{X^\tau_s} \omega^\tau_s +
 \frac{\partial \omega_s^\tau}{\partial \tau} =0
\]
and therefore $(\phi_s^\tau)^*\omega_s^\tau = \omega_s^0 = \omega_{-\infty}$ wherever the flow is defined. Finally, since $\omega_s^\tau$ and $\beta_s$ are both $T\times T$-invariant, the flow of $X_s^\tau$ must be $T\times T$-equivariant.

For the third claim, notice that the map $(\wt^{PT},\hw^{PT}) \colon \mathcal{C}(\delta)\times \T \times T\to \ttt^*\times \ttt^*$ is a moment map for the $T\times T$ action, with respect to both  $\omega_{-\infty}$ and $\omega_s$  (cf. properties~\ref{sympPT property B} and~\ref{sympPT property C} and Lemma~\ref{correction maps}). By uniqueness of moment maps there is some $\hat{x}_0\in \ttt^*\times \ttt^*$ such that
\[
(\wt^{PT}\circ \phi_s^1,\hw^{PT}\circ \phi_s^1)+\hat{x}_0 = (\wt^{PT},\hw^{PT}).
\]
From $||X_s^\tau||< \frac{R}{2bc}$ we conclude that $||\hat{x}_0||<\frac{R}{2b}$.
Let $(x_0,1) = \sigma(\hat{x}_0) \in (G^{w_0,e},\theta)^t_\R \times \T \times T$. Then $||x_0||\le b||\hat{x}_0||\le R/2$. Therefore,  for $(x,\upsilon)\in U\times \T \times T$, the distance from $(x,\upsilon)$ to $\tilde{\phi}_s^1(x,\upsilon)$ is less than $R/2+R/2=R$, and hence $\tilde{\phi}_s^1(x,\upsilon)$ is contained in $U'\times \T \times T$. The form $\omega_{-\infty}$ is constant, and so a shift by $x_0$ preserves $\omega_{-\infty}$. Therefore $(\tilde{\phi}_s^1)^*(\omega_s^1) = \omega_{-\infty}$. The final claim is immediate from the construction.
\end{proof}

We then have the following embedding:
\[
\begin{tikzcd}
F\colon U \times \T \times T \ar[r,"\tilde{\phi}_s^1",hookrightarrow] & U' \times \T \times T \ar[r, "G_s",hookrightarrow] & C(\delta')\times \T \times T \ar[r,"\mathcal{L}_s",hookrightarrow] & K \times \ttt^*_+.
\end{tikzcd}
\]
It satisfies $F^* \omega_{can} = \omega_{-\infty}$, and makes the diagrams~\eqref{main theorem 1.b, diagram} commute.
This completes the proof of Theorem~\ref{main theorem}.

\subsection{Big action-angle coordinate charts on multiplicity free spaces}  \label{section; charts on multiplicity free}

Theorem \ref{main theorem} can be used to construct action-angle coordinates on big subsets of a large family of compact, connected, multiplicity free Hamiltonian $K$-manifolds.   Before stating the theorem, we define the domains of our action-angle coordinates.

Let $(M,\omega,\mu)$ be a compact, connected multiplicity free space with principal stratum $\mathring{\ttt}_+^*$, Kirwan polytope $\triangle = \mu(M) \cap \ttt_+^*$, and principal isotropy subgroup $L \leq T$. Let $\mathring{\triangle}$ denote the relative interior of $\triangle$. Recall from Section~\ref{section;multiplicity free spaces} that there is an associated toric $T/L$-manifold $(\mathring{\triangle} \times T/L,\omega_{\mathrm{std}},\pr\colon \mathring{\triangle} \times T/L \to \mathring{\triangle})$, where $\omega_{\mathrm{std}}$ was defined via a linear identification of $\Lie(T/L)^*$ with the affine subspace of $\ttt^*$ spanned by $\mathring{\triangle}$. In what follows, we will view $(\mathring{\triangle} \times T/L,\omega_{\mathrm{std}},\pr\colon \mathring{\triangle} \times T/L \to \mathring{\triangle})$ as a Hamiltonian $T$-manifold. 

Let $(\mathcal{C}\times \T\times T,\omega_{-\infty},(\hw^{PT},\wt^{PT}))$ be one of the Hamiltonian $T\times T$-manifolds defined in Section~\ref{section; statement of main theorem} (depending on the choices described therein). The open submanifolds $\mathcal{C}(\delta)\times \T\times T$, $\delta >0$, are also Hamiltonian $T\times T$-manifolds. We may form the product Hamiltonian $T\times T \times T$-manifold,
\[
	(\mathcal{C}(\delta) \times \T \times T \times \mathring{\triangle} \times T/L,\omega_{-\infty}\oplus \omega_{\mathrm{std}},(\hw^{PT},\wt^{PT},\pr)).
\] 
Our model space is the symplectic reduction of this space  with respect to the diagonal $T$ action generated by the moment map  $\pr-\hw ^{PT}$,
\[
	 M(\delta,\mathring{\triangle},L) := (\mathcal{C}(\delta) \times \T \times T \times \mathring{\triangle} \times T/L,\omega_{-\infty}\oplus \omega_{\mathrm{std}})\sslash_0 T.
\]
Provided that $\mathring{\triangle} \subset \mathring{\ttt}_+^*$ and $\delta>0$ is sufficiently small, $M(\delta,\mathring{\triangle},L)$ is non-empty. Since the action of $T$ is free, $M(\delta,\mathring{\triangle},L)$ is a smooth symplectic manifold. Denote the reduced symplectic form on $M(\delta,\mathring{\triangle},L)$ by $\omega_{\mathrm{red}}$. The moment map $(\hw^{PT},\wt^{PT})$ and the associated $T\times T$ action descend to  $M(\delta,\mathring{\triangle},L)$ so that we have a Hamiltonian $T\times T$-manifold,
\[
    (M(\delta,\mathring{\triangle},L),\omega_{\mathrm{red}},(\hw^{PT},\wt^{PT})).
\]
Note that we use the same notation for the maps induced on the quotient as for the maps $\hw^{PT}$ and $\wt^{PT}$.  By definition of the symplectic reduced space, there is a $T\times T$-equivariant diffeomorphism \[
        \mathcal{C}(\delta,\mathring{\triangle})\times \T \times (T/L) \xrightarrow{\cong} M(\delta,\mathring{\triangle},L), \quad (x,t,t'L) \mapsto [x,t,t',\hw^{PT}(x),1],
\]
where $\mathcal{C}(\delta,\mathring{\triangle})$ denotes the projection to $\mathcal{C}(\delta)$ of the $T\times T$-invariant subspace $(\hw^{PT})\n(\mathring{\triangle}) \subset \mathcal{C}(\delta) \times \T \times T$.

Finally, recall from Proposition~\ref{proposition about dense model space}  that  the dense subset $\mathcal{U}=(\mathcal{S}\circ \mu)\n(\mathring{\ttt}_+^*) \subset M$ is a Hamiltonian $K\times T$-manifold with moment map $(\mu,\mathcal{S}\circ \mu)$. The extra Hamiltonian $T$-action on $\mathcal{U}$ is the Thimm torus action. The embeddings constructed in the following theorem are action-angle coordinates by the description of $M(\delta,\mathring{\triangle},L)$ above.

\begin{theorem}\label{main corollary 1}
    Let $(M,\omega,\mu)$ be a compact\footnote{Those familiar should note that Theorem~\ref{main corollary 1} easily extends to the more general setting of \emph{convex} multiplicity free Hamiltonian $K$-manifolds  (cf. \cite[Definition 2.2]{knop2}).}, connected, multiplicity free Hamiltonian $K$-manifold such that $\mathring{\triangle}\subset \mathring{\ttt}_+^*$ and let $\mathcal{U}=(\mathcal{S}\circ \mu)\n(\mathring{\ttt}_+^*)$. For all $\varepsilon >0$, there exists $\delta >0$ and symplectic embeddings $(M(\delta,\mathring{\triangle},L),\omega_{\mathrm{red}}) \hookrightarrow (\mathcal{U},\omega)$ 
    such that:
    \begin{enumerate}[label=(\roman*)]
    	\item The symplectic volumes satisfy
    		\[
    			\Vol(M(\delta,\mathring{\triangle},L),\omega_{\mathrm{red}}) > \Vol(M,\omega) - \varepsilon. 
			\]

    	\item The embeddings $(M(\delta,\mathring{\triangle},L),\omega_{\mathrm{red}}) \hookrightarrow (U,\omega)$ are embeddings of Hamiltonian $T\times T$-manifolds:
    	\[
    	    (M(\delta,\mathring{\triangle},L),\omega_{\mathrm{red}},(\hw^{PT},\wt^{PT})) \hookrightarrow (\mathcal{U},\omega,(\mathcal{S}\circ \mu,\pr_{\ttt^*}\circ \mu)).
    	\]
    % 	\begin{enumerate}
    % 	    \item They are $T\times T$-equivariant with respect to: the action of $T\times T$ on $(\mathcal{U},\omega)$ as a subgroup of $K\times T$; and the action of $T\times T$ on $M(\delta,\mathring{\triangle},L)$ described in item 2.~above.
    % 	    \item The following diagram commutes:
    % 	    \begin{equation}\label{corollary 1 diagram}
    % \begin{tikzcd}
    %     M(\delta,\mathring{\triangle},L) \ar[d,"\hw^{PT}"] \ar[r,hookrightarrow] & \mathcal{U} \ar[d,"\mu"] \\
    %     \ttt_+^*  & \kk^*\ar[l,"\mathcal{S}"]
    % \end{tikzcd}\quad  \quad
    % \begin{tikzcd}
    %     M(\delta,\mathring{\triangle},L) \ar[d,"\wt^{PT}"] \ar[r,hookrightarrow] & \mathcal{U} \ar[d,"\mu"] \\
    %     \ttt^*  & \kk^*\ar[l,"\pr_{\ttt^*}"]
    % \end{tikzcd}
    % \end{equation}
    % \end{enumerate} 
    	\end{enumerate}    
\end{theorem}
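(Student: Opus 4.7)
The plan is to construct the embeddings in Theorem~\ref{main corollary 1} as symplectic reductions of the embedding from Theorem~\ref{main theorem}, and then to compare the volumes by fiber integration over $\mathring{\triangle}$.

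First, I would re-express $\mathcal{U}$ as a symplectic reduction. By Proposition~\ref{proposition about dense model space}, $(\mathcal{U},\omega)$ is isomorphic to the reduction of $(K\times \mathring{\ttt}^*_+ \times \mu^{-1}(\mathring{\ttt}^*_+),\omega_{\mathrm{can}}\oplus \omega)$ by the diagonal $T$-action at level $0$, whose moment map is $\mu(x)-\xi$. Under the hypothesis $\mathring{\triangle}\subset \mathring{\ttt}^*_+$ we have $\mu^{-1}(\mathring{\ttt}^*_+) = \mu^{-1}(\mathring{\triangle}) = W$, and Lemma~\ref{model for sympelctic slice} furnishes an isomorphism of Hamiltonian $T/L$-manifolds $(W,\omega,\mu)\cong (\mathring{\triangle}\times T/L,\omega_{\mathrm{std}},\pr)$. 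Combining yields
$$(\mathcal{U},\omega)\;\cong\;\bigl(K\times \mathring{\ttt}^*_+\times \mathring{\triangle}\times T/L,\;\omega_{\mathrm{can}}\oplus \omega_{\mathrm{std}}\bigr)\sslash_0 T,$$
with the reducing $T$ acting diagonally on the first and last factors and with moment map $\pr+\mu_R$.

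Second, I would note that $\mathcal{C}(\delta,\mathring{\triangle})$ is a bounded subset of $\mathcal{C}(\delta)$, since $\mathring{\triangle}$ is bounded and the fibers of $\hw^{PT}$ over $\mathring{\triangle}$ are bounded polytopes by Theorem~\ref{thm:intro_cone}(4). Choosing a bounded open $U\subset \mathcal{C}(\delta)$ containing $\mathcal{C}(\delta,\mathring{\triangle})$ and applying Theorem~\ref{main theorem} produces a $T\times T$-equivariant symplectic embedding
$$F\colon (U\times \T\times T,\omega_{-\infty})\;\hookrightarrow\;(K\times \mathring{\ttt}^*_+,\omega_{\mathrm{can}})$$
with $-\mu_R\circ F = \hw^{PT}$ and $\pr_{\ttt^*}\circ \mu_L\circ F = \wt^{PT}$. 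The product $F\times \mathrm{id}_{\mathring{\triangle}\times T/L}$ then pulls the target moment map $\pr+\mu_R$ back to $\pr-\hw^{PT}$ and is equivariant with respect to the corresponding diagonal $T$-actions; descending through the two symplectic reductions produces the sought symplectic embedding
$$F_{\mathrm{red}}\colon (M(\delta,\mathring{\triangle},L),\omega_{\mathrm{red}})\;\hookrightarrow\;(\mathcal{U},\omega).$$
The zero level set is supported over $\mathcal{C}(\delta,\mathring{\triangle})$, so $F_{\mathrm{red}}$ does not depend on the auxiliary choice of $U$. Conclusion~(ii) then follows from the diagrams~\eqref{main theorem 1.b, diagram} for $F$ together with the identification of the Thimm and maximal torus actions on $\mathcal{U}$ via Proposition~\ref{proposition about dense model space}.

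Third, for~(i), I would decompose both volumes by fiber integration over $\mathring{\triangle}$. Both $M(\delta,\mathring{\triangle},L)$ and $\mathcal{U}$ carry Hamiltonian $T$-actions with moment maps $\hw^{PT}$ and $\mathcal{S}\circ \mu$ respectively, each with image $\mathring{\triangle}$; by reduction-in-stages applied to the presentations above, the symplectic volumes decompose as Fubini integrals against a common measure $d\nu$ on $\mathring{\triangle}$ coming from the shared $\mathring{\triangle}\times T/L$ factor:
$$\Vol(M(\delta,\mathring{\triangle},L),\omega_{\mathrm{red}}) = \int_{\mathring{\triangle}}V_\delta(\lambda)\,d\nu(\lambda),\qquad \Vol(M,\omega) = \int_{\mathring{\triangle}}V_0(\lambda)\,d\nu(\lambda),$$
where $V_\delta(\lambda) = \Vol((\hw^{PT})^{-1}(\lambda)\cap \mathcal{C}(\delta)\times \T,\omega_{-\infty})$ and $V_0(\lambda) = \Vol(\mathcal{O}_\lambda,\omega_\lambda)$. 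Since $\mathcal{C}(\delta)$ grows monotonically as $\delta\searrow 0$, the functions $V_\delta$ increase monotonically to $V_0$ by Lemma~\ref{lemma;hwpt properties}(3); monotone convergence on the bounded set $\mathring{\triangle}$ then gives $\Vol(M(\delta,\mathring{\triangle},L))\to \Vol(M)$, proving~(i) for $\delta$ sufficiently small.

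The main obstacle I anticipate lies in the final step: justifying the Fubini decomposition and verifying that the two toric densities $d\nu$ on $\mathring{\triangle}$ really coincide, so that the volume comparison reduces cleanly to the fiberwise equality of Lemma~\ref{lemma;hwpt properties}(3). The remainder of the argument is essentially bookkeeping for commuting Theorem~\ref{main theorem} with the symplectic reduction that identifies $\mathcal{U}$.
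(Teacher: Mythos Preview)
Your proposal is correct and follows essentially the same approach as the paper: build the embedding by taking the product of Theorem~\ref{main theorem} with $\mathrm{id}_{\mathring{\triangle}\times T/L}$ and then reduce diagonally, using Proposition~\ref{proposition about dense model space} and Lemma~\ref{model for sympelctic slice} to identify the target with $\mathcal{U}$. The paper's volume argument differs only cosmetically: instead of monotone convergence it uses the linear estimate $V_\delta(\xi)\ge V_0(\xi)-c(\xi)\delta$ with $c$ continuous, yielding $\Vol(M(\delta,\mathring{\triangle},L))\ge \Vol(M)-C\delta$ for a uniform $C$; your anticipated obstacle about matching the base measures $d\nu$ is exactly where the paper does the work, identifying both with the pushforward Liouville measure $\mu_*\beta_W$ via the Duistermaat--Heckman description and the formula $\Vol(M,\omega)=\int_{\ttt_+^*}\Vol(\mathcal{O}_\xi,\omega_\xi)\,\mu_*\beta_W$ from \cite{GP}.
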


\begin{proof} \noindent\textbf{Construction of the embeddings and proof of (ii):} The desired embeddings of Hamiltonian $T\times T$-manifolds are constructed by first constructing embeddings of Hamiltonian $T\times T \times T$-manifolds, then reducing by a certain diagonal copy of $T$.

First, we consider the following composition of an embedding and an isomorphism of Hamiltonian $T\times T \times T$-manifolds,
\begin{equation}
    \begin{split}
        &(\mathcal{C}(\delta,\mathring{\triangle}) \times \T \times T \times \mathring{\triangle} \times T/L,\omega_{-\infty}\oplus \omega_{\mathrm{std}},(\hw^{PT},\wt^{PT},\pr))\\
        & \quad \hookrightarrow (K\times \mathring{\ttt}_+^* \times \mathring{\triangle} \times T/L,\omega_{-\infty}\oplus \omega_{\mathrm{std}},(\mathcal{S}\circ \mu_L,\pr_{\ttt^*}\circ \mu_L,\pr))\\
        & \quad \cong (K\times \mathring{\ttt}_+^* \times W,\omega_{-\infty}\oplus \omega,(\mathcal{S}\circ \mu_L,\pr_{\ttt^*}\circ \mu_L,\mu)).\\
    \end{split}
\end{equation}
The  embedding exists by Theorem \ref{main theorem}: since $\mathring{\triangle}$ is the relative interior of a convex polytope and the fibers of $\hw^{PT}$ are bounded, the set  $\mathcal{C}(\delta,\mathring{\triangle}) \subset \mathcal{C}(\delta)$  is bounded.  The isomorphism follows by Lemma~\ref{model for sympelctic slice}.

Next, we apply a diagonal symplectic reduction to this composition, corresponding to the inclusion $T \to T\times T \times T$, $t \mapsto (t\n,1,t)$. On the left side, this action is generated by the moment map $\pr - \hw^{PT}$ and on the right it is generated by $\mu-\mathcal{S}\circ\mu_L$. Since the embeddings, torus actions, and moment maps above are all compatible with this reduction, we obtain an embedding,
\begin{equation}
    \begin{split}
        &(\mathcal{C}(\delta,\mathring{\triangle}) \times \T \times T \times \mathring{\triangle} \times T/L,\omega_{-\infty}\oplus \omega_{\mathrm{std}},(\hw^{PT},\wt^{PT},\pr))\sslash_0 T\\
        & \quad  \hookrightarrow (K\times \mathring{\ttt}_+^* \times W,\omega_{-\infty}\oplus \omega,(\mathcal{S}\circ \mu_L,\pr_{\ttt^*}\circ \mu_L,\mu))\sslash_0 T.\\
    \end{split}
\end{equation}
The reduced space on the right  embeds densely as a Hamiltonian $T\times T$ submanifold of $(\mathcal{U},\omega,(\mathcal{S}\circ \mu,\pr_{\ttt^*}\circ \mu))$ by Proposition~\ref{proposition about dense model space}. The space on the left is our model space. Thus, we have constructed the desired embedding of Hamiltonian $T\times T$-manifolds, 
\[
    (M(\delta,\mathring{\triangle},L),\omega_{\mathrm{red}},(\hw^{PT},\wt^{PT})) \hookrightarrow (\mathcal{U},\omega,(\mathcal{S}\circ \mu,\pr_{\ttt^*}\circ \mu)).
\]

\noindent\textbf{Proof of (i):} Let $\beta_M$ denote the Liouville measure of $(M,\omega)$ and let $\beta_W$ denote the Liouville measure of $(W,\omega)$ as a Hamiltonian $T$-manifold. Recall, e.g.\ from \cite{GP}, that
\begin{equation}
		\Vol(M,\omega) = \int_{\kk^*} \mu_*\beta_M = \int_{\ttt_+^*}\Vol(\mathcal{O}_\xi,\omega_\xi)\mu_*\beta_W.
\end{equation}
Let $d\lambda$ denote the measure on $\hw\n(\xi)\cap \mathcal{C}$ defined by the lattice $\frac{1}{2\pi}\Z^m$. By Lemma \ref{lemma;hwpt properties}, 
\[
\int_{\hw\n(\xi)\cap \mathcal{C}}d\lambda = \Vol(\mathcal{O}_\xi,\omega_\xi).
\]
By definition of $\mathcal{C}(\delta) \subset \mathcal{C}$, there exists a constant $c(\xi)>0$, depending continuously on $\xi\in \mathring{\ttt}_+^*$, such that
\[
\int_{\hw\n(\xi)\cap \mathcal{C}(\delta)}d\lambda \geq \Vol(\mathcal{O}_\xi,\omega_\xi) - c(\xi)\delta.
\]
Since the measure $\mu_*\beta_W$ is compactly supported and $c(\xi)$ depends continuously on $\xi\in \mathring{\ttt}_+^*$, there exists $C>0$ such that 
\[
	C \geq \int_{\ttt_+^*}c(\xi) \mu_*\beta_W .
\]
Finally, let $\beta$ denote the Liouville measure of $(M(\delta,\mathring{\triangle},L),\omega_{\mathrm{red}})$. By Lemma \ref{model for sympelctic slice}, the pushforward of the Liouville measure of $(\mathring{\triangle}\times T/L,\omega_{\mathrm{std}},\pr)$ to $\ttt^*$ by $\pr$ equals $\mu_*\beta_W$ (noting again that we have linearly identified $\Lie(T/L^*)$ with the affine subspace of $\ttt^*$ spanned by $\mathring{\triangle}$). The Duistermaat-Heckman measure of $(M(\delta,\mathring{\triangle},L),\omega_{\mathrm{red}})$ as a Hamiltonian $\T \times T$-manifold is therefore $d\lambda \times \mu_*\beta_W$, which is supported on $\mathcal{C}(\delta,\mathring{\triangle})$. 
Combining these facts and applying Fubini's theorem,
\begin{equation*}
	\begin{split}
		\Vol(M(\delta,\mathring{\triangle},T_W),\omega_{\mathrm{red}}) & = \int_{\mathcal{C}(\delta,\mathring{\triangle})}d\lambda \times \mu_*\beta_W\\ 
		& = \int_{\ttt_+^*}\left(\int_{\hw\n(\xi)\cap \mathcal{C}(\delta)}d\lambda\right) \mu_*\beta_W \\
		& \geq \int_{\ttt_+^*}(\Vol(\mathcal{O}_\xi,\omega_\xi) - c(\xi)\delta) \mu_*\beta_W \\
		& \geq \int_{\ttt_+^*}\Vol(\mathcal{O}_\xi ,\omega_\xi) \mu_*\beta_W - C\delta \\
		& = \Vol(M,\omega)- C\delta.
	\end{split}	
\end{equation*}
Letting $\delta < \varepsilon /C$ completes the proof.
\end{proof}

As discussed in the introduction, Theorem \ref{thm:intro_2} has a more direct proof than Theorem \ref{main corollary 1} since there cannot be nutation effects in the case of coadjoint orbits. Nonetheless, the following proof shows how Theorem \ref{thm:intro_2} can be derived as a corollary of Theorem \ref{main corollary 1}.

\begin{proof}[Proof of Theorem \ref{thm:intro_2}]
    Let $(M,\omega,\mu)=(\mathcal{O}_\lambda,\omega_\lambda,\iota)$ be a regular coadjoint orbit of $K$, equipped with the coadjoint action of $K$ with moment map the inclusion $\iota\colon\mathcal{O}_\lambda  \to \kk^*$. Then  $(\mathcal{O}_\lambda,\omega_\lambda,\iota)$ is a multiplicity free space with principal isotropy subgroup $L=T$ and Kirwan polytope $\triangle = \{\lambda\}$. Let $(\mathcal{C}\times \T\times T,\omega_{-\infty},(\hw^{PT},\wt^{PT}))$ be one of the Hamiltonian $T\times T$-manifolds defined in Section~\ref{section; statement of main theorem}.
    
    The model space is
    \[   
        M(\delta,\mathring{\triangle},L) \cong (\mathcal{C}(\delta) \times \T \times T ,\omega_{-\infty})\sslash_{\lambda} T \cong (\triangle_\lambda(\delta)\times \T,\omega_{\rm{red}}).
    \]
    Here $\triangle_\lambda(\delta)$ is the intersection of the subspace $ (-\sqrt{-1}\psi_\hhh \circ \hw^t)(\lambda)$ with $\mathcal{C}(\delta)$.  It follows from the definition of $\omega_{-\infty}$ (Definition \ref{defintion; symplectic structure on toric space}) that the reduced symplectic form $\omega_{\rm{red}}$ equals
	\[
		\omega_{\rm{std}} := d\upsilon_1\wedge dx_1   + \dots + d\upsilon_m\wedge dx_m.
	\]
    Thus, Theorem \ref{main corollary 1} gives us the desired embeddings $(\triangle_\lambda(\delta)\times \T,\omega_{\rm{std}}) \hookrightarrow (\mathcal{O}_\lambda,\omega_\lambda)$.
\end{proof}

\begin{remark}
    By Theorem \ref{main corollary 1}, we have constructed embeddings of Hamiltonian $T$-manifolds 
    \[
        (M(\delta,\mathring{\triangle},L),\omega_{\mathrm{red}},\wt^{PT}) \hookrightarrow (M,\omega,\pr_{\ttt^*}\circ \mu)
    \]
    where one recalls that $\pr_{\ttt^*}\circ \mu$ is the moment map for the action of $T$ on $M$ as the maximal torus of $K$.  Reduction by $T$ at an arbitrary element $\xi \in \ttt^*$ yields symplectic embeddings
    \[
        M(\delta,\mathring{\triangle},L)\sslash_\xi T \hookrightarrow M\sslash_\xi T.
    \]
    In the case of coadjoint orbits, the space on the right is a symplectic analogue of a weight variety and the space on the left has the form $\triangle_{\lambda,\xi}(\delta)\times \T/T$ where $\triangle_{\lambda,\xi}(\delta)$ is the intersection of $\triangle_{\lambda}(\delta)$ with the affine subspace $ (-\sqrt{-1}\psi_\hhh \circ \wt^t)(\xi)$.  It should be straightforward to show using Theorem \ref{theorem;BKmaintheorem} that such embeddings are volume exhausting, provided the space on the right is non-empty.
\end{remark}

\section{Main Results: Applications to Gromov width}\label{gromov width section}

Given two symplectic manifolds $(N,\tau)$ and $(M,\omega)$, a  \emph{symplectic embedding} of $(N,\tau)$ into $(M,\omega)$ is an injective immersion $\varphi\colon N \hookrightarrow M$ such that $\varphi^*\omega = \tau$.  The \emph{Gromov width} of a connected symplectic manifold $(M,\omega)$ of dimension $2n$ is
\[
	\mathrm{GWidth}(M,\omega)= \sup_{r>0} \left\{ \pi r^2 \mid \exists \text{ a symplectic embedding }(B^{2n}(r),\omega_{\mathrm{std}}) \hookrightarrow (M,\omega)\right\},
\]  
where $\omega_{\mathrm{std}}$ is defined as in~\eqref{equation; omega std on r2n}.
% Let $K$ be a connected, compact, semisimple Lie group with maximal torus $T$.  Denote the Lie algebras of $K$ and $T$ by $\kk$ and $\ttt$ respectively.    The dual vector space $\ttt^*$ may be canonically identified with the subspace $(\kk^*)^T\subset \kk^*$ consisting of elements fixed by the coadjoint action of $T$ on $\kk^*$. A choice of positive roots $\alpha \in R_+$ and corresponding coroots $\alpha^\vee$ determines a \emph{positive Weyl chamber}, 
% \[
% 	\ttt_+^* = \{ \lambda \in \ttt^* \mid \langle \sqrt{-1}\lambda, \alpha^\vee\rangle \geq 0, \, \alpha\in R_+ \}.
% \]
Following earlier work by \cite{KT,GLu,Z,P}, Caviedes Castro proved the following.
	
\begin{theorem}\cite{CC}\label{GW upper  bound} Let $K$ be a compact connected simple Lie group. For all $\lambda \in  \ttt_+^*$, 
\begin{equation}\label{GW upper bound equation}
	\mathrm{GWidth}(\mathcal{O}_\lambda, \omega_\lambda) \leq \min\{ 2\pi\langle \sqrt{-1}\lambda, \alpha^\vee\rangle \mid \alpha\in R_+ \text{ and } \langle \sqrt{-1}\lambda, \alpha^\vee \rangle >0\}.
\end{equation}
\end{theorem}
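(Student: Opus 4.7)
The plan is to prove the upper bound by the standard pseudoholomorphic sphere technique originating in Gromov's non-squeezing theorem. Suppose we have a symplectic embedding $\varphi\colon (B^{2n}(r), \omega_{\mathrm{std}}) \hookrightarrow (\mathcal{O}_\lambda, \omega_\lambda)$, and let $p = \varphi(0)$. I will produce, for each positive root $\alpha$ with $\langle \sqrt{-1}\lambda, \alpha^\vee\rangle>0$, a nonconstant $J$-holomorphic sphere $u\colon \mathbb{CP}^1 \to \mathcal{O}_\lambda$ with $p\in\mathrm{im}(u)$ and symplectic area equal to $2\pi\langle \sqrt{-1}\lambda, \alpha^\vee\rangle$, where $J$ is an $\omega_\lambda$-compatible almost complex structure making $\varphi$ the composition with the standard complex structure on $B^{2n}(r)$ near $0$. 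The monotonicity lemma for minimal surfaces then yields $\pi r^2 \leq \mathrm{Area}(\varphi^{-1}(u(\mathbb{CP}^1))) \leq 2\pi\langle \sqrt{-1}\lambda, \alpha^\vee\rangle$, and minimizing over $\alpha$ gives the stated bound.

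First I would reduce to rational $\lambda$: since both sides of the inequality are continuous in $\lambda$ and a rescaling $\lambda\mapsto c\lambda$ rescales $\omega_\lambda$ and the bound by $c$, one may approximate and take limits. For rational $\lambda$, a multiple $N\lambda$ is an integral dominant weight, and the Borel--Weil construction realizes $\mathcal{O}_\lambda$ as a smooth projective subvariety of $\mathbb{P}(V_{N\lambda}^*)$ with Kähler form $\omega_\lambda = \tfrac{1}{N}$ times the restriction of the Fubini--Study form, making the standard complex structure $J_0$ compatible with $\omega_\lambda$. Next, for each positive root $\alpha$ with $\langle \sqrt{-1}\lambda,\alpha^\vee\rangle>0$, the $\SL_2$-triple $(e_\alpha, f_\alpha, \alpha^\vee)$ acts on any basepoint $q\in\mathcal{O}_\lambda$ to give a rational curve $C_{\alpha,q}\subset \mathcal{O}_\lambda$ isomorphic to $\mathbb{CP}^1$ (collapsing to a point precisely when $\alpha^\vee$ is in the stabilizer of $q$). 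Using $K$-equivariance of $\omega_\lambda$ and integrating $N\omega_\lambda$ against the fundamental class of $C_{\alpha,q}$, which pairs with the coroot $\alpha^\vee \in H_2(\mathcal{O}_\lambda,\mathbb{Z})$, one computes $\int_{C_{\alpha,q}} \omega_\lambda = 2\pi\langle \sqrt{-1}\lambda, \alpha^\vee\rangle$. By transitivity of the $K$-action, and by choosing a basepoint where $\alpha^\vee$ acts nontrivially (possible precisely when $\langle \sqrt{-1}\lambda,\alpha^\vee\rangle>0$), we obtain such a curve through every point $p$.

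The third step is to upgrade from the standard complex structure $J_0$ to an almost complex structure $J$ adapted to the embedded ball. Because the space of $\omega_\lambda$-compatible almost complex structures is contractible, and the moduli space of $J$-holomorphic spheres in the class $[C_\alpha]\in H_2(\mathcal{O}_\lambda,\mathbb{Z})$ passing through $p$ is nonempty for $J=J_0$, by Gromov compactness it remains nonempty for any generic $\omega_\lambda$-compatible $J$ (possibly as a stable map bubble tree whose total area is preserved). Picking $J$ to agree with $\varphi_*J_{\mathrm{std}}$ on $\varphi(B^{2n}(r))$, the preimage of any bubble through $p$ gives a proper pseudoholomorphic curve in $(B^{2n}(r),J_{\mathrm{std}})$ through the origin; the monotonicity lemma for minimal surfaces then bounds $\pi r^2$ by its total area, which equals $2\pi\langle \sqrt{-1}\lambda,\alpha^\vee\rangle$.

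The main obstacle will be step three: ruling out pathological bubbling so that the class $[C_\alpha]$ is represented by a genuine (possibly reducible) $J$-holomorphic sphere through every point of $\mathcal{O}_\lambda$ for the chosen $J$. Because $\mathcal{O}_\lambda$ is a generalized flag variety with semi-positive tangent bundle and the classes $[C_\alpha]$ are the indecomposable effective classes, the relevant Gromov--Witten invariant (counting curves through a point) is nonzero by a Schubert-calculus argument, which furnishes the existence of a $J$-holomorphic representative through every point for generic $J$; the case of non-generic $J$ is handled by compactness, and bubbling only decreases area in our direction. A secondary technical point is the reduction to rational $\lambda$, since $\omega_\lambda$ is only Kähler with respect to the complex structure on $K^{\mathbb{C}}/P_\lambda$ for all $\lambda$, so the Borel--Weil embedding is strictly needed only to compute the area of $C_\alpha$, and this computation can alternatively be carried out directly from the Kostant--Kirillov--Souriau formula, removing the rationality assumption.
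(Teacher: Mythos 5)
The paper does not prove this theorem: it is stated as a citation to \cite{CC} (Caviedes Castro, 2016), following earlier work of Karshon--Tolman, Lu, Zoghi and Pabiniak. So there is no proof in the paper to compare against; what you are offering is essentially a reconstruction of the cited argument. Your broad strategy — pseudoholomorphic spheres through the center of an embedded ball together with the monotonicity lemma, in the tradition of Gromov and Lu — is indeed the right one, but several steps need tightening before it is a proof.

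First, the reduction to rational $\lambda$ ``by continuity of both sides'' is not justified: continuity of Gromov width in $\lambda$ is an open problem (the paper's own Remark at the end of Section~6 points this out). You note later that the rational reduction is dispensable because the area computation can be done directly from the Kostant--Kirillov--Souriau form for any $\lambda$; you should simply delete the continuity argument and carry out the direct computation. Second, your claim that the classes $[C_\alpha]$, $\alpha \in R_+$, are ``the indecomposable effective classes'' is false for non-simple $\alpha$: if $\alpha^\vee = \sum_i c_i \alpha_i^\vee$ with $c_i \in \mathbb{Z}_{\geq 0}$, then $[C_\alpha] = \sum_i c_i[C_{\alpha_i}]$ is decomposable whenever more than one $c_i > 0$. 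This would ruin the no-bubbling argument. It is rescued by the observation that, because $\lambda \in \ttt_+^*$ makes every summand $c_i\langle \sqrt{-1}\lambda,\alpha_i^\vee\rangle$ nonnegative, the minimum in \eqref{GW upper bound equation} over all of $R_+$ is always attained at a simple root; so you only ever need the indecomposable classes $[C_{\alpha_i}]$, and for those bubbling is indeed excluded. Make this reduction to simple roots explicit.

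Third, and most importantly, the existence of a $J$-holomorphic sphere in class $[C_{\alpha_i}]$ through $p$ for your specific $J = \varphi_*J_{\mathrm{std}}$ does not follow from contractibility of the space of compatible $J$'s together with Gromov compactness alone; compactness controls limits of curves along a path of $J$'s but does not by itself give nonemptiness at the endpoint. The correct input is a nonvanishing one-point Gromov--Witten invariant in the class $[C_{\alpha_i}]$. For a generalized flag variety this invariant equals~$1$: the $J_0$-curves in $[C_{\alpha_i}]$ through a given point are exactly the fiber of the fibration $G/P_\lambda \to G/P'$ (with $P'$ the parabolic enlarged by $\alpha_i$) through that point, the moduli problem is unobstructed because the flag variety is convex, and the class is indecomposable so the compactified moduli is the moduli. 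This is the step where Schubert calculus or the quantum Chevalley formula would be cited in a complete write-up. Once these points are filled in, your argument matches the one in \cite{CC} in substance.
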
 

\begin{example}
Let $K = SU(2)$, let $\lambda \in \mathring{\ttt}_+^*$, and let $\alpha$ denote the positive root. In this case,
\[
    \mathrm{GWidth}(\mathcal{O}_\lambda, \omega_\lambda) = \Vol(\mathcal{O}_\lambda,\omega_\lambda) = 2\pi \frac{(\sqrt{-1}\lambda,\alpha)}{(\frac{1}{2}\alpha,\alpha)}= 2\pi\langle \sqrt{-1}\lambda,\alpha^\vee\rangle.
\]
\end{example}

Lower bounds for Gromov width of symplectic manifolds with action-angle coordinates $U\times \T^n$ can be obtained by studying the ``integral affine geometry'' of the domain $U$ as follows. The \emph{open simplex of size} 1 is
\begin{equation}
    \triangle^n := \left\{ \mathbf{x} = (x_1,\dots, x_n) \in \R^n \mid  x_1 + \dots + x_n < \frac{1}{2\pi} \text{ and } x_i > 0\ \forall i = 1, \dots , n\right\}.
\end{equation}
The open simplex of size $\ell>0$ is the scaling $\ell\triangle^n$ (i.e.~the set of points $\ell \mathbf{x}$, $\mathbf{x} \in \triangle^n$).
The group of  \emph{integral affine transformations} of $(\R^n,\Z^n)$, denoted $\mathrm{Aff}(\R^n,\Z^n)$, consists of all transformations $\Psi\colon \R^n \to \R^n$ of the form $\Psi(\mathbf{x}) = A\mathbf{x}+\mathbf{b}$, where $A \in SL_n(\Z)$  and $\mathbf{b} \in \R^n$.  Given $U, U' \subset \R^n$, an \emph{integral affine embedding} of $U$ into  $U'$ is an integral affine transformation $\Psi \in \mathrm{Aff}(\R^n,\Z^n)$ such that $\Psi(U) \subset U'$.

\begin{definition}
    The \emph{integral affine width} of a set $U\subset \R^n$ is
    \[
	    c_\triangle(U) := \sup_{\ell \geq 0} \{ \ell \mid \exists \Psi \in \mathrm{Aff}(\R^n,\Z^n) \text{ such that  } \Psi(\ell\triangle^n) \subset U\}.
    \] 
\end{definition}

Integral affine width has two important properties that are immediate from the definition.
\begin{enumerate}[(i)]
    \item (conformality) For all $\beta \in \R$ and $U \subset \R^n$, $c_\triangle(\beta U) = \vert\beta\vert\cdot  c_\triangle(U)$, where $\beta U = \{ \beta u \mid u \in U\}$. 
    \item (monotonicity) If $\Psi \in \mathrm{Aff}(\R^n,\Z^n)$ and $U,U' \subset \R^n$ such that $\Psi(U) \subset U'$, then $c_\triangle(U) \leq c_\triangle(U')$.
%    \item (non-triviality) $c_\triangle(\triangle^n(1)) = 1 = c_\triangle(\R^{n-1} \times \triangle^1(1))$.
\end{enumerate}
Gromov width has analogous properties. Conformality  is the property that
\[
    \GWidth(M,\beta\omega) = |\beta|\GWidth(M,\omega)
\]
for all $\beta \in \R$. Monotonicity is the property that if $(N,\tau)$ embeds symplectically into $(M,\omega)$, then $\GWidth(N,\tau) \leq \GWidth(M,\omega)$. 

We note two additional properties of integral affine width. First, for all $U\subset \R^n$, $c_\triangle(U) = c_\triangle(U^{\mathrm{int}})$, where $U^{\mathrm{int}}$ denotes the interior of $U$. 
% By monotonicity, $c_\triangle(U^{\mathrm{int}}) \leq c_\triangle(U)$. On the other hand, $c_\triangle(U) \leq c_\triangle(U^{\mathrm{int}})$ since the image of any integral affine embedding of $\triangle^n(\ell)$ into $U$ must be contained in $U^{\mathrm{int}}$. 
The second property is continuity. Recall the Hausdorff distance between non-empty sets $A,B \subset \R^n$ is 
\[
    d_H(A,B) := \inf_{\varepsilon\geq 0}\{ \varepsilon \mid A \subset B_\varepsilon \text{ and } B \subset A_\varepsilon\},
\]
where $A_\varepsilon := \{ \mathbf{x} \in \R^n \mid d(\mathbf{x},A) \leq \varepsilon\}$ and $d(\mathbf{x},A) := \inf\{d(\mathbf{x},\mathbf{y})\mid \mathbf{y} \in A\}$. The Hausdorff distance defines a metric on the set $\mathcal{K}_n$ of non-empty compact subsets of $\R^n$. Integral affine width is continuous as a real valued function on $(\mathcal{K}_n,d_H)$.

\begin{lemma}\label{comparison of widths}
    Let $U$ be an open subset of $ \R^n$. Then, 
    \begin{equation}\label{simplicial integral affine width lower bound}
    \GWidth(U \times \T^n,\omega_{\mathrm{std}})\geq c_\triangle(U),
    \end{equation}
    where $\omega_{\mathrm{std}}$ denotes the standard symplectic structure on $\R^n \times \T^n$ defined in~\eqref{equation; omega std}.
\end{lemma}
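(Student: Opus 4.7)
The plan is to reduce the problem to embedding standard symplectic balls into simplicial toric domains $\ell \triangle^n \times \T^n$, and then pushing these embeddings through integral affine transformations to land inside $U \times \T^n$.

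First, I would observe the following standard toric model: if $\pi r^2 = \ell$, then the open ball $B^{2n}(r)$ admits a dense symplectic embedding of its "open toric piece". Explicitly, in polar coordinates $z_j = \sqrt{2 \lambda_j} \, e^{\sqrt{-1}\varphi_j}$, one has $dx_j \wedge dy_j = d\varphi_j \wedge d\lambda_j$, and $|z|^2 < r^2$ becomes $\sum_j \lambda_j < r^2/2 = \ell/(2\pi)$. Hence one obtains a symplectic embedding
\[
(\ell \triangle^n \times \T^n, \omega_{\mathrm{std}}) \hookrightarrow (B^{2n}(r), \omega_{\mathrm{std}})
\]
with dense image, and conversely, for every $r' < r$ there is a symplectic embedding of $B^{2n}(r')$ into $\ell \triangle^n \times \T^n$ (since a slightly smaller closed ball lies in the dense open subset).

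Second, I would verify that every integral affine transformation $\Psi\colon\mathbf{x}\mapsto A\mathbf{x}+\mathbf{b}$ with $A\in SL_n(\Z)$, $\mathbf{b}\in\R^n$, lifts to a symplectomorphism of $(\R^n \times \T^n, \omega_{\mathrm{std}})$. Indeed, the map
\[
\tilde{\Psi}\colon (\lambda, \varphi) \mapsto (A\lambda + \mathbf{b}, \, (A^{T})^{-1}\varphi)
\]
preserves $\omega_{\mathrm{std}} = \sum_j d\varphi_j \wedge d\lambda_j$ by a direct computation (the cross term becomes $d\varphi^T (A^{-1}) A \, d\lambda = d\varphi^T d\lambda$), and $(A^T)^{-1} \in SL_n(\Z)$ descends to $\T^n = \R^n/\Z^n$. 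Translations on $\lambda$ preserve $\omega_{\mathrm{std}}$ trivially.

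Third, I would combine the two ingredients. Fix $0 < \ell < c_\triangle(U)$. By definition there exists $\Psi \in \Aff(\R^n,\Z^n)$ with $\Psi(\ell \triangle^n) \subset U$. Choose any $r' < r$ with $\pi r^2 = \ell$. Composing the ball embedding with the lift $\tilde\Psi$ gives a symplectic embedding
\[
(B^{2n}(r'), \omega_{\mathrm{std}}) \hookrightarrow (\ell \triangle^n \times \T^n, \omega_{\mathrm{std}}) \xrightarrow{\tilde\Psi} (U \times \T^n, \omega_{\mathrm{std}}).
\]
Thus $\GWidth(U \times \T^n, \omega_{\mathrm{std}}) \geq \pi (r')^2$. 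Letting $r' \to r$ yields $\GWidth(U \times \T^n, \omega_{\mathrm{std}}) \geq \ell$, and finally taking $\ell \to c_\triangle(U)$ proves the inequality.

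There is no real obstacle: every step is routine. The only subtlety is the passage from the toric description of $B^{2n}(r)$ minus a measure-zero set to an honest embedding of a slightly smaller open ball, which is handled by the standard fact that for $r' < r$ the image of $B^{2n}(r')$ under the polar change of coordinates lies in $\ell \triangle^n \times \T^n$.
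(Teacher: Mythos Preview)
Your overall strategy matches the paper's proof exactly: lift an integral affine map $\Psi(\mathbf{x})=A\mathbf{x}+\mathbf{b}$ to the symplectomorphism $\widetilde\Psi(\mathbf{x},\theta)=(A\mathbf{x}+\mathbf{b},(A^T)^{-1}\theta)$ of $\R^n\times\T^n$, embed $\ell\triangle^n\times\T^n$ into $U\times\T^n$, and then use monotonicity of Gromov width together with the value $\GWidth(\ell\triangle^n\times\T^n,\omega_{\mathrm{std}})=\ell$.

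The gap is in your justification of that last equality. You claim that for $r'<r$ (with $\pi r^2=\ell$) there is a symplectic embedding $B^{2n}(r')\hookrightarrow \ell\triangle^n\times\T^n$ ``since a slightly smaller closed ball lies in the dense open subset.'' This is false: the image of your polar-coordinate embedding is $B^{2n}(r)$ \emph{minus the coordinate hyperplanes} $\{z_j=0\}$, and any ball centered at the origin meets all of those hyperplanes. Translating the ball does not help either: to avoid each hyperplane you need the center $p$ to satisfy $|p_j|>r'$ for all $j$, forcing $|p|>\sqrt{n}\,r'$, while staying inside $B^{2n}(r)$ forces $|p|<r-r'$; these are compatible only for $r'<r/(\sqrt{n}+1)$, which is far from sharp.

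The paper avoids this by citing the known (and genuinely non-trivial) fact $\GWidth(\ell\triangle^n\times\T^n,\omega_{\mathrm{std}})=\ell$, with a reference to \cite[Proposition~2.1]{FLP} and the sources therein. You should do the same, or supply a correct construction (e.g.\ via ball embeddings in the toric compactification that can be isotoped off the toric divisor); your polar-coordinate shortcut does not establish it.
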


\begin{proof} Let $\Psi\in \mathrm{Aff}(\R^n,\Z^n)$ and $\ell>0$ such that $\Psi(\ell\triangle^n) \subset U$. The map 
\[
	\widetilde \Psi\colon (\R^n \times \T^n,\omega_{\mathrm{std}}) \to (\R^n \times \T^n,\omega_{\mathrm{std}}), \qquad \widetilde \Psi(\mathbf{x},\theta) = (\Psi(\mathbf{x}),(\Psi\n)^T(\theta))
\] 
is symplectic. Restriction of $\widetilde \Psi$ defines a symplectic embedding of $(\ell\triangle^n \times \T^n,\omega_{\mathrm{std}})$ into $(U \times \T^n,\omega_{\mathrm{std}})$. It is known that $\mathrm{GWidth}(\ell\triangle^n \times \T^n,\omega_{\mathrm{std}}) = \ell$ (see e.g.\ \cite[Proposition 2.1]{FLP} and the references therein). By monotonicity of Gromov width, it follows that $ \GWidth(U \times \T^n,\omega_{\mathrm{std}}) \geq \ell$. Taking the supremum over all $\ell$ such that there exists $\Psi\in \mathrm{Aff}(\R^n,\Z^n)$ with $\Psi(\ell\triangle^n) \subset U$  completes the proof.
\end{proof}

Assume that $K$ is a simple compact Lie group. Fix a double reduced word $\ii$ for $(w_0,e)$, and let $\theta=\zeta\circ \theta_{\overline{\sigma}(\ii)}$, as in Remark~\ref{stringremark}. For $\lambda\in \mathring{\ttt}^*_+$, let $\triangle_\lambda \subset \R^m$ denote the projection to $\R^m$ of the fiber $(\hw^{PT})\n(\lambda) \subset PT(K^*,\theta)$; see also~\eqref{coneinj}. The parameterization $\mathring{\ttt}_+^* \to \mathcal{K}_m$, $\lambda \mapsto \triangle_{\lambda}$ is continuous with respect to $d_H$. Note that for $\beta>0$, $\triangle_{\beta\lambda} = \beta \triangle_{\lambda}$.

\begin{lemma}\label{string polytope width}
	With $K$ and $\theta$ as above, for all $\lambda \in \mathring{\ttt}_+^*$, 
	\begin{equation}\label{int aff inequality}
	    c_\triangle(\triangle_{ \lambda}) \geq \min\{ 2\pi \langle \sqrt{-1}\lambda,\alpha^\vee \rangle \mid \alpha \in R_+\}.
	\end{equation}
\end{lemma}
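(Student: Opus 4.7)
The plan is to identify $\triangle_\lambda$ with a classical string polytope associated to $\ii$, invoke the integral affine width bound from~\cite{FLP} for positive scalar multiples of dominant integer weights, and extend by continuity. Concretely, by Remark~\ref{stringremark} the cone $(L^{w_0,e},\Phi_L,\overline{\theta})^t$ is unimodularly isomorphic to the string cone associated to $\ii$ in~\cite{BZ01,FLP}, and the composition displayed in~\eqref{coneinj} embeds each fiber $(\hw^t)^{-1}(\lambda^\vee)$ into this string cone as a standard string polytope. Using the definition~\eqref{equation;hwwtPT} of $\hw^{PT}= -\sqrt{-1}\psi_\h\circ \hw^t\circ \pr_1$, this gives an integral-affine identification of $\triangle_\lambda$ (up to a rescaling by $2\pi$ coming from our torus coordinates $\varphi_i$ being defined modulo $2\pi$) with the string polytope corresponding to the weight $\lambda^\vee := \sqrt{-1}\psi_\h^{-1}(\lambda)$.

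Next, for $\lambda$ a positive scalar multiple of a dominant integral weight, I would invoke the integral affine width estimate on string polytopes that is the key geometric ingredient underlying the Gromov width bound in~\cite{FLP}: after accounting for the factor of $2\pi$ in our normalization, one obtains $c_\triangle(\triangle_\lambda) \geq \min\{ 2\pi\langle \sqrt{-1}\lambda, \alpha^\vee \rangle \mid \alpha \in R_+\}$. Combined with conformality $c_\triangle(\beta\triangle_\lambda)=\beta\, c_\triangle(\triangle_\lambda)$ and the scaling $\triangle_{\beta\lambda}=\beta\triangle_\lambda$ for $\beta>0$, this yields~\eqref{int aff inequality} on the dense subset $\R_{>0}\cdot (P_+ \cap \mathring{\ttt}_+^*) \subset \mathring{\ttt}_+^*$. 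Finally, using the continuity of $\lambda \mapsto \triangle_\lambda$ in the Hausdorff metric (noted in the paragraph preceding the lemma), continuity of $c_\triangle$ on $(\mathcal{K}_m,d_H)$, and continuity of the right hand side of~\eqref{int aff inequality} in $\lambda$, the inequality extends from this dense set to all $\lambda \in \mathring{\ttt}_+^*$.

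The main obstacle will be the first step: carefully tracking the lattice structure through the unimodular isomorphism between $\triangle_\lambda$ and the string polytope, and verifying that the $2\pi$ scale factor, together with the Langlands-type pairings (via $\sqrt{-1}\psi_\h$ and the identification of coroots of $G$ with roots of $G^\vee$), line up exactly so that the bound from~\cite{FLP} transfers to produce the inequality stated in~\eqref{int aff inequality} with the correct normalization.
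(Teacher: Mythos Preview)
Your proposal is correct and follows essentially the same approach as the paper: identify $\triangle_\lambda$ with a string polytope via Remark~\ref{stringremark}, invoke the integral affine simplex embedding from~\cite{FLP} (the paper also cites~\cite{MG} here) for dominant integral $\lambda$, use conformality and $\triangle_{\beta\lambda}=\beta\triangle_\lambda$ to pass to the dense set of positive rational multiples, and then extend to all of $\mathring{\ttt}_+^*$ by Hausdorff continuity of $\lambda\mapsto\triangle_\lambda$ together with continuity of $c_\triangle$ and of the right-hand side. Your caution about tracking the lattice and the $2\pi$ normalization is well-placed, but the paper handles this tersely by packaging it into the reference to Remark~\ref{stringremark} and~\cite{FLP,MG}.
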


\begin{proof}
For all $\lambda \in \mathring{\ttt}_+^*$, let $\ell_\lambda := \min \{ 2\pi \langle \sqrt{-1}\lambda,\alpha^\vee \rangle \mid \alpha \in R_+\}$. By Remark~\ref{stringremark} together with \cite[Theorem 7.2]{FLP} and \cite{MG}, there exists an integral affine embedding of $\ell_\lambda\triangle^m$ into $\triangle_{\lambda}$ for all $\lambda \in \sqrt{-1}P\cap\ttt_+^*$.  For all $\beta>0$ and $\lambda \in \sqrt{-1}P\cap\ttt_+^*$, conformality of integral affine width implies
\[
    c_\triangle(\triangle_{ \beta\cdot \lambda}) = c_\triangle(\beta\triangle_{ \lambda}) = \beta c_\triangle(\triangle_{ \lambda}) \geq \beta\ell_\lambda = \ell_{\beta \cdot\lambda}.
\]
Thus \eqref{int aff inequality} holds for all $\lambda$ in the set  
\[
    \mathring{\ttt}_{+,\Q}^* := \left\{ \beta \cdot \lambda \mid \beta >0, \, \lambda \in \mathring{\ttt}_+^* \text{ such that }\lambda \in \sqrt{-1}P\cap\ttt_+^* \right\}.
\]

Now let $\lambda \in \mathring{\ttt}_{+}^*$ arbitrary. The set $\mathring{\ttt}_{+,\Q}^*$ is dense in $\mathring{\ttt}_+^*$, so we can find a sequence $(\lambda_n)_{n=1}^\infty \subset \mathring{\ttt}_{+,\Q}^*$ converging to $\lambda \in \mathring{\ttt}_{+}^*$. Then $\triangle_{\lambda_n}$ converge to  $\triangle_{\lambda}$ with respect to $d_H$. It follows by continuity of $c_\triangle$ and $\ell_\lambda$ that 
\[
    c_\triangle(\triangle_{ \lambda}) = \lim_{n\to \infty}c_\triangle(\triangle_{ \lambda_n}) \geq \lim_{n\to \infty}\ell_{\lambda_n} = \ell_\lambda. \tag*{\qedhere}
\]
\end{proof}

\begin{proof}[Proof of Theorem \ref{gromov width theorem}]
Let $K$ be an arbitrary compact connected simple Lie group and fix $\lambda \in \mathring{\ttt}_+^*$ arbitrary. By Theorem \ref{GW upper  bound}, it remains to prove the lower bound. 

Fix any choice of double reduced word $\ii$ for $(w_0,e)$, and let $\theta$ be as above. Let $\varepsilon >0$. For $k\in \N$, consider the $1/k$-interior $\triangle_{ \lambda}(1/k)$ of $\triangle_\lambda$. Since $\triangle_{ \lambda}(1/k)$ converges to $\triangle_{ \lambda}$ with respect to $d_H$, continuity of $c_\triangle$ implies that for $k$ sufficiently large,
\begin{equation}\label{last proof eq1}
    c_\triangle(\triangle_{ \lambda}(1/k)) \geq c_\triangle(\triangle_{ \lambda}) - \varepsilon.
\end{equation}

By Theorem~\ref{thm:intro_2}, there exists a symplectic embedding $(\triangle_{ \lambda}(1/k) \times \T,\omega_{\mathrm{std}}) \hookrightarrow (\mathcal{O}_\lambda,\omega_\lambda)$. Combining this with monotonicity of Gromov width, Lemma \ref{comparison of widths}, Equation \eqref{last proof eq1}, and Lemma \ref{string polytope width},
\begin{equation}\label{last proof eq2}
    \begin{split}
        \GWidth(\mathcal{O}_\lambda,\omega_\lambda) & \geq \GWidth(\triangle_{ \lambda}(1/k)\times \T,\omega_{\mathrm{std}}) \\
        & \geq c_\triangle(\triangle_{ \lambda}(1/k)) \\
        & \geq c_\triangle(\triangle_{ \lambda}) - \varepsilon \\
        & \geq \min\{ 2\pi\langle \lambda,\alpha^\vee \rangle \mid \alpha \in R_+\}- \varepsilon.
    \end{split}
\end{equation} 
Since $\varepsilon>0$ was arbitrary, this completes the proof.
\end{proof}

\begin{remark}
    As we see, our lower bounds come from an upper semicontinuity property enjoyed by integral affine width. One might wonder if there is a more direct approach using an upper semicontinuity property of the Gromov width of regular coadjoint orbits. To our knowledge, it is an open problem whether  Gromov width has such a property. 
    See \cite{FLP}[Remark 4.1] for more details.
\end{remark}

\begin{remark}
    Note that by Theorem \ref{GW upper  bound} and  \eqref{last proof eq2},
    \[
       \min\{ 2\pi\langle \sqrt{-1}\lambda,\alpha^\vee \rangle \mid \alpha \in R_+\} \geq  c_\triangle(\triangle_{ \lambda}).
    \]
    Thus the inequality of Lemma \ref{string polytope width} is in fact an equality.
\end{remark}

\Addresses

\end{document}